\providecommand{\U}[1]{\protect\rule{.1in}{.1in}}
\newcommand{\BA}{{\mathbb {A}}}
\newcommand{\BG}{{\mathbb {G}}}
\newcommand{\BQ}{{\mathbb {Q}}}
\newcommand{\BR}{{\mathbb {R}}}
\newcommand{\BZ}{{\mathbb {Z}}}
\newcommand{\CC}{{\mathcal {C}}}
\renewcommand{\CD}{{\mathcal {D}}}
\newcommand{\CF}{{\mathcal {F}}}
\newcommand{\CO}{{\mathcal {O}}}
\newcommand{\CT}{{\mathcal {T}}}
\newcommand{\CU}{{\mathcal {U}}}
\newcommand{\CX}{{\mathcal {X}}}
\newcommand{\CZ}{{\mathcal {Z}}}
\newcommand{\RA}{{\mathbf {A}}}
\newcommand{\Br}{{\mathrm{Br}}}
\newcommand{\codim}{{\mathrm{codim}}}
\newcommand{\coker}{{\mathrm{coker}}}
\newcommand{\Div}{{\mathrm{Div}}}
\renewcommand{\div}{{\mathrm{div}}}
\newcommand{\Gal}{{\mathrm{Gal}}}
\newcommand{\Hom}{{\mathrm{Hom}}}
\renewcommand{\Im}{{\mathrm{Im}}}
\newcommand{\Ker}{{\mathrm{Ker}}}
\newcommand{\Mor}{{\mathrm{Mor}}}
\newcommand{\Pic}{\mathrm{Pic}}
\newcommand{\Res}{{\mathrm{Res}}}
\newcommand{\Spec}{{\mathrm{Spec}}}
\newcommand{\Triv}{{\mathrm{Triv}}}
\newcommand{\tor}{{\mathrm{tor}}}
\font\cyr=wncyr10
\newcommand{\Sha}{\hbox{\cyr X}}
\newcommand{\iso}{\stackrel{\sim}{\rightarrow} }
\newcommand{\sbt}{\subset}
\newcommand{\bk}{\bar{k}}
\newcommand{\Ok}{\Omega_k}
\numberwithin{equation}{section}
\theoremstyle{remark}
\newtheorem{defi}{\rm{\textbf{D\'efinition}}}[section]
\newtheorem{exam}[defi]{\rm{\textbf{Example}}}
\newtheorem{rem}[defi]{\rm{\textbf{Remarque}}}
\newtheorem{ques}[defi]{\rm{\textbf{Question}}}
\theoremstyle{plain}
\newtheorem{thm}[defi]{\rm{\textbf{Th\'eor\`eme}}}
\newtheorem{cor}[defi]{\rm{\textbf{\textbf{Corollaire}}}}
\newtheorem{lem}[defi]{\rm{\textbf{Lemme}}}
\newtheorem{prop}[defi]{\rm{\textbf{\textbf{Proposition}}}}
\begin{document}

\title[]
{Approximation forte pour les vari\'et\'es avec une action d'un groupe lin\'eaire}

\author{Yang CAO}

\address{Yang CAO \newline Laboratoire de Math\'ematiques d'Orsay
\newline Univ. Paris-Sud, CNRS, Univ. Paris-Saclay \newline 91405 Orsay, France}

\email{yang.cao@math.u-psud.fr}

\date{\today.}

\maketitle

\begin{abstract}
 Soit $G$ un groupe lin\'eaire connexe sur un corps de nombres.   Soit $U \hookrightarrow X$ une inclusion $G$-\'equivariante
 d'un $G$-espace homog\`ene \`a stabilisateurs connexes dans une $G$-vari\'et\'e lisse.
 On montre que $X$ satisfait l'approximation forte  avec condition de  Brauer-Manin
 hors d'un ensemble   $S$ de places de $k$ dans chacun des cas suivants :
 
 (i) $S$ est l'ensemble des places archim\'ediennes;
 
 (ii) $S$ est un ensemble fini non vide quelconque, et $\bk^{\times}= \bk[X]^{\times}$.
 
  La d\'emonstration utilise le cas  $X=U$,  qui a fait l'objet de divers travaux.
  
  \medskip

\indent
Summary. 
  Let $G$ be a connected linear algebraic group over a number field. Let $U \hookrightarrow X$ be a $G$-equivariant open embedding
  of a $G$-homogeneous space with connected stabilizers into a smooth $G$-variety. 
We prove that $X$ satisfies strong approximation with  Brauer-Manin  condition  off a  set $S$ of places of $k$ under either of the following hypotheses :

(i) $S$ is the set of archimedean places;

(ii) $S$ is a nonempty finite set and $\bar{k}^{\times}= \bar{k}[X]^{\times}$.

 The proof builds upon the case $X=U$, which has been the object of several works.
  \end{abstract}

\tableofcontents

\section{Introduction}

Soit $k$ un corps de nombres. On note $\Omega_k $ l'ensemble des places de $ k $ et $ \infty_k $ l'ensemble des places archim\'ediennes de $ k $. 
On note $v <\infty_k $ pour $ v \in \Omega_k \setminus \infty_k $. 
Notons $\CO_k$
 l'anneau des entiers 
 de $k$ et $\CO_S $ l'anneau des   $S$-entiers de $ k $ pour un sous-ensemble fini $S$ de $\Omega_k $ contenant $\infty_k$. 
Pour chaque $ v \in \Omega_k $, on note $k_v$ le compl\'et\'e de $k$ en $v$ et  $ \CO_v\sbt k_v$ l'anneau des entiers ($\CO_v=k_v$ pour $v\in \infty_k$).  
Soit $ \RA_k $ l'anneau des ad\`eles de $k$.
Pour un sous-ensemble fini $S\sbt \Omega_k$, soit $\RA_k^S$   l'anneau des ad\`eles hors de $S$ et $k_S:=\prod_{v\in S}k_v$. 
Par exemple, soit $\RA_k^{\infty}$ l'anneau des ad\`eles finis et $k_{\infty}:=\prod_{v\in \infty_k}k_v$.

On rappelle les d\'efinitions de \cite{CTX09}, \cite[\S 2]{CTX13}, \cite{BD} et \cite{CX1}.

Pour $B$ sous-ensemble de $\Br (X)$, on d\'efinit
$$ X ({\RA}_k)^ B = \{(x_v)_{v \in \Omega_k} \in X ({\RA}_k): \ \ \sum_{v \in \Omega_k} \ inv_v (\xi (x_v)) = 0\in \BQ/\BZ, \ \ \forall \xi \in B \}.$$ 
Comme l'a remarqu\'e Manin, la th\'eorie du corps de classes donne $ X (k) \subseteq X (\RA_k)^B $.

D\'efinissons
\begin{equation}\label{Xbullet}
 X(\RA_k)_{\bullet} = \pi_0(X(k_{\infty})) \times  X(\RA_k^{\infty})
 \end{equation}
la projection, o\`u $ \pi_0 (X (k_{\infty})) $ est l'ensemble des composantes connexes de $X(k_{\infty})$. 

Puisque, pour tout $ v \in \infty_k $, 
chaque \'el\'ement de $ \Br (X) $ prend une valeur constante sur chaque composante connexe de $ \pi_0 (X (k_v)) $,  pour tout sous-ensemble $ B \sbt \Br (X)$ on peut d\'efinir :
$$ X ({\RA}_k)_{\bullet} ^ B = \{(x_v)_{v \in \Omega_k} \in X ({\RA}_k)_{\bullet}: \ \ \sum_{v \in \Omega_k} \ inv_v (\xi (x_v)) = 0\in \BQ/\BZ, \ \ \forall \xi \in B \}.$$

\begin{defi} (\cite{CTX09,CTX13}) Soient $k$ un corps de nombres, $ X $ une $k$-vari\'et\'e et $S\sbt \Omega_k$ un sous-ensemble fini.
Notons $Pr^S: X(\RA_k)\to X(\RA_k^S) $ la projection. 

(1) Si $X(k)$ est dense dans $Pr^S( X(\RA_k))$, on dit que $ X $ satisfait \emph{l'approximation forte} hors de $ S $.

(2) Si $X(k)$ est dense dans $Pr^S( X(\RA_k)^{B} )$ pour un sous-ensemble $ B$ de $ \Br (X) $, on dit que $X$ satisfait \emph{l'approximation forte  par rapport \`a $B$} hors de $S$.
On dit aussi alors que $X$ satisfait l'approximation forte de Brauer-Manin   par rapport \`a $B$ hors de $S$.
\end{defi}

Si $S=\infty_k$, on peut s'int\'eresser \`a  une question un peu plus pr\'ecise tenant compte des composantes connexes r\'eelles :  $X(k)$ est-il dense dans $X(\RA_k)^B_{\bullet}$ pour un sous-groupe $B\sbt \Br(X)$?

Pour les espaces homog\`enes de groupes alg\'ebriques connexes, ces questions ont fait ces derni\`eres ann\'ees  l'objet d'une s\'erie de travaux \cite{CTX09,Ha08, BD}
prolongeant des travaux classiques.

Lorsqu'on cherche \`a \'etendre la classe des vari\'et\'es satisfaisant les propri\'et\'es ci-dessus, il est naturel de poser les questions \ref{ques2} et \ref{ques1} suivantes.

\begin{ques}\label{ques2}
Soient $X$ une $k$-vari\'et\'e lisse g\'eom\'etriquement int\`egre et $U$ un ouvert de $X$. Si $U(k)$ est dense dans $U(\RA_k)^{\Br(U)}_{\bullet}$, sous quelles conditions peut-on \'etablir que  $X(k)$ est dense dans $X(\RA_k)^{\Br(X)}_{\bullet}$? 
\end{ques}

Le cas qui nous int\'eresse est 
celui o\`u 
$U$ est un $G$-espace homog\`ene.
En g\'en\'eral, l'inclusion $ \bk^{\times} \subset  \bk[U]^{\times}$ n'est pas un isomorphisme 
et, dans ce cas, il existe des exemples pour lesquels $U$ ne satisfait pas l'approximation forte par rapport \`a $\Br(U)$ 
hors d'un sous-ensemble fini $S\sbt \Omega_k$ non vide quelconque
(par exemple $U\cong \BG_m$, $k=\BQ$ et $S=\{v_0\}$ avec $v_0$ une place non archim\'edienne).

\begin{ques}\label{ques1}
Soit $S\sbt \Omega_k$ un sous-ensemble fini non vide. Soient $X$ une $k$-vari\'et\'e lisse g\'eom\'etriquement int\`egre et $U$ un ouvert de $X$ fibr\'e sur un certain $G$-espace homog\`ene $Z$. Si toute fibre de $U$ au-dessus d'un $k$-point de $Z$ satisfait l'approximation forte hors de $S$, sous quelles conditions peut-on \'etablir l'approximation forte pour $X$ par rapport \`a $\Br(X)$ hors de $S$? 
\end{ques}

Le principal r\'esultat de cet article est le :

\begin{thm}\label{thmgroupic}(cf. Th\'eor\`eme \ref{main1cor2})
Soit $G$ un groupe lin\'eaire connexe sur un corps de nombres $k$, et soit $X$ une $G$-vari\'et\'e lisse g\'eom\'etriquement int\`egre sur $k$. 
Supposons qu'il existe un $G$-ouvert $U\sbt X$ tel que $U\cong G/G_0$, o\`u $G_0\sbt G$ est un sous-groupe ferm\'e connexe.
Soit $S\sbt \Omega_k$ un sous-ensemble fini non vide et supposons que $G'(k_S)$ est non compact pour chaque facteur simple $G'$ du groupe $G^{sc}$. 

(1) Si $S\sbt \infty_k $, alors $X(k)$ est dense dans $X(\RA_k)_{\bullet}^{\Br(X)}$.

(2) Si $\bk^{\times}= \bk[X]^{\times}$, 
alors $X$ satisfait l'approximation forte de Brauer-Manin   par rapport \`a $\Br(X)$ hors de $S$.
\end{thm}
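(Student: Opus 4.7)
Le plan est de d\'eduire l'\'enonc\'e pour $X$ du cas connu $X=U=G/G_0$ trait\'e par Borovoi--Demarche \cite{BD}, en trois temps : d\'eplacer un point ad\'elique de $X$ vers $U$, corriger la condition de Brauer--Manin pour se ramener \`a celle de $\Br(U)$, puis appliquer l'approximation forte sur $U$ en g\'erant s\'epar\'ement l'obstruction d'unit\'es dans le cas (2).

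\textbf{\'Etape 1 : d\'eplacement dans $U$.} On fixe des mod\`eles entiers lisses $\CX\spt\CU$ sur $\CO_{S_0}$ pour un $S_0\sbt\Omega_k$ fini contenant $\infty_k\cup S$. La lissit\'e de $X$ et la densit\'e de $U\sbt X$ entra\^inent que $U(k_v)$ est dense dans $X(k_v)$ pour tout $v$, et Hensel fournit la densit\'e de $\CU(\CO_v)$ dans $\CX(\CO_v)$ pour $v\notin S_0$. Partant d'un point ad\'elique $(x_v)\in X(\RA_k)^{\Br(X)}_{\bullet}$ (cas (1)), resp.\ $(x_v)\in X(\RA_k)^{\Br(X)}$ (cas (2)), on l'approche par $(x'_v)\in U(\RA_k)$ avec $x'_v=x_v$ aux places archim\'ediennes dans le cas (1). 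Comme tout $\alpha\in\Br(X)$ est localement constant dans la topologie $v$-adique sur $X(k_v)$, une approximation assez fine pr\'eserve la condition de Brauer--Manin, et l'on obtient $(x'_v)\in U(\RA_k)^B$ o\`u $B\sbt\Br(U)$ est l'image de la restriction $\Br(X)\to\Br(U)$.

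\textbf{\'Etape 2 : de $B$ \`a $\Br(U)$.} Les classes de $\Br(U)/(B+\Br_0(U))$ sont essentiellement ramifi\'ees le long de $X\setminus U$. Pour un syst\`eme fini de repr\'esentants $\alpha_1,\dots,\alpha_n$, on applique le lemme formel de Harari en perturbant $(x'_v)$ par des translations $x''_v=g_v\cdot x'_v$ avec $g_v\in G(k_v)$ en un nombre fini de places $v\notin S$, afin d'annuler l'accouplement avec chaque $\alpha_i$. La $G$-stabilit\'e de $U$ assure $x''_v\in U(k_v)$, et l'hypoth\`ese de non-compacit\'e des $G'(k_S)$ sur les facteurs simples de $G^{sc}$, coupl\'ee \`a l'approximation forte sur $G^{sc}$ et au th\'eor\`eme de Chebotarev, fournit la flexibilit\'e requise pour choisir les $g_v$. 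On aboutit \`a $(x''_v)\in U(\RA_k)^{\Br(U)}_{\bullet}$ proche de $(x_v)$ aux places voulues.

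\textbf{\'Etape 3 : conclusion et obstacle principal.} Dans le cas (1), le th\'eor\`eme d'approximation forte de \cite{BD} pour $U=G/G_0$ hors de $\infty_k$ fournit directement $y\in U(k)\sbt X(k)$ approchant $(x''_v)$, donc $(x_v)$. Dans le cas (2), la difficult\'e suppl\'ementaire est que la version \`a $S$ fini non vide g\'en\'eral de \cite{BD} exige $\bk[U]^\times=\bk^\times$, ce qui n'est pas en g\'en\'eral impliqu\'e par $\bk[X]^\times=\bk^\times$ (t\'emoin : $X=\BA^1$, $U=\BG_m$). L'obstacle principal est donc de montrer que l'obstruction correspondant aux unit\'es suppl\'ementaires $\chi\in\bk[U]^\times/\bk^\times$ est automatiquement neutralis\'ee pour les points ad\'eliques provenant de $X$ : une telle $\chi$ a z\'eros ou p\^oles le long de $X\setminus U$, et l'on exploite la $G$-action en des places auxiliaires hors de $S$ pour absorber la contrainte correspondante, dans l'esprit du traitement des torseurs sous des tores quasi-triviaux dans \cite{CX1}. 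Cette \'etape combine donc l'usage pr\'ecis de la g\'eom\'etrie du bord $X\setminus U$, le lemme formel, et la non-compacit\'e des $G'(k_S)$.
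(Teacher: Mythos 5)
Your overall strategy (push an adelic point from $X$ into $U$, then reduce to Borovoi--Demarche on $U=G/G_0$) is the right starting point and is indeed how the paper begins, but Step 2 contains a genuine gap that is precisely the main difficulty the paper is designed to overcome. You assert that the classes of $\Br(U)/(B+\Br_0(U))$ admit a \emph{finite} system of representatives $\alpha_1,\dots,\alpha_n$ to which Harari's formal lemma can be applied. This is false in general: already for $X=\BA^1$, $U=\BG_m=G$, the quotient $\Br_1(\BG_m)/\Br(k)\cong H^1(k,\BQ/\BZ)$ is infinite, and the same phenomenon occurs for the relevant group $\Br_G(U)\cong\Br_1(U,G)$ in the general case. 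The formal lemma only kills finitely many classes at a time, so your argument cannot produce a point of $U(\RA_k)$ orthogonal to the whole of $\Br_G(U)$ (which is what Borovoi--Demarche requires). The paper's resolution of this is the key new input you are missing: the Sansuc homomorphism $\lambda:\Br_G(U)\to\Br_a(G)$ (D\'efinition 3.8), the identity $\theta_U(g\cdot u)=\lambda^D(\theta_G(g))+\theta_U(u)$, and the openness of the Brauer--Manin pairing map $G(\RA_k)_\bullet\to\Br_a(G)^D$ (Lemme 4.1). One first kills a well-chosen finite subgroup $B_1$ by the formal lemma, and then absorbs the remaining (infinite) part of the obstruction by translating by an element of an open subgroup $C\sbt G(\RA_k)$ that preserves the given open set $W$, using that $\Ker(B^D\to(B_1+\Im\Br(k))^D)\sbt(\lambda^D\circ\theta_G)(C)$. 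Without this (or an equivalent device), Step 2 does not go through even in case (1).

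For case (2) you correctly identify the second obstacle ($\bk[U]^\times\neq\bk^\times$ even when $\bk[X]^\times=\bk^\times$), but you only name it and gesture at a fix; no argument is given. In the paper this is where most of the work lies: it requires the descent machinery for torsors under arbitrary connected linear groups (\S 5, Th\'eor\`eme 5.8, built on a coflasque resolution and the vanishing of $\Sha^1$ of an auxiliary group $H$), a fibration argument over the maximal toric quotient $Z^{tor}$ of $Z=G/G_0$ (\S 6, Th\'eor\`emes 6.6, 6.9 et 6.11), and a refined strong approximation statement for open subsets of affine space relative to an open finite-index subgroup $D_{v_0}\sbt T(k_{v_0})$ at an \emph{arbitrary} place $v_0$ (Th\'eor\`eme 6.3), together with a version of Borovoi--Demarche valid off an arbitrary nonempty $S$ under $\bk[Z]^\times=\bk^\times$ (Proposition 7.3). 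None of this is replaced by anything in your sketch, so case (2) remains unproved as written.
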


Ce th\'eor\`eme est une cons\'equence directe d'un r\'esultat plus g\'en\'eral mais d'\'enonc\'e technique  (Th\'eor\`eme \ref{main2thm}).

Le  th\'eor\`eme  \ref{thmgroupic} (1)  avait d\'ej\`a \'et\'e \'etabli dans les cas suivants :

(i)  $X$  est une vari\'et\'e torique, c'est-\`a-dire que $G$ est un tore  (F. Xu et l'auteur \cite{CX}).

(ii) $X$ est une vari\'et\'e groupique,  c'est-\`a-dire que $G$ est un groupe lin\'eaire connexe et que $G_{0}=1$ (F.~Xu et l'auteur \cite{CX1}).

(iii) $X$ est comme dans le th\'eor\`eme, avec $G_{0}$ r\'esoluble connexe (r\'esultat tout r\'ecent de D.~Wei \cite{wei}).

Au  \S \ref{4}, on donne une d\'emonstration totalement nouvelle  de ce r\'esultat,  laquelle est plus simple que celles des trois travaux ci-dessus.

Le  th\'eor\`eme  \ref{thmgroupic} (2)  avait d\'ej\`a \'et\'e \'etabli dans les cas suivants :

(iv) $X$  est une vari\'et\'e torique (D.~Wei \cite{wei1}).

(v) $X$ est une vari\'et\'e groupique (F.~Xu et l'auteur \cite{CX1}).

Les d\'emonstrations de ce r\'esultat,  comme celle de \cite{CX1}, 
reposent d'une part sur des constructions  g\'eom\'etriques sur un corps quelconque, d'autre part sur  les th\'eor\`emes
d'approximation forte avec condition de Brauer-Manin pour les espaces homog\`enes \'etablis dans \cite{CTX09, Ha08, BD}.
Les constructions g\'eom\'etriques du pr\'esent article s'inspirent  de celles de \cite{CX1} et 
la partie arithm\'etique du pr\'esent article utilise une g\'en\'eralisation de \cite{CX2} (cf. \S \ref{5}).

Dans les  quatre articles cit\'es, on a  la conclusion plus pr\'ecise :  dans les \'enonc\'es,
on peut remplacer $\Br(X)$ par le sous-groupe ``alg\'ebrique'' $\Br_1(X) \subset \Br(X)$, dont la d\'efinition est rappel\'ee ci-dessous.
Dans le cadre g\'en\'eral o\`u nous nous  pla\c{c}ons, il faut utiliser tout le groupe de Brauer $\Br(X)$. 
L'id\'ee cl\'e est la notion de sous-groupe de Brauer invariant (cf. \S 3), qui g\'en\'eralise \cite[\S 1.2]{BD}.

\medskip

Soit $G$ un groupe lin\'eaire connexe.
Le plan de l'article est le suivant: 

La section \ref{2} est consacr\'ee \`a divers pr\'eliminaires g\'eom\'etriques, notamment sur les $G$-vari\'et\'es et leurs torseurs sous un tore ou un groupe de type multiplicatif.
Au \S \ref{2.3}, on \'etudie les torseurs sous un groupe de type multiplicatif sur une $G$-vari\'et\'e.  
On  montre que tout tel torseur peut    \^etre muni canoniquement  d'une action d'un groupe lin\'eaire $H$ qui s'envoie  sur $G$ (Th\'eor\`eme \ref{thmaction}).
Pour cela, on utilise  un th\'eor\`eme de Colliot-Th\'el\`ene \cite[Thm. 5.6]{CT07} sur les torseurs au-dessus d'un groupe lin\'eaire connexe.

Au  \S \ref{3}, on  d\'efinit la notion  du  sous-groupe de Brauer  $G$-invariant $\Br_G(X)$
 d'une $G$-vari\'et\'e lisse (cf. D\'efinition \ref{def-invariant}).
On donne des caract\'erisations  \'equivalentes sur  les sous-groupes de $\Br_G(X)$ (Proposition \ref{prop-binvariant}). 
Ensuite, on d\'efinit la notion  d'homomorphisme de Sansuc (cf. D\'efinition \ref{defsansuc})
 et on g\'en\'eralise la suite exacte de Sansuc (cf. (\ref{braueralg-e1})).
On \'etudie la propri\'et\'e de $\Br_G(X)$ par rapport au
 passage \`a la fibre d'un $G$-morphisme vers un tore (Proposition \ref{propbrauersuj}).
On g\'en\'eralise la notion de $G$-espace homog\`ene \`a stabilisateur g\'eom\'etrique connexe
 et on d\'efinit la notion de pseudo $G$-espace homog\`ene (D\'efinition \ref{defpseudo}) qui sera utilis\'ee aux \S 4 et \S 6.
 
La section \ref{4} est consacr\'ee \`a l'approximation forte hors des places archim\'ediennes.
On \'etablit le th\'eor\`eme \ref{mainthminfty} 
sur l'approximation  d'un point ad\'elique de $X$ satisfaisant une condition de Brauer-Manin
par un tel point  situ\'e sur $U$.
Comme cons\'equence, on r\'epond \`a la  question \ref{ques2}   (Corollaire \ref{main1cor1}) dans ce cas.
Ceci donne le th\'eor\`eme \ref{thmgroupic} (1).

Au  \S \ref{5}, en utilisant la notion de sous-groupe   $G$-invariant
du groupe de Brauer,  on combine la suite exacte (\ref{braueralg-e1})
et la m\'ethode de \cite{CX2},  et 
on g\'en\'eralise \cite[Thm. 1.2]{CX2}.
On \'etablit un  th\'eor\`eme de descente pour un torseur sous un groupe lin\'eaire connexe quelconque (Th\'eor\`eme \ref{Dmain}).
 Comme cons\'equence, 
 on \'etablit une formule sur les points ad\'eliques  d'un certain $G$-espace homog\`ene
 satisfaisant une condition de Brauer-Manin (Corollaire \ref{Dmaincor2}).

Au  \S \ref{6}, pour une $G$-vari\'et\'e $X$ munie d'un $G$-ouvert $U$ fibr\'e sur l'autre $G$-vari\'et\'e $Z$, 
on consid\`ere l'approximation  d'un point ad\'elique de $X$ satisfaisant une condition de Brauer-Manin
par un tel point  situ\'e dans la fibre au-dessus  d'un point rationnel  de $Z$ (Question \ref{ques4}).
Pour r\'epondre \`a la question \ref{ques4}, 
on g\'en\'eralise \cite[Prop. 3.6]{CX1} 
et on \'etablit  au \S \ref{6.1} un th\'eor\`eme plus fin sur l'approximation forte   d'un espace affine (th\'eor\`eme \ref{strangapptoricstandthm}).
Ensuite, on combine ce th\'eor\`eme et  l'\'etude du sous-groupe de Brauer  $G$-invariant (\S 3) 
avec la m\'ethode de fibration de Colliot-Th\'el\`ene et Harari \cite{CTH}, 
et on r\'epond \`a la question \ref{ques4}
 dans le cas o\`u $Z$ est un certain tore (Th\'eor\`eme  \ref{thmfibration} et Th\'eor\`eme \ref{mainlem}).
Au cas o\`u $Z$ est un pseudo $G$-espace homog\`ene, on r\'esoud la question \ref{ques4} (Th\'eor\`eme \ref{main1thm})
\`a l'aide de tous les r\'esultats ci-dessus (sauf ceux du \S \ref{4}).

La section \ref{7} est consacr\'ee \`a la preuve des r\'esultats principaux \`a l'aide de tous les r\'esultats ci-dessus.
En utilisant la descente (\S \ref{5}), on \'etablit d'abord un r\'esultat (Proposition \ref{main2prop}) sur 
l'approximation forte pour les $G$-espaces homog\`enes \`a stabilisateur g\'eom\'etrique connexe, ce qui g\'en\'eralise un r\'esultat de Borovoi et Demarche (\cite[Thm. 1.4]{BD}).
Ensuite, on combine ce r\'esultat avec les r\'esultats des  \S \ref{4} et \S \ref{6}, 
 et on \'etablit le th\'eor\`eme principal (Th\'eor\`eme \ref{main1cor2})
sur l'approximation forte d'une  $G$-vari\'et\'e munie d'un $G$-ouvert fibr\'e sur un certain $G$-espace homog\`ene.
Ceci donne imm\'ediatement   le th\'eor\`eme \ref{thmgroupic}.

\bigskip

\textbf{Conventions et notations}. 

Soit $k$ un corps quelconque de caract\'eristique $0$. On note $\overline{k}$ une cl\^oture alg\'ebrique et $\Gamma_k:=\Gal (\bk/k)$.

Tous les groupes de cohomologie sont des groupes de cohomologie \'etale.
Soit $X$ un sch\'ema int\`egre.  On note $\eta_X$ le point g\'en\'erique de $X$.

Une $k$-vari\'et\'e $X$ est un $k$-sch\'ema s\'epar\'e de type fini. 
 Pour $X$ une telle vari\'et\'e, on note $k[X]$ son anneau des fonctions globales,
$k[X]^{\times}$ son groupe des fonctions inversibles,
$\Pic(X):=H^1_{\text{\'et}}(X,\BG_m)$ son groupe de Picard et
$\Br(X):=H_{\text {\'et}}^2 (X, \BG_m)$ son groupe de Brauer. Notons
$$\Br_1 (X) := \Ker [\Br (X) \to\Br (X_ {\bk})]\ \ \text{ et}\ \ \Br_a(X):=\Br_1(X)/\Im\Br(k).$$
Le groupe $\Br_1 (X)$ est le sous-groupe ``alg\'ebrique'' du groupe de Brauer de $X$.
Si $X$ est int\`egre, on note $k(X)$ son  corps des fonctions rationnelles.

On note  $X_{sing}$ le lieu singulier de $X$,
 et pour un sous-ensemble ferm\'e $D\sbt X$, on note $D_{sing }$ le lieu singulier de $D$ comme sous-vari\'et\'e ferm\'ee r\'eduite.

Un $k$-groupe alg\'ebrique $G$ est une $k$-vari\'et\'e qui est un $k$-sch\'ema en groupes. 
On note $e_G$ l'unit\'e de $G$ et $G^*$ le groupe des caract\`eres de $G_{\bk}$.
C'est un module galoisien de type fini.
Si $G$ est connexe, on note 
\begin{equation}\label{Bredef}
\Br_{e}(G):=\Ker (\Br_1(G)\xrightarrow{e_G^*} \Br(k))\cong \Br_a(G).
\end{equation}

Soit $G$ un groupe lin\'eaire connexe. On note $G^{tor}$ son  quotient torique maximal, $G^u$ son radical unipotent,
$G^{red}:=G/G^u$ son quotient r\'eductif maximal,
  $G^{ssu} :=\Ker (G\to G^{tor})$, $G^{ss}:=G^{ssu}/G^u$ et $G^{sc}$ le rev\^etement simplement connexe du
groupe semi-simple $G^{ss}$. Alors on a $G^*=(G^{tor})^*$.

 Soit $G$ un $k$-groupe alg\'ebrique. Une \emph{$G$-vari\'et\'e} $(X,\rho )$ (ou $X$) est une $k$-vari\'et\'e $X$ munie d'une action \`a gauche $G\times_k X\xrightarrow{\rho}X$. 
 Un ouvert $U\sbt X$ est un \emph{$G$-ouvert} si $U$ est invariant sous l'action de $G$.
Un $k$-morphisme de $G$-vari\'et\'es est appel\'e \emph{$G$-morphisme} s'il est compatible avec l'action de $G$.

Soit $G$ un $k$-groupe alg\'ebrique connexe et $X$ une $G$-vari\'et\'e lisse g\'eom\'e\-triquement int\`egre.
Notons $\Br_G(X)$ \emph{le sous-groupe de Brauer $G$-invariant} (cf: D\'efinition \ref{def-invariant}).
Dans le cas o\`u $X\cong G/G_0$ avec $G$ lin\'eaire et $G_0\sbt G$ un sous-groupe ferm\'e connexe,
Borovoi et Demarche ont  d\'efini $\Br_1(X,G):=\Ker(\Br(X)\to \Br(G_{\bk}))$ (\cite[\S 1.2]{BD})
En fait, on a (Proposition \ref{corbraueralgebraic1}): $\Br_G(X)\cong \Br_1(X,G)$.

Soit $T$ un tore. 
\emph{Une vari\'et\'e torique} $(T\hookrightarrow X)$ est une $T$-vari\'et\'e lisse int\`egre $X$ 
munie d'une immersion ouverte fix\'ee $T\hookrightarrow X$ de $T$-vari\'et\'es.
Pour une $k$-alg\`ebre finie s\'eparable $K$, on a une vari\'et\'e torique canonique:
$(\Res_{K/k}\BG_m\hookrightarrow \Res_{K/k}\BA^1).$
 
Soit $k$ un corps de nombres. Soit $X$ une $k$-vari\'et\'e. On note $X(\RA_k)$ l'ensemble des points ad\'eliques de $X$
et on note $X(\RA_k)_{\bullet}$  comme en (\ref{Xbullet}). 
Soit $G$ un $k$-groupe alg\'ebrique. On note $G(k_{\infty})^+$ la composante connexe de l'identit\'e de $G(k_{\infty}):=\prod_{v\in \infty_k}G(k_v)$.

\section{Pr\'eliminaires sur les $G$-vari\'et\'es et les torseurs}\label{2}

Dans toute cette section,  $k$ est un corps quelconque de caract\'eristique $0$. 
Sauf  mention explicite du contraire,  une vari\'et\'e est une $k$-vari\'et\'e.
Dans \cite{CX1}, le r\'esultat de structure g\'eom\'etrique est \cite[Prop. 3.12]{CX1}. 
On en donne une g\'en\'eralisation (Proposition \ref{propopensubset} et Proposition \ref{propextension}). 
Dans  \cite[Lem. 1.6.2]{CTS87},
Colliot-Th\'el\`ene et Sansuc  ont \'etudi\'e la structure des torseurs $Y \to X$ sous un tore   et obtenu la suite exacte (\ref{tor-e1}).
On pr\'ecise ici les morphismes de la suite exacte (\ref{tor-e1}) (Proposition \ref{proptor}).
Colliot-Th\'el\`ene (\cite[Thm. 5.6]{CT07})
a \'etudi\'e la structure des   torseurs $Y \to X$ sous un groupe de type mutiplicatif lorque la base $X$
est un groupe lin\'eraire connexe. 
On  consid\`ere ici  le cas plus g\'en\'eral  des torseurs sur une vari\'et\'e  $X$ munie d'une action d'un groupe $G$.  
 On \'etablit 
 un  th\'eor\`eme g\'en\'eral 
(Th\'eor\`eme  \ref{thmaction}) sur la structure de ces torseurs.

\subsection{Pr\'eliminaires sur les $G$-vari\'et\'es}

On rappelle un r\'esultat pour les vari\'et\'es toriques lisses (\cite[Prop. 2.10]{CX}).

\begin{lem}\label{lemtorique}
Soient $T$ un tore et $(T\hookrightarrow Z)$ une vari\'et\'e torique lisse. Soit $Z_1:=Z\setminus [(Z\setminus T)_{sing}]$. 
Alors $(T\hookrightarrow Z_1)$ est une vari\'et\'e torique, $\codim(Z\setminus Z_1,Z)\geq 2$ 
et chaque $T_{\bk}$-orbite de  $(Z_1\setminus T)_{\bk}$ est de codimension $1$ et son stabilisateur g\'eom\'etrique est isomorphe \`a $\BG_{m,\bk}$.
\end{lem}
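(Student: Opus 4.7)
Mon plan est de me ramener d'abord au cas $k = \bk$, ce qui est loisible puisque les trois assertions sont g\'eom\'etriques, puis d'invoquer la correspondance classique entre vari\'et\'es toriques lisses sur $\bk$ et \'eventails lisses dans le r\'eseau des cocaract\`eres $N := \Hom(\BG_{m,\bk}, T_{\bk})$. J'\'ecrirai $Z_{\bk}$ comme la vari\'et\'e torique attach\'ee \`a un \'eventail lisse $\Sigma$ dans $N_{\BR} := N \otimes_{\BZ} \BR$, et j'utiliserai la correspondance orbites--c\^ones : les $T_{\bk}$-orbites sont en bijection avec les c\^ones $\sigma \in \Sigma$, l'orbite $O_{\sigma}$ est de codimension $\dim \sigma$ dans $Z_{\bk}$, et son stabilisateur est le sous-tore de $T_{\bk}$ correspondant \`a la saturation $N \cap \BR\sigma \subset N$. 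En particulier $O_{\{0\}} = T_{\bk}$.

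Le c\oe ur de la preuve consistera \`a identifier le lieu singulier de $Z_{\bk} \setminus T_{\bk}$. Pour chaque rayon $\rho \in \Sigma(1)$, l'adh\'erence $\overline{O_{\rho}}$ est un diviseur premier $T_{\bk}$-stable, qui est lisse puisque $\Sigma$ l'est ; de plus on aura $Z_{\bk} \setminus T_{\bk} = \bigcup_{\rho \in \Sigma(1)} \overline{O_{\rho}}$, et pour tout c\^one $\sigma \in \Sigma$ de dimension $\geq 2$ on a $\overline{O_{\sigma}} = \bigcap_{\rho \subset \sigma,\ \dim \rho = 1} \overline{O_{\rho}}$. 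Ainsi le lieu singulier du diviseur r\'eduit $Z_{\bk} \setminus T_{\bk}$ co\"incide avec $\bigsqcup_{\sigma \in \Sigma,\ \dim \sigma \geq 2} O_{\sigma}$, qui est de codimension $\geq 2$ dans $Z_{\bk}$ ; ceci \'etablira l'assertion (2).

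Pour conclure, j'observerai que l'ouvert $Z_1 = Z \setminus (Z \setminus T)_{sing}$ est $T$-stable (car $T$ et $(Z \setminus T)_{sing}$ le sont) et contient $T$, donc $(T \hookrightarrow Z_1)$ est encore une vari\'et\'e torique, d'o\`u (1) ; et l'on aura $(Z_1)_{\bk} \setminus T_{\bk} = \bigsqcup_{\rho \in \Sigma(1)} O_{\rho}$, r\'eunion disjointe d'orbites de codimension $1$, dont le stabilisateur est, par lissit\'e de $\Sigma$, engendr\'e par un \'el\'ement primitif d'une $\BZ$-base de $N$ et donc est un sous-tore de dimension $1$ de $T_{\bk}$ isomorphe \`a $\BG_{m,\bk}$, ce qui donnera (3). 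Le seul point technique sera d'invoquer correctement la lissit\'e de $Z$ pour en d\'eduire la r\'egularit\'e de l'\'eventail $\Sigma$ ; tout le reste se lit directement sur la correspondance orbites--c\^ones.
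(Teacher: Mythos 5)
Votre d\'emonstration est correcte et repose au fond sur le m\^eme ingr\'edient que celle de l'article, \`a savoir la classification des vari\'et\'es toriques lisses sur $\bk$ : vous identifiez $((Z\setminus T)_{sing})_{\bk}$ avec $\bigsqcup_{\dim\sigma\geq 2}O_{\sigma}$ via la correspondance orbites--c\^ones, tandis que l'article lit exactement la m\^eme chose sur le recouvrement de $Z_{\bk}$ par des cartes $\BG_m^i\times\BA^{n-i}$ (cit\'e de \cite[Lem. 2.1]{CX}), ces cartes n'\'etant autres que les ouverts affines $U_{\sigma}$ des c\^ones lisses. Les deux arguments sont donc essentiellement identiques, \`a la pr\'esentation pr\`es.
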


\begin{proof}
Puisque $(Z\setminus T)_{sing}$ est $T$-invariant et $\dim((Z\setminus T)_{sing})< \dim(Z\setminus T) $, 
la vari\'et\'e $(T\hookrightarrow Z_1)$ est une vari\'et\'e torique et $\codim(Z\setminus Z_1,Z)\geq 2$.

Supposons que $k=\bk$. Dans ce cas,  toutes les vari\'et\'es toriques lisses de dimension $d$ ont un recouvrement par des vari\'et\'es toriques $(\BG_m^n\hookrightarrow \BG_m^i\times \BA^{n-i})_i$ (cf. \cite[Lem. 2.1]{CX}), 
et donc $Z_1$ admet un recouvrement par des vari\'et\'es toriques $\BG_m^{\dim(Z)-1}\times \BA^1$ et $\BG_m^{\dim(Z)}$. 
Le r\'esultat en d\'ecoule.
\end{proof}

\begin{prop}\label{propopensubset}
Soient $T\sbt Z_1 \sbt Z$ comme dans le lemme \ref{lemtorique}.
Soit $G$ un groupe lin\'eaire connexe muni d'un homomorphisme surjectif $G\xrightarrow{\varphi} T$ de noyau connexe.
Soit $X$ une $G$-vari\'et\'e lisse g\'eom\'e\-tri\-que\-ment int\`egre munie d'un $G$-morphisme dominant $X\xrightarrow{f}Z$. 
Soit $U$ un $G$-ouvert de $X$ tel que $f(U)\sbt T$. On a:

(1) le morphisme $f^{-1}(Z_1)\xrightarrow{f|_{Z_1}}Z_1$ est plat;

(2) si $f$ induit un isomorphisme 
$\Div_{Z_{\bk}\setminus T_{\bk}}(Z_{\bk})\xrightarrow{f^*_{\Div}}\Div_{X_{\bk}\setminus U_{\bk}}(X_{\bk})$,
alors il existe un $G$-ouvert $X_1$ de $X$ tel que $f(X_1)\sbt Z_1$, $X_1\cap f^{-1}(T)=U$, $\codim(X\setminus X_1,X)\geq 2$ et $X_1\xrightarrow{f|_{X_1}}Z_1$ soit lisse, surjectif \`a fibres g\'eom\'etriquement int\`egres.
\end{prop}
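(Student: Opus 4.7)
The plan is to prove (1) by Matsumura's miracle flatness criterion and to establish (2) by excising from $X$ certain $G$-invariant closed subsets of codimension $\geq 2$, controlled by the divisor hypothesis and by the smooth locus of $f$.

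For (1), $X$ is smooth (hence Cohen--Macaulay) and $Z_1$ is smooth (hence regular), so miracle flatness reduces the statement to showing that every fiber of $f|_{f^{-1}(Z_1)}$ has dimension $\dim X - \dim Z$. By $G$-equivariance and surjectivity of $\varphi:G\to T$, fibers over points of the same $T_{\bk}$-orbit are $\bk$-isomorphic, so one reduces to one fiber in each of the finitely many $T_{\bk}$-orbits of $Z_{1,\bk}$. Over the open orbit $T$, dominance of $f$ gives the expected dimension; over a codimension-$1$ orbit with closure $D_i$, a strictly larger fiber dimension would force $\dim f^{-1}(D_i)\geq \dim X$, hence $f^{-1}(D_i)=X$ by irreducibility, contradicting dominance.

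For (2), the divisor isomorphism has nonnegative integer matrix (pullback of effective divisors is effective), and an invertible nonnegative integer square matrix is a permutation matrix, so after relabeling $f^*D_i = D'_{\sigma(i)}$ with multiplicity $1$, and set-theoretically $f^{-1}(D_i) = D'_{\sigma(i)}\cup E_i$ with $\codim(E_i,X)\geq 2$. Three codim-$\geq 2$ consequences follow: (a) no codim-$1$ component of $X\setminus U$ lies in $f^{-1}(T)$, so $f^{-1}(T)\setminus U$ has codim $\geq 2$; (b) each $D_i$ meets $Z_1$ in its open orbit so $D'_{\sigma(i)}\not\subset f^{-1}(Z\setminus Z_1)$, forcing $f^{-1}(Z\setminus Z_1)\subset X\setminus U$ to have codim $\geq 2$; (c) $\bigcup_i E_i$ is $G$-invariant of codim $\geq 2$. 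Using that $\varphi$ is a smooth principal bundle under the connected group $H:=\Ker\varphi$ and the $G$-equivariance of $f|_U$, one obtains over $\bk$ an isomorphism $U_{\bk}\cong G_{\bk}\times^{H_{\bk}} F$ with $F:=f^{-1}(\bar{e}_T)\cap U$; smoothness of $U$ descends through this principal $H$-bundle to smoothness of $F$, whence $f|_U$ is smooth. The smooth locus $X_{sm}$ of $f$ is thus a $G$-invariant open containing $U$, and contains each generic point of $D'_{\sigma(i)}$ (the multiplicity-one equality $f^*D_i=D'_{\sigma(i)}$ makes $f^*$ send a uniformizer to a uniformizer, so $f$ is smooth there in characteristic $0$); hence $\codim(X\setminus X_{sm},X)\geq 2$. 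Setting
\[
X_1 := X_{sm}\setminus\bigl(f^{-1}(Z\setminus Z_1)\cup (f^{-1}(T)\setminus U)\cup \textstyle\bigcup_i E_i\bigr),
\]
we obtain a $G$-open with codim-$\geq 2$ complement, $f(X_1)\subset Z_1$, $X_1\cap f^{-1}(T)=U$, and $f|_{X_1}$ smooth. Surjectivity onto $Z_1$ follows from $T\subset f(X_1)$ and the fact that, for each $i$, the image of the dense open $X_1\cap D'_{\sigma(i)}$ in $D_i$ is $T$-invariant and dominant, hence contains the open orbit $D_i\cap Z_1$.

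The delicate point is the geometric integrality of the fibers. Since $f|_{X_1}$ is smooth, this reduces to geometric connectedness. Over $T$, the geometric fiber is $F$; its connectedness follows from the principal $H$-bundle $G_{\bk}\times F\to U_{\bk}$ together with the connectedness of $G_{\bk}$, $H_{\bk}$, and $U_{\bk}$, since a disconnected $F$ would make $U_{\bk}$ disconnected. Over a codimension-$1$ orbit $D_i\cap Z_1$, the stabilizer $T_{z_0}$ of a geometric point is $\BG_{m,\bk}$ by Lemma~\ref{lemtorique}, so $G_{z_0}:=\varphi^{-1}(T_{z_0})$ is an extension of two connected groups, hence connected. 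Having excised $\bigcup_i E_i$ from $X_1$, the set $X_1\cap f^{-1}(D_i\cap Z_1)_{\bk}$ is open with codim-$\geq 2$ complement in the integral prime divisor $D'_{\sigma(i),\bk}$, hence connected; the analogous principal $G_{z_0}$-bundle argument yields connectedness of each such fiber.
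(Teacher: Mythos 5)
Your proof of (1) is correct and is a legitimate variant of the paper's argument: you get equidimensionality of the fibres orbit by orbit from the $G$-action and conclude by miracle flatness, whereas the paper proves flatness over an open subset of $Z_1$ with complement of codimension $\geq 2$ and then spreads it over the codimension-one orbits by the same homogeneity; both routes work.

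The gap is at the very first step of (2). The claim \og an invertible nonnegative integer square matrix is a permutation matrix \fg\ is false: $\left(\begin{smallmatrix}1&1\\0&1\end{smallmatrix}\right)$ has nonnegative entries and determinant $1$ (permutation would follow only if the \emph{inverse} were also nonnegative, which nothing here guarantees). Everything downstream rests on the resulting equality $f^*D_i=D'_{\sigma(i)}$ with multiplicity one: your claims (a) and (b) (in particular $\codim(f^{-1}(Z\setminus Z_1),X)\geq 2$), the uniformizer argument for smoothness of $f$ at the generic points of the boundary divisors, the surjectivity of $X_1\to Z_1$, and the connectedness of the fibres over the closed orbits all use that each $D_i$ pulls back to a single reduced prime divisor which moreover dominates $D_i$. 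This cannot be extracted from the hypothesis as you (and the statement) formulate it: take $T=G=\BG_m^2$, $Z=\BA^2$ standard, $X=\BA^2$ with the action $(\lambda,\mu)\cdot(s,t)=(\lambda s,\lambda^{-1}\mu t)$, $f(s,t)=(s,st)$ and $U=\{st\neq 0\}$. Then $f^*D_1=D'_1$ and $f^*D_2=D'_1+D'_2$, so $f^*_{\Div}$ is an isomorphism of divisor groups, yet $f^{-1}(Z\setminus Z_1)=f^{-1}(0,0)=\{s=0\}$ is a divisor and $f$ is nowhere smooth along it; so (b) and your smoothness claim fail, and indeed no open $X_1$ with the stated properties exists for this $f$.

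So the missing input is precisely the finer statement that $f$ induces a multiplicity-one bijection between the boundary prime divisors of $Z$ and those of $X\setminus U$ (from which dominance of $D'_{\sigma(i)}\to D_i$ does follow, since the image is $T$-invariant and $D_i\setminus\bigcup_{j\neq i}D_j=F_i$); with that in hand your architecture for (2), including the $G\times^{H}F$ description of $U$ and the connectedness arguments, is sound and close to the paper's. Be aware that the paper's own proof also deduces the two codimension bounds directly from \og $f^*_{\Div}$ isomorphisme \fg, i.e.\ it silently reads the hypothesis in this stronger form; and in the only place the proposition is applied, the isomorphism comes from Proposition \ref{propextension}, where by construction each coordinate hyperplane of $\BA^l\times T$ pulls back to a boundary prime divisor of $X$ with multiplicity one, so the stronger property does hold there. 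As written, however, your permutation-matrix step is where the proof breaks.
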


\begin{proof}
  Par le lemme \ref{lemtorique}, $Z_1$ est $T$-invariant, $\codim(Z\setminus Z_1,Z)\geq 2$ et $(Z_1\setminus T)_{\bk}=\coprod_iF_i$, o\`u $F_i$ est $T_{\bk}$-orbite de codimension $1$.
 Donc  $f^{-1}(Z_1)$ et $\overline{f^{-1}(T)\setminus U}\sbt X$ sont $G$-invariants.
 
 Pour (1), on peut supposer que $X=f^{-1}(Z_1)$ et $k=\bk$. 
Puisque  $Z_1$ est lisse, $X$ est int\`egre et $f$ est dominant, il existe un ouvert $V\sbt Z_1$ tel que $\codim(Z_1\setminus V,Z_1)\geq 2$ et $f^{-1}(V)\to V$ soit plat. Donc pour chaque $i$, $V\cap F_i\neq\emptyset$. 
En utilisant l'action de $G$, on a que $f|_{Z_1}$ est plat.
 
  Pour (2), puisque $f^*_{\Div}$ est un isomorphisme, on a 
  $$\codim(\overline{f^{-1}(T)\setminus U},X)\geq 2 \ \ \ \text{ et}\ \ \ \codim(f^{-1}(Z\setminus Z_1),X)\geq 2.$$
  On note $X_2:=f^{-1}(Z_1)\setminus \overline{f^{-1}(T)\setminus U}$ et $X_3:=X_2\setminus (X_2\setminus U)_{sing}$.
  Alors $(X_3\setminus U)=D\coprod E$ avec $\codim(E,X_3)\geq 2$ et toutes les composantes connexes de $D$ sont de dimension $\dim(X)-1$.
On note $X_1:=X_3\setminus E$.
Alors $X_1$ est $G$-invariant, $f(X_1)\sbt Z_1$, $ X_1\cap f^{-1}(T)=U$, $\codim(X\setminus X_1,X)\geq 2$ et
chaque composante connexe de  $(X_1\setminus U)_{\bk}$ est lisse, int\`egre de dimension $\dim(X)-1$.

Puisque $f^*_{\Div}$ est un isomorphisme, $(X_1\setminus U)_{\bk}\cong \coprod_iD_i$ avec $D_i=f^{-1}(F_i)\cap X_1$. 
Alors chaque $D_i$ est une $G_{\bk}$-vari\'et\'e lisse int\`egre de dimension $\dim(X)-1$. 
Puisque $\Ker(\varphi)$ est connexe, le stabilisateur de $F_i$ comme $G_{\bk}$-vari\'et\'e est connexe. 
Par \cite[Prop. 2.2]{CX1}, les morphismes $U\to T$ et $D_i\to F_i$ sont lisses  surjectifs \`a fibres g\'eom\'etriquement int\`egres. Le r\'esultat en d\'ecoule.
\end{proof}

\begin{prop}\label{propextension}
Soient $T$ et $T_0$ deux tores avec $T_0$ quasi-trivial.
Soient $X$ une vari\'et\'e lisse g\'eom\'etriquement int\`egre et $U\sbt X$ un ouvert muni d'un morphisme $U\xrightarrow{f}T_0\times T$.
Supposons que la composition
$$\Lambda: \ T_0^* \xrightarrow{p_1^*}T_0^*\times T^* \xrightarrow{f^*} \bk[U]^{\times}/\bk^{\times} \xrightarrow{\div_X} \Div_{X_{\bk}\setminus U_{\bk}}(X_{\bk})$$
est un isomorphisme. 

(1) Alors il existe un homomorphisme $T_0\xrightarrow{\phi}T$ et une vari\'et\'e torique
 $(T_0\hookrightarrow \BA^l)$ tels que:
 
 (a) il existe  une $k$-alg\`ebre finie s\'eparable $K$ et un isomorphisme de vari\'et\'es toriques:
 \begin{equation}\label{exten-e1}
 (T_0\hookrightarrow \BA^l)\cong (\Res_{K/k}\BG_m\hookrightarrow \Res_{K/k}\BA^1_K );
 \end{equation}
 
(b) si l'on note
$$
T_0\times T\xrightarrow{\widetilde{\phi}}T_0\times T:\ (t_0,t)\mapsto (t_0,t-\phi(t_0)),
$$
alors le morphisme $\widetilde{\phi} \circ f$ peut \^etre \'etendu \`a un  unique morphisme $X\xrightarrow{f'}\BA^l\times T$;

(c) on a un isomorphisme $\Div_{(\BA^l\times T)_{\bk}\setminus (T_0\times T)_{\bk}}((\BA^l\times T)_{\bk}) \xrightarrow{{f'}^*}\Div_{X_{\bk}\setminus U_{\bk}}(X_{\bk}).$

(2) Soit $G$ un groupe lin\'eaire muni d'un homomorphisme $G\xrightarrow{\varphi} T_0\times T$.
Supposons que $X$ est une $G$-vari\'et\'e, $U\sbt X$ est un $G$-ouvert et $f$ est un $G$-morphisme. 
Alors  le morphisme $f'$ est un $G$-morphisme, o\`u l'action de $G$ sur $\BA^l\times T$ est induite par $\widetilde{\phi}\circ \varphi$.
\end{prop}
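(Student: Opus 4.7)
The plan is to use the isomorphism $\Lambda$ for a double purpose: to pin down the toric embedding $T_0 \hookrightarrow \BA^l$ (by identifying the permutation basis of $T_0^*$ with the prime components of $X_{\bk}\setminus U_{\bk}$) and to define $\phi$ by diagram chase (so that correcting the $T$-coordinate of $f$ kills its divisor on $X$). Once $\phi$ and $\BA^l$ are in place, the extension $f'$ is produced coordinate by coordinate: the $T$-coordinate extends because its pullback of each character is a unit with trivial divisor, and the $\BA^l$-coordinate extends because the pullbacks of the coordinate characters are regular functions with effective prime divisors in $X_{\bk}\setminus U_{\bk}$.

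For (1)(a), let $D_1,\ldots,D_l$ be the prime components of $X_{\bk}\setminus U_{\bk}$ (a $\Gamma_k$-invariant basis of $\Div_{X_{\bk}\setminus U_{\bk}}(X_{\bk})$). Since $\Lambda$ is $\Gamma_k$-equivariant, the basis $\chi_j:=\Lambda^{-1}(D_j)$ of $T_0^*$ is permuted by $\Gamma_k$, so it is a permutation basis, and one obtains $T_0\cong \Res_{K/k}\BG_m$ for the finite étale $k$-algebra $K$ with $\Hom_k(K,\bk)\cong\{\chi_j\}$ as $\Gamma_k$-sets. Take $\BA^l:=\Res_{K/k}\BA^1_K$; its coordinate functions on $\BA^l_{\bk}$ restrict on $T_{0,\bk}$ to the $\chi_j$. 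For (1)(b), set $a:=f^*\circ p_2^*: T^*\to \bk[U]^\times/\bk^\times$ and $b:=f^*\circ p_1^*: T_0^*\to \bk[U]^\times/\bk^\times$, so $\Lambda=\div_X\circ b$; define $\phi^*:=\Lambda^{-1}\circ\div_X\circ a: T^*\to T_0^*$ (a $\Gamma_k$-equivariant homomorphism since each constituent is), and let $\phi:T_0\to T$ be its dual. By construction $\div_X(a(\chi)-b(\phi^*\chi))=0$ for every $\chi\in T^*$. To produce $f'$: for $\chi\in T^*$ the pullback along $p_2\circ \widetilde{\phi}\circ f$ is $a(\chi)-b(\phi^*\chi)$, with trivial divisor on $X_{\bk}$ (on $X\setminus U$ by construction, on $U$ since it is a unit), so it extends to $\bk[X]^\times$; packaging over $\chi$ yields a morphism $X\to T$. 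For the first coordinate, $b(\chi_j)$ has effective prime divisor $\Lambda(\chi_j)=D_j$, so extends to a regular function on $X_{\bk}$ with zero locus $D_j$; the tuple $(b(\chi_j))_j$ defines a $\bk$-morphism $X_{\bk}\to \BA^l_{\bk}$ which is $\Gamma_k$-equivariant and so descends to $X\to \BA^l$. Combining both coordinates gives $f'$, unique by density of $U$ in $X$ and separatedness of $\BA^l\times T$. Part (1)(c) is then immediate: the irreducible components of $(\BA^l\times T)_{\bk}\setminus (T_0\times T)_{\bk}$ are the hyperplanes $\{z_j=0\}\times T$, and by construction $(f')^*\{z_j=0\}=D_j$, so $(f')^*$ is a bijection on bases.

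For (2), the morphism $\widetilde{\phi}\circ f:U\to T_0\times T$ is $G$-equivariant for the action via $\widetilde{\phi}\circ \varphi$ by construction, and the two morphisms $G\times X\rightrightarrows \BA^l\times T$ expressing the $G$-equivariance of $f'$ agree on the dense open $G\times U$, hence on $G\times X$ by separatedness of the target. The main delicate point I anticipate is the Galois descent: the simultaneous requirement that the basis $\{\Lambda^{-1}(D_j)\}$ of $T_0^*$ be a permutation basis, that the resulting toric embedding be canonical, and that $\phi$ and the extension $(b(\chi_j))_j:X_{\bk}\to \BA^l_{\bk}$ descend to $k$. All of this is handled uniformly by the $\Gamma_k$-equivariance of $\Lambda$, $\div_X$, $a$, and $b$, which propagates equivariance through every step of the construction.
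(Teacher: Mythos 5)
Your proposal is correct and follows essentially the same route as the paper: you identify the permutation basis $\{\Lambda^{-1}(D_j)\}$ of $T_0^*$ to get $T_0\cong\Res_{K/k}\BG_m$ and the toric embedding, define $\phi^*=\Lambda^{-1}\circ\div_X\circ f^*\circ p_2^*$ so that the corrected map has coordinate functions with no poles in codimension one, and extend by normality of $X$; parts (c) and (2) are handled exactly as in the paper (bijection on bases of divisor groups, and agreement of two morphisms on the dense open $G\times U$). The only cosmetic difference is that you extend $f'$ coordinate by coordinate where the paper states $(\widetilde{\phi}\circ f)^*(\bk[\BA^l\times T])\subset\bk[X]$ in one stroke.
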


\begin{proof}
Soient $\{D_i\}_{i=1}^l$ les composantes irr\'eductibles  de $(X\setminus U)_{\bk}$ dont la dimension est $\dim(X)-1$. 
Alors $\Div_{X_{\bk}\setminus U_{\bk}}(X_{\bk})\cong \oplus_i\BZ\cdot [D_i]$. 
Puisque $\Lambda$ est un isomorphisme, il existe une base $\{x_i\}$ de $ T_0^*$ telle que $\Lambda(x_i)=D_i$. 
Puisque $\bk[T_0]^{\times}=\bk^{\times}\oplus T_0^*$, on peut supposer que $x_i\in \bk[T_0]^{\times}$ et $\{x_i\}_{i=1}^l$ est globalement  $\Gal(\bk/k)$-invariant. 
Soit $K$ la  $k$-alg\`ebre finie s\'eparable qui correspond au $\Gal(\bk/k)$-ensemble $\{x_i\}_{i=1}^l$.
Alors $T_0\cong \Res_{K/k}\BG_{m,K}$,  et l'immersion ouverte
$$T_{0,\bk}\cong \Spec\ \bk[x_1,x_1^{-1},\cdots ,x_l,x_l^{-1}]\hookrightarrow \Spec\ \bk[x_1,\cdots ,x_l]\cong \BA^l_{\bk}\cong (\Res_{K/k}\BA^1_{K})_{\bk}$$
est exactement l'immersion ouverte $(\Res_{K/k}\BG_{m,K}\hookrightarrow \Res_{K/k}\BA^1_{K})_{\bk}$.
Puisque $\BA^l\cong \Res_{K/k}\BA^1_{K}$, on obtient canoniquement une vari\'et\'e torique $T_0\hookrightarrow \BA^l$.

 Notons 
 $T_{\bk}\cong \Spec\ \bk[t_1,t_1^{-1},\cdots ,t_n,t_n^{-1}]$ et donc 
 $$  (\BA^l\times T)_{\bk}\cong \Spec\ \bk[x_1,\cdots ,x_l, t_1,t_1^{-1},\cdots ,t_n,t_n^{-1}].$$
 Soit $T_0\xrightarrow{\phi}T$ l'homomorphisme correspondant \`a
  la composition
$$\xymatrix{\phi^*:&T^* \ar[r]^-{p_2^*}&T_0^*\times T^*\ar[r]^-{f^*} &\bk[U]^{\times}/\bk^{\times}\ar[r]^-{\div_X}& \Div_{X_{\bk}\setminus U_{\bk}}(X_{\bk})\ar[r]^-{\Lambda^{-1}}_-{\cong}&T_0^*.
}$$
 Puisque l'homomorphisme $T_0^*\times T^*\xrightarrow{\widetilde{\phi}^*}T_0^*\times T^*$  est
  d\'efini par $\ (t_0,t)\mapsto (t_0-\phi^*(t),t)$,
 on a 
 $$(\div_X\circ f^*\circ \widetilde{\phi}^*)(x_i)=(\div_X\circ f^*)(x_i)=\Lambda (x_i)=D_i$$
 et 
 $$(\div_X\circ f^*\circ \widetilde{\phi}^*)(t_i)=((\div_X\circ f^*)-(\div_X\circ f^*\circ p_1^*\circ \phi^*))(t_i)=((\div_X\circ f^*)-(\Lambda \circ \phi^*))(t_i)=0.$$
 Puisque $X$ est lisse, et donc normale, 
 on a  
 $$( \widetilde{\phi}\circ f)^*(\bk[\BA^l\times T])\sbt \bk[X]\ \ \ \text{ et donc} \ \ \ ( \widetilde{\phi}\circ f)^*(k[\BA^l\times T])\sbt k[X].$$
 Alors $ \widetilde{\phi}\circ f$ s'\'etend en un morphisme $X\xrightarrow{f'}\BA^l\times T$.
 
 Pour (c), puisque dans le diagramme commutatif
 $$\xymatrix{T_0^*\ar[d]^=\ar[r]^-{p_1^*}&T_0^*\times T^*\ar[r]^-{=}\ar[d]&\bk[T_0\times T]^{\times}/\bk^{\times} \ar[d]^-{(\widetilde{\phi}\circ f)^*}\ar[r]^-{\div}& \Div_{(\BA^l\times T)_{\bk}\setminus (T_0\times T)_{\bk}}((\BA^l\times T)_{\bk})\ar[d]^{(f')^*}\\
 T_0^*\ar[r]^-{p_1^*}&T_0^*\times T^*\ar[r]^-{(\widetilde{\phi}\circ f)^*} &\bk[U]^{\times}/\bk^{\times}\ar[r]^{\div_X} &\Div_{X_{\bk}\setminus U_{\bk}}(X_{\bk}),
 }$$
 les compositions horizontales sont  des isomorphismes, $(f')^*$ est un isomorphisme.
 
 Pour (2), puisque le diagramme
 $$\xymatrix{G\times U\ar@{^{(}->}[r]&G\times X\ar[d]^{\rho_X}\ar[r]^-{(\widetilde{\phi}\circ f,f')}&(T_0\times T)\times (\BA^l\times T)\ar[d]\\
 &X\ar[r]^{f'}&\BA^l\times T
 }$$
 est commutatif en $G\times U$, ce diagramme est commutatif et $f'$ est un $G$-morphisme.
\end{proof}

\medskip

\subsection{Trivialisation d'un torseur}
 Les torseurs sous un tore ou sous un groupe de type multiplicatif sont  \'etudi\'es par  Colliot-Th\'el\`ene et Sansuc \cite{CTS87}.
Soit $T$ un tore. Soient $X$ une vari\'et\'e lisse g\'eom\'etriquement int\`egre et $U\sbt X$ un ouvert.
 Par \cite[Lem. 1.6.2]{CTS87}, on a une suite exacte canonique:
 \begin{equation}\label{tor-e1}
 \xymatrix{H^0(U,T)\ar[r]^-{\Phi}&\Hom_k(T^*,\Div_{X_{\bk}\setminus U_{\bk}}(X_{\bk}))\ar[r]^-{\Psi}&H^1(X,T)\ar[r]&H^1(U,T),
 }\end{equation}
 et pour chaque $\chi\in H^0(U,T)=\Mor(U,T)$, on a 
 $\Phi(\chi): T^*\xrightarrow{\chi^*}\bk[U]^{\times}/\bk^{\times}\xrightarrow{\div_X}\Div_{X_{\bk}\setminus U_{\bk}}(X_{\bk})$.
 
 Pour un $T$-torseur $Y\xrightarrow{p} X$ tel que $Y|_U$ soit trivial, on note $V:=Y\times_X U$ et $\Triv_U(V,T)$ l'ensemble des trivialisations $V\xrightarrow{\tau} T\times U$. On a une application canonique
 \begin{equation}\label{tor-e2}
 \Triv_U(V,T)\xrightarrow{\Upsilon}\Hom_k(T^*,\Div_{X_{\bk}\setminus U_{\bk}}(X_{\bk}))
 \end{equation}
 d\'efinie par: pour chaque $\tau\in \Triv_U(V,T)$, le morphisme $\Upsilon( \tau)$ est la composition:
 $$ T^*\xrightarrow{p_1^*} \bk[T\times U]^{\times}/\bk^{\times}\xrightarrow{\tau^*}\bk[V]^{\times}/\bk^{\times} \xrightarrow{\div_Y} \Div_{Y_{\bk}\setminus V_{\bk}}(Y_{\bk})\xrightarrow{(p^*)^{-1}}\Div_{X_{\bk}\setminus U_{\bk}}(X_{\bk}),$$
o\`u $\Div_{X_{\bk}\setminus U_{\bk}}(X_{\bk})\xrightarrow{p^*}\Div_{Y_{\bk}\setminus V_{\bk}}(Y_{\bk})$ est un isomorphisme par \cite[Lem. B.1]{CT07}. 
On a aussi une application canonique
 \begin{equation}\label{tor-e3}
 H^0(U,T)\times \Triv_U(V,T)\xrightarrow{\Theta}\Triv_U(V,T):\ \ \ (\chi,\tau)\mapsto \widehat{\chi}\circ \tau
\end{equation} 
o\`u $T\times U\xrightarrow{\widehat{\chi}}T\times U: (t,u)\mapsto (t+\chi(u),u)$.

 \begin{lem}\label{lemtor}
 L'application $\Theta$ induit une action transitive de $H^0(U,T)$ sur $\Triv_U(V,T)$, et pour chaque $\tau\in \Triv_U(V,T)$, $\chi\in H^0(U,T)$, on a $\Upsilon (\Theta(\chi,\tau))=\Phi(\chi)+\Upsilon (\tau)$.
 \end{lem}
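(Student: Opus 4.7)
La preuve consiste en trois v\'erifications: que $\Theta$ est bien une action, que cette action est transitive, et qu'elle satisfait la formule de cocycle annonc\'ee.

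Tout d'abord, on observe que $\widehat{\chi_1+\chi_2}=\widehat{\chi_1}\circ \widehat{\chi_2}$ en tant qu'automorphismes de $T\times U$, cons\'equence imm\'ediate de la loi de groupe sur $T$. Ceci entra\^ine que $\Theta$ est une action. Pour la transitivit\'e, si $\tau_1,\tau_2\in \Triv_U(V,T)$, alors $\tau_2\circ \tau_1^{-1}$ est un automorphisme du $T$-torseur trivial $T\times U$ au-dessus de $U$ qui pr\'eserve la structure de $T$-torseur; ces automorphismes sont pr\'ecis\'ement les translations par des $U$-points de $T$, c'est-\`a-dire par des \'el\'ements de $\Mor(U,T)=H^0(U,T)$. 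Il existe donc un unique $\chi\in H^0(U,T)$ avec $\tau_2=\widehat{\chi}\circ \tau_1=\Theta(\chi,\tau_1)$.

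Pour la formule $\Upsilon(\Theta(\chi,\tau))=\Phi(\chi)+\Upsilon(\tau)$, on fixe $t^*\in T^*$ et on calcule. Vu que $\widehat{\chi}(t,u)=(t+\chi(u),u)$ et que les caract\`eres de $T$ sont des homomorphismes, on a dans $\bk[T\times U]^{\times}/\bk^{\times}$ (\'ecrit additivement):
\[
\widehat{\chi}^*(p_1^*t^*)\;=\;p_1^*t^*+p_2^*\chi^*(t^*).
\]
En appliquant $\tau^*$ et en utilisant que $p_2\circ \tau=p|_V$ (car $\tau$ est une trivialisation au-dessus de $U$), il vient
\[
\tau^*\widehat{\chi}^*(p_1^*t^*)\;=\;\tau^*p_1^*t^*+(p|_V)^*\chi^*(t^*)\quad \text{dans}\ \bk[V]^{\times}/\bk^{\times}.
\]

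Il reste \`a appliquer $\div_Y$ puis $(p^*)^{-1}$. La partie cruciale est la compatibilit\'e $\div_Y\circ (p|_V)^*=p^*\circ \div_X$ sur $\bk[U]^{\times}/\bk^{\times}$, qui r\'esulte de ce que les diviseurs se tirent en arri\`ere le long de morphismes plats (ici $p$ est plat comme torseur), et de l'isomorphisme $p^*\colon \Div_{X_{\bk}\setminus U_{\bk}}(X_{\bk})\iso \Div_{Y_{\bk}\setminus V_{\bk}}(Y_{\bk})$ de \cite[Lem. B.1]{CT07}. Alors
\[
(p^*)^{-1}\div_Y(p|_V)^*\chi^*(t^*)=\div_X\chi^*(t^*)=\Phi(\chi)(t^*),
\]
tandis que le premier terme donne $\Upsilon(\tau)(t^*)$. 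D'o\`u l'\'egalit\'e souhait\'ee. Le seul point d\'elicat est cette derni\`ere compatibilit\'e, mais elle est imm\'ediate car $p$ est un torseur sous un $k$-groupe lisse et donc plat \`a fibres g\'eom\'etriquement int\`egres.
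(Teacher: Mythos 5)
Votre preuve est correcte et suit essentiellement la même démarche que celle du texte : même vérification de la loi d'action via $\widehat{\chi_1+\chi_2}$, même argument de transitivité par l'automorphisme $\tau_2\circ\tau_1^{-1}$ du torseur trivial, et même point clé, à savoir la compatibilité $\div_Y\circ (p|_V)^*=p^*\circ \div_X$ issue de \cite[Lem. B.1]{CT07}. La seule différence est cosmétique : vous calculez sur les tirés en arrière de fonctions $\widehat{\chi}^*(p_1^*t^*)$ là où le texte manipule les compositions de morphismes vers $T$, ce qui revient au même.
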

 
 \begin{proof}
 Pour $\chi_1,\chi_2\in H^0(U,T)$, on a $\widehat{\chi_1+\chi_2}=\widehat{\chi_2}\circ \widehat{\chi_1}$, et donc $\Theta$ est une action.
 
 Notons $T\times U\xrightarrow{p_1}T$. Pour deux trivialisations $\tau_1,\tau_2\in \Triv_U(V,T)$, le morphisme $\tau_2\circ\tau_1^{-1}$ est un $T$-morphisme sur $U$,
  i.e. pour chaque $t\in T$ et $u\in U$, on a
  $$(\tau_2\circ\tau_1^{-1})(t,u)=((p_1\circ \tau_2\circ\tau_1^{-1})(t,u),u)=(t+(p_1\circ \tau_2\circ\tau_1^{-1})(e_T,u),u).$$
  Notons $\chi:=(p_1\circ \tau_2\circ\tau_1^{-1})(e_T,-)\in H^0(U,T)$, alors $\tau_2=\widehat{\chi}\circ \tau_1$ et donc $\Theta$ est transitive.
  
Notons $T\times U\xrightarrow{p_2}U$.  Pour tout $\tau\in \Triv_U(V,T)$ et tout $\chi\in H^0(U,T)$, on a 
  $$p_1\circ \Theta(\chi,\tau)=p_1\circ \widehat{\chi}\circ \tau=(p_1+\chi\circ p_2)\circ \tau=p_1\circ \tau +\chi\circ p_2\circ \tau= p_1\circ\tau + \chi\circ p|_U $$
  dans $\Mor(V,T)$.
 Puisque les deux morphismes compos\'es
$$\bk[U]^{\times}/\bk^{\times}\xrightarrow{p|_U^*} \bk[V]^{\times}/\bk^{\times}\xrightarrow{\div_Y}\Div_{Y_{\bk}\setminus V_{\bk}}(Y_{\bk}) \ \ \ \text{et}\ \ \  
\bk[U]^{\times}/\bk^{\times}\xrightarrow{\div_X}\Div_{X_{\bk}\setminus U_{\bk}}(X_{\bk})\xrightarrow{p^*}\Div_{Y_{\bk}\setminus V_{\bk}}(Y_{\bk})$$
co\"incident, on a $\div_Y\circ (\chi\circ p|_U)^*=p^*\circ \div_X\circ \chi^*$ et le r\'esultat en d\'ecoule.
 \end{proof}

 \begin{prop}\label{proptor}
 Avec les notations des (\ref{tor-e1}) et (\ref{tor-e2}), pour un $\phi\in \Hom_k(T^*,\Div_{X_{\bk}\setminus U_{\bk}}(X_{\bk}))$, 
 soit $Y\to X$ un torseur correspondant \`a $\Psi (\phi)$ et  soit $V:=Y\times_XU$. 
 Alors apr\`es avoir bien choisi le signe de $\Psi$, il existe une trivialisation $\tau\in \Triv_U(V,T)$ telle que $\Upsilon (\tau)=\phi$ et que, pour chaque $\tau'\in \Triv_U(V,T)$, on ait $\Psi (\Upsilon (\tau'))=[Y]$.
 \end{prop}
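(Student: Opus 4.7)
The strategy is to first produce a single trivialization $\tau_{0} \in \Triv_{U}(V,T)$ with $\Upsilon(\tau_{0}) = \phi$, and then deduce the second assertion from Lemma \ref{lemtor} together with the exactness of (\ref{tor-e1}).

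Assuming such a $\tau_{0}$ exists, any $\tau' \in \Triv_{U}(V,T)$ is of the form $\tau' = \Theta(\chi, \tau_{0})$ for some $\chi \in H^{0}(U,T)$ by the transitivity of $\Theta$ (Lemma \ref{lemtor}). The equivariance formula in the same lemma gives $\Upsilon(\tau') = \Phi(\chi) + \Upsilon(\tau_{0}) = \Phi(\chi) + \phi$, and applying $\Psi$, together with the relation $\Psi \circ \Phi = 0$ coming from the exactness of (\ref{tor-e1}), yields $\Psi(\Upsilon(\tau')) = \Psi(\phi) = [Y]$. Thus it remains only to construct $\tau_{0}$.

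For the construction of $\tau_{0}$, I would unwind the definition of $\Psi$ from \cite[Lem. 1.6.2]{CTS87}. Under the identification $H^{1}(X,T) \cong \Ext^{1}_{X}(T^{*}, \BG_{m})$, the sequence (\ref{tor-e1}) arises from applying $\Hom(T^{*}, -)$ to the short exact sequence of \'etale sheaves $0 \to \BG_{m,X} \to j_{*}\BG_{m,U} \to \bigoplus_{D} \BZ_{D} \to 0$, where $j \colon U \hookrightarrow X$ is the open immersion and $D$ runs over the Galois-stable codimension-one irreducible components of $(X \setminus U)_{\bk}$. By functoriality in $T^{*}$, the construction reduces to the case $T = \BG_{m}$ with $\phi$ corresponding to a single irreducible divisor $D$ on $X_{\bk}$. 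In that case, $\Psi(\phi)$ is, up to the sign fixed once and for all in the definition, the class in $\Pic(X) = H^{1}(X, \BG_{m})$ of the line bundle $\CO_{X}(D)$, and the associated torsor $Y$ is the complement of the zero section in the total space of $\CO_{X}(D)$. The canonical rational section $s_{D}$ of $\CO_{X}(D)$, with $\div(s_{D}) = D$, is nowhere vanishing on $U$ and thus yields a canonical trivialization $\tau_{0}$ of $V = Y \times_{X} U$. A direct divisor computation then gives $\Upsilon(\tau_{0}) = \phi$, with the sign convention chosen to match that of $\Psi$. Assembling this construction over a $\BZ$-basis of $T^{*}$ and descending via Galois equivariance yields $\tau_{0}$ in general.

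The main obstacle is the bookkeeping: reconciling the sign conventions (accommodated in the statement by the clause on the sign of $\Psi$), checking the $T$-equivariance of the trivialization built basis-wise, and performing the Galois descent from $X_{\bk}$ to $X$. All three are automatic once one works with the functorial description via $\Ext^{1}_{X}(T^{*}, \BG_{m})$, but making this completely explicit requires a careful diagram chase comparing the divisor map $\Upsilon$ defined on $Y$ with the Yoneda coboundary defining $\Psi$.
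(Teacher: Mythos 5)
Your opening reduction is correct and is exactly the paper's: by Lemme \ref{lemtor} and the exactness of (\ref{tor-e1}), the quantity $\Psi(\Upsilon(\tau))$ is independent of $\tau$, and producing one $\tau_0$ with $\Upsilon(\tau_0)=\phi$ is equivalent to proving $\Psi(\Upsilon(\tau))=[Y]$ for one (hence every) trivialization. The gap is in how you produce $\tau_0$. First, you assert rather than prove the key compatibility: that the torsor in the class $\Psi(\phi)$ \emph{is} the complement of the zero section of $\CO_X(D)$ and that the canonical section computes $\Upsilon$ correctly. That assertion is precisely the content of the proposition in the case $T=\BG_m$, and even there it requires the comparison of the divisor map with the type map via \cite[Lem.~B.1]{CT07}, which is what \'Etape (1) of the paper's proof does. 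Second, and more seriously, the passage from $\BG_m$ and a single irreducible divisor to the general case does not go through as you describe: for a torus $T$ whose character lattice carries a nontrivial Galois action, $T^*$ has no Galois-stable $\BZ$-basis, so the basis-wise construction only produces a trivialization over $\bk$, and the rational sections $s_{D_i}$ (each well defined only up to $\bk^\times$, with the $D_i$ permuted by $\Gamma_k$) give no canonical descent datum. Checking $\Upsilon(\tau_0)=\phi$ after base change to $\bk$ is also insufficient to conclude over $k$, because $H^1(X,T)\to H^1(X_{\bk},T)$ is not injective in general, so a $\bk$-computation does not identify the class of the given $k$-torsor $Y$.

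This last point is exactly why the paper's proof cannot stop at \'Etapes (1)--(2): it must pass through \'Etape (3) (quasi-trivial $T$ and projective $X$, where $H^1(X,T)\to H^1(X_{\bk},T)$ \emph{is} injective by \cite[(2.0.2)]{CTS87}) and \'Etape (4) (a smooth compactification $X^c$ together with the quasi-trivial torus $T_0$ with $T_0^*\cong\Div_{X_{\bk}\setminus U_{\bk}}(X_{\bk})$, and the functoriality of $\Psi$ and $\Upsilon$ under $\pi\circ-$ and $-\circ\sigma^*$). Your strategy can likely be repaired by working with the canonical trivialization coming from the sheaf-theoretic description of the torsor as the preimage of $\phi$ in $\mathcal{H}om(T^*,j_*\BG_{m,U})$, which is defined over $k$ from the start; but as written, the step ``assembling over a $\BZ$-basis and descending via Galois equivariance'' hides the actual mathematical difficulty rather than resolving it.
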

 
 \begin{proof}
 D'apr\`es le lemme \ref{lemtor}, l'\'enonc\'e est \'equivalent \`a  l'existence d'une trivialisation $\tau\in \Triv_U(V,T)$ telle que $\Psi (\Upsilon (\tau))=[Y]$.
 
 \'Etape (1). Supposons que $k=\bk$ et $T=\BG_m$. Dans ce cas, $T^*\cong \BZ$.
 La suite exacte (\ref{tor-e1}) est obtenue en appliquant $H^i(X,-)=Ext^i_{X_{\text{\'et}}}(\BZ,-)$ \`a la suite exacte de faisceaux (cf. \cite[Lem. 1.6.2]{CTS87})
 $$0\to \BG_{m,X}\to j_*\BG_{m,U}\to {\textbf{Div}}_{X\setminus U}(X)\to 0$$
 o\`u $U\xrightarrow{j}X$. 
 Donc, apr\`es avoir choisi $1_{T^*}\in T^*$, le morphisme 
 $$\Div_{X\setminus U}(X)\xleftarrow{\sim}\Hom_k(T^*,\Div_{X\setminus U}(X))\xrightarrow {\Psi} H^1(X,T)\iso \Pic(X)$$
 est le morphisme canonique $\Div_{X\setminus U}(X)\rightarrow \Pic(X)$.
 Par \cite[Lem. B.1]{CT07}, pour chaque trivialisation $\tau$, on a un diagramme commutatif:
 $$\xymatrix{T^*\ar[r]^-{\tau^*\circ\ p_1^*}\ar[rd]_=&k[V]^{\times}/k^{\times}\ar[r]^{div_Y}\ar[d]&\Div_{Y\setminus V}(Y)&\Div_{X\setminus U}(X)\ar[l]_{\cong }\ar[d]\\
 &T^*\ar[rr]^{type}&&\Pic(X).
 }$$
 Puisque $type(1_{T^*})=[Y]$, le r\'esultat en d\'ecoule.
 
 \'Etape (2). Supposons $k=\bk$. Notons $n=\dim(T)$. Alors $T\cong \BG_m^n$, $T^*\cong \BZ^n$, $H^1(X,T)\cong \Pic(X)^{\oplus n}$ et $\Hom(T^*,\Div_{X\setminus U}(X))\cong \Div_{X\setminus U}(X)^{\oplus n}$.
 On se r\'eduit ainsi \`a l'\'etape (1).
 
 \'Etape (3). Supposons que $T$ est quasi-trivial et $X$ est projective.
  Dans ce cas, le morphisme $H^1(X,T)\to H^1(X_{\bk},T)$ est injectif (une cons\'equence de \cite[(2.0.2)]{CTS87}). 
  Par l'\'etape (2), pour chaque $\tau\in \Triv_U(V,T)$, on a $\Psi(\Upsilon (\tau))|_{\bk}=[Y_{\bk}]$, et donc $\Psi(\Upsilon (\tau)) =[Y]$.
 
 \'Etape (4). En g\'en\'eral, soit $X^c$ une compactification lisse de $X$ et soit $T_0$ un tore avec un isomorphisme $T_0^*\xrightarrow{\iota}\Div_{X_{\bk}\setminus U_{\bk}}(X_{\bk})$. 
 Notons $\sigma^*:=\iota^{-1}\circ \phi$ et $T_0\xrightarrow{\sigma}T$ le morphisme correspondant de tores.
L'\'egalit\'e $\Div_{X^c_{\bk}\setminus U_{\bk}}(X^c_{\bk})\cong \Div_{X^c_{\bk}\setminus X_{\bk}}(X^c_{\bk})\oplus \Div_{X_{\bk}\setminus U_{\bk}}(X_{\bk})$ donne des morphismes
$$\iota_c: T_0^*\xrightarrow{\iota}\Div_{X_{\bk}\setminus U_{\bk}}(X_{\bk})\hookrightarrow \Div_{X^c_{\bk}\setminus U_{\bk}}(X^c_{\bk})\ \ \ \text{et}\ \ \ \pi: \Div_{X^c_{\bk}\setminus U_{\bk}}(X^c_{\bk})\to \Div_{X_{\bk}\setminus U_{\bk}}(X_{\bk})$$
tels que $\pi\circ \iota_c=\iota $.
 Par la suite exacte (\ref{tor-e1}), on a un diagramme commutatif 
 $$\xymatrix{\Hom_k(T_0^*,\Div_{X^c_{\bk}\setminus U_{\bk}}(X^c_{\bk}))\ar[r]^{\pi\circ -}\ar[d]^{\Psi_c}&
  \Hom_k(T_0^*,\Div_{X_{\bk}\setminus U_{\bk}}(X_{\bk}))\ar[d]^-{\Psi_0}\ar[r]^{-\circ \sigma^*}&
  \Hom_k(T^*,\Div_{X_{\bk}\setminus U_{\bk}}(X_{\bk}))\ar[d]^-{\Psi}\\
  H^1(X^c,T_0)\ar[r]^{(-)|_X}&H^1(X,T_0)\ar[r]^{\sigma_*}&H^1(X,T).
 }$$
 Alors $\Psi_c(\iota_c)$ donne un $T_0$-torseur $Y_0^c$ sur $X^c$ tel que $\pi\circ \iota_c\circ \sigma^*=\phi $, $\sigma_*[Y_0^c|_X]=[Y]\in H^1(X,T)$ 
 et que $V_0:=Y_0^c|_U\cong T_0\times U$. Ceci donne un diagramme commutatif:
 $$\xymatrix{\Triv_U(V_0,T_0)\ar[d]^-{\Upsilon_c }\ar[r]^{=}&
  \Triv_U(V_0,T_0)\ar[d]^-{\Upsilon_0 }\ar[r]^{\sigma_*}&
 \Triv_U(V,T)\ar[d]^-{\Upsilon }\\
 \Hom_k(T_0^*,\Div_{X^c_{\bk}\setminus U_{\bk}}(X^c_{\bk}))\ar[r]^{\pi\circ -}&
  \Hom_k(T_0^*,\Div_{X_{\bk}\setminus U_{\bk}}(X_{\bk}))\ar[r]^{-\circ \sigma^*}&
  \Hom_k(T^*,\Div_{X_{\bk}\setminus U_{\bk}}(X_{\bk}))
 }$$
 et le r\'esultat d\'ecoule de l'\'etape (3).
 \end{proof}

\medskip

\subsection{L'action d'un groupe sur un torseur}\label{2.3}
Soit $S$ un groupe de type multiplicatif et soit $G$ un groupe lin\'eaire connexe.
Colliot-Th\'el\`ene a montr\'e que tout $S$-torseur $H$ sur $G$ peut \^etre muni d'une structure de groupe telle que $H\to G$ soit un homomorphisme de groupes,
s'il poss\`ede un point rationnel au-dessus de $e_G$ (cf. \cite[Thm. 5.6]{CT07}). On donne une g\'en\'eralisation de ce r\'esultat (Th\'eor\`eme \ref{thmaction}).

On commence par une g\'en\'eralisation de  \cite[Lem. 5.5]{CT07}.

\begin{lem}\label{lemaction}
Soient $X_1$, $X_2$ deux vari\'et\'es lisses g\'eom\'etriquement int\`egres.
Soit $S$ un groupe de type multiplicatif.
Supposons que $X_1$ est g\'eom\'etriquement rationnelle et $X_1(k)\neq \emptyset$.
Alors pour chaque $e\in X_1(k)$ et $i=0$ ou $1$, on a un isomorphisme canonique:
$$H^i_e(X_1,S)\oplus H^i(X_2,S)\iso H^i(X_1\times X_2,S)$$
o\`u $H^i_e(X_1,S):=\Ker (H^i(X_1,S)\xrightarrow{e^*} H^i(k,S))$.
\end{lem}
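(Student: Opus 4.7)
The plan is to use the canonical section $\sigma = (e, \id) \colon X_2 \to X_1 \times X_2$ of the projection $p_2$, which splits the pullback $p_2^* \colon H^i(X_2, S) \to H^i(X_1 \times X_2, S)$. This yields the canonical decomposition
\[ H^i(X_1 \times X_2, S) = p_2^* H^i(X_2, S) \oplus \Ker(\sigma^*). \]
Since $\sigma^* \circ p_1^* = e^*$, the pullback $p_1^*$ sends $H^i_e(X_1, S)$ into $\Ker(\sigma^*)$, and the lemma reduces to showing that the induced map $p_1^* \colon H^i_e(X_1, S) \to \Ker(\sigma^*)$ is an isomorphism.

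A standard d\'evissage then reduces everything to the case $S = \BG_m$: every $k$-group of multiplicative type $S$ fits into a short exact sequence $1 \to S \to P \to Q \to 1$ with $P$ and $Q$ quasi-triviaux tores $\prod_j \Res_{K_j/k}\BG_m$, and a five-lemma argument on the long exact sequences in \'etale cohomology, combined with Shapiro's lemma applied after base change to each $K_j$, brings us to $S = \BG_m$.

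For $i = 0$ and $S = \BG_m$, Rosenlicht's unit theorem provides the canonical isomorphism $\bk[X_1 \times X_2]^\times \cong (\bk[X_1]^\times \times \bk[X_2]^\times)/\bk^\times$ for any pair of smooth geometrically integral varieties; Hilbert 90 applied to the diagonal $\bk^\times$ then permits one to take $\Gamma_k$-invariants and obtain the analogous decomposition over $k$, from which the identification of $\Ker(\sigma^*)$ with $H^0_e(X_1, \BG_m)$ is immediate via evaluation at $e$. For $i = 1$ and $S = \BG_m$, the claim becomes $\Pic_e(X_1) \oplus \Pic(X_2) \iso \Pic(X_1 \times X_2)$; I would prove it via the Leray spectral sequence for $p_2$, reducing first by compactification and the divisor localization sequence to the case where $X_1$ is smooth projective. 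Geometric rationality of $X_1$ then forces $\Pic^0_{X_1/k} = 0$ and trivializes the Albanese of $X_1$, so that (by the classical see-saw theorem) $R^0 p_{2*} \BG_m = \BG_m$ and $R^1 p_{2*} \BG_m$ is locally constant with value $\Pic(X_{1, \bk})$ and no monodromy; the section $\sigma$ then degenerates the spectral sequence and yields the claimed decomposition. The main obstacle is this $i = 1$ case: passing from the proper to the non-proper setting requires careful control of the boundary divisors introduced by compactifying $X_1$, and verifying that the geometric rationality of $X_1$ is inherited by a suitable smooth compactification in a way compatible with the Leray argument — a standard but delicate point resting on the structure theory of Picard groups of open subvarieties of proper geometrically rational varieties.
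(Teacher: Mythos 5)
Your overall architecture coincides with the paper's: split off $H^i(X_2,S)$ via the section $\sigma=(e,\id)$ of $p_2$, reduce by d\'evissage to $S=\BG_m$, and treat $i=0$ by Rosenlicht and $i=1$ by the product formula for Picard groups. Two points are genuinely problematic. First, the d\'evissage as you state it rests on a false claim: a group of multiplicative type $S$ does not in general fit into an exact sequence $1\to S\to P\to Q\to 1$ with $P$ \emph{and} $Q$ quasi-trivial. Dually this would be a length-one permutation resolution $0\to Q^*\to P^*\to S^*\to 0$; since $\Ext^1_{\Gamma_k}(S^*,\BZ[\Gamma_k/H])\cong H^1(H,\Hom(S^*,\BZ))$ vanishes for every $H$ whenever $S^*$ is flasque, such a sequence would split and force every flasque lattice to be invertible, which fails already over fields with non-cyclic Galois groups. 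One can only arrange $P$ quasi-trivial and $Q$ a torus (this is the sequence $0\to S\to T_0\to T\to 0$ the paper uses), and the five lemma then needs separate, if easy, input on the $T$-columns (bijectivity of the comparison map on $H^0$ for an arbitrary torus, injectivity on $H^1$); so your reduction is repairable but does not work as written.

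Second, and more seriously, the case $i=1$, $S=\BG_m$ --- that is, $\Pic(X_1)\oplus\Pic(X_2)\iso\Pic(X_1\times X_2)$ for $X_1$ smooth, geometrically rational, with a rational point --- is the entire content of the lemma, and it is precisely \cite[Lem.~6.6]{S}, which the paper simply cites (together with \cite[Lem.~6.5]{S} for $i=0$). Your Leray/compactification/see-saw discussion is a plan for reproving that result, not a proof: the passage from the projective to the open case, the identification of $E_\infty^{0,1}$ with $\Pic(X_1)$ rather than with $\Pic(X_{1,\bk})^{\Gamma_k}$ (these agree here only because $e\in X_1(k)$ kills the Brauer obstruction in Hochschild--Serre; and ``no monodromy'' is only true geometrically, the Galois action on $\Pic(X_{1,\bk})$ being nontrivial in general), and the splitting of the resulting two-step filtration all remain to be carried out --- and you yourself flag the compactification step as unresolved. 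As it stands the proposal therefore has a genuine gap at its core case; the intended argument is a citation of Sansuc's two lemmas followed by Shapiro and the five lemma.
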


\begin{proof}
Si $S=\BG_m$, l'\'enonc\'e d\'ecoule de \cite[Lem. 6.5 et Lem. 6.6]{S}.
Si $S$ est quasi-trivial, i.e. il existe une $k$-alg\`ebre finie  \'etale $K$ tel que $S=\Res_{K/k}\BG_m$, 
  l'\'enonc\'e d\'ecoule du fait  $H^1(-,S)\cong H^1((-)_K,\BG_m)$.

En g\'en\'eral, on note $H_1^i(S):=H^i_e(X_1,S)\oplus H^i(X_2,S)$ et $H^i_2(S):=H^i(X_1\times X_2,S)$ pour $i=0$ ou $1$.
Il existe un tore $T$, un tore quasi-trivial $T_0$ et une suite exacte:
$$0\to S\to T_0\to T\to 0.$$
 Elle induit  un diagramme commutatif de suites exactes:
$$
\xymatrix{0\ar[r]&H^0_1(S)\ar[r]\ar[d]&H^0_1(T_0)\ar[r]\ar[d]^{\cong}&H^0_1(T)\ar[r]\ar[d]
&H^1_1(S)\ar[r]\ar[d]&H^1_1(T_0)\ar[r]\ar[d]^{\cong}&H^1_1(T)\ar[d]\\
0\ar[r]&H^0_2(S)\ar[r]&H^0_2(T_0)\ar[r]&H^0_2(T)\ar[r]
&H^1_2(S)\ar[r]&H^1_2(T_0)\ar[r]&H^1_2(T).
}$$
Le r\'esultat d\'ecoule du lemme des cinq.
\end{proof}

\begin{thm}\label{thmaction}
Soient $G$ un $k$-groupe lin\'eaire connexe et $(X,\rho )$ une $G$-vari\'et\'e lisse g{\'eom\'e}\-triquement int\`egre.
Soient $S$ un groupe de type multiplicatif et $Y\xrightarrow{p}X$ un $S$-torseur.
Alors il existe  un groupe lin\'eaire $H$ et un homomorphisme $ H \xrightarrow{\psi}G$  
 tels que $\psi$ soit surjectif de noyau $S$ central et que
l'action $\rho_S$ de $S$ en $Y$ s'\'etende  une action $\rho_H$ de $H$ sur $Y$ qui fait $p$  un $H$-morphisme.

De plus :

(1) le groupe $H$ est unique, i.e. si $H_1\xrightarrow{\psi_1}G$ satisfait les conditions ci-dessus,
 alors il existe un isomomorphisme 
de $k$-groupes $H_1\xrightarrow{\vartheta}H $ tel que l'on ait un diagramme commutatif:
\begin{equation}\label{diaactione1}
\xymatrix{1\ar[r]&S\ar[d]^=\ar[r]&H_1\ar[r]^{\psi_1}\ar[d]^{\vartheta}&G\ar[d]^=\ar[r]&1\\
1\ar[r]&S\ar[r]&H\ar[r]^{\psi}&G\ar[r]&1;
}
\end{equation}

(2) si $\rho^*[Y]=p_2^*[Y]$ dans $H^1(G\times X,S)$, alors $H\cong S\times G$;

(3) pour chaque choix d'une action $\rho_H$, chaque homomorphisme $G\xrightarrow{\phi}S$ induit une nouvelle action $H\times Y\xrightarrow{\rho_{H,\phi}}Y$ satisfaisant les conditions ci-dessus, o\`u
$$\rho_{H,\phi}(h,y)=(\phi\circ \psi)(h)\cdot \rho_H(h,y)\ \ \ \text{pour chaque}\ \ \ h\in H,\ y\in Y,$$
et toutes les actions satisfaisant les conditions ci-dessus sont obtenues de cette fa\c{c}on.
\end{thm}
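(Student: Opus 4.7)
On se propose de construire $H$ comme l'extension centrale de $G$ par $S$ d\'etermin\'ee par la comparaison des deux tir\'es en arri\`ere de $Y$ le long de $G\times X\to X$, puis d'obtenir l'action sur $Y$ en observant que le $S$-torseur pertinent devient canoniquement trivial apr\`es avoir \'et\'e tir\'e en arri\`ere \`a $H\times Y$ le long de $(\psi,p)$.

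\textbf{\'Etape 1 (descente de la classe).} On pose $\beta:=[\rho^*Y]-[p_2^*Y]\in H^1(G\times X,S)$. Sa restriction \`a $\{e_G\}\times X$ est nulle puisque $\rho$ et $p_2$ y co\"incident. Le groupe $G$ \'etant lin\'eaire connexe, donc g\'eom\'etriquement rationnel, avec $e_G\in G(k)$, le \arf{lemaction} appliqu\'e \`a $(X_1,X_2)=(G,X)$ fournit la d\'ecomposition canonique
$$H^1(G\times X,S)\cong H^1_{e_G}(G,S)\oplus H^1(X,S),$$
dont la seconde projection est la restriction \`a $\{e_G\}\times X$. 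Ainsi $\beta=p_G^*\alpha$ pour un unique $\alpha\in H^1_{e_G}(G,S)$, o\`u $p_G:G\times X\to G$ est la premi\`ere projection. Par \cite[Thm.~5.6]{CT07}, il existe alors une extension centrale $1\to S\to H\xrightarrow{\psi}G\to 1$ de groupes lin\'eaires dont le $S$-torseur sous-jacent $H\to G$ repr\'esente $\alpha$.

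\textbf{\'Etape 2 (trivialisation canonique et construction de l'action).} Posons $f:=(\psi,p):H\times Y\to G\times X$. L'\'etape~1 fournit un isomorphisme $\rho^*Y\cong p_G^*H\wedge^S p_2^*Y$ dans $H^1(G\times X,S)$. En tirant en arri\`ere \`a $H\times Y$, le torseur $f^*(p_G^*H)=p_H^*(\psi^*H)$ devient trivial via la section diagonale de $\psi^*H=H\times_GH\to H$, et de m\^eme $f^*(p_2^*Y)=p_Y^*(p^*Y)$ est trivial via la section diagonale de $p^*Y=Y\times_XY\to Y$. Leur produit contract\'e est donc canoniquement trivial, fournissant une trivialisation de $f^*(\rho^*Y)=(\rho\circ f)^*Y$, c'est-\`a-dire un morphisme $\rho_H:H\times Y\to Y$ tel que $p\circ\rho_H(h,y)=\rho(\psi(h),p(y))$. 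La compatibilit\'e $\rho_H|_{S\times Y}=\rho_S$ s'obtient en observant qu'en $(e_G,x)$ l'iso $p_G^*H\wedge^S p_2^*Y\cong \rho^*Y$ se r\'eduit \`a l'identit\'e canonique $S\wedge^Sp^{-1}(x)=p^{-1}(x)$, et les axiomes d'action pour $\rho_H$ d\'ecoulent de la compatibilit\'e analogue pour la multiplication $H\times H\to H$ de l'extension centrale $\alpha$.

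\textbf{\'Etape 3 (parties (1)--(3)).} Pour~(1), si $H_1\xrightarrow{\psi_1}G$ satisfait les m\^emes conditions, la construction inverse appliqu\'ee \`a l'action $\rho_{H_1}$ montre que la classe de $H_1\to G$ dans $H^1_{e_G}(G,S)$ \'egale $\alpha$~; l'unicit\'e dans \cite[Thm.~5.6]{CT07} donne $\vartheta$ et le diagramme~(\ref{diaactione1}). La partie~(2) est imm\'ediate~: l'hypoth\`ese $\rho^*[Y]=p_2^*[Y]$ \'equivaut \`a $\alpha=0$, et l'extension centrale se scinde alors. Pour~(3), la centralit\'e de $S$ dans $H$ entra\^ine que chaque $\rho_{H,\phi}$ reste une action prolongeant $\rho_S$ et relevant $\rho$~; inversement, le quotient $\rho'_H/\rho_H$ de deux telles actions d\'efinit un morphisme $\mu:H\times Y\to S$ v\'erifiant une relation de cocycle multiplicatif et trivial sur $\{e_H\}\times Y$ et sur $S\times Y$, et la d\'ecomposition de Rosenlicht de $\bk[H\times Y]^\times/\bk^\times$ combin\'ee \`a ces conditions force $\mu$ \`a provenir d'un unique $\phi\in\Hom(G,S)$.

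\textbf{Principale difficult\'e.} L'obstacle technique se situe \`a l'\'etape~2, dans la v\'erification d\'etaill\'ee des axiomes d'action de $\rho_H$, en particulier l'associativit\'e, qui demande de contr\^oler l'interaction entre la trivialisation diagonale du produit contract\'e et la loi de groupe de l'extension centrale $H$. Cette v\'erification suit essentiellement la trame de la preuve de \cite[Thm.~5.6]{CT07}, qui en traite le cas particulier o\`u $X=G$ agit sur lui-m\^eme par translation \`a gauche.
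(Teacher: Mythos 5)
Votre démonstration suit pour l'essentiel la même route que celle du texte : construction de $H$ par la décomposition $H^1(G\times X,S)\cong H^1_{e_G}(G,S)\oplus H^1(X,S)$ du lemme \ref{lemaction} et \cite[Thm.~5.6]{CT07}, action $\rho_H$ obtenue de l'isomorphisme $H\times^S Y\cong\rho^*Y$ après normalisation sur $\{e_G\}\times X$, puis lemme de Rosenlicht pour les points (1)--(3). Le seul endroit où le texte est plus explicite est la vérification de l'associativité de $\rho_H$, que vous signalez à juste titre comme la difficulté principale : elle ne « découle » pas directement de la loi de groupe de $H$, mais s'obtient en factorisant le défaut $\Psi:H\times H\times Y\to S$ à travers $G\times G\times X$ (car $\rho_H$ est un $S\times S$-morphisme), en appliquant Rosenlicht pour écrire $\Psi_1(g_1,g_2,x)=\chi_1(g_1)\chi_2(g_2)\chi_0(x)$, et en évaluant en $e_H$ pour tout annuler — exactement l'argument que vous utilisez déjà pour (3).
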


\begin{proof}
On suit la d\'emonstration de Colliot-Th\'el\`ene \cite[Thm. 5.6]{CT07}. 
D'apr\`es le lemme \ref{lemaction}, pour chaque \'el\'ement $\alpha\in H^1(X,S)$, puisque $\rho^*(\alpha)|_{e_G\times X}=\alpha$, 
il existe un unique
$\beta\in H^1_{e_G}(G,S)$ tel que $\rho^*(\alpha)=p_2^*(\alpha)+p_1^*(\beta)$, o\`u $p_1,\ p_2$ sont les projections.
Si $\alpha=[Y]$, on note $\beta=(H\xrightarrow{\psi}G)$.  
Par \cite[Thm. 5.6 et Cor. 5.7]{CT07}, il existe une structure unique de $k$-groupe lin\'eaire  sur $H$ (\`a (\ref{diaactione1}) pr\`es)
telle que $\psi$ soit un homomorphisme de noyau $S$ central.

Notons $ S\xrightarrow{i} H$ l'immersion. L'\'egalit\'e $\rho^*[Y]=p_1^*[H]+p_2^*[Y]$ donne un diagramme commutatif:
\begin{equation}\label{diaaction}
\xymatrix{S\times Y\ar[r]_{i\times id_Y}\ar[d]\ar@{}[rd]|{\square}&H\times Y\ar[r]_{\rho_+}\ar[d]^{\psi\times p}\ar@/^1pc/[rr]^{\rho_H} &\rho^*Y\ar[r]_{\rho_Y}\ar[d]\ar@{}[rd]|{\square}&Y\ar[d]^p\\
e_G\times X\ar[r]&G\times X\ar[r]^=&G\times X\ar[r]^{\rho}&X,
}
\end{equation}
tel que $\rho_+$ induise un isomorphisme $H\times^SY\iso \rho^*Y$ et $\rho_H:=\rho_Y\circ \rho_+$ soit un $S\times S$-morphisme,
o\`u l'action $S\times S\curvearrowright Y: (s_1,s_2,y)\mapsto \rho_S(s_1\cdot s_2, y)$. 
Donc $\rho_H|_{e_S\times Y}\in \Hom_X(Y,Y)$ est un $S$-morphisme, et donc un isomorphisme. 
Rempla\c{c}ant $\rho_Y$ par $(\rho_H|_{e_S\times Y})^{-1}\circ \rho_Y$, on peut supposer que  $\rho_H|_{e_S\times Y}$ est l'identit\'e, et donc $\rho_S=\rho_H\circ (i\times id_Y)$.

Montrons que $\rho_H$ est une action. Notons $m_H$ la multiplication sur $H$. 
Puisque $\rho_G$ est une action, les morphismes $\rho_1:=\rho_H\circ (m_H\times id_Y)$ et $\rho_2:=\rho_H\circ (id_H\times \rho_H)$ induisent un morphisme
$$\Psi: H\times H\times Y\xrightarrow{(\rho_1,\rho_2)} Y\times_XY\iso Y\times S\xrightarrow{p_2} S,\ \ \ \text{i.e.}\ \ \  \rho_H(h_1\cdot h_2,y)=\Psi (h_1,h_2,y)\cdot \rho_H(h_1,\rho_H(h_2,y))$$
pour chaque $h_1, h_2\in H$ et $y\in Y$.
Puisque $\rho_H$ est un $S\times S$-morphisme, il existe un morphisme $G\times G\times X\xrightarrow{\Psi_1}S$ tel que 
$\Psi=\Psi_1\circ (\psi\times \psi\times p)$.
Par le lemme de Rosenlicht (voir \cite[Lem. 6.5]{S}), il existe des homomorphismes $\chi_1,\chi_2: G\to S$ et un morphisme $X\xrightarrow{\chi_0}S$ tels que 
$$\Psi_1(g_1,g_2,x)=\chi_1(g_1)\cdot \chi_2(g_2)\cdot \chi_0 (x)$$
pour chaque $g_1, g_2\in G$ et $x\in X$. Puisque $\rho_H(e_H,y)=y$ pour chaque $y\in Y$, on a $\chi_1=0$, $\chi_2=0$ et $\chi_0(x)=e_S$ pour chaque $x\in X$. Donc $\rho_H$ est une action.

Pour (1), s'il existe un groupe lin\'eaire $H$ qui satisfait l'\'enonc\'e du th\'eor\`eme, alors $H$ et l'action $\rho_H$ en $Y$ induisent le diagramme (\ref{diaaction}) tel que $\rho_H:=\rho_Y\circ \rho_+$  et $H\times^SY\iso \rho^*Y$. 
Alors dans $H^1(G\times X,S)$, on a $p_1^*[H]+p_2^*[Y]=\rho^*[Y]$, 
ce qui d\'etermine uniquement $H$ par l'argument ci-dessus. 

Pour (2), si $\rho^*[Y]=p_2^*[Y]$, on a $[H]=0$ et, d'apr\`es (1), on a $H\cong S\times G$.

Pour (3), il est clair que $\rho_{H,\phi}|_{S\times Y}=\rho_S$ et $p\circ \rho_{H,\phi}=\rho\circ (\psi\times p)$. 
Par ailleurs, soit $\rho_H'$ une action satisfaisant les conditions. 
Alors on a un morphisme 
$$\Phi: H\times Y\xrightarrow{(\rho_H,\rho_H')}Y\times_XY\iso Y\times S\xrightarrow{p_2}S\ \ \ \text{i.e.}\ \ \ 
\rho_H'(h,y)=\Phi (y,h)\cdot \rho_H(h,y)$$
pour chaque $h\in H$ et $y\in Y$. 
Puisque $\rho_H'|_{S\times Y}=\rho_S$, il existe $G\times X\xrightarrow{\Phi_1}S$ tel que $\Phi=\Phi_1\circ (\psi\times p)$.
Par le lemme de Rosenlicht (voir \cite[Lem. 6.5]{S}), il existe un homomorphisme $G\xrightarrow{\phi}S$ et un morphisme $X\xrightarrow{\chi}S$ tels que $\Phi_1(g,x)=\phi(g)\cdot \chi (x)$ pour chaque $g\in G$ et $x\in X$. 
Puisque $\rho_H'(e_H,y)=y$ pour chaque $y\in Y$, on a $\chi (x)=e_S$ pour tout $x\in X$. Donc $\rho_H'=\rho_{H,\phi}$.
\end{proof}

\begin{cor}\label{cor1action}
Soient $G$ un $k$-groupe lin\'eaire connexe 
et $X_1, $ $X_2$ deux $G$-vari\'et\'es lisses g{\'eom\'e}\-triquement int\`egres munies d'un $G$-morphisme $X_1\to X_2$.
Soit $S$ un groupe de type multiplicatif.
Pour $i=1,2$, soient $Y_i\to X_i$ un $S$-torseur et $H_i$ le groupe lin\'eaire donn\'e par le th\'eor\`eme \ref{thmaction}. 
Si $Y_1\cong Y_2\times_{X_2}X_1$ comme $S$-torseurs, alors $H_1\cong H_2$ 
et,  apr\`es avoir chang\'e l'action de $H_1$ sur $Y_1$ ou l'action de $H_2$ sur $Y_2$, 
on peut imposer que $Y_1\cong Y_2\times_{X_2}X_1\to Y_2$ soit un $H_1$-morphisme.
\end{cor}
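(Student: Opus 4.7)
Le plan est d'identifier d'abord $H_1$ et $H_2$ en comparant les cocycles qui les d\'efinissent dans le Th\'eor\`eme \ref{thmaction}, puis, apr\`es avoir tordu l'une des actions par un caract\`ere, de rendre la projection $\pi:Y_1\to Y_2$ \'equivariante. Notons $f:X_1\to X_2$ le $G$-morphisme donn\'e. D'apr\`es la construction, l'extension $H_i\to G$ est caract\'eris\'ee par l'unique \'el\'ement $\beta_i\in H^1_{e_G}(G,S)$ satisfaisant $\rho_i^*[Y_i]=p_2^*[Y_i]+p_1^*\beta_i$ dans $H^1(G\times X_i,S)$. L'\'equivariance $\rho_2\circ(\id_G\times f)=f\circ\rho_1$ et l'\'egalit\'e $[Y_1]=f^*[Y_2]$ donnent
\[
\rho_1^*[Y_1]=(\id_G\times f)^*\rho_2^*[Y_2]=(\id_G\times f)^*\bigl(p_2^*[Y_2]+p_1^*\beta_2\bigr)=p_2^*[Y_1]+p_1^*\beta_2,
\]
et l'unicit\'e dans le lemme \ref{lemaction} appliqu\'e \`a $X_1$ force $\beta_1=\beta_2$. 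Par le Th\'eor\`eme \ref{thmaction}(1), on en d\'eduit $H_1\cong H_2$ comme extensions de $G$ par $S$. On pose $H:=H_1\cong H_2$.

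Pour la compatibilit\'e des actions, on compare les deux morphismes $\alpha:=\pi\circ\rho_{H_1}$ et $\alpha':=\rho_{H_2}\circ(\id_H\times\pi)$ de $H\times Y_1$ dans $Y_2$. Ils induisent le m\^eme morphisme $H\times Y_1\to X_2$ (toujours gr\^ace \`a l'\'equivariance de $f$), et l'isomorphisme $Y_2\times_{X_2}Y_2\cong Y_2\times S$ leur associe donc un morphisme $\Phi:H\times Y_1\to S$. Comme $\pi$ est un $S$-morphisme et que $\rho_{H_1},\rho_{H_2}$ prolongent l'action de $S$, $\Phi$ est invariante sous $S\times S$ et descend en $\Phi_1:G\times X_1\to S$. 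Le lemme de Rosenlicht (cf. \cite[Lem. 6.5]{S}, d\'ej\`a exploit\'e dans la preuve du Th\'eor\`eme \ref{thmaction}) donne $\Phi_1(g,x)=\phi(g)\cdot\chi(x)$ pour un homomorphisme $\phi:G\to S$ et un morphisme $\chi:X_1\to S$. L'\'egalit\'e $\alpha(e_H,y)=\pi(y)=\alpha'(e_H,y)$ force $\chi\equiv e_S$ sur l'image de $p_1:Y_1\to X_1$, c'est-\`a-dire sur $X_1$ tout entier. D'o\`u l'identit\'e $\pi(\rho_{H_1}(h,y))=\phi(\psi(h))\cdot\rho_{H_2}(h,\pi(y))$.

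Enfin, le Th\'eor\`eme \ref{thmaction}(3) garantit que l'action tordue $\rho_{H_1,\phi^{-1}}(h,y):=\phi^{-1}(\psi(h))\cdot\rho_{H_1}(h,y)$ satisfait encore les conditions du th\'eor\`eme, et un calcul direct donne $\pi\circ\rho_{H_1,\phi^{-1}}=\rho_{H_2}\circ(\id_H\times\pi)$, de sorte que $\pi$ devient $H$-\'equivariante pour cette action modifi\'ee. Le point le plus d\'elicat du plan est la descente de $\Phi$ en $\Phi_1$ et sa factorisation via Rosenlicht, mais ces \'etapes reprennent fid\`element les arguments d\'eploy\'es dans la preuve du Th\'eor\`eme \ref{thmaction}, ce qui rend la d\'emarche essentiellement sans obstacle.
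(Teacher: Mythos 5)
Votre démonstration est correcte et suit essentiellement la même stratégie que celle du papier : l'identification $H_1\cong H_2$ repose sur l'unicité du théorème \ref{thmaction} (que vous déroulez explicitement via l'égalité des classes $\beta_i$ dans $H^1_{e_G}(G,S)$), et l'ajustement de l'action se fait en constatant que la différence des deux actions est un homomorphisme $G\to S$ (argument de Rosenlicht), ce qui est précisément l'idée du théorème \ref{thmaction} (3) invoquée par le papier. Vous ne faites qu'expliciter les détails que le papier laisse implicites.
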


\begin{proof}
L'action de $H_2$ sur $Y_2$ induit canoniquement une action de $H_2$ sur $Y_2\times_{X_2}X_1$ telle que $Y_2\times_{X_2}X_1\to X_1$ soit un $H_2$-morphisme. 
Par l'unicit\'e dans le th\'eor\`eme \ref{thmaction}, $H_1\cong H_2$.
Le r\'esultat d\'ecoule du fait que la diff\'erence de deux actions est un homomorphisme $G\to S$, qui ne d\'epend pas de $X_i$.
\end{proof}

\begin{cor}\label{coraction}
Sous les hypoth\`eses du th\'eor\`eme \ref{thmaction}, si le $S$-torseur $[Y]$ est trivial sur un $G$-ouvert $U$ de $X$, alors  $H\cong S\times G$.
\end{cor}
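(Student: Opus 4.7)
Le plan consiste \`a appliquer la partie (2) du Th\'eor\`eme \ref{thmaction} : il suffit donc d'\'etablir l'\'egalit\'e $\rho^*[Y] = p_2^*[Y]$ dans $H^1(G\times X, S)$. Or la construction du th\'eor\`eme donne d\'ej\`a une d\'ecomposition
$$\rho^*[Y] = p_2^*[Y] + p_1^*[H]$$
o\`u $[H]\in H^1_{e_G}(G,S)$ est la classe du $S$-torseur $H\to G$. Il s'agit donc de montrer que $[H]=0$.

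L'id\'ee est de restreindre cette \'egalit\'e \`a l'ouvert $G$-invariant $G\times U$. Puisque $U$ est un $G$-ouvert (que l'on peut supposer non vide, sinon l'hypoth\`ese est vide), la restriction de $\rho$ \`a $G\times U$ se factorise par $U$ ; donc $\rho^*[Y]|_{G\times U} = (\rho|_{G\times U})^*([Y]|_U) = 0$. De m\^eme, $p_2^*[Y]|_{G\times U} = 0$. On en d\'eduit l'annulation $p_1^*[H]|_{G\times U} = 0$ dans $H^1(G\times U, S)$.

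On conclut alors par le Lemme \ref{lemaction}, appliqu\'e \`a $X_1 = G$ (g\'eom\'etriquement rationnel comme groupe lin\'eaire connexe en caract\'eristique $0$, avec le $k$-point $e_G$) et $X_2 = U$ (lisse et g\'eom\'etriquement int\`egre comme ouvert de $X$). L'isomorphisme canonique
$$H^1(G\times U, S) \cong H^1_{e_G}(G, S) \oplus H^1(U, S),$$
donn\'e par $(\alpha,\beta)\mapsto p_1^*\alpha + p_2^*\beta$, identifie $p_1^*[H]|_{G\times U}$ au couple $([H],0)$. Son annulation force donc $[H]=0$, d'o\`u $\rho^*[Y] = p_2^*[Y]$, et le Th\'eor\`eme \ref{thmaction}(2) donne $H\cong S\times G$.

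Aucun obstacle technique majeur ne se pr\'esente : tous les ingr\'edients n\'ecessaires (la structure donn\'ee par le Th\'eor\`eme \ref{thmaction} et la d\'ecomposition de K\"unneth cohomologique du Lemme \ref{lemaction}) ont d\'ej\`a \'et\'e \'etablis. Le seul point non tout \`a fait formel est la rationalit\'e g\'eom\'etrique de $G$, n\'ecessaire \`a l'application du Lemme \ref{lemaction}, mais c'est un r\'esultat classique pour un groupe lin\'eaire connexe en caract\'eristique nulle.
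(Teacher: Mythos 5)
Votre preuve est correcte et suit essentiellement la m\^eme voie que celle du papier : l'argument cl\'e est dans les deux cas la restriction \`a $G\times U$ et l'injectivit\'e de $p_1^*$ sur $H^1_{e_G}(G,S)$ fournie par le Lemme \ref{lemaction}, qui force $[H]=0$ puis permet d'appliquer le Th\'eor\`eme \ref{thmaction} (2). La seule nuance est que le papier passe par l'unicit\'e (partie (1)) appliqu\'ee \`a $Y|_U\to U$ au lieu d'annuler directement la classe $[H]$, mais c'est le m\^eme m\'ecanisme.
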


\begin{proof}
 Le r\'esultat d\'ecoule du th\'eor\`eme \ref{thmaction} (1) et (2).
\end{proof}

\begin{cor}\label{cor2action}
Sous les hypoth\`eses du th\'eor\`eme \ref{thmaction}, supposons que $X\cong G/G_0$, o\`u $G_0\sbt G$ est un sous-groupe ferm\'e connexe. 
Si $Y(k)\neq\emptyset$, alors il existe un sous-groupe connexe ferm\'e $H_0$ de $ H$ tel que $Y\cong H/H_0$ et $H_0\cong G_0$.
\end{cor}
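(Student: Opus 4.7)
Le plan consiste \`a choisir un point rationnel $y_0\in Y(k)$, \`a poser $x_0:=p(y_0)\in X(k)$ et \`a identifier $G_0$ avec $\Stab_G(x_0)$ via l'isomorphisme $X\cong G/G_0$. On pose $H_0:=\Stab_H(y_0)$, sous-groupe ferm\'e de $H$. L'objectif sera de v\'erifier successivement que $\psi|_{H_0}:H_0\to G_0$ est un isomorphisme, puis que le morphisme d'orbite $\alpha:H/H_0\to Y$, $h\mapsto h\cdot y_0$, est un isomorphisme.

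Pour la premi\`ere assertion, l'inclusion $\psi(H_0)\sbt G_0$ est imm\'ediate puisque $p$ est un $H$-morphisme, et l'\'egalit\'e $H_0\cap S=\{e_H\}$ r\'esulte de ce que $S$ agit librement sur les fibres du $S$-torseur $p$. Ainsi $\psi|_{H_0}$ est un monomorphisme. Pour la surjectivit\'e sur les $\bk$-points, on rel\`eve $g\in G_0(\bk)$ en $h\in H(\bk)$ (surjectivit\'e de $\psi$ sur $\bk$, cons\'equence de $H^1(\bk,S)=0$); comme $p^{-1}(x_0)$ est un $S$-torseur trivialis\'e par $y_0\in Y(k)$, on a $h\cdot y_0=s\cdot y_0$ pour un $s\in S(\bk)$, et donc $s^{-1}h\in H_0(\bk)$ rel\`eve $g$. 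Le monomorphisme $\psi|_{H_0}$, surjectif sur les $\bk$-points, est alors un isomorphisme en caract\'eristique $0$, et, comme $G_0$ est connexe, $H_0$ l'est aussi.

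Pour la seconde assertion, $\alpha$ est une immersion localement ferm\'ee $H$-\'equivariante (propri\'et\'e standard des orbites en caract\'eristique $0$) et radicielle (par d\'efinition de $H_0$); l'\'egalit\'e des dimensions $\dim(H/H_0)=\dim G+\dim S-\dim G_0=\dim X+\dim S=\dim Y$, avec un calcul de diff\'erentielles en $y_0$, entra\^ine que $\alpha$ est \'etale, donc une immersion ouverte. Il reste \`a \'etablir la surjectivit\'e sur les $\bk$-points: pour tout $y\in Y(\bk)$, la transitivit\'e de $G$ sur $X$ fournit $h\in H(\bk)$ tel que $\psi(h)\cdot x_0=p(y)$; alors $y$ et $h\cdot y_0$ sont deux $\bk$-points du $S_{\bk}$-torseur $p^{-1}(p(y))$, trivial car $H^1(\bk,S)=0$, d'o\`u $y=(sh)\cdot y_0$ pour un $s\in S(\bk)$. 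L'obstacle principal r\'eside dans cette trivialit\'e g\'eom\'etrique des fibres de $p$, utilis\'ee \`a deux reprises; elle repose sur l'annulation $H^1(\bk,S)=0$ pour un groupe de type multiplicatif sur un corps alg\'ebriquement clos de caract\'eristique $0$.
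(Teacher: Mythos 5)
Votre preuve est correcte, mais elle suit une route r\'eellement diff\'erente de celle du texte. Le papier ne calcule jamais le stabilisateur \`a la main : il tire en arri\`ere le torseur le long du morphisme d'orbite $G\xrightarrow{\pi}X$ d\'efini par $x:=p(y)$, observe que $Y_G:=Y\times_XG$ a un $k$-point au-dessus de $e_G$, lui applique \cite[Thm. 5.6]{CT07} pour en faire un groupe satisfaisant les conditions du th\'eor\`eme \ref{thmaction}, puis invoque l'unicit\'e (corollaire \ref{cor1action}) pour identifier $Y_G\cong H$ et conclure que la projection $\pi_Y:H\to Y$ est un $H$-morphisme dont la fibre $H_0:=\pi_Y^{-1}(y)\cong\pi^{-1}(x)\cong G_0$, d'o\`u $Y\cong H/H_0$. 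Vous, au contraire, posez directement $H_0:=\Stab_H(y_0)$ et v\'erifiez \`a la main que $\psi|_{H_0}:H_0\to G_0$ est un isomorphisme (noyau trivial car $S$ agit librement sur les fibres, surjectivit\'e sur les $\bk$-points via la trivialit\'e des fibres sur $\bk$), puis que $H(\bk)$ agit transitivement sur $Y(\bk)$, ce qui donne $Y\cong H/H_0$ par l'argument standard sur les espaces homog\`enes en caract\'eristique $0$; tous ces points sont exacts (la libert\'e sch\'ematique de l'action de $S$ donne bien la trivialit\'e du noyau comme sch\'ema, et la lissit\'e de $Y$ assure que l'immersion surjective est un isomorphisme). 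Ce que chaque approche apporte : la v\^otre est plus \'el\'ementaire et autonome, n'utilisant du th\'eor\`eme \ref{thmaction} que l'existence de l'action $\rho_H$, au prix de v\'erifications ponctuelles sur $\bk$; celle du papier est plus structurelle et plus courte une fois la machinerie (th\'eor\`eme \ref{thmaction}, corollaire \ref{cor1action}, \cite[Thm. 5.6]{CT07}) disponible, car l'identification $Y_G\cong H$ livre simultan\'ement la transitivit\'e et l'identification du stabilisateur sans calcul.
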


\begin{proof}
Soient $y\in Y(k)$, $x:=p(y)$ et $G\xrightarrow{\pi}X$ le morphisme induit par $x$. 
Alors $Y_G:=Y\times_XG$ a un $k$-point sur $e_G$. 
Par \cite[Thm. 5.6]{CT07}, $Y_G$ est un groupe satisfaisant les conditions du th\'eor\`eme \ref{thmaction}. 
Par le corollaire \ref{cor1action}, $Y_G\cong H$ et $H\xrightarrow{\pi_Y}Y$ est un $H$-morphisme.
 Donc $H_0:=\pi_Y^{-1}(y)\cong \pi^{-1}(x)\cong G_0$.
\end{proof}

\section{Groupe de Brauer invariant}\label{3}

Dans toute cette section,  $k$ est un corps quelconque de caract\'eristique $0$. 
Sauf  mention explicite, une vari\'et\'e est une $k$-vari\'et\'e.
Soit $G$ un groupe alg\'ebrique et soit $X$ une $G$-vari\'et\'e lisse. 
On d\'efinit la notion  de sous-groupe $G$-invariant de $\Br(X)$ (D\'efinition \ref{def-invariant}). 
Ensuite on \'etablit ``l'alg\'ebricit\'e'' de $\Br_G(X)$ (Proposition \ref{propbraueralgebraic}).
On d\'efinit la notion  d'homomorphisme de Sansuc (cf. D\'efinition \ref{defsansuc}) et
on obtient un diagramme commutatif canonique de suites exactes de Sansuc (Th\'eor\`eme \ref{sansucthm}).

\subsection{D\'efinitions et propri\'et\'es}

\begin{defi}\label{def-invariant}
Soient $G$ un  groupe alg\'ebrique connexe et $(X,\rho )$ une $G$-vari\'et\'e lisse g\'eom\'e\-triquement int\`egre.  
\emph{Le sous-groupe de Brauer $G$-invariant} de $X$ est le sous-groupe 
\begin{equation}\label{defBr_GX}
\Br_G(X):=\{b\in \Br(X)\ :\ (\rho^*(b)-p_2^*(b))\in p_1^*\Br(G)\}
\end{equation}
de $\Br(X)$, o\`u $G\times X\xrightarrow{p_1}G$, $G\times X\xrightarrow{p_2}X$ sont les projections. 
\end{defi}

Dans la proposition \ref{prop-binvariant}, pour un sous-groupe $B\sbt \Br(X)$, on montre que $B\sbt \Br_G(X)$ si et seulement si $B$ est ``$G$-invariant".

 Le lemme suivant est bien connu.
 
\begin{lem}\label{brauertorseur}
Soient $X$, $Y$ deux vari\'et\'es lisses {g\'eom\'e}triquement int\`egres et $Y\xrightarrow{p}X$ un morphisme fid\`element plat \`a fibres g\'eom\'etriquement int\`egres. Soient $U$ un ouvert  non vide de $X$ et $V:=p^{-1}(U)$. 
Soit $B\sbt \Br(U)$ un sous-groupe.
Alors 
$$p^*(B\cap \Br(X))=(p^*B)\cap \Br(Y).$$
\end{lem}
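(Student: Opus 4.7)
The inclusion $p^*(B \cap \Br(X)) \subseteq (p^*B) \cap \Br(Y)$ is immediate, so the whole content is the reverse inclusion. Given $\alpha \in (p^*B) \cap \Br(Y)$, write $\alpha = p^*b$ with $b \in B \subseteq \Br(U)$. Since $X$ and $Y$ are smooth and geometrically integral, the restriction maps $\Br(X) \hookrightarrow \Br(U)$ and $\Br(Y) \hookrightarrow \Br(V)$ are injective by Grothendieck purity, so it suffices to prove that $b$ extends to an element of $\Br(X)$: then automatically $b \in B \cap \Br(X)$ and $\alpha \in p^*(B \cap \Br(X))$. By purity again, this reduces to checking that the residue $\partial_D(b) \in H^1(k(D), \BQ/\BZ)$ vanishes at every codimension-one point $D$ of $X$ contained in $X \setminus U$.

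Fix such a $D$, and set $D_Y := p^{-1}(D)_{\red}$. Because $p$ is faithfully flat with geometrically integral fibers, the induced morphism $D_Y \to D$ has the same property; in particular $D_Y$ is an integral divisor of $Y$ supported in $Y \setminus V$, the extension of discrete valuation rings $\CO_{X,D} \hookrightarrow \CO_{Y,D_Y}$ is unramified (fibers being geometrically reduced), and the residue field extension $k(D) \hookrightarrow k(D_Y)$ is regular, i.e.\ $k(D)$ is algebraically closed in $k(D_Y)$ (fibers being geometrically irreducible, in characteristic zero). The standard compatibility of residues under unramified extensions of discrete valuation rings then yields
$$\partial_{D_Y}(p^*b) \;=\; \partial_D(b)|_{k(D_Y)} \quad \text{dans } H^1(k(D_Y), \BQ/\BZ).$$
The left-hand side vanishes since $p^*b = \alpha$ lies in $\Br(Y)$, and the restriction map $H^1(k(D), \BQ/\BZ) \to H^1(k(D_Y), \BQ/\BZ)$ is injective because $k(D) \hookrightarrow k(D_Y)$ is regular (any non-trivial finite cyclic extension of $k(D)$ remains a field after base change to $k(D_Y)$). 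Hence $\partial_D(b) = 0$, and $D$ being arbitrary concludes.

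The only real input is the compatibility of the residue with pullback along a morphism having geometrically integral fibers; the injectivity of $H^1(-, \BQ/\BZ)$ under a regular field extension is an elementary consequence of linear disjointness. Both are classical, consistent with the author's remark that the lemma is well known, so no genuine obstacle is expected.
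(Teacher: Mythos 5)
Your proof is correct. The paper itself gives no argument for this lemma (it is simply declared \og bien connu \fg), so there is nothing to compare against; your residue computation is the standard proof one would expect. The two points that actually need checking are handled properly: that $p^{-1}(D)$ is integral of codimension one in $Y$ with ramification index $1$ over $\CO_{X,D}$ (flatness plus geometrically integral fibres), and that $H^1(k(D),\BQ/\BZ)\to H^1(k(D_Y),\BQ/\BZ)$ is injective because $k(D)$ is algebraically closed in $k(D_Y)$; combined with purity for the smooth varieties $X$ and $Y$ in characteristic zero and the injectivity of $\Br(X)\to\Br(U)$, $\Br(Y)\to\Br(V)$, this yields the nontrivial inclusion.
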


\begin{prop}\label{prop-binvariant}
Sous les hypoth\`eses de la D\'efinition \ref{def-invariant}, pour un sous-groupe $B\sbt \Br(X)$, les \'enonc\'es ci-dessous sont \'equivalents:

(1) $B\sbt \Br_G(X)$;

(2) $\rho^*B+p_1^*\Br(G)=p_2^*B+p_1^*\Br(G)$;

(3) pour toute extension de corps $K/k$ et tout $g\in G(K)$, l'action $\rho_g: X_K\xrightarrow{g\cdot (-)}X_K$ induit 
un morphisme $ \Br(X_K)\xrightarrow{\rho_g^*}\Br(X_K)$ tel que
\begin{equation}\label{prop-binv-e}
\rho_g^*\pi^*B+\Im \Br(K)=\pi^*B+\Im \Br(K),
\end{equation}
o\`u $X_K\xrightarrow{\pi}X$;

(4) pour tout $b\in B$, toute extension de corps $K/k$  et tout  $g\in G(K)$, 
on a 
$$(\rho_g^*\pi^* (b)-\pi^*(b)) \in \Im\Br(K),$$
o\`u $\pi$ et $\rho_g^*$ sont d\'efinis dans (3);

(5) pour toute extension de corps $K/k$ telle que  $k$ soit alg\'ebriquement clos dans $K$, et tout $g\in G(K)$,
on a (\ref{prop-binv-e}),
o\`u $\pi$ et $\rho_g^*$ sont d\'efinis dans (3).
\end{prop}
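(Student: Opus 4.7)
The plan is to prove the implications
$(1)\Leftrightarrow (2)$, $(1)\Rightarrow (4)\Rightarrow (3)\Rightarrow (5)\Rightarrow (1)$, where the last implication is the substantive one.

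For $(1)\Leftrightarrow (2)$, one direction is immediate from the definition: if $b\in B\subset \Br_G(X)$, then $\rho^*b\in p_2^*b+p_1^*\Br(G)\subset p_2^*B+p_1^*\Br(G)$, and similarly $p_2^*b\in \rho^*B+p_1^*\Br(G)$. Conversely, assuming (2), for $b\in B$ write $\rho^*b=p_2^*b'+p_1^*c$ with $b'\in B$, $c\in \Br(G)$. Pulling back along the section $X\xrightarrow{\iota_2}G\times X,\ x\mapsto (e_G,x)$, and using $\rho\circ \iota_2=\id_X$, one finds $b=b'+e_G^*c$ in $\Br(X)$. Replacing $c$ by $c-\mathrm{str}_G^*(e_G^*c)\in\Br(G)$ (where $\mathrm{str}_G:G\to\Spec k$), which does not change $p_1^*c$ modulo an element of $p_2^*\Br(X)$, one absorbs the constant and obtains $\rho^*b-p_2^*b\in p_1^*\Br(G)$, i.e.\ $b\in \Br_G(X)$.

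The implication $(1)\Rightarrow (4)$ is a base-change computation: for $b\in B$, write $\rho^*b-p_2^*b=p_1^*c$ with $c\in\Br(G)$. For $g\in G(K)$ the morphism $X_K\xrightarrow{(g,\id)}G_K\times_K X_K$ satisfies $\rho_K\circ (g,\id)=\rho_g$ and $p_{2,K}\circ(g,\id)=\id$, while $p_{1,K}\circ (g,\id)$ factors through $\Spec K\xrightarrow{g}G_K$; pulling the equation back yields $\rho_g^*\pi^*b-\pi^*b=\mathrm{str}_{X_K}^*(g^*c)\in \Im\Br(K)$. The implication $(4)\Rightarrow (3)$ is immediate (take $b'=b$), and since $G$ is a group we may replace $g$ by $g^{-1}$ to get the reverse inclusion. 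The implication $(3)\Rightarrow (5)$ is trivial.

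The main obstacle is $(5)\Rightarrow (1)$. Since $G$ is connected in characteristic zero it is geometrically integral (it is smooth and has the rational point $e_G$), so $k$ is algebraically closed in $K:=k(G)$, and (5) applies to the generic point $g=\eta_G\in G(k(G))$. For $b\in B$ we get an equality $\rho_{\eta_G}^*\pi^*b=\pi^*b'+\pi_G^*c_\eta$ in $\Br(X_{k(G)})$ for some $b'\in B$ and $c_\eta\in\Br(k(G))$. This equation lives on the generic fibre of $p_1:G\times X\to G$; by a standard spreading-out argument (the Brauer group is the colimit over open neighbourhoods), there is an open $U\subset G$ and $c_U\in\Br(U)$ with $c_U|_{k(G)}=c_\eta$ and $\rho^*b-p_2^*b'=p_1^*c_U$ in $\Br(U\times X)$. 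Shrinking $U$ so that $e_G\in U$ and pulling back by $\iota_2$ gives $b-b'=e_G^*c_U$ in $\Br(X)$, hence after replacing $c_U$ by $c_U-\mathrm{str}_G^*(e_G^*c_U)$ we may assume $b'=b$, so that $\alpha:=\rho^*b-p_2^*b\in\Br(G\times X)$ satisfies $\alpha|_{U\times X}=p_1^*c_U$. It remains to extend $c_U$ to $\Br(G)$. Since $G\times X$ is smooth, the class $\alpha$ is unramified at every codimension one point of $G\times X$; in particular its residue vanishes along $D\times X$ for each codimension one integral divisor $D\subset G\setminus U$. By functoriality of residues, this residue equals the image of the residue $\partial_D(c_U)\in H^1(k(D),\BQ/\BZ)$ under the pullback $H^1(k(D),\BQ/\BZ)\to H^1(k(D\times X),\BQ/\BZ)$, which is injective because $D\times X\to D$ is smooth with geometrically integral fibres so that $k(D)$ is algebraically closed in $k(D\times X)$. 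Hence $\partial_D(c_U)=0$ for every such $D$, so by Bloch--Ogus purity for the Brauer group on the smooth variety $G$ the class $c_U$ extends to $\tilde c\in \Br(G)$. The equality $\alpha=p_1^*\tilde c$ holds on the dense open $U\times X$ and hence on all of $G\times X$ (since $\Br(G\times X)\hookrightarrow \Br(k(G\times X))$), which is exactly the statement $b\in\Br_G(X)$.
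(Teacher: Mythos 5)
Your argument is correct in substance and follows the same skeleton as the paper's proof: the easy implications and the reductions via the sections $x\mapsto(e_G,x)$ and $x\mapsto(g,\pi(x))$ are the same, and your treatment of $(5)\Rightarrow(1)$ is the paper's $(5)\Rightarrow(2)$ in only slightly different packaging. You specialize (5) at the generic point $\eta_G$ (legitimate because $G$, being a connected group in characteristic zero, is geometrically integral, so $k$ is algebraically closed in $k(G)$), spread the resulting identity out over a nonempty open $U\sbt G$, and then descend the ``constant'' class from $\Br(U)$ to $\Br(G)$. The only genuine difference is how this descent is done: the paper invokes its Lemma \ref{brauertorseur} applied to $p_1\colon G\times X\to G$ in colimit form, which gives exactly $p_1^*\Br(\eta_G)\cap\Br(G\times X)=p_1^*\Br(G)$, whereas you reprove this fact by hand, computing residues along the divisors $D\times X$ for $D$ running over the codimension-one components of $G\setminus U$, using that $H^1(k(D),\BQ/\BZ)\to H^1(k(D\times X),\BQ/\BZ)$ is injective because $X$ is geometrically integral over $k$, and concluding by purity for the Brauer group of the smooth variety $G$. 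Your residue argument is in effect a proof of the quoted lemma, so the two routes buy the same thing; yours is self-contained, the paper's is shorter.

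One step is misstated: after spreading out you write ``shrinking $U$ so that $e_G\in U$''. Shrinking cannot create $e_G\in U$, and nothing guarantees that the open produced by the limit argument contains $e_G$ (at that stage $c_U$ is not yet known to extend to a neighbourhood of $e_G$). This is harmless but should be reordered: run the residue/purity argument directly with $\alpha'=\rho^*b-p_2^*b'$ (its residues along $D\times X$ vanish for the same reason, since $\alpha'\in\Br(G\times X)$), obtaining $\rho^*b-p_2^*b'=p_1^*\tilde c$ on all of $G\times X$ with $\tilde c\in\Br(G)$; only then pull back along $x\mapsto(e_G,x)$ to get $b-b'\in\Im\Br(k)$ and absorb this constant into $\tilde c$, which yields $\rho^*b-p_2^*b\in p_1^*\Br(G)$. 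With that reordering the proof is complete.
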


\begin{proof}
Les implications $(4)\Rightarrow (3)$, $(3)\Rightarrow (5)$ sont claires.

Pour $(2)\Rightarrow (1)$, on note $i_e:X\xrightarrow{e_G\times id_X}G\times X$ l'immersion ferm\'ee.
Pour $b\in B$, il existe $b'\in B$ et $a\in \Br(G)$ tels que $\rho^*(b)=p_2^*(b')+p_1^*(a)$.
Puisque $\rho\circ i_e=p_2\circ i_e=id_X$ et  que $p_1\circ i_e$ se factorise par $\Spec\ k$, 
on a $b-b'\in \Im\Br(k) $ et donc 
$$(\rho^*(b)-p_2^*(b))\in (p_1^*\Br(G)+\Im\Br(k))=p_1^*\Br(G).$$

Pour $(1)\Rightarrow (4)$, on note $i_g:X\xrightarrow{g\times id_X}G\times X$ le morphisme et $A:=\Im \Br(K)$.
Alors $\pi\circ \rho_g=\rho\circ i_g $ et $\pi=p_2\circ i_g $.
Puisque $i_g^*(p_1^*\Br(G))\sbt A$, on a
$$\rho_g^*\pi^* (b)+A=i_g^*(p_1^*\Br(G)+\rho^*(b))+A=i_g^*(p_1^*\Br(G)+p_2^*(b))+A=\pi^*(b)+A.$$

Pour $(5)\Rightarrow (2)$, notons $X_{\eta_G}\xrightarrow{\pi}X$ la projection et $X_{\eta_G}\xrightarrow{i_{\eta}}G\times X$ l'immersion canonique. Alors $\Br(G\times X)\to \Br(X_{\eta_G})$ est injectif.
Par le lemme \ref{brauertorseur}, $p_1^*\Br(\eta_G)\cap \Br(G\times X)=p_1^*\Br(G)$.
Donc il suffit de montrer que:
$$(i_{\eta}^*\rho^*)B+\Im \Br(\eta_G)=(i_{\eta}^*p_2^*)B+ \Im \Br(\eta_G).$$
Le r\'esultat d\'ecoule de $p_2\circ i_{\eta}=\pi$ et $\rho\circ i_{\eta}=\rho_{\eta_G}\circ \pi$.
\end{proof}

\begin{prop}  \label{prop-invariant}
Sous les hypoth\`eses de la D\'efinition \ref{def-invariant}, alors:

(1) pour tout groupe alg\'ebrique connexe $G_0$ muni d'un homomorphisme $G_0\xrightarrow{\phi}G$,  
on a $\Br_G(X)\sbt \Br_{G_0}(X)$;

(2) pour toute $G$-vari\'et\'e $Y$ lisse g\'eom\'etriquement int\`egre munie d'un $G$-morphisme $Y\xrightarrow{p}X$,  
on a $p^*\Br_G(X)\sbt \Br_G(Y)$, o\`u $\Br(X)\xrightarrow{p^*}\Br(Y)$;

(3) pour tout $G$-ouvert dense $U\sbt X$, on a $\Br_G(X)=\Br_G(U)\cap \Br(X)$;

(4) si $G$ est lin\'eaire, on a $\Br_1(X)\sbt \Br_G(X)$;

(5) pour toute $G$-vari\'et\'e $Y$ munie d'un $G$-morphisme $Y\xrightarrow{p}X$, 
si $p$ est un torseur sous un groupe lin\'eaire connexe $H$,
on a $(p^*)^{-1}\Br_G(Y)\sbt \Br_G(X)$, o\`u $\Br(X)\xrightarrow{p^*}\Br(Y)$;

(6) sous les hypoth\`eses de (5),  si $G$ est lin\'eaire, on a
$$\Br_1(X,Y):=\Ker(\Br(X)\to \Br(Y_{\bk}))\sbt \Br_G(X);$$
\end{prop}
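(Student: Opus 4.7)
Les parties (1), (2) et (3) d\'ecoulent formellement des caract\'erisations de $\Br_G(X)$ donn\'ees par la proposition \ref{prop-binvariant}. Pour (1), on applique la caract\'erisation (4) de cette proposition: pour $b \in \Br_G(X)$, $K/k$ et $g_0 \in G_0(K)$, l'action de $g_0$ sur $X_K$ co\"incide avec celle de $\phi(g_0) \in G(K)$, de sorte que la condition $\rho_{\phi(g_0)}^*\pi^*b - \pi^*b \in \Im\Br(K)$ donne imm\'ediatement la condition analogue pour $g_0$. Pour (2), l'identit\'e d\'efinissante $\rho_X^* b - p_{2,X}^* b \in p_1^*\Br(G)$ se tire en arri\`ere le long du $G$-morphisme $p$ en utilisant les relations de commutation $\rho_X \circ (\id_G \times p) = p \circ \rho_Y$ et $p_{2,X} \circ (\id_G \times p) = p \circ p_{2,Y}$. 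Pour (3), l'inclusion $\subset$ est un cas particulier de (2); r\'eciproquement, pour $b \in \Br_G(U) \cap \Br(X)$, la classe $\alpha := \rho_X^* b - p_2^* b \in \Br(G \times X)$ a pour restriction \`a l'ouvert dense $G \times U$ un \'el\'ement de $p_1^*\Br(G)$, et l'injection $\Br(G \times X) \hookrightarrow \Br(G \times U)$ (valable puisque $G \times X$ est lisse) force $\alpha \in p_1^*\Br(G)$.

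Pour (4), le plan est d'appliquer la proposition \ref{prop-binvariant}(5). Soit $K/k$ une extension avec $k$ alg\'ebriquement clos dans $K$ et soit $g \in G(K)$; pour $b \in \Br_1(X)$, l'\'el\'ement $\rho_g^*\pi^*b - \pi^*b$ appartient \`a $\Br_1(X_K)$ et on veut le voir dans $\Im\Br(K)$. Via l'injection de Leray $\Br_1(X_K)/\Im\Br(K) \hookrightarrow H^1(K, \Pic(X_{\bar{K}}))$, cela se r\'eduit \`a montrer que l'automorphisme induit par $\rho_g$ sur $\Pic(X_{\bar{K}})$ est trivial. Le point cl\'e est que, $G$ \'etant connexe, son action sur $\Pic(X_{\bar{K}})$ est alg\'ebrique \`a travers le sch\'ema de Picard et se factorise par la composante neutre, qui agit trivialement sur cette classe discr\`ete; cela requiert l'analyse standard de type Sansuc de $\Br_1(G \times X)$ appliqu\'ee \`a $\rho^*b - p_2^*b$.

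Pour (5), soient $b \in \Br(X)$ avec $p^*b \in \Br_G(Y)$, et posons $\alpha := \rho_X^*b - p_2^*b \in \Br(G \times X)$. Le morphisme $q := \id_G \times p: G \times Y \to G \times X$ est un $H$-torseur. L'hypoth\`ese donne $q^*\alpha \in p_1^*\Br(G)$, disons $q^*\alpha = p_1^*\beta$ avec $\beta \in \Br(G)$. Alors $q^*(\alpha - p_1^*\beta) = 0$ dans $\Br(G \times Y)$, et la suite exacte de Sansuc associ\'ee au $H$-torseur $q$ identifie ce noyau \`a l'image de $\Pic(H) \to \Br(G \times X)$ via la classe du torseur. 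Par fonctorialit\'e et le fait que $[q] = p_2^*[p]$, cette image s'identifie \`a $p_2^*\Im[\Pic(H) \to \Br(X)] \sbt p_2^*\Br_1(X)$, donc $\alpha - p_1^*\beta = p_2^*\delta$ pour un certain $\delta \in \Br_1(X)$. En appliquant $i_e^*$, on a $i_e^*\alpha = 0$ et $i_e^*p_1^*\beta = e_G^*\beta \in \Im\Br(k)$, d'o\`u $\delta = -e_G^*\beta \in \Im\Br(k)$; par cons\'equent $p_2^*\delta \in p_1^*\Br(G)$ et $\alpha \in p_1^*\Br(G)$, ce qui donne $b \in \Br_G(X)$.

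La partie (6) est alors imm\'ediate: pour $b \in \Br_1(X, Y) = \Ker[\Br(X) \to \Br(Y_{\bk})]$, le tir\'e en arri\`ere $p^*b$ appartient \`a $\Br_1(Y)$, donc \`a $\Br_G(Y)$ par (4); la partie (5) donne alors $b \in \Br_G(X)$. Les obstacles techniques principaux sont l'analyse en (4) de l'action de $G$ sur $\Pic(X_{\bar{K}})$ modulo $\Pic^0$, et l'utilisation pr\'ecise de la suite exacte de Sansuc dans (5) qui contr\^ole le noyau de $q^*$ et le situe dans le sous-groupe $p_2^*\Br_1(X)$ gr\^ace \`a la factorisation de la classe de torseur.
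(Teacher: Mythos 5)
Votre d\'emonstration est correcte et, pour les points (1), (2), (3), (5) et (6), elle suit essentiellement la m\^eme voie que l'article : (1)--(3) sont les v\'erifications formelles que l'article laisse au lecteur, votre (5) est exactement l'argument de l'article (diagramme commutatif des suites exactes de Sansuc pour $p$ et $\id_G\times p$, identification du noyau de $(\id_G\times p)^*$ \`a $p_2^*\Im[\Pic(H)\to\Br(X)]$, puis \'evaluation le long de $e_G\times \id_X$), et (6) s'obtient de la m\^eme fa\c{c}on en combinant (4) et (5). La seule divergence r\'eelle est en (4) : l'article conclut en deux lignes \`a partir de la d\'ecomposition $\Br_1(G\times X)=\Br_a(G)\oplus\Br_1(X)$ de Sansuc (\cite[Lem. 6.6]{S}), qui fournit directement le crit\`ere (2) de la proposition \ref{prop-binvariant}, tandis que vous passez par le crit\`ere ponctuel sur les extensions de corps, la suite de Hochschild-Serre et la trivialit\'e de l'action de $\rho_g$ sur $\Pic(X_{\bar{K}})$. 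Cette trivialit\'e est vraie, mais votre justification via ``le sch\'ema de Picard et sa composante neutre'' ne s'applique pas telle quelle \`a une vari\'et\'e non propre (le foncteur de Picard n'y est pas repr\'esentable en g\'en\'eral) ; la justification correcte est pr\'ecis\'ement la d\'ecomposition $\Pic(G_{\bar{K}}\times X_{\bar{K}})\cong\Pic(G_{\bar{K}})\oplus\Pic(X_{\bar{K}})$ : en \'ecrivant $\rho^*L=p_1^*M\otimes p_2^*N$, la restriction \`a $e_G\times X$ donne $N\cong L$, puis la restriction \`a $g\times X$ donne $\rho_g^*L\cong L$. Une fois ce point pr\'ecis\'e, votre variante de (4) est valable, mais elle est plus d\'etourn\'ee que celle de l'article, qui \'evite le passage aux extensions de corps et \`a $H^1(K,\Pic(X_{\bar{K}}))$.
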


\begin{proof}
Les \'enonc\'es (1), (2) et (3) d\'ecoulent de la d\'efinition.

Pour (4),  par \cite[Lem. 6.6]{S}, $\Br_1(G\times X)=\Br_a(G)\oplus \Br_1(X)$, et donc
$$\Br_1(G\times X)=p_2^*\Br_1(X)+p_1^*\Br_1(G)=\rho^*\Br_1(X)+p_1^*\Br_1(G).$$
Le r\'esultat  d\'ecoule de la proposition \ref{prop-binvariant}.

Pour (5), puisque $G\times Y\xrightarrow{id_G\times p}G\times X$ est aussi un $H$-torseur, par la suite exacte de Sansuc (\cite[Prop. 6.10]{S}), on a un diagramme commutatif de suites exactes
$$\xymatrix{\Pic(H)\ar[r]\ar[d]^=&\Br(X)\ar[r]^{p^*}\ar[d]^{p_2^*}&\Br(Y)\ar[d]^{p_{2,Y}^*}\\
\Pic(H)\ar[r]&\Br(G\times X)\ar[r]^-{(id_G\times p)^*}&\Br(G\times Y).
}$$
 Donc, pour tout $\alpha\in (p^*)^{-1}\Br_G(Y)$, on a
$$\rho^*(\alpha)-p_2^*(\alpha)\in p_1^*\Br(G)+\Im \Pic(H)\sbt p_1^*\Br(G)+p_2^*\Br(X) .$$ 
D'apr\`es calculer sur $X\xrightarrow{e_G\times id_X}G\times X$, on peut voir que $\rho^*(\alpha)-p_2^*(\alpha)\in p_1^*\Br(G)$.

Par (4) et (5), $\Br_1(X,Y)\sbt \Br_G(X)$.
\end{proof}

Soient $X$ une vari\'et\'e lisse g\'eom\'etriquement int\`egre et $U\sbt X$ un ouvert non vide. 
Supposons que $D:=(X\setminus U)$ est lisse de codimension $1$. Par le th\'eor\`eme de puret\'e pour la cohomologie \'etale
\`a support  dans un ferm\'e lisse (cf. \cite[\S VI.5]{Mi80}),
on a une suite exacte:
$$H^2(X,\BQ/\BZ(1))\to H^2(U,\BQ/\BZ(1))\to H^1(D,\BQ/\BZ)\to H^3(X,\BQ/\BZ (1))\to H^3(U,\BQ/\BZ (1)).$$
Puisque $\Pic(X)\to \Pic(U)$ est surjectif et $\Br(X)\to \Br(U)$ est injectif, 
d'apr\`es la suite exacte de Kummer, on a la suite exacte (cf. Grothendieck  \cite{Gro})
\begin{equation}\label{purity}
0\to \Br(X)\to \Br(U)\xrightarrow{\partial}H^1(D,\BQ/\BZ)\to H^3(X,\BQ/\BZ (1))\to H^3(U,\BQ/\BZ (1)).
\end{equation}

Soit $G$ un groupe alg\'ebrique connexe. 
Si  $X$ est munie d'une $G$-action $\rho : G \times X \to X$ respectant $U$, on a un diagramme de suites exactes:
$$\xymatrix{0\ar[r]&\Br(X)\ar[r]\ar[d]^{\theta^*}&\Br(U)\ar[r]^-{\partial}\ar[d]^{\theta_U^*}&H^1(D,\BQ/\BZ)\ar[d]^{\theta_D^*}\\
0\ar[r]&\Br(G\times X)\ar[r]^{\Res}&\Br(G\times U)\ar[r]&H^1(G\times D,\BQ/\BZ),
}$$
o\`u $G\times X\xrightarrow{\theta}X$ est soit $p_2$  soit $\rho$.
 On a:

\begin{lem}\label{lem-fibreinvariant}
Avec les notations et hypoth\`eses ci-dessus, on a: 

(1)  $p_{2,D}^*|_{\partial (\Br_G(U))}=\rho^*_D |_{\partial (\Br_G(U))} $;

(2) pour tout $b\in \Br_G(U)$, il existe un rev\^etement fini \'etale galoisien ab\'elien $D'\xrightarrow{\pi}D$ tel que 
$D'$ soit une $G$-vari\'et\'e, $\pi$ soit un $G$-morphisme et que $\pi^*(\partial (b))=0\in H^1(D',\BQ/\BZ)$.
\end{lem}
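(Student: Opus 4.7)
Mon plan pour (1) est de calculer le r\'esidu des deux membres de la relation d\'efinissant $\Br_G(U)$. Par d\'efinition, pour $b\in \Br_G(U)$, il existe $a\in \Br(G)$ tel que $\rho_U^*(b)-p_{2,U}^*(b)=p_{1,U}^*(a)$ dans $\Br(G\times U)$. Comme $G$ est lisse, $G\times X$ l'est aussi et $G\times D\sbt G\times X$ est un diviseur lisse de codimension $1$; la suite exacte de puret\'e (\ref{purity}) s'applique donc \`a ce couple. Par fonctorialit\'e du r\'esidu $\partial$ par rapport aux morphismes lisses $\rho, p_2: G\times X\to X$ (qui pr\'eservent le couple $(X,D)$), les r\'esidus des deux premiers termes valent respectivement $\rho_D^*(\partial b)$ et $p_{2,D}^*(\partial b)$. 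Quant \`a $p_{1,U}^*(a)$, elle provient de $p_1^*(a)\in \Br(G\times X)$, donc son r\'esidu sur $G\times D$ est nul par (\ref{purity}) appliqu\'ee \`a ce couple. Ceci \'etablira (1).

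Pour (2), l'\'el\'ement $\alpha:=\partial(b)\in H^1(D,\BQ/\BZ)$ est de torsion puisque $\Br(U)$ l'est. En choisissant $n$ un multiple de l'ordre de $\alpha$, on voit $\alpha$ comme une classe dans $H^1(D,\BZ/n)$ (via l'injection induite par $0\to \BZ/n\to \BQ/\BZ\xrightarrow{n}\BQ/\BZ\to 0$), ce qui classifie un $\BZ/n$-torseur \'etale (galoisien ab\'elien) $\pi:D'\to D$. Le point (1) fournit un isomorphisme de $\BZ/n$-torseurs $p_{2,D}^*D'\cong \rho_D^*D'$ au-dessus de $G\times D$. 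En composant avec la projection naturelle $\rho_D^*D'\to D'$, on obtient un morphisme $\mu:G\times D'\to D'$ v\'erifiant $\pi\circ \mu=\rho_D\circ (\id_G\times \pi)$.

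Le point d\'elicat sera de v\'erifier que $\mu$ peut \^etre choisi comme une $G$-action, puisque l'isomorphisme pr\'ec\'edent n'est d\'etermin\'e qu'\`a translation pr\`es. On se ram\`ene d'abord au cas o\`u $D$ est g\'eom\'etriquement connexe (les composantes connexes g\'eom\'etriques de $D$ \'etant $G$-stables puisque $G$ est connexe). Comme $G$ est aussi g\'eom\'etriquement connexe, on a $\Mor(G\times D,\BZ/n)=\BZ/n$, donc $\mu$ est d\'etermin\'e \`a translation par une constante pr\`es. La restriction $\mu|_{\{e_G\}\times D'}$ \'etant un automorphisme de $D'$ au-dessus de $D$, c'est une translation par un \'el\'ement de $\BZ/n$, et on normalise $\mu$ pour que $\mu|_{\{e_G\}\times D'}=\id_{D'}$. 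Pour l'associativit\'e, les morphismes $\mu\circ (m_G\times \id_{D'})$ et $\mu\circ (\id_G\times \mu): G\times G\times D'\to D'$ rel\`event le m\^eme morphisme $G\times G\times D\to D$, donc leur diff\'erence d\'efinit un \'el\'ement de $\Mor(G\times G\times D,\BZ/n)=\BZ/n$, constant, qui s'annule sur $\{e_G\}\times G\times D'$ gr\^ace \`a la normalisation. Cette diff\'erence sera donc identiquement nulle, et $\mu$ d\'efinira une $G$-action sur $D'$, faisant de $\pi$ un $G$-morphisme. Enfin, $\pi^*(\alpha)=0$ r\'esultera de la propri\'et\'e classique de trivialisation tautologique d'un torseur sur son espace total.
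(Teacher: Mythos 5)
Votre preuve est correcte. Pour (1), votre argument est exactement celui du texte : on applique la fonctorialit\'e de la suite de puret\'e (\ref{purity}) aux deux morphismes $\theta=p_2,\rho: G\times X\to X$, qui ram\`enent bien le couple $(X,D)$ sur $(G\times X, G\times D)$, et le terme $p_{1,U}^*(a)$ a un r\'esidu nul car il provient de $p_1^*(a)\in \Br(G\times X)$, c'est-\`a-dire $p_{1,U}^*\Br(G)\sbt \Im(\Br(G\times X)\to \Br(G\times U))$. Pour (2), vous suivez le m\^eme sch\'ema que le texte (rel\`evement de $\partial(b)$ en une classe de $H^1(D,\BZ/n)$, torseur $D'\to D$, puis (1) donne $p_{2,D}^*[D']=\rho_D^*[D']$ dans $H^1(G\times D,\BZ/n)$), mais l\`a o\`u le texte invoque directement le th\'eor\`eme \ref{thmaction} (2) --- le th\'eor\`eme de structure g\'en\'eral pour les torseurs sous un groupe de type multiplicatif sur une $G$-vari\'et\'e, dont la preuve utilise \cite[Thm. 5.6]{CT07} et le lemme de Rosenlicht --- vous construisez l'action \`a la main : normalisation de l'isomorphisme de torseurs le long de $\{e_G\}\times D'$, puis rigidit\'e des morphismes vers le groupe constant fini $\BZ/n$ pour obtenir l'associativit\'e. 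C'est un raccourci l\'egitime et plus \'el\'ementaire, qui reproduit la preuve du th\'eor\`eme \ref{thmaction} dans ce cas particulier (la constance des fonctions localement constantes rempla\c{c}ant Rosenlicht), et qui a l'avantage de ne pas requ\'erir que $D$ soit g\'eom\'etriquement int\`egre. Un seul point \`a retoucher : la r\'eduction au cas o\`u $D$ est g\'eom\'etriquement connexe n'est pas licite telle quelle (les composantes connexes g\'eom\'etriques ne sont pas des $k$-vari\'et\'es), mais elle est inutile : comme $G$ est g\'eom\'etriquement connexe et poss\`ede un point rationnel, la restriction induit un isomorphisme $H^0(G\times G\times D,\BZ/n)\cong H^0(D,\BZ/n)$, de sorte que la diff\'erence entre $\mu\circ(m_G\times \id_{D'})$ et $\mu\circ(\id_G\times\mu)$, qui descend \`a $G\times G\times D$, est d\'etermin\'ee par sa restriction \`a $\{e_G\}\times G\times D$, o\`u elle s'annule par votre normalisation; l'argument vaut donc sans aucune hypoth\`ese de connexit\'e sur $D$.
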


\begin{proof}
Puisque $p_{1,U}^*\Br(G)\sbt \Im (\Res) $, l'\'enonc\'e (1) d\'ecoule de la proposition \ref{prop-binvariant} (2).
Pour $b\in \Br_G(U)$, soit $n\in \BZ$ l'ordre de $\partial (b)$.
Alors $\partial (b)\in H^1(D,\BZ/n)$. 
Soit $D'\xrightarrow{\pi}D$ un $\BZ/n$-torseur tel que $[D']=\partial (b)$.
Par (1), $p_{2,D}^*[D']=\rho^*_D[D']\in H^1(G\times D,\BZ/n)$.
D'apr\`es le th\'eor\`eme \ref{thmaction} (2), $D'$ est une $G$-vari\'et\'e.
\end{proof}

\begin{lem}\label{lembrauerkbar}
 Soit $G$ un groupe lin\'eaire connexe.
 Alors $\Br_G(G)=\Br_1(G)$.
\end{lem}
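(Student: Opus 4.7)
The inclusion $\Br_1(G)\subseteq \Br_G(G)$ is immediate from Proposition \ref{prop-invariant}(4); the content lies in showing $\Br_G(G)\subseteq \Br_1(G)$. Given $b\in \Br_G(G)$, we must show $b_{\bk}=0$ in $\Br(G_{\bk})$.

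My first step is to translate the $G$-invariance into a primitivity identity. Writing $m:G\times G\to G$ for the multiplication (which is the action of $G$ on itself), by hypothesis there exists $a\in \Br(G)$ with $m^*b-p_2^*b=p_1^*a$. Pulling this identity back along $\id_G\times e_G:G\to G\times G$, under which $m$ and $p_1$ become $\id_G$ while $p_2$ factors through $\Spec k$, yields $a=b-e_G^*b$ in $\Br(G)$, where $e_G^*b$ denotes the pullback of its constant class. Base-changing to $\bk$ and using $\Br(\bk)=0$ gives $a_{\bk}=b_{\bk}$, so
$$m^*(b_{\bk})=p_1^*(b_{\bk})+p_2^*(b_{\bk}) \quad \text{in } \Br((G\times G)_{\bk}).$$
That is, $b_{\bk}$ is primitive with respect to the Hopf-algebra structure on $\Br(G_{\bk})$ induced by $m$.

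The heart of the argument, and the main obstacle, is to conclude that any primitive class in $\Br(G_{\bk})$ vanishes for $G$ connected linear. I would proceed by structural reduction. First, since the base field has characteristic zero, the Levi decomposition identifies $G_{\bk}$ with $G^{red}_{\bk}\times_{\bk} \BA^N_{\bk}$ as a variety, and $\BA^1$-invariance of the Brauer group (together with the fact that the Levi projection is a group homomorphism, so primitivity is preserved) reduces the claim to $G$ reductive. Next, via the standard isogeny $T\times G^{sc}\twoheadrightarrow G_{\bk}$ with finite central kernel $\mu$, where $T$ is the connected centre, the classical vanishings $\Br(G^{sc}_{\bk})=0$, $\bk[G^{sc}]^{\times}=\bk^{\times}$ and $\Pic(G^{sc}_{\bk})=0$ imply, via K\"unneth, that $\Br((T\times G^{sc})_{\bk})\cong \Br(T_{\bk})$.

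The key calculation is that primitive $\Br(T_{\bk})=0$ for any torus: since $\Br(T_{\bk})[n]\cong \wedge^2 H^1(T_{\bk},\mu_n)$, expanding $m^*(\alpha)$ for $\alpha=\sum_{i<j}a_{ij}\, x_i\wedge x_j$ in the K\"unneth decomposition of $H^2((T\times T)_{\bk},\mu_n)$ produces cross terms $\sum_{i<j} a_{ij}(p_1^*x_i\wedge p_2^*x_j - p_1^*x_j\wedge p_2^*x_i)$ in the middle summand $p_1^*H^1\otimes p_2^*H^1$; these are linearly independent, so primitivity forces $a_{ij}=0$ for all $i<j$. Finally, to descend through the finite quotient $\mu$, Sansuc's exact sequence for the $\mu$-torsor $T\times G^{sc}\to G$ (together with $\Pic(\mu_{\bk})=0$) yields an injection $\Br(G_{\bk})\hookrightarrow \Br((T\times G^{sc})_{\bk})$, whence $b_{\bk}=0$. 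The most delicate step is invoking Sansuc's sequence carefully for a non-connected finite kernel; once that is set up, the rest is a formal consequence of the primitivity identity derived in Step~1.
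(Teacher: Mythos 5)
Your opening step (deriving the primitivity identity $m^*(b_{\bk})=p_1^*(b_{\bk})+p_2^*(b_{\bk})$ from the definition of $\Br_G$), your reduction to $G^{red}$, and your computation that a primitive class in $\Br(T_{\bk})$ vanishes are all correct. The gap is the final descent step: the pullback $\Br(G_{\bk})\to \Br((T\times G^{sc})_{\bk})$ along the central isogeny is \emph{not} injective in general, and Sansuc's exact sequence cannot be invoked for it, since \cite[Prop. 6.10]{S} (like every use of it in this paper) concerns torsors under \emph{connected} linear groups; for a finite kernel $\mu$ the Hochschild--Serre term $H^2(\mu,\bk[T\times G^{sc}]^{\times})$ survives and is exactly the obstruction. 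Concretely, take $G=\BG_m\times \PGL_2$ over $\bk$, so $T=\BG_m$, $G^{sc}=\SL_2$, $\mu=\mu_2$. Let $t$ be the coordinate on $\BG_m$ and $c\in H^1(\PGL_{2,\bk},\BZ/2)$ the class of the covering $\SL_2\to \PGL_2$. The class $t\cup c\in \Br(G_{\bk})[2]$ is nonzero (restrict to $\BG_m\times T'$ with $T'\sbt \PGL_2$ a maximal torus of coordinate $s$: one gets the symbol $(t,s)_2$, already nonzero over $\bk(t,s)$), yet it pulls back to $0$ on $\BG_m\times \SL_2$ because $c$ becomes the class of the split covering. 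So the claimed injection fails. (This class is not primitive --- the cross terms $(t_1,c_2)+(t_2,c_1)$ survive --- so the Lemma itself is safe, but your argument as written does not reach it.)

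The repair is to replace the isogeny by the inclusion of a maximal torus $T_{\max}\sbt G$ for $G$ reductive: the Bruhat big cell gives an open $U\cong \BA^N\times T_{\max}\times \BA^N\sbt G$, hence an injection $\Br(G_{\bk})\hookrightarrow \Br(T_{\max,\bk})$; since $T_{\max}\hookrightarrow G$ is a group homomorphism compatible with the multiplications, the primitivity identity restricts to $T_{\max}$, and your $\wedge^2 H^1(T_{\bk},\mu_n)$ computation then kills $b_{\bk}$. This maximal-torus reduction is precisely the paper's route in the reductive case; by contrast, your handling of the torus case is genuinely different from the paper's, which does not argue via primitivity and K\"unneth but embeds $T$ in the smooth locus of the toric compactification $\BA^n$ and uses the purity sequence (\ref{purity}) together with the $G$-equivariance of residues (Lemme \ref{lem-fibreinvariant}) to force $\Br_G(T_{\bk})\sbt \Br(X)=0$. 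Both torus arguments are valid; yours is arguably more direct, the paper's has the advantage of being the same residue machinery it reuses later.
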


\begin{proof}
D'apr\`es la proposition \ref{prop-invariant} (2) et (4), il suffit de montrer que $\Br_G(G_{\bk})=0$ et on peut supposer  $k=\bk$.

Si $G$ est un tore de dimension $n$, alors $G\cong \BG_m^n$ est un ouvert de $ \BA^n$ canoniquement.
Soit $X:=\BA^n\setminus [(\BA^n\setminus G)_{sing}]$.
Alors $\codim (\BA^n\setminus X,\BA^n)\geq 2$, $\Br(X)=0$, $G$ est un ouvert de $X$ et $X\setminus G=\sqcup_{i=1}^nD_i$, chaque   $D_i$ \'etant  un $G$-espace homog\`ene de stabilisateur $\BG_m$.
D'apr\`es \cite[Prop. 2.2]{CX1}, pour tout  rev\^etement fini \'etale galoisien $D'_i\xrightarrow{\pi_i}D_i$ tel que $D'_i$ soit une $G$-vari\'et\'e int\`egre et que $\pi_i$ soit un $G$-morphisme,  le morphisme $\pi_i$ est un isomorphisme.
D'apr\`es le lemme \ref{lem-fibreinvariant} (2) et (\ref{purity}),  $\Br_G(G)\sbt \Br(X)=0$.

Si $G$ est r\'eductif, soit $T$ le tore maximal de $G$.
Par la d\'ecomposition de Bruhat, il existe un ouvert $U$ de $G$ tel que $U\cong \BA^{n}\times T\times \BA^{n}$, o\`u $2n=\dim (U)-\dim (T)$ (voir la d\'emonstration de \cite[Prop. 4.2]{CTb}). Donc $\Br(G)\to \Br(T)$ est injectif.
Le r\'esultat d\'ecoule de la proposition \ref{prop-invariant} (2). 

En g\'en\'eral, par \cite[Lem. 2.1]{CX2}, le morphisme $\Br(G^{red})\to \Br(G)$ est un isomorphisme.  
Le r\'esultat d\'ecoule de la proposition \ref{prop-invariant} (2). 
\end{proof}

Rappelons la d\'efinition de  $\Br_e(G)$ (cf. (\ref{Bredef})).

\begin{prop}\label{propbraueralgebraic}
Soient $G$ un  groupe lin\'eaire connexe et $(X,\rho )$ une $G$-vari\'et\'e lisse g\'eom\'e\-triquement int\`egre.  
 Pour tout $b\in \Br_G(X)$, on a 
$(\rho^*(b)-p_2^*(b))\in p_1^*\Br_{e}(G).$
\end{prop}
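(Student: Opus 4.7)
\medskip

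\textbf{Plan.} Pick any $a \in \Br(G)$ witnessing that $b \in \Br_G(X)$, that is, satisfying
\begin{equation}\label{starwitness}
\rho^*(b) - p_2^*(b) = p_1^*(a) \quad \text{in } \Br(G\times X). \tag{$\star$}
\end{equation}
The aim is to replace $a$ by an element $a' \in \Br_e(G)$ still satisfying $(\star)$. The plan has two steps: first show $a \in \Br_1(G)$ after base change to $\bk$, then kill the image of $a$ in $\Br(k)$ by subtracting a pullback.

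For the first step, I would exploit the cocycle identity of the action, namely the equality of the two maps $G\times G \times X \to X$ given by $\rho \circ (m_G \times \id_X) = \rho \circ (\id_G \times \rho)$, where $m_G$ is the group law of $G$. Pulling back $b$ along both sides and substituting \eqref{starwitness} repeatedly, one is led, after cancellation, to the identity
$$p_{12}^*\bigl(m_G^*(a) - p_1^*(a) - p_2^*(a)\bigr) = 0 \quad \text{in } \Br(G \times G \times X),$$
where $p_{12}\colon G\times G\times X \to G\times G$ denotes the projection. Base changing to $\bk$ and pulling back along the section $G\times G \to G\times G \times X$ induced by any point $x_0 \in X(\bk)$ (which exists by geometric integrality), one obtains $m_G^*(a_{\bk}) = p_1^*(a_{\bk}) + p_2^*(a_{\bk})$ in $\Br(G_{\bk} \times G_{\bk})$. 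This is precisely the condition that $a_{\bk} \in \Br_G(G_{\bk})$ where $G_{\bk}$ acts on itself by left translation (so $\rho = m_G$). The proof of Lemma \ref{lembrauerkbar} actually establishes the stronger statement $\Br_G(G_{\bk}) = 0$; hence $a_{\bk} = 0$, which gives $a \in \Br_1(G)$.

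For the second step, I would pull back \eqref{starwitness} along $i_e\colon X \hookrightarrow G\times X$, $x \mapsto (e_G, x)$. Since $\rho \circ i_e = p_2 \circ i_e = \id_X$, the left-hand side becomes $0$, while the right-hand side becomes $\pi_X^*(e_G^*(a))$, where $\pi_X\colon X \to \Spec k$. Thus $\pi_X^*(e_G^*(a)) = 0$ in $\Br(X)$. Set $a' := a - \pi_G^*(e_G^*(a)) \in \Br_1(G)$. Using $\pi_G \circ e_G = \id_{\Spec k}$ one computes $e_G^*(a') = 0$, so $a' \in \Br_e(G)$; and using the factorisation $\pi_G \circ p_1 = \pi_X \circ p_2$ together with the vanishing just established, one finds
$$p_1^*(a') = p_1^*(a) - p_2^*(\pi_X^*(e_G^*(a))) = p_1^*(a),$$
so \eqref{starwitness} still holds with $a'$ in place of $a$, concluding the proof.

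The delicate point will be the cocycle computation in step one: the natural pullback identity produced by the cocycle lives in $\Br(G\times G\times X)$, whereas invoking Lemma \ref{lembrauerkbar} requires an equality in $\Br(G\times G)$. This is precisely why one must pass to $\bk$ and pull back along a rational section of $p_{12}$, a section that exists only after base change in general.
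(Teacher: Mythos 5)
Your proof is correct and follows essentially the same route as the paper: the unit-section reduction from $\Br_e(G)$ to $\Br_1(G)$ is the paper's opening step, and the core of the argument is in both cases a base change to $\bk$, evaluation at a point of $X(\bk)$, and the vanishing $\Br_{G_{\bk}}(G_{\bk})=0$ supplied by Lemma \ref{lembrauerkbar}. The only difference is cosmetic: the paper pulls the defining identity back along $\id_G\times i_x\colon G\times G\to G\times X$ (where $i_x\colon g\mapsto g\cdot x$), applies the lemma to $i_x^*(b)$ and then uses injectivity of $p_1^*$, whereas you extract from the associativity of the action the primitivity relation $m_G^*(a)=p_1^*(a)+p_2^*(a)$ over $\bk$ and apply the lemma directly to the witness $a$.
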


\begin{proof}
Notons $X\xrightarrow{e_G\times id_X}G\times X$. 
Puisque $p_2\circ (e_G\times id_X)=\rho\circ (e_G\times id_X)$, il suffit de montrer que, pour tout $b\in \Br_G(X)$, on a 
$(\rho^*(b)-p_2^*(b))\in p_1^*\Br_1(G)$.

On peut supposer  $k=\bk$. 
Un point $x\in X(k)$ induit un morphisme $G\xrightarrow{i_x} X$. 
Notons $m$ la multiplication sur $G$.
Alors on a un diagramme commutatif:
$$\xymatrix{\Br(X)\ar[r]^-{p_2^*-\rho^*}\ar[d]^{i_x^*}&\Br(G\times X)\ar[d]^{(id_G\times i_x)^*}&\Br(G)\ar[d]^=\ar[l]_{p_1^*}\\
\Br(G)\ar[r]_-{p_2^*-m^*}&\Br(G\times G)&\Br(G)\ar[l]^{p_1^*}.
}$$
Le r\'esultat d\'ecoule du lemme \ref{lembrauerkbar} et de l'injectivit\'e de $p_1^*$.
\end{proof}

\subsection{L'homomorphisme de Sansuc}

Soient $G$ un  groupe alg\'ebrique connexe et $(X,\rho )$ une $G$-vari\'et\'e lisse g\'eom\'e\-triquement int\`egre.  
Notons $G\times X\xrightarrow{p_1}G$, $G\times X\xrightarrow{p_2}X$ les deux projections.
Soit $\Br_G(X)$ le sous-groupe de Brauer $G$-invariant de $X$ (D\'efinition \ref{def-invariant}).
Rappelons la d\'efinition de  $\Br_e(G)$ (cf. (\ref{Bredef})).

Par \cite[Lem. 6.6]{S}, on a $\Br_a(G\times X)=\Br_a(G)\oplus \Br_a(X)$, et donc $p_1^*|_{\Br_e(G)}$ est injectif.
Par la proposition \ref{propbraueralgebraic}, il existe un unique homomorphisme $\Br_G(X)\xrightarrow{\lambda}\Br_e(G) $ tel que 
\begin{equation}\label{def-sansuc-e}
p_1^* \circ \lambda=\rho^*-p_2^*: \Br_G(X)\to \Br(G\times X).
\end{equation}

\begin{defi}\label{defsansuc}
Soient $G$ un  groupe lin\'eaire connexe et $(X,\rho )$ une $G$-vari\'et\'e lisse g\'eom\'e\-triquement int\`egre. 
L'unique homomorphisme $\Br_G(X)\xrightarrow{\lambda}\Br_e(G) $ satisfaisant (\ref{def-sansuc-e})
est appel\'e \emph{l'homomorphisme de Sansuc}.
\end{defi}

\begin{prop}\label{corbraueralgebraic1}
Sous les hypoth\`eses de la d\'efinition \ref{defsansuc}, on a:

(1) pour toute extension de corps $K/k$, et tous $x\in X(K)$, $g\in G(K)$, $\alpha\in \Br_G(X)$, on a
$$(g\cdot x)^*(\alpha )= g^*(\lambda (\alpha))+x^*(\alpha)\in \Br(K) ;$$

(2) si $X(k)\neq\emptyset$, alors pour tout $x\in X(k)$, on a
\begin{equation}\label{defsansuc-e}
 \lambda=\rho_x^*-x^*,
 \end{equation} 
 o\`u $G\xrightarrow{\rho_x} X: g\mapsto g\cdot x$, $\Spec\ k\xrightarrow{x}X$ est le point $x$
  et $\Br(X)\xrightarrow{x^*}\Br(k)\sbt \Br(G)$.

(3) si $X\cong G/G_0$ avec $G_0\sbt G$ un sous-groupe ferm\'e connexe, alors 
$$\Br_G(X)\cong \Br_1(X,G):=\Ker (\Br(X)\to \Br(G_{\bk})).$$
\end{prop}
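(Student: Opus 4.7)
The three parts will all flow from the defining relation
\[
p_1^*\bigl(\lambda(\alpha)\bigr) \;=\; \rho^*(\alpha) - p_2^*(\alpha) \qquad\text{in } \Br(G\times X),
\]
for $\alpha \in \Br_G(X)$, by pulling back along suitably chosen morphisms into $G\times X$.

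For (1), the plan is to pull back this identity along the $K$-point $(g,x)\colon \Spec K \to (G\times X)_K$ attached to $g\in G(K)$ and $x\in X(K)$. Because $p_1\circ(g,x)=g$, $p_2\circ(g,x)=x$ and $\rho\circ(g,x)=g\cdot x$, one gets immediately
\[
g^*\bigl(\lambda(\alpha)\bigr) \;=\; (g\cdot x)^*(\alpha) - x^*(\alpha) \qquad\text{in } \Br(K),
\]
which is the claim.

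For (2), assume $x\in X(k)$ and pull back the defining relation along the section $G \xrightarrow{\sigma} G\times X$, $g\mapsto(g,x)$. Since $p_1\circ\sigma=\id_G$, while $p_2\circ\sigma$ and $\rho\circ\sigma$ are respectively the composite $G\to\Spec k\xrightarrow{x} X$ and the orbit map $\rho_x\colon g\mapsto g\cdot x$, the pullback reads $\lambda(\alpha)=\rho_x^*(\alpha)-x^*(\alpha)$ in $\Br(G)$, with $x^*(\alpha)\in\Br(k)\subset\Br(G)$ as stated. (Equivalently, this is the specialisation of (1) to $K=k$ with $g$ ranging over the $T$-points of $G$ for variable $T$.)

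For (3), the easy inclusion $\Br_1(X,G)\subset \Br_G(X)$ is already covered: the quotient map $G\to G/G_0=X$ is a torsor under the connected linear group $G_0$, so Proposition \ref{prop-invariant}(6) applies verbatim with $Y=G$. The reverse inclusion $\Br_G(X)\subset \Br_1(X,G)$ is the content of the proposition and is the main point. Fix the base point $x_0=e_G G_0\in X(k)$; then $\rho_{x_0}\colon G\to X$ is precisely the quotient map $G\to G/G_0$, so the map $\Br(X)\to\Br(G_{\bar k})$ defining $\Br_1(X,G)$ factors as $\alpha\mapsto \rho_{x_0}^*(\alpha)\mapsto \rho_{x_0}^*(\alpha)_{\bar k}$. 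For $\alpha\in\Br_G(X)$, part (2) yields $\rho_{x_0}^*(\alpha)=x_0^*(\alpha)+\lambda(\alpha)$ in $\Br(G)$. Base changing to $\bar k$ kills both summands: $x_0^*(\alpha)\in\Br(k)$ dies in $\Br(\bar k)=0$, while $\lambda(\alpha)\in\Br_e(G)\subset\Br_1(G)=\Ker(\Br(G)\to\Br(G_{\bar k}))$ dies in $\Br(G_{\bar k})$ by the very definition of $\Br_1(G)$. Hence $\rho_{x_0}^*(\alpha)_{\bar k}=0$, i.e.\ $\alpha\in\Br_1(X,G)$.

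The calculations involved are all immediate consequences of the defining relation of $\lambda$ and of $\Br_e(G)\subset\Br_1(G)$; no new structural input is required, and the only subtle moment is recognising in (3) that the map defining $\Br_1(X,G)$ is literally the orbit map at the base point, which is what lets (2) be applied directly.
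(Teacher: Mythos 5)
Your proof is correct, and for (1) and (2) it is exactly the paper's argument: one pulls back the defining relation $p_1^*(\lambda(\alpha))=\rho^*(\alpha)-p_2^*(\alpha)$ along the $K$-point $(g,x)$ of $G\times X$, respectively along the section $g\mapsto (g,x)$ over a rational point $x$. For (3) the paper takes a slightly different but equivalent route: it cites Proposition \ref{prop-invariant} (2), (5) and Lemma \ref{lembrauerkbar} ($\Br_G(G)=\Br_1(G)$), so that functoriality gives $\pi^*\Br_G(X)\subset\Br_G(G)=\Br_1(G)$, hence $\Br_G(X)\subset\Br_1(X,G)$, while (5) gives the reverse inclusion; you instead get the forward inclusion from your part (2) together with $\lambda(\alpha)\in\Br_e(G)\subset\Br_1(G)$, and the reverse inclusion from Proposition \ref{prop-invariant} (6). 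Since the containment $\lambda(\alpha)\in\Br_e(G)$ is Proposition \ref{propbraueralgebraic}, whose proof rests on the same Lemma \ref{lembrauerkbar}, and (6) is itself deduced from (4) and (5), the two arguments use the same structural inputs and differ only in packaging; there is no gap.
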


\begin{proof}
Pour (1), on a 
$$(g\cdot x)^*(\alpha )=(g,x)^*(\rho^*(\alpha))=(g,x)^*(p_2^*(\alpha)+p_1^*(\lambda(\alpha)))= g^*(\lambda (\alpha))+x^*(\alpha)\in \Br(K).$$

Dans le cas (2), pour tout $\alpha\in \Br_G(X)$, on obtient
 $ (\rho^*-p_2^*)(\alpha)|_{G\times x}=(\rho_x^*-x^*)(\alpha)$.

L'\'enonc\'e (3) r\'esulte de la proposition \ref{prop-invariant} (2) (5) et  du lemme \ref{lembrauerkbar}.
\end{proof}

 Si $X\xrightarrow{f}Z$ est un $G$-torseur, par la d\'efinition, l'homomorphisme de Sansuc $\lambda|_{\Br_1(X)}$ est exactement le morphisme dans la suite exacte de Sansuc (cf: \cite[Thm. 2.8]{BD}). 

\begin{thm}\label{sansucthm}
Soient $G$ un groupe lin\'eaire connexe, $Z$ une vari\'et\'e lisse g\'eom\'etriquement int\`egre et $X\xrightarrow{f}Z$ un $G$-torseur. 
 Notons $G\times X\xrightarrow{\rho}X$ l'action de $G$.
 Alors l'homomorphisme de Sansuc $\lambda$ induit un diagramme commutatif de suites exactes:
\begin{equation}\label{braueralg-e1}
 \xymatrix{\Pic(G)\ar[r]\ar[d]^=&\Br(Z)\ar[r]^{f^*}\ar[d]^=&\Br_G(X)\ar[r]^-{\lambda}\ar@{^{(}->}[d]\ar@{}[rd]|{\square} &\Br_e(G)\cong \Br_a(G)\ar@{^{(}->}[d]^{p_1^*}\\
 \Pic(G)\ar[r]&\Br(Z)\ar[r]&\Br(X)\ar[r]^-{\rho^*-p_2^*}&\Br(G\times X).
 }
  \end{equation}
  \end{thm}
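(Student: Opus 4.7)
The plan is to derive the upper exact row from the lower Sansuc-type exact sequence together with the definition of $\lambda$ given by (\ref{def-sansuc-e}) and Proposition \ref{propbraueralgebraic}. First I would establish the bottom row: exactness at $\Br(Z)$ is the classical Sansuc exact sequence \cite[Prop. 6.10]{S}, using that $f^*\Br(Z) \subset \Br_1(X,G)$ (any $\beta \in \Br(Z)$ pulls back on a geometric fibre $G_{\bk} \hookrightarrow X_{\bk}$ through a constant map to a $\bk$-point of $Z$, landing in $\Br(\bk) = 0$), so the Sansuc kernel coincides with $\ker(f^* : \Br(Z) \to \Br(X))$. Exactness at $\Br(X)$, i.e.\ the descent statement that $\rho^*(b) = p_2^*(b)$ forces $b \in f^*\Br(Z)$, is an instance of fppf descent for $\BG_m$-gerbes along the torsor $X \to Z$, and can be extracted from the low-degree terms of the Čech-to-derived-functor spectral sequence associated to $f$.

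Next I would verify that the top row is a well-defined commutative subdiagram. The map $f^*$ lands in $\Br_G(X)$ because $f \circ \rho = f \circ p_2$ gives $(\rho^* - p_2^*)(f^*\beta) = 0 \in p_1^*\Br_e(G)$; injectivity of $p_1^*|_{\Br_e(G)}$, which follows from the canonical splitting $\Br_a(G \times X) = \Br_a(G) \oplus \Br_a(X)$ of \cite[Lem. 6.6]{S}, then forces $\lambda \circ f^* = 0$. The right square commutes by the very definition (\ref{def-sansuc-e}) of $\lambda$, and the middle square commutes by functoriality of $f^*$. The right square is Cartesian (as indicated by $\square$): Definition \ref{def-invariant} identifies $\Br_G(X)$ with the preimage in $\Br(X)$ of $p_1^*\Br(G) \subset \Br(G \times X)$, and Proposition \ref{propbraueralgebraic} refines this preimage to that of $p_1^*\Br_e(G)$, which is exactly the fibre-product condition.

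Exactness of the top row then follows formally. At $\Br_G(X)$: if $\lambda(b) = 0$, then $(\rho^* - p_2^*)(b) = p_1^*\lambda(b) = 0$, hence by exactness of the bottom row $b \in f^*\Br(Z)$. At $\Br(Z)$: since $\Br_G(X) \hookrightarrow \Br(X)$ is injective, the kernel of $\Br(Z) \to \Br_G(X)$ agrees with that of $\Br(Z) \to \Br(X)$, which is $\Im(\Pic(G) \to \Br(Z))$ by the bottom row. I expect the main obstacle to be the exactness of the bottom row at $\Br(X)$: the descent equality $\ker(\rho^* - p_2^*) = f^*\Br(Z)$ is not directly in \cite{S} and requires either an explicit fppf-descent argument for Brauer classes along the torsor $X \to Z$ or a careful analysis of the Hochschild-Serre-type spectral sequence, whereas the other pieces of the diagram reduce formally to the previously established Propositions \ref{prop-binvariant}, \ref{prop-invariant} and \ref{propbraueralgebraic}.
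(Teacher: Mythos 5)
Your formal derivation of the top row is correct and is essentially the paper's own argument: granting exactness of the bottom row, the identity $f\circ\rho=f\circ p_2$ together with the injectivity of $p_1^*$ on $\Br_e(G)$ (from $\Br_a(G\times X)\cong\Br_a(G)\oplus\Br_a(X)$, \cite[Lem. 6.6]{S}) gives $\lambda\circ f^*=0$, the right square is Cartesian by D\'efinition \ref{def-invariant} refined by la proposition \ref{propbraueralgebraic}, and exactness of the top row at $\Br_G(X)$ and at $\Br(Z)$ then follows by the diagram chase you describe. Likewise, exactness of the bottom row at $\Br(Z)$ is indeed covered by Sansuc's sequence.

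The gap is your treatment of the bottom row at $\Br(X)$. The equality $\Ker(\rho^*-p_2^*)=f^*\Br(Z)$ cannot simply be ``extracted from the low-degree terms'' of the \v{C}ech-to-derived-functor spectral sequence of $f$, nor is it a direct instance of fppf descent of gerbes: the hypothesis $\rho^*(b)=p_2^*(b)$ only says that $b$ lies in $E_2^{0,2}$ (agreement on the double overlap $X\times_ZX\cong G\times X$, with no coherence datum on triple overlaps), and for $b$ to come from $\Br(Z)$ one must in addition kill the differentials $d_2\colon E_2^{0,2}\to E_2^{2,1}$ and $d_3\colon E_3^{0,2}\to E_3^{3,0}$, i.e.\ \v{C}ech cohomology of the Picard groups and of the units along the nerve $G\times X$, $G\times G\times X,\dots$ of the torsor. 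Controlling these terms is exactly where the hypotheses ``$G$ lin\'eaire connexe, $\mathrm{car}(k)=0$'' enter (Rosenlicht's lemma, the decompositions of units and Picard groups of products with $G$), and it is precisely the content of the extension of Sansuc's sequence due to Borovoi et Demarche, \cite[Thm. 2.8]{BD}, which is what the paper invokes: its entire proof consists of citing \cite[Thm. 2.8]{BD} for the bottom row and then applying la proposition \ref{propbraueralgebraic}. You correctly flag this exactness as the main obstacle, but you neither carry out the descent computation nor cite the result that supplies it, so as written the argument is incomplete; the quickest repair is to replace your descent paragraph by a reference to \cite[Thm. 2.8]{BD} (or to reproduce its spectral-sequence analysis in full).
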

 
\begin{proof}
D'apr\`es  Borovoi et Demarche \cite[Thm. 2.8]{BD}, on a une suite exacte
 $$\Pic(G)\to \Br(Z)\xrightarrow{f^*} \Br(X)\xrightarrow{\rho^*-p_2^*}\Br(G\times X)$$
telle que $(\rho^*-p_2^*)(\Br_1(X))\sbt p_1^*\Br_e(G)$.
Le r\'esultat d\'ecoule de la proposition \ref{propbraueralgebraic}.
\end{proof}

\begin{cor}\label{corbraueralgebraic}
Soient $1\to N\to H\xrightarrow{\psi}G\to 1$ une suite de groupes lin\'eaires connexes et
$(X,\rho )$ une $G$-vari\'et\'e lisse g\'eom\'e\-triquement int\`egre.  

(1) On a $\Br_G(X)=\Br_H(X)$.

(2) S'il existe une $H$-vari\'et\'e $Y$  et un $H$-morphisme $Y\xrightarrow{p}X$ tels que $Y\to X $ soit un $N$-torseur,
alors $\Br(X)\xrightarrow{p^*}\Br(Y)$ satisfait $p^*\Br_G(X)\sbt \Br_H(Y)$, $(p^*)^{-1}\Br_H(Y)=\Br_G(X)$ 
et on a  une suite exacte (o\`u $\lambda$ est l'homomorphisme de Sansuc):
$$ \Pic(N)\to \Br_G(X)\xrightarrow{p^*} \Br_H(Y)\xrightarrow{\lambda}\Br_a(N).$$
\end{cor}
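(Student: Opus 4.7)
Le plan est d'\'etablir (1) d'abord en appliquant la suite exacte de Sansuc \`a la fois au $N$-torseur $H\to G$ et \`a son tir\'e en arri\`ere $H\times X\to G\times X$ le long de $p_1\colon G\times X\to G$; puis de d\'eduire (2) en combinant (1) avec la suite de Sansuc pour le $N$-torseur $Y\to X$.

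Pour (1), l'inclusion $\Br_G(X)\sbt\Br_H(X)$ sera imm\'ediate d'apr\`es la Proposition \ref{prop-invariant}(1). Pour la r\'eciproque, on part de $b\in\Br_H(X)$. Puisque $\rho_H=\rho\circ(\psi\times\id_X)$ et $p_{2,H}=p_2\circ(\psi\times\id_X)$, la Proposition \ref{propbraueralgebraic} fournit $\alpha\in\Br_e(H)$ tel que
$$(\psi\times\id_X)^*\bigl[(\rho^*-p_2^*)(b)\bigr]=(\rho_H^*-p_{2,H}^*)(b)=p_{1,H}^*(\alpha).$$
Le Th\'eor\`eme \ref{sansucthm} appliqu\'e \`a $H\to G$ et \`a $H\times X\to G\times X$ fournit deux suites exactes de Sansuc, avec des morphismes $\lambda_N$ et $\lambda_N'$ respectivement. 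Une chasse au diagramme directe donne la compatibilit\'e cl\'e $\lambda_N'(p_{1,H}^*(\alpha))=\lambda_N(\alpha)$; comme $p_{1,H}^*(\alpha)$ est dans l'image de $(\psi\times\id_X)^*$, le membre de gauche est nul par exactitude, d'o\`u $\lambda_N(\alpha)=0$ et donc $\alpha=\psi^*(\alpha_0)$ pour un certain $\alpha_0\in\Br(G)$. On obtient alors $(\psi\times\id_X)^*\bigl[(\rho^*-p_2^*)(b)-p_1^*(\alpha_0)\bigr]=0$. La fonctorialit\'e du bord de Sansuc, le torseur $H\times X\to G\times X$ \'etant le tir\'e en arri\`ere de $H\to G$ via $p_1$, montre que le noyau de $(\psi\times\id_X)^*$ sur $\Br(G\times X)$ est contenu dans $p_1^*\Br(G)$. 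Par cons\'equent $(\rho^*-p_2^*)(b)\in p_1^*\Br(G)$, et $b\in\Br_G(X)$.

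Pour (2), l'inclusion $p^*\Br_G(X)\sbt\Br_H(Y)$ r\'esulte de (1) et de la Proposition \ref{prop-invariant}(2); l'\'egalit\'e $(p^*)^{-1}\Br_H(Y)=\Br_G(X)$ combine la Proposition \ref{prop-invariant}(5) (pour $\sbt$) et l'inclusion pr\'ec\'edente (pour $\spt$). La suite exacte s'obtient en appliquant le Th\'eor\`eme \ref{sansucthm} au $N$-torseur $Y\to X$, qui donne
$$\Pic(N)\to\Br(X)\xrightarrow{p^*}\Br_N(Y)\xrightarrow{\lambda}\Br_a(N),$$
puis en la restreignant. La Proposition \ref{prop-invariant}(6) fournit $\Im(\Pic(N)\to\Br(X))\sbt\Br_1(X,Y)\sbt\Br_G(X)$, ce qui assure l'exactitude en $\Br_G(X)$. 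L'exactitude en $\Br_H(Y)$ utilise $\Br_H(Y)\sbt\Br_N(Y)$ (Proposition \ref{prop-invariant}(1)) et le fait que tout rel\`evement $\alpha\in\Br(X)$ d'un \'el\'ement $\beta\in\Br_H(Y)\cap\Ker\lambda$ appartient \`a $(p^*)^{-1}\Br_H(Y)=\Br_G(X)$.

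Le point technique d\'elicat est la compatibilit\'e $\lambda_N'(p_{1,H}^*(\alpha))=\lambda_N(\alpha)$ reliant les deux morphismes de Sansuc; sa v\'erification est une chasse au diagramme directe mais soigneuse, exploitant l'isomorphisme $N\times H\times X\cong(N\times H)\times_H(H\times X)$ et l'injectivit\'e de $p_{1,N,H\times X}^*$ sur $\Br_e(N)$, cette derni\`ere \'etant implicite dans la D\'efinition \ref{defsansuc}.
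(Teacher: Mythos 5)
Votre preuve est correcte et suit essentiellement la m\^eme voie que celle du texte : pour (1), votre argument \'el\'ement par \'el\'ement (compatibilit\'e des deux homomorphismes de Sansuc pour $H\to G$ et pour son tir\'e en arri\`ere $H\times X\to G\times X$, puis $\Ker(\psi_X^*)\sbt p_1^*\Br(G)$) n'est que le d\'epliage du diagramme commutatif de suites exactes utilis\'e dans l'article, qui donne $(\psi_X^*)^{-1}(p_1^*\Br_e(H))=p_1^*\Br_e(G)$. Pour (2), vous invoquez les m\^emes ingr\'edients que le texte (Proposition \ref{prop-invariant} (2), (5), (6) et le th\'eor\`eme \ref{sansucthm} appliqu\'e au $N$-torseur $Y\to X$), avec le m\^eme niveau de d\'etail pour la compatibilit\'e des fl\`eches de Sansuc que celui assum\'e dans l'article.
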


\begin{proof}
 On a $\Br_1(H)\sbt \Br_N(H) $ et $H\times X\xrightarrow{\psi_X} G\times X$ est un $N$-torseur.
Par la suite exacte de Sansuc (\cite[Prop. 6.10]{S}) et le diagramme (\ref{braueralg-e1}),  on a un diagramme commutatif de suites exactes:
$$\xymatrix{\Pic(N)\ar[d]^=\ar[r]&\Br_1(G)\ar[r]\ar[d]^{p_1^*}&\Br_1(H)\ar[r]\ar[d]^{p_1^*}&\Br_a(N)\ar[d]^=\\
\Pic(N)\ar[r]&\Br(G\times X)\ar[r]^{\psi_X^*}&\Br_N(H\times X)\ar[r]&\Br_a(N)
}$$
 Donc $ (\psi_X^*)^{-1}(p_1^*\Br_{e}(H))=p_1^*\Br_{e}(G)$ et on obtient (1).
 
Une application de la proposition \ref{prop-invariant} (5) et du diagramme (\ref{braueralg-e1}) donne (2).
\end{proof}

\begin{lem}\label{Dlemsuite1}
Soient $G$, $N$ deux groupes lin\'eaires connexes et
$X$ une $G$-vari\'et\'e lisse g\'eom\'e\-triquement int\`egre.  
Soient $H:=N\times G$ et $P$ une $H$-vari\'et\'e telle que $P$ soit un $N$-torseur sur $k$.
Soient $Y\iso P\times X$ et $Y\xrightarrow{p_1}P$, $Y\xrightarrow{p_2}X$  les deux projections.
 Si $P(k)\neq\emptyset$ ou $H^3(k,\bk^{\times})=0$,  on a un isomorphisme:
$$\Br_a(P)\oplus \Br_G(X)/\Im\Br(k) \xrightarrow{(p_1^*,p_2^*)}\Br_{H}(Y)/\Im\Br(k).  $$

De plus, cet isomorphisme induit un isomorphisme:
$\Br_a(P)\oplus \Br_a(X) \xrightarrow{(p_1^*,p_2^*)}\Br_a(Y).$
\end{lem}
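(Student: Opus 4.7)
The plan is to apply the Sansuc exact sequence of Corollaire~\ref{corbraueralgebraic}(2) to the $N$-torsor $Y \to X$ and compare it with the Sansuc sequence of Théorème~\ref{sansucthm} for $P \to \Spec k$. First, the projection $p_2 \colon Y = P \times X \to X$ is $H$-equivariant (letting $H = N \times G$ act on $X$ through the projection $H \to G$, so that $N$ acts trivially on $X$; then Corollaire~\ref{corbraueralgebraic}(1) gives $\Br_G(X) = \Br_H(X)$) and it is an $N$-torsor, being the base change of $P \to \Spec k$; the projection $p_1 \colon Y \to P$ is likewise $H$-equivariant. By Proposition~\ref{prop-invariant}(2) and (4), together with $\Br_a(P) \subset \Br_1(P) \subset \Br_H(P)$, both $p_1^*(\Br_a(P))$ and $p_2^*(\Br_G(X))$ land in $\Br_H(Y)$, so the map of the lemma is well-defined modulo $\Im \Br(k)$.

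Applying Corollaire~\ref{corbraueralgebraic}(2) to $Y \to X$ gives the exact sequence
\begin{equation*}
\Pic(N) \to \Br_G(X) \xrightarrow{p_2^*} \Br_H(Y) \xrightarrow{\lambda} \Br_a(N),
\end{equation*}
while Théorème~\ref{sansucthm} applied to $P \to \Spec k$ yields
\begin{equation*}
\Pic(N) \to \Br(k) \to \Br_N(P) \xrightarrow{\lambda_P} \Br_a(N).
\end{equation*}
Since $N$ acts on $Y$ only through the $P$-factor, a direct computation from the definition of Sansuc's map shows that $\lambda \circ p_1^* = \lambda_P$ on $\Br_1(P)$. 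The crucial input is that the hypothesis ``$P(k) \neq \emptyset$ or $H^3(k, \bk^\times) = 0$'' is exactly what makes $\lambda_P \colon \Br_a(P) \to \Br_a(N)$ surjective: in the first case $P \cong N$ and Proposition~\ref{corbraueralgebraic1}(2) identifies $\lambda_P$ with the retraction $\beta \mapsto \beta - e_N^*(\beta)$ onto $\Br_e(N) \cong \Br_a(N)$; in the second case the Hochschild--Serre spectral sequence for $P \to \Spec k$ (combined with $\Br(N_{\bk}) = 0$) places the cokernel of $\lambda_P$ inside $H^3(k, \bk^\times) = 0$. This surjectivity of $\lambda_P$ is the main obstacle; everything else will be diagram chasing.

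Granted this, surjectivity of $(p_1^*, p_2^*)$ is immediate: for $\gamma \in \Br_H(Y)$, choose $\beta \in \Br_1(P)$ with $\lambda_P(\beta) = \lambda(\gamma)$, and then $\gamma - p_1^*(\beta) \in \ker \lambda = \Im(p_2^*)$. For injectivity modulo $\Im \Br(k)$, suppose $p_1^*(\beta) + p_2^*(\alpha) \in \Im \Br(k)$. Apply $\lambda$: it vanishes on $\Im \Br(k)$ (constants factor through $p_2^*$) and on $\Im(p_2^*)$, so $\lambda_P(\beta) = 0$, hence by exactness of the Sansuc sequence for $P$, $\beta \in \Im(\Br(k) \to \Br_1(P))$, giving $\bar \beta = 0$ in $\Br_a(P)$. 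Then $p_2^*(\alpha) \in \Im \Br(k)$; since the Sansuc coboundary $\Pic(N) \to \Br(X)$ attached to the torsor $Y \to X$ factors through $\Br(k) \to \Br(X)$ (because $[Y/X]$ is pulled back from $[P] \in H^1(k, N)$), we have $\ker(p_2^*) \subset \Im \Br(k)$, so $\alpha \in \Im \Br(k)$ and $\bar \alpha = 0$.

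For the ``de plus'' isomorphism, the map clearly sends $\Br_a(P) \oplus \Br_a(X)$ into $\Br_a(Y)$. Conversely, for $\gamma \in \Br_1(Y)$, write $\gamma \equiv p_1^*(\beta) + p_2^*(\alpha) \pmod{\Im \Br(k)}$ by the main isomorphism, with $\beta \in \Br_1(P)$ and $\alpha \in \Br_G(X)$. Base-changing to $\bk$ gives $\gamma_{\bk} = \beta_{\bk} = 0$, whence $p_2^*(\alpha_{\bk}) = 0$ in $\Br(Y_{\bk})$; any $\bk$-point of $P$ provides a section $X_{\bk} \to Y_{\bk}$ of $p_2$, making $p_2^* \colon \Br(X_{\bk}) \to \Br(Y_{\bk})$ split injective, so $\alpha_{\bk} = 0$ and $\alpha \in \Br_1(X)$. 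Injectivity is inherited from the main isomorphism.
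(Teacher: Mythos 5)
Your argument is correct in structure and follows essentially the same route as the paper: both proofs juxtapose the Sansuc sequence of Corollaire \ref{corbraueralgebraic} (2) for the $N$-torseur $Y\to X$ with the sequence of Th\'eor\`eme \ref{sansucthm} for $P\to \Spec\ k$, reduce everything to the surjectivity of $\Br_1(P)\to\Br_a(N)$, and deduce the $\Br_a$-statement from the injectivity of $\Br(X_{\bk})\to\Br(Y_{\bk})$ (your section coming from a $\bk$-point of $P$ is the same device). The one real divergence is how that key surjectivity is obtained: the paper simply cites \cite[Lem. 6.7 et 6.8]{S}, whose hypotheses are exactly ``$P(k)\neq\emptyset$ ou $H^3(k,\bk^{\times})=0$'', whereas you re-derive it. Your treatment of the case $P(k)\neq\emptyset$, via the formula $\lambda=\rho_x^*-x^*$ of Proposition \ref{corbraueralgebraic1} (2), is complete and clean. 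The case $H^3(k,\bk^{\times})=0$ is, as written, only a one-line sketch: the Hochschild--Serre sequence for $P$ ends in $H^3(k,\bk[P]^{\times})$ rather than $H^3(k,\bk^{\times})$, the extension $1\to\bk^{\times}\to\bk[P]^{\times}\to N^*\to 1$ need not split, and the part of $\Br_a(N)$ coming from $H^2(k,N^*)$ must also be shown to lift to $\Br_a(P)$; reducing all of these obstructions to $H^3(k,\bk^{\times})$, together with the compatibility of $\lambda_P$ with the two Hochschild--Serre sequences through the identification $\Pic(P_{\bk})\cong\Pic(N_{\bk})$, is precisely the content of Sansuc's Lemme 6.8 --- either carry out this comparison in detail or cite it, as the paper does. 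A small benefit of your chase: by using the factorization of the connecting map $\Pic(N)\to\Br(X)$ through $\Br(k)$ (functoriality of the Sansuc sequence under the base change from $\Spec\ k$ to $X$), you do not need the surjectivity of $\Pic(P)\to\Pic(N)$, which the paper invokes from \cite[Lem. 6.7]{S} to start its four-term exact sequence (\ref{Dlemsuite1-e}) with $0\to\Br(k)$; since the statement is only modulo $\Im\Br(k)$, your bookkeeping suffices.
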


\begin{proof}
Par la proposition \ref{prop-invariant} (1) et le lemme \ref{lembrauerkbar}, 
on a $\Br_{H}(P)=\Br_1(P)$.
Par la suite exacte de Sansuc (\cite[Prop. 6.10]{S}) et le corollaire \ref{corbraueralgebraic} (2), on a un diagramme commutatif de suites exactes
$$\xymatrix{\Pic(P)\ar[r]^{\vartheta_1}\ar[d]&\Pic(N)\ar[d]^=\ar[r]&\Br(k)\ar[r]\ar[d] 
&\Br_1(P)\ar[d]^{p_1^*}\ar[r]^{\vartheta_2}&\Br_a(N)\ar[d]^=\\
\Pic(Y)\ar[r]&\Pic(N) \ar[r]&\Br_{H}(X)\ar[r]^-{p_2^*}&
 \Br_{H}(Y)\ar[r] &\Br_a(N).
}$$
Par \cite[Lem. 6.7 et 6.8]{S}, $\vartheta_1 $ et $\vartheta_2$ sont surjectifs.
Alors on a une suite exacte 
\begin{equation}\label{Dlemsuite1-e}
0\to \Br(k)\to \Br_1(P)\oplus \Br_H(X) \xrightarrow{(p_1^*,p_2^*)} \Br_{H}(Y)\to 0. 
\end{equation}
Puisque le morphisme $\Br(X_{\bk})\to \Br((P\times X)_{\bk})\cong \Br(Y_{\bk})$ est injectif, on a une suite exacte:
$$
0\to \Br(k)\to \Br_1(P)\oplus \Br_1(X) \xrightarrow{(p_1^*,p_2^*)} \Br_1( Y)\to 0. 
$$
Le r\'esultat en d\'ecoule.
\end{proof}

\begin{prop}\label{propbrauersuj}
Soient $T$ un tore et $1\to G_0\to G\xrightarrow{\psi}T\to 1$ une suite exacte de groupes lin\'eaires connexes.
Soient $X$ une $G$-vari\'et\'e lisse, g\'eom\'e\-triquement int\`egre et $X\xrightarrow{f}T$ un $G$-morphisme.
Notons $\Br_a(G)\xrightarrow{\vartheta}\Br_a(G_0)$ l'homomorphisme induit par $G_0\sbt G$.
Alors, pour tout $t\in T(k)$, la fibre $X_t$ est $G_0$-invariante et 
on a un isomorphisme naturelle $\Pic(X_{\bk})\cong \Pic(X_{t,\bk})$ 
et deux suites exactes naturelles 
$$0\to T^*\xrightarrow{f^*}\bk[X]^{\times}/\bk^{\times}\to \bk[X_t]^{\times}/\bk^{\times}\to 0\ \ \ \text{et}\ \ \ 
\Br_e(T)\to \Br_G(X)\to \Br_{G_0}(X_t)\to \coker (\vartheta).$$
\end{prop}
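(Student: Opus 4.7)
L'invariance de $X_t$ sous $G_0$ est imm\'ediate puisque $G_0=\Ker\psi$ et $f$ est $G$-\'equivariant. Je commencerais par montrer que $f$ est lisse : en tout $x\in X$, la d\'eriv\'ee de l'action $\Lie(G)\to T_xX$ compos\'ee avec $df_x$ co\"{\i}ncide avec $d\psi:\Lie(G)\to T_{f(x)}T$, surjective puisque $\psi$ l'est ; comme $X$ est lisse, $f$ l'est aussi.

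L'id\'ee centrale est d'\'etablir sur $\bk$ un isomorphisme $X_{\bk}\cong X_{t,\bk}\times T_{\bk}$ compatible avec $f$. On observe que $\psi:G\to T$ est un $G_0$-torseur, trivial sur $\bk$ : par d\'evissage de $G_0$ en ses sous-quotients unipotent, semi-simple et torique, on a $H^1_{\text{\'et}}(T_{\bk}, G_0)=0$ ($T_{\bk}$ \'etant un ouvert d'un espace affine en caract\'eristique $0$). Choisissant une section $\sigma:T_{\bk}\to G_{\bk}$ de $\psi$ avec $\sigma(t)=e_G$, le morphisme $(x,s)\mapsto \sigma(st^{-1})\cdot x$ d\'efinit $X_{t,\bk}\times T_{\bk}\to X_{\bk}$, bijectif sur les $\bk$-points entre vari\'et\'es lisses int\`egres de m\^eme dimension, donc un isomorphisme (caract\'eristique $0$). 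Les assertions sur les unit\'es et Picard en d\'ecoulent imm\'ediatement via le lemme de Rosenlicht (\cite[Lem.~6.5]{S}) et $\Pic(T_{\bk})=0$ : le facteur $T^*$ provient de $f^*$ et est tu\'e par restriction \`a $X_t\times\{t\}$, tandis que $\Pic(X_{t,\bk}\times T_{\bk})\cong \Pic(X_{t,\bk})$.

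Pour la suite des Brauer, les fl\`eches sont : $f^*$ modulo $\Br(k)$ pour $\Br_e(T)\to \Br_G(X)$ --- dont l'image tombe dans $\Br_G(X)$ via l'homomorphisme de Sansuc pour $T$ agissant sur lui-m\^eme, tir\'e en arri\`ere par $\psi^*$ ---, la restriction pour $\Br_G(X)\to \Br_{G_0}(X_t)$ (licite par la proposition \ref{prop-invariant}), et la composition de $\lambda_0:\Br_{G_0}(X_t)\to \Br_a(G_0)$ avec la projection vers $\coker(\vartheta)$. Le caract\`ere complexe r\'esulte de la naturalit\'e de Sansuc : pour $b\in\Br_G(X)$, on a $\lambda_0(b|_{X_t})=\vartheta(\lambda(b))\in\Im\vartheta$. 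L'exactitude en $\Br_G(X)$ se d\'eduira de la trivialisation sur $\bk$, d'une formule de type K\"unneth pour le groupe de Brauer, et de la descente galoisienne via la $G$-invariance. L'obstacle principal sera l'exactitude en $\Br_{G_0}(X_t)$ : \'etant donn\'e $b_t\in\Br_{G_0}(X_t)$ avec $\lambda_0(b_t)=\vartheta(a)$ pour un $a\in\Br_a(G)$, il s'agira d'\'etendre $b_t$ en un \'el\'ement de $\Br_G(X)$, ce qui n\'ecessitera de combiner le diagramme (\ref{braueralg-e1}) pour le $G_0$-torseur $X_t\times G\to X$ (donn\'e par l'action) avec l'isomorphisme sur $\bk$ et la compatibilit\'e des homomorphismes de Sansuc pour l'inclusion $G_0\subset G$.
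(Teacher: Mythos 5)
Votre proposition comporte deux lacunes réelles. D'abord, l'étape clé de trivialisation repose sur l'affirmation $H^1_{\text{\'et}}(T_{\bk},G_{0,\bk})=0$ obtenue par d\'evissage ; elle est fausse en g\'en\'eral : comme $G_0\supset G^{ssu}$, le facteur semi-simple de $G_0$ peut ne pas \^etre simplement connexe, et par exemple pour $G_0=\PGL_n$ et $T$ de dimension $\geq 2$ l'alg\`ebre d'Azumaya $(x,y)_n$ sur $\BG_{m,\bk}^2$ donne un $\PGL_n$-torseur non trivial (car $\Br(\BG_{m,\bk}^2)\neq 0$). Le torseur particulier $\psi_{\bk}\colon G_{\bk}\to T_{\bk}$ est bien trivial, mais il faut un autre argument (par exemple : un tore maximal $T_m\sbt G$ se surjecte sur $T$ avec noyau $T_m\cap G_0$ connexe, donc $T_m\to T$ admet une section sur $\bk$) ; tel quel, votre justification ne tient pas. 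Ensuite — et c'est le point essentiel — la suite exacte de Brauer, qui est le contenu principal de la proposition, n'est pas d\'emontr\'ee : vous annoncez que l'exactitude en $\Br_G(X)$ \og se d\'eduira \fg{} d'une formule de K\"unneth pour le groupe de Brauer et d'une descente galoisienne (aucune des deux ne vaut na\"{\i}vement : $\Br(X)\to\Br(X_{\bk})^{\Gamma_k}$ n'est en g\'en\'eral ni injectif ni surjectif, et les sous-groupes $\Br_G$, $\Br_{G_0}$ vivent sur $k$), et vous reportez explicitement l'exactitude en $\Br_{G_0}(X_t)$ comme \og obstacle principal \fg. Un plan au futur n'est pas une preuve.

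Pour comparaison, la d\'emonstration du texte n'utilise aucune section de $\psi$ ni aucun passage \`a $\bk$ : elle observe que $\rho\colon G\times X_t\to X$, $(g,x)\mapsto g\cdot x$, est un $G_0$-torseur sur $k$ (de sorte que $X\cong G\times^{G_0}X_t$), compatible avec le $G_0$-torseur $G\xrightarrow{t\cdot\psi(-)}T$ ; la suite exacte de Sansuc \cite[Prop. 6.10]{S} appliqu\'ee \`a ces deux torseurs donne, via le lemme du serpent et $\Pic(T_{\bk})=0$, les \'enonc\'es sur les unit\'es et sur $\Pic$ ; puis la version \og invariante \fg{} de cette suite (corollaire \ref{corbraueralgebraic}, diagramme (\ref{braueralg-e1})), combin\'ee \`a la d\'ecomposition $\Br_e(G)\oplus\Br_{G_0}(X_t)\cong\Br_{G\times G_0}(G\times X_t)$ de (\ref{Dlemsuite1-e}) et \`a une chasse au diagramme utilisant $p_2\circ i=\id$ et $i^*\circ p_1^*=0$, fournit la suite de Brauer. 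Vous avez identifi\'e le bon objet (le torseur $G\times X_t\to X$ et le diagramme (\ref{braueralg-e1})) pour l'\'etape difficile, mais il vous reste \`a mener ce calcul \`a bien ; je vous conseille d'abandonner la strat\'egie \og trivialisation sur $\bk$ puis descente \fg{} pour la partie Brauer et de travailler directement sur $k$ comme ci-dessus.
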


\begin{proof}
D'apr\`es \cite[Prop. 2.2]{CX1}, $X_t$ est lisse, g\'eom\'etriquement int\`egre.
Notons:
 $$X_t\xrightarrow{i}G\times X_t: x\mapsto (e_G,x)\ \ \ \text{et}\ \ \  G\times X_t\xrightarrow{\rho}X: (g,x)\mapsto g\cdot x .$$
Alors $\rho\circ i $ est l'immersion $X_t\sbt X$. On fixe des actions
$$G\times G_0\curvearrowright G\times X_t: (g,g_0)\times (g',x)\mapsto (gg'g_0^{-1},g_0\cdot x)\ \ \ \text{et}\ \ \ 
G\times G_0\curvearrowright G: (g,g_0)\times g'\mapsto gg'g_0^{-1}. $$
Par d\'efinition, $X\cong G\times^{G_0}X_t$ et on a un diagramme commutatif de $G\times G_0$-morphismes
$$\xymatrix{X_t&G\times X_t\ar[l]^-{p_2}\ar[r]_-{p_1}\ar[d]^{\rho}&G\ar[d]^{t\cdot \psi (-)}\\
&X\ar[r]^f&T
}$$
tel que les colonnes soient  des $G_0$-torseurs. 

D'apr\`es \cite[Lem. 6.5 et Lem. 6.6]{S}, on a 
 $$\bk[G\times X_t]^{\times}/\bk^{\times}\cong \bk[G]^{\times}/\bk^{\times}\oplus \bk[X_t]^{\times}/\bk^{\times}
 \ \ \ \text{et}\ \ \ \Pic(G_{\bk}\times X_{t,\bk})\cong  \Pic(G_{\bk})\oplus \Pic(X_{t,\bk}).$$
Par la suite exacte de Sansuc \cite[Prop. 6.10 et Cor. 6.11]{S}, on a un diagramme commutatif de suites exactes:
$$\xymatrixcolsep{1.8pc}
\xymatrix{0\ar[r]&T^*\ar[r]\ar[d]^{f^*}&\frac{\bk[G]^{\times}}{\bk^{\times}}\ar[r]\ar[d]&
\frac{\bk[G_0]^{\times}}{\bk^{\times}}\ar[r]\ar[d]^=& \Pic(T_{\bk})\ar[r]\ar[d]&
\Pic(G_{\bk})\ar[r]\ar[d]&\Pic(G_{0,\bk})\ar[r]\ar[d]^=&0\\
0\ar[r]&\frac{\bk[X]^{\times}}{\bk^{\times}}\ar[r]&\frac{\bk[G\times X_t]^{\times}}{\bk^{\times}}\ar[r]&
\frac{\bk[G_0]^{\times}}{\bk^{\times}}\ar[r]&\Pic(X_{\bk})\ar[r]&\Pic((G\times X_t)_{\bk})\ar[r]&\Pic(G_{0,\bk})&.
}$$ 
Puisque $\Pic(T_{\bk})=0 $, une application du lemme du serpent donne l'isomorphisme et la premi\`ere suite exacte de l'\'enonc\'e.

 D'apr\`es (\ref{Dlemsuite1-e}), on a un isomorphisme:
 $\Br_e(G)\oplus \Br_{G_0}(X_t)\xrightarrow{(p_1^*,p_2^*)}\Br_{G\times G_0}(G\times X_t) $.
Le corollaire \ref{corbraueralgebraic} donne un diagramme commutatif de suites exactes:
$$\xymatrix{\Pic(G_0)\ar[r]\ar[d]^=&\Br_e(T)\ar[r]\ar[d]&\Br_e(G)\ar[r]^{\vartheta}\ar[d]^{p_1^*}&\Br_a(G_0)\ar[d]^=\\
\Pic(G_0)\ar[r]&\Br_G(X)\ar[r]^-{\rho^*}&\Br_{G\times G_0}(G\times X_t) \ar[r] \ar[d]^{i^*}&\Br_a(G_0)\\
&&\Br(X_t) &.
}$$
Puisque $ p_2\circ i=id$ et $i^*\circ p_1^*=0$, on a $\Br_{G_0}(X_t)= \Im (i^*)\cong \coker(p_1^*)$.
Une chasse au diagramme donne l'\'enonc\'e.
\end{proof}

\begin{cor}\label{prop-fibreinvariant}
Sous les hypoth\`eses de la proposition \ref{propbrauersuj}, soient $U\sbt X$ un $G$-ouvert et $B\sbt \Br_G(U)$ un sous-groupe. 
 Alors,  pour tout  $t\in T(k)$, de fibre $U_t\sbt X_t\stackrel{i_t}{\hookrightarrow}X$,
 on a :
 $$i_t^*(B\cap \Br(X))=(i_t^*(B)\cap \Br(X_t)).$$
\end{cor}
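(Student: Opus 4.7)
L'inclusion $i_t^*(B\cap \Br(X))\sbt i_t^*(B)\cap \Br(X_t)$ \'etant imm\'ediate, il s'agit de montrer l'inclusion r\'eciproque. L'id\'ee sera d'appliquer la proposition \ref{propbrauersuj} \`a la fois \`a $(X,f)$ et \`a $(U,f|_U)$ et de comparer les deux suites exactes r\'esultantes gr\^ace \`a un diagramme commutatif naturel. Remarquons que, $\psi:G\to T$ \'etant surjectif et $f$ \'etant $G$-\'equivariant, la restriction $f_U:=f|_U:U\to T$ est encore surjective (son image est un ouvert $T$-invariant non vide de $T$), ce qui l\'egitime l'application de \ref{propbrauersuj} au couple $(U,f_U)$. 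Le diagramme attendu est
\[
\xymatrix@C=1.2pc{
\Br_e(T)\ar[r]^-{\phi_X}\ar[d]^-= & \Br_G(X)\ar[r]\ar[d] & \Br_{G_0}(X_t)\ar[r]\ar[d] & \coker(\vartheta)\ar[d]^-= \\
\Br_e(T)\ar[r]^-{\phi_U} & \Br_G(U)\ar[r] & \Br_{G_0}(U_t)\ar[r] & \coker(\vartheta),
}
\]
les fl\`eches verticales \'etant les restrictions \`a l'ouvert dense. Par surjectivit\'e de $f_U$ et int\'egrit\'e g\'eom\'etrique de $X_t$ (proposition \ref{propbrauersuj}), $U_t$ est ouvert dense dans $X_t$.

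Soit alors $b\in B$ tel que son image $\alpha\in \Br(U_t)$ provienne, via l'injection $\Br(X_t)\hookrightarrow \Br(U_t)$, d'un \'el\'ement encore not\'e $\alpha\in \Br(X_t)$. D'apr\`es la proposition \ref{prop-invariant} (3) appliqu\'ee \`a l'ouvert dense $U_t\sbt X_t$, on a en fait $\alpha\in \Br(X_t)\cap \Br_{G_0}(U_t)=\Br_{G_0}(X_t)$. Comme $\alpha$ est l'image de $b\in \Br_G(U)$ par la fl\`eche inf\'erieure et que la ligne du bas est exacte, la commutativit\'e du carr\'e de droite entra\^ine que l'image de $\alpha$ dans $\coker(\vartheta)$ est nulle; l'exactitude de la ligne du haut fournit donc un rel\`evement $\tilde b\in \Br_G(X)$ avec $i_t^*(\tilde b)=\alpha$. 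La diff\'erence $b-\tilde b|_U\in \Br_G(U)$ s'annule alors dans $\Br_{G_0}(U_t)$ et s'\'ecrit $\phi_U(\xi)$ pour un certain $\xi\in \Br_e(T)$ par exactitude.

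Je poserai $b':=\tilde b+\phi_X(\xi)\in \Br_G(X)\sbt \Br(X)$. La commutativit\'e du carr\'e de gauche donne $b'|_U=\tilde b|_U+\phi_U(\xi)=b$, ce qui montre que $b$ se prolonge \`a $\Br(X)$, c'est-\`a-dire $b\in B\cap \Br(X)$. L'exactitude de la ligne du haut donnant $i_t^*(\phi_X(\xi))=0$, on conclut $i_t^*(b')=i_t^*(\tilde b)=\alpha$, d'o\`u $\alpha\in i_t^*(B\cap \Br(X))$. Le point technique qu'il faudra \'etablir soigneusement est la commutativit\'e (naturalit\'e) du diagramme ci-dessus vis-\`a-vis de l'inclusion $G$-\'equivariante $U\hookrightarrow X$; ceci se v\'erifie en reprenant la construction des suites exactes de la proposition \ref{propbrauersuj}, laquelle repose sur le corollaire \ref{corbraueralgebraic} et la suite exacte de Sansuc, toutes deux naturelles par restriction.
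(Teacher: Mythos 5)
Your proof is correct and follows essentially the same route as the paper: the paper also applies Proposition \ref{propbrauersuj} to both $X\to T$ and $U\to T$, forms exactly the commutative diagram of exact sequences you display (with injective vertical restrictions in the middle), and concludes by a diagram chase. You have simply written out explicitly the chase (including the use of Proposition \ref{prop-invariant} (3) to see that $\alpha\in\Br_{G_0}(X_t)$, and the correction term $\phi_X(\xi)$ coming from $\Br_e(T)$) that the paper leaves to the reader.
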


\begin{proof}
D'apr\`es la proposition \ref{propbrauersuj}, on a un diagramme de suites exactes:
$$\xymatrix{\Br_e(T)\ar[r]\ar[d]^=&\Br_G(X)\ar[r]\ar@{^{(}->}[d]&\Br_{G_0}(X_t)\ar[r]\ar@{^{(}->}[d]&\coker (\vartheta)\ar[d]^=\\
\Br_e(T)\ar[r]&\Br_G(U)\ar[r]^{i_t^*}&\Br_{G_0}(U_t)\ar[r]&\coker (\vartheta).
}$$
Une chasse au diagramme donne l'\'enonc\'e.
\end{proof}

\subsection{Pseudo espace homog\`ene}

Soit $G$ un groupe lin\'eaire connexe.
 La notion de pseudo $G$-espace homog\`ene g\'en\'eralise la notion de $G$-espace homog\`ene \`a stabilisateur g\'eom\'etrique connexe (cf. exemple \ref{examtoretorseur} (1)).

\begin{defi}\label{defpseudo}
Soit $G$ un groupe lin\'eaire connexe. Une $G$-vari\'et\'e $Z$ est appel\'ee \emph{pseudo $G$-espace homog\`ene} si $Z$ est lisse, g\'eom\'e\-triquement int\`egre,  $\Pic(Z_{\bk})$ est de type fini, $Z(k)\neq\emptyset$
 et $\frac{\Ker (\lambda )}{\Br(k)}, \Br_{G_{\bk}}(Z_{\bk})$ sont finis, o\`u $\Br_G(Z)\xrightarrow{\lambda}\Br_e(G)$ est l'homomorphisme de Sansuc (cf. D\'efinition \ref{defsansuc}).
\end{defi}

Fixons $z\in Z(k)$ et notons $\rho_z: G\to Z: g\mapsto g\cdot z$.
D'apr\`es (\ref{defsansuc-e}), le groupe $\frac{\Ker (\lambda )}{\Br(k)}$ est fini 
si et seulement si  $\Ker (\rho_z^*)\cap \Br_G(Z)$ est fini, o\`u $\Br(Z)\xrightarrow{\rho_z^*}\Br(G) $.

\begin{exam}\label{examtoretorseur}
Soit $G$ un groupe lin\'eaire connexe.

(1) Soit $G_0\sbt G$ un sous-groupe ferm\'e connexe. Alors $G/G_0$ est un pseudo $G$-espace homog\`ene.

(2) Soient $X$ une vari\'et\'e lisse g\'eom\'etriquement int\`egre et $Z\to X$ un $G$-torseur.
Si $\Pic(X)$ est de type fini, $\Br(X)/\Br(k), \Br(X_{\bk})$ sont finis et $Z(k)\neq \emptyset$,  alors $Z$ est un pseudo $G$-espace homog\`ene.
\end{exam}

\begin{proof}
L'\'enonc\'e (1) r\'esulte de \cite[Thm. 2.8]{BD} et de la proposition \ref{corbraueralgebraic1} (3).
L'\'enonc\'e (2) r\'esulte du corollaire \ref{corbraueralgebraic} (2).
\end{proof}

\begin{prop}\label{lemtoretorseur}
Soient $G$ un groupe lin\'eaire connexe et $Z$ un pseudo $G$-espace homog\`ene.
Soient $T$ un tore, $ Z'\to Z$ un $T$-torseur et $H$ le groupe lin\'eaire connexe d\'etermin\'e dans le th\'eor\`eme \ref{thmaction}.
Si $Z'(k)\neq\emptyset $, alors $Z'$ est un pseudo $H$-espace homog\`ene.
\end{prop}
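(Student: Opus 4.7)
Le plan est de v\'erifier chacune des cinq conditions de la D\'efinition \ref{defpseudo} pour $Z'$ vu comme $H$-vari\'et\'e. Les conditions que $Z'$ soit lisse g\'eom\'etriquement int\`egre et poss\`ede un $k$-point sont imm\'ediates : la premi\`ere parce que $Z' \to Z$ est un $T$-torseur au-dessus d'une telle vari\'et\'e, la seconde par hypoth\`ese. Pour le caract\`ere de type fini de $\Pic(Z'_{\bk})$, j'appliquerais la suite exacte de Sansuc \cite[Prop. 6.10]{S} au $T_{\bk}$-torseur $Z'_{\bk} \to Z_{\bk}$ : puisque $\Pic(T_{\bk}) = 0$, ceci r\'ealise $\Pic(Z'_{\bk})$ comme quotient de $\Pic(Z_{\bk})$, qui est de type fini par hypoth\`ese.

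Pour la finitude de $\Br_{H_{\bk}}(Z'_{\bk})$, j'appliquerais le corollaire \ref{corbraueralgebraic}(2) sur $\bk$ au $T_{\bk}$-torseur $Z'_{\bk}\to Z_{\bk}$, obtenant la suite exacte
$$\Pic(T_{\bk}) \to \Br_{G_{\bk}}(Z_{\bk}) \xrightarrow{p^*} \Br_{H_{\bk}}(Z'_{\bk}) \to \Br_a(T_{\bk}).$$
Comme $T_{\bk}\cong \BG_{m,\bk}^n$, on a $\Pic(T_{\bk}) = 0$ et, par la suite de puret\'e (\ref{purity}) appliqu\'ee \`a $\BG_{m,\bk}^n\sbt \BA_{\bk}^n$ par r\'ecurrence, $\Br(T_{\bk}) = 0$ donc $\Br_a(T_{\bk}) = 0$. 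Ainsi $\Br_{H_{\bk}}(Z'_{\bk}) \cong \Br_{G_{\bk}}(Z_{\bk})$, qui est fini par hypoth\`ese.

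La partie d\'elicate sera la finitude de $\Ker(\lambda')/\Br(k)$, o\`u $\lambda' : \Br_H(Z') \to \Br_e(H)$ est l'homomorphisme de Sansuc pour l'action de $H$ sur $Z'$. J'utiliserais deux ingr\'edients. D'abord, le carr\'e commutatif $\lambda' \circ p^* = \psi^* \circ \lambda$, o\`u $\psi : H \to G$ est la projection, \`a \'etablir en choisissant $z' \in Z'(k)$ d'image $z \in Z(k)$ et en combinant la formule (\ref{defsansuc-e}) avec l'\'egalit\'e $p \circ \rho_{H,z'} = \rho_{G,z} \circ \psi$. Ensuite, la suite exacte sur $k$ du corollaire \ref{corbraueralgebraic}(2) :
$$\Pic(T) \to \Br_G(Z) \xrightarrow{p^*} \Br_H(Z') \xrightarrow{\lambda_T} \Br_a(T),$$
dans laquelle $\lambda_T$ est la composition de $\lambda'$ avec la restriction $\Br_a(H)\to \Br_a(T)$.

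Tout $\alpha \in \Ker(\lambda')$ est contenu dans $\Ker(\lambda_T)$ et s'\'ecrit donc $p^*\beta$ avec $\beta \in \Br_G(Z)$; la commutativit\'e fournit $\psi^*\lambda(\beta) = 0$, d'o\`u $\lambda(\beta)\in \Ker(\psi^* : \Br_a(G) \to \Br_a(H))$. Ce dernier noyau est un quotient de $\Pic(T)$ d'apr\`es la suite exacte de Sansuc appliqu\'ee au $T$-torseur $H\to G$. Or $T^*$ est un $\BZ$-module libre de type fini sur lequel $\Gamma_k$ agit via un quotient fini $\Gal(L/k)$; puisque $H^1(L,T^*)=\Hom(\Gamma_L,\BZ^n)=0$, l'inflation-restriction donne que $\Pic(T) = H^1(k, T^*)$ est fini. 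Posant $A := (p^*)^{-1}\Ker(\lambda')=\lambda^{-1}\Ker(\psi^*)$, on obtient l'isomorphisme $\Ker(\lambda')/\Br(k) \cong A/(\Br(k) + \Ker(p^*))$, et un argument de filtration par $\Ker(\lambda)+\Ker(p^*)$ combinant la finitude de $\Ker(\lambda)/\Br(k)$ (hypoth\`ese sur $Z$), celle de $A/\Ker(\lambda) \hookrightarrow \Ker(\psi^*)$ et celle de $\Ker(p^*)$ (quotient de $\Pic(T)$) fournit la finitude cherch\'ee.
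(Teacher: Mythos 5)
Votre preuve est correcte et suit pour l'essentiel la m\^eme route que celle de l'article : dans les deux cas, on descend les conditions de la D\'efinition \ref{defpseudo} le long de la suite exacte du corollaire \ref{corbraueralgebraic} (2) appliqu\'ee au $T$-torseur $Z'\to Z$ (sur $k$ puis sur $\bk$), la finitude de $\Pic(T)=H^1(k,T^*)$ et l'hypoth\`ese sur $Z$ donnant la finitude voulue. La seule diff\'erence est de pr\'esentation : l'article utilise la reformulation (donn\'ee apr\`es la D\'efinition \ref{defpseudo}) de la condition sur $\Ker(\lambda)$ en termes des noyaux des tir\'es en arri\`ere le long des applications d'orbite $\rho_z^*$, $\rho_{z'}^*$, et conclut par une chasse dans une \'echelle commutative, tandis que vous travaillez directement avec les homomorphismes de Sansuc en \'etablissant $\lambda'\circ p^*=\psi^*\circ\lambda$ via (\ref{defsansuc-e}) puis en filtrant $A=\lambda^{-1}(\Ker\psi^*)$ ; c'est la m\^eme substance.

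Un point \`a rectifier dans votre justification de la finitude de $\Br_{H_{\bk}}(Z'_{\bk})$ : l'affirmation $\Br(T_{\bk})=0$ est fausse d\`es que $\dim T\geq 2$ (par exemple $\Br(\BG_{m,\bk}^2)\neq 0$ : les classes des symboles $(x,y)_n$ sont non ramifi\'ees sur $\BG_m^2$), et la r\'ecurrence par puret\'e que vous esquissez ne peut donc pas aboutir. Mais vous n'avez besoin que de $\Br_a(T_{\bk})=0$, ce qui vaut pour une raison plus simple : sur un corps alg\'ebriquement clos, $\Br_1=0$ par d\'efinition (de fa\c{c}on \'equivalente, l'homomorphisme de Sansuc sur $\bk$ est nul, cf. le lemme \ref{lembrauerkbar} et la proposition \ref{propbraueralgebraic}), de sorte que la suite du corollaire \ref{corbraueralgebraic} (2) sur $\bk$, jointe \`a $\Pic(T_{\bk})=0$, donne bien la surjection $\Br_{G_{\bk}}(Z_{\bk})\twoheadrightarrow\Br_{H_{\bk}}(Z'_{\bk})$ (en fait un isomorphisme), exactement comme dans l'article ; la finitude en d\'ecoule et le reste de votre argument est inchang\'e.
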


\begin{proof}
Fixons $z'\in Z'(k)$ et $z\in Z(k)$ l'image de $z$.
Notons $\rho_{z'}: H\to Z': h\mapsto h\cdot z' $ et $\rho_z: G\to Z: g\mapsto g\cdot z$.
D'apr\`es la suite exacte de Sansuc \cite[Prop. 6.10]{S}, $\Pic({Z'}_{\bk})$ est de type fini.
Par le corollaire \ref{corbraueralgebraic} (2), on a un diagramme commutatif de suites exactes:
$$\xymatrix{\Pic(T)\ar[r]\ar[d]^=&\Br_G(Z)\ar[r]\ar[d]^{\rho_z^*}&\Br_H(Z')\ar[r]\ar[d]^{\rho_{z'}^*}&\Br_a(T)\ar[d]^=\\
\Pic(T)\ar[r]&\Br_1(G)\ar[r]&\Br_1(H)\ar[r]&\Br_a(T),
}$$
et un isomorphisme  $\Br_{G_{\bk}}(Z_{\bk})\to \Br_{H_{\bk}}(Z'_{\bk}) $.
Ainsi $\Br_{H_{\bk}}(Z'_{\bk}) $ est fini.
Puisque $\Ker (\rho_z^*)$ est fini, le groupe $\Ker (\rho_{z'}^*)$ est fini et $Z'$ est un pseudo $H$-espace homog\`ene.
\end{proof}

\begin{prop}\label{lembrauerinv-braueralg}
Soient $G$ un groupe lin\'eaire connexe et $Z$ un pseudo $G$-espace homog\`ene.
Alors $\Br_G(Z)/\Br_1(Z)$ est fini.
\end{prop}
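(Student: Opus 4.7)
The plan is to observe that $\Br_G(Z)/\Br_1(Z)$ injects into $\Br_{G_{\bk}}(Z_{\bk})$, which is finite by the definition of a pseudo $G$-espace homog\`ene.

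First, by \autoref{prop-invariant} (4), the hypothesis that $G$ is lin\'eaire gives the inclusion $\Br_1(Z) \subset \Br_G(Z)$. So the quotient $\Br_G(Z)/\Br_1(Z)$ makes sense, and the restriction morphism $\Br(Z) \to \Br(Z_{\bk})$ induces an injection
\[
\Br_G(Z)/\Br_1(Z) \hookrightarrow \Br(Z_{\bk})
\]
whose kernel is exactly $\Br_G(Z)\cap \Br_1(Z)=\Br_1(Z)$.

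Next, I would check that the image of $\Br_G(Z)$ in $\Br(Z_{\bk})$ lies in the subgroup $\Br_{G_{\bk}}(Z_{\bk})$. This is a direct consequence of the definition (\ref{defBr_GX}): if $b \in \Br_G(Z)$, then $\rho^*(b)-p_2^*(b) \in p_1^*\Br(G)$ in $\Br(G\times Z)$, and base-changing this identity to $\bk$ gives $\rho_{\bk}^*(b_{\bk})-p_{2,\bk}^*(b_{\bk}) \in p_{1,\bk}^*\Br(G_{\bk})$, so $b_{\bk}\in \Br_{G_{\bk}}(Z_{\bk})$.

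Combining the two steps, one obtains an injection
\[
\Br_G(Z)/\Br_1(Z) \hookrightarrow \Br_{G_{\bk}}(Z_{\bk}),
\]
and since $Z$ is a pseudo $G$-espace homog\`ene, $\Br_{G_{\bk}}(Z_{\bk})$ is finite by \autoref{defpseudo}, which concludes the argument. I do not expect any substantial obstacle: the finiteness of $\Ker(\lambda)/\Br(k)$ built into the definition is not even used here; only the finiteness of $\Br_{G_{\bk}}(Z_{\bk})$ is invoked, together with the elementary compatibilities of the $G$-invariant Brauer subgroup under base change and the inclusion $\Br_1 \subset \Br_G$.
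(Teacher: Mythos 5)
Your argument is correct and is exactly the paper's proof, just written out in detail: the paper's one-line justification is that $\Br_G(Z)/\Br_1(Z)$ is a subgroup of $\Br_{G_{\bk}}(Z_{\bk})$, which is finite by the definition of a pseudo $G$-espace homog\`ene. Your verification that the kernel of the restriction $\Br_G(Z)\to \Br(Z_{\bk})$ is $\Br_1(Z)$ and that the image lands in $\Br_{G_{\bk}}(Z_{\bk})$ is precisely the content of that statement.
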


\begin{proof}
Ceci vaut car $\Br_G(Z)/\Br_1(Z)$ est un sous-groupe de $\Br_{G_{\bk}}(Z_{\bk})$.
\end{proof}

 Pour un corps de nombres, le lemme suivant est bien connu.
 
 \begin{lem}\label{souslemexamtortorseur}
 Supposons que $k$ est un corps de nombres. Alors:
 
 (i) pour tout $\Gamma_k$-module de type fini sans torsion $M\neq 0$, le groupe $H^2(k,M)$ est infini;
 
 (ii) pour tout $\Gamma_k$-module fini $M\neq 0$, le groupe $H^1(k,M)$ est infini.
\end{lem}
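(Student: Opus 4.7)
\smallskip

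La strat\'egie est de prouver (ii) d'abord, puis d'en d\'eduire (i) par la suite exacte usuelle $0\to M\to M\otimes\BQ\to M\otimes\BQ/\BZ\to 0$.

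Pour (ii), on proc\`ede par d\'evissage~: en utilisant la suite exacte longue de cohomologie associ\'ee \`a une composition de $M$ en sous-quotients simples, et en remarquant que pour tout $\Gamma_k$-module fini $N$, $H^0(k,N)$ est fini, on ram\`ene l'\'enonc\'e au cas o\`u $M$ est un $\BF_{\ell}[\Gamma_k]$-module simple pour un premier $\ell$. Soit alors $L/k$ une extension finie galoisienne telle que $\Gamma_L$ agisse trivialement sur $M$. La suite exacte de restriction-inflation
\begin{equation*}
0\to H^1(\Gal(L/k),M)\to H^1(k,M)\to H^1(L,M)^{\Gal(L/k)}\to H^2(\Gal(L/k),M)
\end{equation*}
a ses termes extr\^emes finis (car $\Gal(L/k)$ et $M$ sont finis), donc il suffit de montrer que $H^1(L,M)^{\Gal(L/k)}$ est infini. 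On a $H^1(L,M)=\Hom_{\mathrm{cont}}(\Gamma_L^{ab},M)$ puisque l'action de $\Gamma_L$ sur $M$ est triviale.

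Pour produire une infinit\'e d'\'el\'ements $\Gal(L/k)$-invariants, on utilise la th\'eorie du corps de classes appliqu\'ee \`a $k$~: la compactification ab\'elienne $\Gamma_k^{ab}$ est infinie et il existe une infinit\'e de $\BZ/\ell$-extensions ab\'eliennes de $k$ lin\'eairement disjointes de $L$. La restriction donne une surjection $\Gamma_L^{ab}\twoheadrightarrow \Gal(k^{ab}\cdot L/L)\cong \Gal(k^{ab}/k^{ab}\cap L)$, dont le but s'identifie \`a un sous-groupe ouvert de $\Gamma_k^{ab}$ (d'indice fini) et est muni de l'action triviale de $\Gal(L/k)$ (puisqu'il s'agit de conjugaisons au sein d'un groupe ab\'elien). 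Ainsi $\Hom_{\mathrm{cont}}(\Gal(k^{ab}/k^{ab}\cap L), M^{\Gamma_k})$ se plonge $\Gal(L/k)$-\'equivariantment dans $H^1(L,M)^{\Gal(L/k)}$. Dans le cas o\`u $M^{\Gamma_k}\neq 0$, ce groupe est d\'ej\`a infini. Sinon, quitte \`a remplacer $L$ par une extension plus grande $L'/k$ telle que $\Gamma_{L'}$ agisse trivialement et que $M^{\Gal(L'/k)}$ contienne un \'el\'ement non nul — ce que l'on obtient en montant jusqu'\`a un corps sur lequel l'action est triviale globalement —, on se ram\`ene au cas pr\'ec\'edent (avec un ab\'us l\'eger de notation, puisque l'on substitue $L$ par $L'$ dans la suite ci-dessus).

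Pour (i), on utilise la suite exacte $0\to M\to M\otimes \BQ \to M\otimes \BQ/\BZ\to 0$. Comme $M\otimes \BQ$ est un $\BQ$-espace vectoriel continu sur $\Gamma_k$ (l'action se factorise par un quotient fini), et que la cohomologie d'un groupe fini \`a valeurs dans un $\BQ$-espace vectoriel est nulle en degr\'es positifs (\'etant annul\'ee par l'ordre du groupe et divisible), on obtient $H^i(k,M\otimes \BQ)=0$ pour $i\geq 1$. La suite exacte longue donne alors $H^2(k,M)\cong H^1(k,M\otimes \BQ/\BZ)$. Comme la cohomologie galoisienne continue commute aux limites inductives filtrantes sur les modules discrets de torsion, on a
\begin{equation*}
H^1(k,M\otimes \BQ/\BZ)=\varinjlim_n H^1(k,M/nM).
\end{equation*}
Puisque $M\neq 0$ et sans torsion, $M/nM$ est un $\Gamma_k$-module fini non nul pour tout $n\geq 2$. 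Par (ii) appliqu\'e \`a $M/nM$, chaque $H^1(k,M/nM)$ est infini, donc la limite inductive l'est aussi, ce qui conclut.

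Le principal obstacle est l'\'etape dans (ii) o\`u l'on doit produire une infinit\'e d'\'el\'ements $\Gal(L/k)$-invariants dans $H^1(L,M)$~: cela demande de bien choisir l'extension $L$ et d'exploiter que le groupe des caract\`eres continus de $\Gamma_k^{ab}$ est infini, via la th\'eorie globale du corps de classes.
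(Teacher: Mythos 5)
Le texte de l'article ne donne aucune d\'emonstration de ce lemme (il est cit\'e comme bien connu), donc la seule question est de savoir si votre argument est complet~; il ne l'est pas. La lacune est dans (ii), dans le cas $M^{\Gamma_k}=0$, qui est pr\'ecis\'ement le cas difficile. Votre construction plonge $\Hom_{\mathrm{cont}}(\Gal(k^{\ab}/k^{\ab}\cap L),\,M^{\Gamma_k})$ dans $H^1(L,M)^{\Gal(L/k)}$~: lorsque $M^{\Gamma_k}=0$, ce groupe est nul et ne donne rien. Le rem\`ede propos\'e — remplacer $L$ par un corps trivialisant plus grand $L'$ tel que $M^{\Gal(L'/k)}\neq 0$ — ne peut pas fonctionner~: l'action de $\Gamma_k$ sur $M$ est fix\'ee et se factorise par $\Gal(L/k)$, donc $M^{\Gal(L'/k)}=M^{\Gamma_k}$ pour tout corps trivialisant $L'$~; agrandir le corps ne cr\'ee aucun invariant. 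Or, apr\`es votre d\'evissage aux $\BF_{\ell}[\Gamma_k]$-modules simples, vous devez justement traiter des modules simples sans invariants (par exemple $M=\mu_{\ell}$ quand $\mu_{\ell}\not\subset k$). Traiter ce cas demande davantage que la th\'eorie du corps de classes ab\'elienne sur $k$~: il faut produire une infinit\'e d'homomorphismes $\Gal(L/k)$-\'equivariants $\Gamma_L^{\ab}\to M$, par exemple via la th\'eorie du corps de classes sur $L$ jointe \`a la structure de $\Gal(L/k)$-module des groupes de classes de rayons (pour une infinit\'e de places $v$ de $k$ totalement d\'ecompos\'ees dans $L(\mu_{\exp(M)})$, les unit\'es semi-locales fournissent des copies de la repr\'esentation r\'eguli\`ere $(\BZ/\exp(M))[\Gal(L/k)]$ dans $\Gamma_L^{\ab}\otimes\BZ/\exp(M)$, modulo l'image des unit\'es globales, qui sont de type fini), ou via un argument de dualit\'e de type Poitou--Tate~; rien de tel ne figure dans votre texte.

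Un point secondaire, r\'eparable, dans (i)~: de l'infinitude de chaque $H^1(k,M/nM)$ on ne peut pas conclure que la limite inductive $H^1(k,M\otimes\BQ/\BZ)$ est infinie — une colimite filtrante de groupes infinis peut \^etre petite, et ici les noyaux des fl\`eches de transition sont contr\^ol\'es par des groupes $H^0(k,M/mM)$ dont la taille cro\^it. La voie propre, toujours \`a partir de (ii), est la suite exacte $0\to M\xrightarrow{n} M\to M/nM\to 0$~: comme $H^1(L,M)=\Hom_{\mathrm{cont}}(\Gamma_L,M)=0$ pour un corps trivialisant $L$ (un homomorphisme continu d'un groupe profini vers $\BZ^r$ est nul), le groupe $H^1(k,M)$ est fini, donc $H^1(k,M/nM)\to H^2(k,M)$ a un noyau fini et une image infinie, d'o\`u (i) — mais cela pr\'esuppose bien s\^ur une d\'emonstration compl\`ete de (ii).
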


\begin{lem}\label{lemexamtortorseur}
Supposons que $k$ est un corps de nombres.
Soit $\phi: M\to N $ un homomorphisme de $\Gamma_k$-modules de type fini sans torsion. 
Si $\Ker (H^2(k,M)\xrightarrow{\phi_*} H^2(k,N))$ est fini, alors $\phi$ est injectif et $\coker(\phi)$ est sans torsion.
\end{lem}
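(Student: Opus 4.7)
Le plan est de proc\'eder par contrapos\'ee : on suppose que $\phi$ n'est pas injectif ou que $\coker (\phi)$ a un sous-groupe de torsion non nul, et on montre que $\Ker(\phi_*)$ est infini. L'ingr\'edient cl\'e sera la finitude de $H^1(k,L)$ pour tout $\Gamma_k$-module $L$ de type fini sans torsion : en choisissant une extension galoisienne finie $F/k$ trivialisant l'action sur $L$, on a $H^1(\Gamma_F,L)\cong \Hom(\Gamma_F,\BZ^{\rank L})=0$ puisque $\Gamma_F$ est profini et $\BZ$ est discret sans torsion, de sorte que la suite d'inflation-restriction fournit un isomorphisme $H^1(k,L)\cong H^1(\Gal(F/k),L)$, ce dernier \'etant fini.

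Dans le cas o\`u $K:=\Ker(\phi)\neq 0$, on pose $M':=\Im(\phi)$ et on \'ecrit la suite exacte longue associ\'ee \`a $0\to K\to M\to M'\to 0$ :
$$H^1(k,M')\to H^2(k,K)\xrightarrow{i_*} H^2(k,M).$$
La composition $K\hookrightarrow M\xrightarrow{\phi}N$ \'etant nulle, on a $\Im(i_*)\sbt \Ker(\phi_*)$. Or $\Im(i_*)\cong H^2(k,K)/\Im(H^1(k,M'))$ est le quotient d'un groupe infini (par le lemme \ref{souslemexamtortorseur} (i) appliqu\'e \`a $K$) par un sous-groupe fini (par l'ingr\'edient cl\'e appliqu\'e \`a $M'$), donc est infini.

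Dans le cas restant, $\phi$ est injectif mais le sous-groupe de torsion $C_t$ de $\coker(\phi)$ est non trivial. Soit $N'\sbt N$ l'image inverse de $C_t$ par $N\twoheadrightarrow \coker(\phi)$ ; alors $0\to M\to N'\to C_t\to 0$ est exacte et $N/N'$ est sans torsion. La suite exacte longue fournit un isomorphisme
$$\Ker(H^2(k,M)\to H^2(k,N'))\cong H^1(k,C_t)/\Im(H^1(k,N')),$$
et ce quotient est infini par le lemme \ref{souslemexamtortorseur} (ii) appliqu\'e \`a $C_t\neq 0$ combin\'e avec l'ingr\'edient cl\'e appliqu\'e \`a $N'$. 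Puisque $\phi: M\to N$ se factorise par $N'$, on a $\Ker(H^2(k,M)\to H^2(k,N'))\sbt \Ker(\phi_*)$, d'o\`u l'infinitude de $\Ker(\phi_*)$. Le seul point non purement formel de la d\'emonstration est la finitude de $H^1(k,L)$ pour $L$ de type fini sans torsion ; une fois ce fait admis, tout se r\'eduit \`a une chasse soigneuse dans les suites exactes longues associ\'ees \`a la factorisation $M\twoheadrightarrow M'\hookrightarrow N'\hookrightarrow N$ et aux deux lemmes de finitude pr\'eliminaires.
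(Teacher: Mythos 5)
Your proof is correct, and it rests on exactly the same two ingredients as the paper's argument: the infinitude statements of Lemme \ref{souslemexamtortorseur} and the finiteness of $H^1(k,L)$ pour $L$ de type fini sans torsion (which the paper invokes as standard and which you prove correctly via inflation-restriction). The difference is organizational and in the treatment of the cokernel. The paper argues directly under the finiteness hypothesis, factoring $\phi$ through $I=\Im(\phi)$ and chasing the two sequences $0\to K\to M\to I\to 0$ and $0\to I\to N\to C\to 0$: first $H^2(k,K)$ is finite, so $K=0$; then $H^1(k,C)$ is finite, so $C$ is torsion-free. That last deduction implicitly uses one more small step, since Lemme \ref{souslemexamtortorseur} (ii) concerns \emph{finite} modules while $C$ is only de type fini: one must note that the image of $H^0(k,C/C_{tor})$ in $H^1(k,C_{tor})$ is finite, so $H^1(k,C_{tor})$ infinite forces $H^1(k,C)$ infinite. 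Your contrapositive variant sidesteps this by pulling the torsion subgroup $C_t$ back to $N'\sbt N$ and using $0\to M\to N'\to C_t\to 0$, so that (ii) applies directly to the finite module $C_t$, and the inclusion $\Ker\bigl(H^2(k,M)\to H^2(k,N')\bigr)\sbt \Ker(\phi_*)$, coming from the factorization of $\phi$ through $N'$, finishes the case; your first case (via $\Im(i_*)\cong H^2(k,K)/\Im(H^1(k,M'))$) matches the paper's use of the first sequence. In short: same toolbox, a slightly different decomposition ($M\twoheadrightarrow M'\hookrightarrow N'\hookrightarrow N$ versus the paper's $K$, $I$, $C$), with your route making explicit, and indeed avoiding, the one point the paper leaves implicit, at the minor cost of a two-case structure instead of a single linear chase.
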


\begin{proof}
Notons $I:=\Im (\phi)$, $K:=\Ker(\phi)$ et $C:=\coker(\phi)$.
Alors $I$ et $K$ sont de type fini sans torsion et on a deux suites exactes:
$$H^1(k,I)\to H^2(k,K)\to H^2(k,M)\xrightarrow{\theta_1} H^2(k,I)$$
et
$$H^1(k,N)\to H^1(k,C)\to H^2(k,I)\xrightarrow{\theta_2}H^2(k,N).$$
Ainsi $H^1(k,I)$, $H^1(k,N)$ et $\Ker(\theta_1)$ sont finis.
Alors $H^2(k,K)$ est fini et donc $K=0$ (Lemme \ref{souslemexamtortorseur}).
Ainsi $\Ker(\theta_2)$ est fini.
Alors $H^1(k,C)$ est fini et donc  $C$ est sans torsion (Lemme \ref{souslemexamtortorseur}). 
\end{proof}

\begin{lem}\label{lemlemtorequotient}
Supposons que $k$ est un corps de nombres.
Soient $G$ un groupe lin\'eaire connexe, $T$ un tore et $G\xrightarrow{\psi}T$ un homomorphisme. Alors les \'enonc\'es suivants sont \'equivalents: 

(i) le groupe $\Ker (\Br_1(T)\xrightarrow{\psi^*}\Br_1(G))$ est fini; 

(ii) le morphisme $\psi$ est surjectif de noyau connexe; 

(iii) la $G$-vari\'et\'e $T$ est un pseudo $G$-espace homog\`ene.
\end{lem}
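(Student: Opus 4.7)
The plan is to prove the chain of implications (ii) $\Rightarrow$ (iii) $\Rightarrow$ (i) $\Rightarrow$ (ii).

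The first implication is immediate: if $\psi\colon G\to T$ is surjective with connected kernel $N$, then $T\cong G/N$ exhibits $T$ as a $G$-homog\`ene space with closed connected stabilizer, so Example \ref{examtoretorseur} (1) applies and $T$ is a pseudo $G$-espace homog\`ene.

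For (iii) $\Rightarrow$ (i), I first note that $\Pic(T_{\bk})=0$ gives $\Br(T_{\bk})=0$, whence $\Br_G(T)=\Br_1(T)=\Br(T)$ by Proposition \ref{prop-invariant} (4). Specializing (\ref{defsansuc-e}) at the point $z=e_T\in T(k)$ produces $\lambda=\psi^*-e_T^*$; since $e_T^*$ lands in $\Br(k)$ and since $\Br(k)\hookrightarrow \Br_1(G)$ admits a retraction induced by $e_G\in G(k)$, a short chase shows that $\Ker(\lambda)/\Br(k)$ is canonically isomorphic to $\Ker(\psi^*\colon \Br_1(T)\to \Br_1(G))$. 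The finiteness assumption (iii) then directly gives (i).

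The main step is (i) $\Rightarrow$ (ii). The idea is to reduce to tori by factoring $\psi$ through the maximal toric quotient $q\colon G\twoheadrightarrow G^{tor}$, writing $\psi=\psi^{tor}\circ q$ with $\psi^{tor}\colon G^{tor}\to T$. Since $\psi^*=q^*\circ(\psi^{tor})^*$, we have $\Ker((\psi^{tor})^*)\sbt \Ker(\psi^*)$, hence $\Ker((\psi^{tor})^*)$ is finite. For any torus $T_0$, the Hochschild--Serre spectral sequence together with $\Pic(T_{0,\bk})=0$ and $\bk[T_0]^{\times}/\bk^{\times}\cong T_0^*$ yields a canonical isomorphism $\Br_1(T_0)/\Br(k)\cong H^2(k,T_0^*)$, functorial in homomorphisms of tori. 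Applied to $T$ and $G^{tor}$, it follows that the kernel of $H^2(k,T^*)\to H^2(k,(G^{tor})^*)$ induced by $(\psi^{tor})^*\colon T^*\to (G^{tor})^*$ is finite. Then Lemma \ref{lemexamtortorseur}, applied to the homomorphism of finitely generated torsion-free $\Gamma_k$-modules $(\psi^{tor})^*$, shows that this character morphism is injective with torsion-free cokernel. By the standard anti-equivalence between tori and their character lattices, this translates to $\psi^{tor}$ being surjective with connected kernel. Consequently $\psi$ is surjective, and $\Ker(\psi)$ fits in an extension $1\to G^{ssu}\to \Ker(\psi)\to \Ker(\psi^{tor})\to 1$ of connected groups, so is itself connected, giving (ii).

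The delicate point is this last implication, where two ingredients must be assembled: the identification $\Br_1(T_0)/\Br(k)\cong H^2(k,T_0^*)$ for tori (which converts Brauer-kernel finiteness into finiteness of a Galois-cohomology kernel), and Lemma \ref{lemexamtortorseur}, which is precisely tailored to extract injectivity and torsion-freeness of cokernel from that finiteness.
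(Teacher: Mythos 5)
Your proof follows essentially the same route as the paper: (ii)$\Rightarrow$(iii) via l'exemple \ref{examtoretorseur} (1), (iii)$\Rightarrow$(i) from the definition of the Sansuc homomorphism, and for the main implication (i)$\Rightarrow$(ii) the reduction to $G^{tor}$, the identification $\Br_a(T_0)\cong H^2(k,T_0^*)$ for tori, and Lemma \ref{lemexamtortorseur}; that part is correct and complete.

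One assertion in your (iii)$\Rightarrow$(i) step is false, though harmless. $\Pic(T_{\bk})=0$ does \emph{not} imply $\Br(T_{\bk})=0$: for $\dim T\geq 2$ one has $\Br(\BG_{m,\bk}^2)[n]\cong \BZ/n\neq 0$ (classes of the cyclic algebras $(x,y)_n$), so the chain $\Br_G(T)=\Br_1(T)=\Br(T)$ is unjustified, and indeed $\Br_G(T)=\Br_1(T)$ can fail (e.g.\ $\psi$ trivial, where $\rho=p_2$ and $\Br_G(T)=\Br(T)\supsetneq \Br_1(T)$); under (iii) one only knows $\Br_G(T)/\Br_1(T)$ is finite. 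Consequently your claimed isomorphism $\Ker(\lambda)/\Br(k)\cong \Ker(\psi^*\colon \Br_1(T)\to\Br_1(G))$ is not established. Fortunately only one inclusion is needed: if $b\in \Br_1(T)$ and $\psi^*(b)=0$, then $e_T^*(b)=e_G^*\psi^*(b)=0$, so $\lambda(b)=\psi^*(b)-e_T^*(b)=0$; and since $\Br(k)\to\Br(G)$ is injective, $\Ker(\psi^*|_{\Br_1(T)})\cap \Im\Br(k)=0$. Hence $\Ker(\psi^*|_{\Br_1(T)})$ injects into $\Ker(\lambda)/\Br(k)$, which is finite by (iii), and the implication (iii)$\Rightarrow$(i) stands. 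With this one-line repair your argument agrees with the paper's proof.
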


\begin{proof}
D'apr\`es l'exemple \ref{examtoretorseur} (1), on a (ii)$\Rightarrow$(iii). 
Par  d\'efinition, on a (iii)$\Rightarrow$(i).
Pour (i)$\Rightarrow$(ii), par la suite exacte de Sansuc, on peut remplacer $G$ par $G^{tor}$ et supposer que $G$ est un tore.
Dans ce cas, le noyau de $H^2(k,T^*)\xrightarrow{\psi^*} H^2(k,G^*)$ est fini.
Le lemme \ref{lemexamtortorseur} ci-dessus donne (ii).
\end{proof}

Soient $G$ un groupe lin\'eaire connexe et $Z$ un pseudo $G$-espace homog\`ene (cf. D\'efinition \ref{defpseudo}).
Soient $(Z^{tor})^*:=\bk[Z]^{\times}/\bk^{\times}$ un $\Gamma_k$-module libre de type fini et $Z^{tor}$ le tore correspondant.
Pour tout $z\in Z(k)$, on a un morphisme canonique $Z\xrightarrow{\pi_z}Z^{tor}$ tel que $\pi_z(z)=e_{Z^{tor}}$ (Rosenlicht).
Soit $\psi_z$ la composition $G\to G\cdot z\hookrightarrow Z\xrightarrow{\pi_z} Z^{\tor}$.

\begin{prop}\label{lemtorequotient}
Le morphisme $\psi_z$ ne d\'epend pas du choix de $z$, et c'est un homomorphisme tel que 
$Z^{tor}$ soit une $G$-vari\'et\'e et $\pi_z$ soit un $G$-morphisme.

De plus, si $k$ est un corps de nombres, la $G$-vari\'et\'e $Z^{\tor}$ est un pseudo $G$-espace homog\`ene.
\end{prop}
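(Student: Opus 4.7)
Le plan se d\'ecompose en trois \'etapes. D'abord, on \'etablira la premi\`ere assertion via le passage aux caract\`eres et le lemme de Rosenlicht. Ensuite, on utilisera le Lemme \ref{lemlemtorequotient} pour ramener la partie arithm\'etique \`a la finitude d'un noyau de groupes de Brauer. Enfin, on contr\^olera ce noyau via la suite de Hochschild--Serre.

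Pour la premi\`ere \'etape, on passe aux caract\`eres. Pour tout $\chi\in (Z^{\tor})^*=\bk[Z]^\times/\bk^\times$, vu comme fonction sur $Z$ modulo $\bk^\times$, le lemme de Rosenlicht \cite[Lem. 6.5]{S} appliqu\'e \`a $\rho^*\chi$ sur $G\times Z$, suivi d'une sp\'ecialisation en $\{e_G\}\times Z$ puis d'une normalisation, donne l'\'egalit\'e $\chi(g\cdot z')=\alpha_\chi(g)\chi(z')$ avec $\alpha_\chi\in G^*$. L'application $\chi\mapsto \alpha_\chi$ est $\BZ$-lin\'eaire et $\Gamma_k$-\'equivariante, donc correspond \`a un homomorphisme de tores, se relevant en un homomorphisme $\psi:G\to Z^{\tor}$. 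En normalisant $\chi$ par $\chi(z)=1$ (compatible avec $\pi_z(z)=e_{Z^{\tor}}$), on obtient $\chi\circ\psi_z=\alpha_\chi$~; ainsi $\psi_z=\psi$ ne d\'epend pas de $z$ et est un homomorphisme. L'\'equivariance $\pi_z(g\cdot z')=\psi(g)\cdot\pi_z(z')$ s'en d\'eduit en comparant les valeurs de chaque caract\`ere $\chi$ sur les deux membres.

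Pour la partie arithm\'etique, on suppose $k$ corps de nombres et l'on applique le Lemme \ref{lemlemtorequotient} \`a $\psi:G\to Z^{\tor}$~: il s'agit de prouver la finitude de $\Ker(\psi^*:\Br_1(Z^{\tor})\to\Br_1(G))$. Puisque $G(k)\ne\emptyset$ (donc $\Br(k)\hookrightarrow\Br_1(G)$) et $Z^{\tor}(k)\ne\emptyset$, ceci \'equivaut \`a la finitude de $\Ker(\psi^*:\Br_a(Z^{\tor})\to\Br_a(G))$. La factorisation $\psi=\pi_z\circ\rho_z$ avec $\rho_z:G\to Z,\ g\mapsto g\cdot z$, combin\'ee \`a la Proposition \ref{corbraueralgebraic1}(2) qui identifie $\rho_z^*|_{\Br_1(Z)}$ \`a $\lambda_Z$ modulo $\Br(k)$, ram\`ene ce probl\`eme au contr\^ole s\'epar\'e de deux noyaux~: $\Ker(\rho_z^*:\Br_a(Z)\to\Br_a(G))$, qui est un sous-groupe de $\Ker(\lambda_Z)/\Br(k)$ et donc fini par l'hypoth\`ese de pseudo $G$-espace homog\`ene sur $Z$~; et $\Ker(\pi_z^*:\Br_a(Z^{\tor})\to\Br_a(Z))$.

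La difficult\'e principale sera ce dernier noyau. L'id\'ee est d'exploiter l'identit\'e $(Z^{\tor})^*=\bk[Z]^\times/\bk^\times$ dans les suites des bas degr\'es de Hochschild--Serre. Puisque $\Pic(Z^{\tor}_{\bk})=0$, on a $\Br_a(Z^{\tor})\simeq H^2(k,(Z^{\tor})^*)$, et la suite analogue pour $Z$ identifie le noyau de $\pi_z^*$ \`a l'image de $\Pic(Z_{\bk})^{\Gamma_k}/\Pic(Z)$ dans $H^2(k,(Z^{\tor})^*)$. Ce groupe est de type fini, puisque $\Pic(Z_{\bk})$ l'est par hypoth\`ese. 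D'autre part, $H^2(k,(Z^{\tor})^*)$ est de torsion sur un corps de nombres (r\'esoudre $(Z^{\tor})^*$ par $(Z^{\tor})^*\otimes\BQ$, uniquement divisible, donc de cohomologie galoisienne nulle en degr\'e $\ge 1$)~; tout sous-groupe de type fini y est donc fini. On conclut en combinant avec l'\'etape pr\'ec\'edente. L'obstacle tient ainsi \`a la conjonction entre la finitude topologique fournie par $\Pic(Z_{\bk})$ et la structure de torsion de la cohomologie galoisienne des modules libres de type fini.
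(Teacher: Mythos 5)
Votre démonstration est correcte et suit essentiellement la même voie que celle du texte : la première partie repose sur le lemme de Rosenlicht (que le texte emballe via le lemme \ref{lemaction}, c'est-à-dire la décomposition de $H^0(G\times Z, Z^{tor})$, tandis que vous l'appliquez caractère par caractère — même substance), et la partie arithmétique utilise exactement le même argument : finitude de $\Ker(\pi_z^*|_{\Br_1})$ par Hochschild--Serre (groupe de type fini dans un groupe de torsion), finitude de $\Ker(\rho_z^*)\cap\Br_G(Z)$ par l'hypothèse de pseudo espace homogène, puis le lemme \ref{lemlemtorequotient}.
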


\begin{proof}
Notons $\rho: G\times Z\to Z$ l'action de $G$. 
Par le lemme de Rosenlicht, $\psi_z$ est un homomorphisme.
D'apr\`es le lemme \ref{lemaction}, on a un isomorphisme canonique
$$\xymatrix{H^0_{e_G}(G,Z^{tor})\oplus H^0(Z,Z^{tor})\ar@/^/[r]^-{p_1^*\cdot p_2^*}&H^0(G\times Z,Z^{tor}):\ar@/^/[l]^-{\theta}
&(\psi_z,\pi_z ) \ar@{|->}[r]& \psi_z\cdot \pi_z
}$$
tel que $\theta (\pi_z\circ \rho)=((\pi_z\circ \rho)|_{G\times z},(\pi_z\circ \rho)|_{e_G\times Z})=(\psi_z,\pi_z)$.
Alors $\pi_z\circ \rho=\psi_z\cdot \pi_z $ et $\pi_z$ est un $G$-morphisme.

Pour tout $z'\in Z(k)$, on a $\pi_{z'}=\pi_z(z')\cdot \pi_z$ et donc
$\theta (\pi_{z'}\circ \rho)=(\psi_z,\pi_z(z')\cdot \pi_z)$.
 Ainsi $\psi_{z'}=\psi_z$.

 La suite spectrale de Hochschild-Serre donne une suite exacte
$$\Pic(Z)\to \Pic(Z_{\bk})^{\Gamma_k}\to \Br_1(Z^{tor}) \xrightarrow{\pi_z^*|_{\Br_1}} \Br(Z).$$
Puisque $\Pic(Z_{\bk})$ est de type fini et $\Br_1(Z^{tor})$ est torsion, le groupe  $\Ker (\pi_z^*|_{\Br_1})$ est fini.
Puisque $\Ker(\Br_G(Z)\xrightarrow{\rho_z^*} \Br(G))$ est fini, 
le groupe  $\Ker (\Br_1(Z^{tor})\xrightarrow{\psi_z^*|_{\Br_1}} \Br(G))$ est fini.
Si $k$ est un corps de nombres, une application du lemme \ref{lemlemtorequotient} donne l'\'enonc\'e.
\end{proof}

\begin{defi}\label{defitorequotient}
Dans la proposition \ref{lemtorequotient}, une fois qu'on a  choisi   $z\in Z(k)$,
 le morphisme $Z\xrightarrow{\pi}Z^{tor}$ est appel\'e \emph{le quotient torique maximal}.
Il  ne d\'epend  du choix de $z$ qu'\`a translation pr\`es.
Si $k$ est un corps de nombres, le morphisme $G\to Z^{tor}$ dans la proposition \ref{lemtorequotient} est surjectif de noyau $G_0$ connexe.
Le groupe $G_0$ est appel\'e \emph{le stabilisateur de $G$ sur $Z^{tor}$}.
\end{defi}

 \section{L'approximation forte hors des places archim\'ediennes et la question \ref{ques2}}\label{4} 
 
  Dans toute cette section,  $k$ est un corps de nombres. 
Sauf  mention explicite, une vari\'et\'e est  une $k$-vari\'et\'e.
 Soit $G$ un groupe lin\'eaire connexe.
 Pour r\'epondre \`a la   question \ref{ques2}, on \'etablit le th\'eor\`eme \ref{mainthminfty}.
 Comme consequence, on montre le th\'eor\`eme \ref{thmgroupic} (1).
 
 Rappelons la notion de sous-groupe de Brauer invariant (cf. D\'efinition \ref{def-invariant}).
 
 \begin{lem}\label{mainleminfty}
Soit $G$ un groupe lin\'eaire connexe. Alors l'homomorphisme induit par l'accouplement de Brauer-Manin 
$G(\RA_k)_{\bullet}\xrightarrow{\theta_G} \Br_a(G)^D$ est ouvert, 
o\`u $(-)^D:=\Hom(-,\BQ/\BZ)$.
\end{lem}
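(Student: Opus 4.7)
The target $\Br_a(G)^D = \Hom(\Br_a(G), \BQ/\BZ)$ carries the Pontryagin dual topology: since $\Br_a(G)$ is a torsion abelian group (Grothendieck) with the discrete topology, $\Br_a(G)^D$ is a profinite topological group. The map $\theta_G$ is a continuous homomorphism of topological groups, so by translation invariance openness is equivalent to the condition that the image of every open neighbourhood of the identity in $G(\RA_k)_{\bullet}$ contains an open neighbourhood of $0$ in $\Br_a(G)^D$.

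The plan is to reduce to the case where $G$ is a torus. Using the two exact sequences $1 \to G^u \to G \to G^{red} \to 1$ (with $G^u$ $k$-rational) and $1 \to G^{ss} \to G^{red} \to G^{tor} \to 1$, together with Sansuc's exact sequence applied to the isogeny $G^{sc} \to G^{ss}$ and the facts $\Pic(G^{sc}_{\bk}) = 0$ and $\Pic(G^{ss}_{\bk})$ finite, one shows that the morphism $\Br_a(G^{tor}) \to \Br_a(G)$ has finite kernel and finite cokernel. Dually, $\Br_a(G)^D \to \Br_a(G^{tor})^D$ is a continuous homomorphism of profinite groups with finite kernel and with open image of finite index. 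On the adelic side, the map $G(\RA_k)_{\bullet} \to G^{tor}(\RA_k)_{\bullet}$ is a continuous open surjection: strong approximation for $G^{sc}$ handles the semisimple part, the unipotent radical contributes a continuous open surjection (via the $k$-rationality of $G^u$), and openness of the isogeny $G^{sc}(k_v) \to G^{ss}(k_v)$ follows from Kneser. Functoriality of the Brauer--Manin pairing gives $\theta_{G^{tor}} \circ \pi_{*} = \phi^D \circ \theta_G$ where $\pi\colon G \to G^{tor}$ and $\phi\colon \Br_a(G^{tor}) \to \Br_a(G)$, reducing the openness of $\theta_G$ to that of $\theta_T$ for the torus $T := G^{tor}$.

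For a torus $T$, Sansuc's sequence gives $\Br_a(T) \cong H^2(k, T^*)$. Local Tate duality provides, at each place $v$, a continuous homomorphism $T(k_v) \to H^2(k_v, T^*)^D$ (factoring through $\pi_0$ at archimedean places), and by assembling these one obtains a continuous homomorphism
\[
\theta_T^{\loc}\colon T(\RA_k)_{\bullet} \lra \Bigl(\bigoplus_v H^2(k_v, T^*)\Bigr)^D
\]
which is open onto its image. The map $\theta_T$ factors as $\theta_T^{\loc}$ composed with the dual of the localisation $H^2(k, T^*) \to \bigoplus_v H^2(k_v, T^*)$. By the Poitou--Tate nine-term exact sequence, this localisation has finite kernel $\Sha^2(k, T^*) \cong \Sha^1(k, T)^D$ and finite cokernel; its dual is therefore a continuous homomorphism of profinite groups with open image of finite index, so composing open maps yields the openness of $\theta_T$.

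The most delicate step will be the torus case: at an individual place the local Artin-type map $T(k_v) \to H^2(k_v, T^*)^D$ has in general dense but non-open image in the locally compact topology of $T(k_v)$, so openness is a genuinely global phenomenon requiring the finiteness of Tate--Shafarevich groups through Poitou--Tate duality (the incarnation of global class field theory in this setting). The careful bookkeeping of topologies---discrete versus profinite, restricted versus full direct products, and the role of $(\,{-}\,)_{\bullet}$ at archimedean places---is where the actual work lies.
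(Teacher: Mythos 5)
Your overall strategy (reduce to $G^{tor}$, then do the torus case by duality) is reasonable in spirit, but as written it does not prove the lemma: the torus case, which you yourself identify as the crux, is not established, and the specific steps you propose for it fail. At a finite place, local Tate duality identifies $H^2(k_v,T^*)^D$ with the profinite completion $T(k_v)^{\wedge}$, and the completion map $T(k_v)\to T(k_v)^{\wedge}$ is \emph{not} open onto its image: for $T=\BG_m$, the subgroup $\CO_v^\times$ is open in $k_v^\times$ but of infinite index, so its image is not relatively open in the image of $k_v^\times$ inside $\widehat{k_v^\times}\cong\Gamma_{k_v}^{\ab}$ (the induced topology only sees finite-index subgroups). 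The same defect persists after assembling over all places, so your claim that $\theta_T^{\loc}$ is ``open onto its image'' is false. Moreover, even granting it, ``composing open maps'' does not apply: the dual of localisation is open onto its (open, finite-index) image as a map of compact groups, but you would need it to carry the non-closed subgroup $\Im(\theta_T^{\loc})$ openly onto an open subgroup of $\Br_a(T)^D$, and that already presupposes that $\theta_T$ has open image --- essentially the statement to be proved. The genuinely global input that is missing from your sketch is the finiteness statement: for a compact open subgroup $C$, the image $\theta_T(C)$ has \emph{finite index} (finiteness of ray class groups, resp.\ of class numbers); combined with compactness (hence closedness) of $\theta_T(C)$ this gives openness. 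The paper supplies exactly this input, directly for general $G$ and without any reduction to tori: compact open subgroups form a base of neighbourhoods in $G(\RA_k)_\bullet$, the image of such a subgroup is compact hence closed, and it is of finite index by the finiteness of the class number of $G$ together with Demarche's Poitou--Tate exact sequence (using the finiteness of $\Sha^1(G)$). Flagging the torus case as ``where the actual work lies'' does not discharge it.

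There are also problems in your reduction step, though they are repairable. The map $G(\RA_k)_\bullet\to G^{tor}(\RA_k)_\bullet$ is in general \emph{not} surjective: for $G=\GL_1(D)$ with $D$ a quaternion algebra ramified at a real place, the map to $G^{tor}\cong\BG_m$ is the reduced norm, whose image on real points is $\BR_{>0}$; strong approximation for $G^{sc}$ is irrelevant to this claim, as it concerns density of rational points, not surjectivity on local or adelic points. What is true and suffices is openness of the map on adelic points (Conrad, cited in the paper as \cite[Thm.\ 4.5]{Co}). Finally, transferring openness back along $\phi^D$ (which only has finite kernel, dual to the finite cokernel of $\Br_a(G^{tor})\to\Br_a(G)$) requires knowing that $\theta_G(C)$ is closed, so one must again work with compact open subgroups; none of this bookkeeping is done. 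In short, the proposal outlines a plausible alternative route but leaves precisely the decisive finiteness argument unproved, and several of the intermediate assertions are incorrect as stated.
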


\begin{proof}
Dans $G(\RA_k)_{\bullet}$, les sous-groupes ouverts compacts forment  une base topologique de $e_G$.
Pour tout tel sous-groupe $C$, l'image $\theta_G(C)\sbt \Br_a(G)^D$ est compacte, et donc ferm\'ee.
Il suffit  alors de montrer que cette image est d'indice fini.
Par la finitude de nombre de classes de $G$ (\cite[Thm. 5.1]{PR}),
il existe un tel sous-groupe $C_0$ tel que la classe double $C_0\backslash G(\RA_k)_{\bullet}/G(k) $ soit finie. 
Puisque $\Sha^1(G)$ est fini, d'apr\`es \cite[Thm. 5.1]{D11}, 
$\theta_G(C_0)$ est d'indice fini.
Pour tout tel sous-groupe $C$, le quotient $C_0/(C\cap C_0) $ est fini et donc $\theta_G(C)$ est d'indice fini.
\end{proof}

 \begin{thm}\label{mainthminfty}
Soient $G$ un groupe lin\'eaire connexe, $X$ une $G$-vari\'et\'e lisse g\'eom\'e\-triquement int\`egre et $U\sbt X$ un $G$-ouvert.
Soient $A\sbt \Br(X)$ un sous-groupe fini et  $B\sbt \Br_G(U)$ (cf. (\ref{defBr_GX})) un sous-groupe.
Supposons que $\frac{B\cap \Ker (\lambda)}{B\cap \Im \Br(k)}$ est fini, 
o\`u $\Br_G(U)\xrightarrow{\lambda}\Br_e(G)$ est  l'homomorphisme de Sansuc (D\'efinition \ref{defsansuc}).
Alors, pour tout ouvert $W\sbt X(\RA_k)$ satisfaisant $W^{(A+B)\cap \Br(X)}\neq\emptyset$, on a 
$W\cap U(\RA_k)^{A+B}\neq\emptyset.$
 \end{thm}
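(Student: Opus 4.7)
I would prove the theorem by first producing a point $u^0 \in W \cap U(\RA_k)$ close to a given $x_0 \in W^{(A+B)\cap \Br(X)}$ and satisfying the Brauer-Manin condition for the ``uncontrollable'' part of $B$, and then translating by a suitable $g \in G(\RA_k)$ close to $e_G$ to cancel the remaining defect coming from the Sansuc homomorphism.

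\emph{Local approximation into $U$.} Pick $x_0 \in W^{(A+B)\cap \Br(X)}$ and a finite set $S_0$ of places carrying the constraints of $W$. Since $X$ is smooth and $U$ is dense open, $U(\RA_k)$ is dense in $X(\RA_k)$, so one can find $u^0 \in W \cap U(\RA_k)$ arbitrarily close to $x_0$ at the places of $S_0$ (and integral outside). For any $\xi \in \Br(X)$, the evaluation map $X(k_v) \to \Br(k_v)$ is locally constant for $v$ non-archimedean and constant on connected components for $v$ archimedean; hence if $u^0$ is close enough to $x_0$ one has $\xi(u^0_v) = \xi(x_{0,v})$ at every $v$. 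In particular, for each $\xi \in (A+B) \cap \Br(X)$ the sum $\sum_v inv_v(\xi(u^0_v)) = \sum_v inv_v(\xi(x_{0,v}))$ vanishes, so $u^0$ already satisfies the Brauer-Manin condition for $A$ and for $B \cap \Br(X)$.

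\emph{Killing the $\Ker(\lambda)$-defect.} The elements of $B \cap \Im \Br(k)$ pair to $0$ via global reciprocity. By hypothesis, the quotient $(B \cap \Ker(\lambda))/(B \cap \Im \Br(k))$ is a finite group $F$; representatives of $F$ that belong to $\Br(X)$ are already handled by the previous step. For the finitely many remaining classes, which lie in $\Br(U) \setminus \Br(X)$, I would apply Harari's formal lemma in the $G$-invariant context (exploiting that their residues on $X \setminus U$ are $G$-invariant by \autoref{lem-fibreinvariant}) to further perturb $u^0$ within $W \cap U(\RA_k)$ so as to impose $\sum_v inv_v(b(u^0_v)) = 0$ for each such $b$, without disturbing the previously achieved vanishings.

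\emph{Global $G$-translation.} Define $c: B \to \BQ/\BZ$ by $c(b) := \sum_v inv_v(b(u^0_v))$. By construction $c$ vanishes on $B \cap \Ker(\lambda)$ and hence descends to a character of $\lambda(B) \subset \Br_a(G)$. Applying \autoref{corbraueralgebraic1}(1) at each place to $b \in B \subset \Br_G(U)$ gives, for $u := g \cdot u^0$ with $g \in G(\RA_k)$,
\[
\sum_v inv_v(b(u_v)) \;=\; \sum_v inv_v(\lambda(b)(g_v)) \;+\; c(b),
\]
so it suffices to find $g \in G(\RA_k)$, close enough to $e_G$ that $g \cdot u^0$ still lies in $W$, whose image under the Brauer-Manin pairing $G(\RA_k)_\bullet \to \Br_a(G)^D$ restricts to $-c$ on $\lambda(B)$. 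By \autoref{mainleminfty}, the image of any open neighborhood of $e_G$ in $\Br_a(G)^D$ is an open subgroup of finite index; combined with the freedom left in choosing $u^0$ through the formal-lemma step (which can shift $c$ through the finitely many cosets), one arranges for $-c$ to lie in the accessible image. Taking $g$ sufficiently small keeps $g \cdot u^0 \in W$ and, by local constancy of evaluation of each $a \in A \subset \Br(X)$, preserves the vanishing of $\sum_v inv_v(a(u_v))$.

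The main obstacle will be the careful dovetailing between the formal-lemma adjustment and the openness provided by \autoref{mainleminfty}: the formal lemma supplies full freedom for Brauer classes that do not extend to $\Br(X)$, while the $G$-action supplies a finite-index subgroup's worth of freedom on $\lambda(B)$, and these must be combined to produce a single $u$ killing the entire defect $c$ while remaining in $W$.
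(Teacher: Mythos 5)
Your overall architecture is the same as the paper's: use Harari's formal lemma to produce a point of $W\cap U(\RA_k)$ orthogonal to a finite part of $B$, then use Proposition \ref{corbraueralgebraic1} (1) together with the openness of the Brauer--Manin pairing on $G$ (Lemme \ref{mainleminfty}) to translate by some $g\in G(\RA_k)$ and kill the remaining defect on all of $B$ at once. However, the step you yourself single out as ``the main obstacle'' is a genuine gap, and the mechanism you propose for it does not work as stated. Write $(-)^D:=\Hom(-,\BQ/\BZ)$ and let $C\sbt G(\RA_k)$ be an open subgroup with $C\cdot W=W$ (this already requires shrinking $W$ so that its finite part is compact, as in the paper). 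After your adjustments the defect character $c\in B^D$ vanishes on $B\cap\Ker(\lambda)$, hence is of the form $\lambda^D(\chi)$; but to realize $-c$ by a translation in $C$ you need the coset $-\chi+\Ker(\lambda^D)$ to meet $\theta_G(C)$, and $\theta_G(C)$ is only of \emph{finite index} in $\Br_a(G)^D$. Forcing $c$ to vanish on representatives of the finite group $(B\cap\Ker\lambda)/(B\cap\Im\Br(k))$ gives no control on which coset of $(\lambda^D\circ\theta_G)(C)$ the character $c$ occupies, and the formal lemma cannot ``shift $c$ through the finitely many cosets'': it only imposes vanishing on a prescribed finite set of classes. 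The paper's resolution is to choose, \emph{before} invoking the formal lemma, a single finite subgroup $B_1\sbt B$ large enough that every character of $B$ vanishing on $B_1+\Im\Br(k)$ automatically lies in $(\lambda^D\circ\theta_G)(C)$ --- condition (\ref{mainthminfty-e}). The existence of such a $B_1$ is a compactness/Pontryagin-duality consequence of the two finiteness inputs (finiteness of $(B\cap\Ker\lambda)/(B\cap\Im\Br(k))$, which makes the relevant characters factor through $\lambda(B)$, and finite index of $\theta_G(C)$, which makes the target an open subgroup of a profinite dual). One application of the formal lemma to $A+B_1$ then finishes the proof. This choice of $B_1$ is precisely where the hypothesis of the theorem is used, and it is absent from your argument.

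Two secondary remarks. First, your opening step claims $\xi(u^0_v)=\xi(x_{0,v})$ for every $\xi\in(A+B)\cap\Br(X)$ by continuity; this group need not be finite, so no single neighbourhood of $x_0$ works uniformly. Fortunately the claim is also unnecessary: since $B\cap\Br(X)\sbt B$, orthogonality to it comes for free from the final translation, and only the finite group $A$ needs to be killed by shrinking $W$. Second, the appeal to Lemme \ref{lem-fibreinvariant} and $G$-invariance of residues plays no role in the formal lemma, which applies to any finite subgroup of $\Br(U)$; that part of your plan is a red herring rather than an error.
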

 
 \begin{proof}
 On peut supposer que $\Im \Br(k)\sbt B$.
 Apr\`es avoir r\'etr\'eci $W$, on peut supposer que tout \'el\'ement de $A$ s'annule sur $W$ 
 et $W\cong W_{\infty}\times W_f$ avec $W_{\infty}\sbt X(k_{\infty})$, $W_f\sbt X(\RA_k^{\infty})$
  tel que $W_f$ soit compact.
  
Pour tout $w\in W_f$, il existe un ouvert $W_w\sbt W_f$ contenant $w$ et un ouvert $C_w\sbt G(\RA^{\infty})$ tel que $C_w\cdot W_w\sbt W_f$.
Puisque $W_f$ est compact, 
il existe un sous-ensemble fini $I\sbt W_f $ tel que $\cup_{w\in I}W_w=W_f$.
Soit $C_f$ le sous-groupe de $G(\RA^{\infty})$ engendr\'e par $\cap_{w\in I}C_w$.
Alors $C_f\cdot W_f\sbt W_f$ et $C_f$ est ouvert dans $G(\RA^{\infty})$.
Soit $C:=(e_G)_{\infty}\times C_f\sbt G(\RA_k)$.
Alors $C\cdot W=W$ et l'image de $C$ dans $G(\RA_k)_{\bullet}$ est ouvert.

Notons $G(\RA_k)\xrightarrow{\theta_G} \Br_a(G)^D$ et $U(\RA_k)\xrightarrow{\theta_U}B^D$
 les applications induites par l'accouplement de Brauer-Manin, o\`u $(-)^D:=\Hom(-,\BQ/\BZ)$.
D'apr\`es le lemme \ref{mainleminfty}, $\theta_G (C)\sbt \Br_a(G)^D$ est un sous-groupe ouvert d'indice fini.
Par hypoth\`ese, il existe un sous-groupe fini $B_1\sbt B$ tel que 
\begin{equation}\label{mainthminfty-e}
\Ker((B)^D\xrightarrow{\vartheta} (B_1+\Im \Br(k))^D)\sbt (\lambda^D\circ \theta_G )(C),
\end{equation}
o\`u $\vartheta$ est induit par l'inclusion $B_1+\Im\Br(k)\sbt B$ et l'homomorphisme
$$\lambda: B\sbt \Br_G(U)\to \Br_e(G)=\Br_a(G) \ \ \ \text{induit} \ \ \ \lambda^D:=\Hom(\lambda,\BQ/\BZ): \Br_a(G)^D\to B^D.$$

D'apr\`es le lemme formel de Harari (\cite[Cor. 2.6.1]{Ha94}), $W\cap U(\RA_k)^{B_1}\neq\emptyset$.
On a un diagramme:
$$\xymatrix{G(\RA_k)\ar[d]^{\theta_G}&U(\RA_k)\ar[d]^{\theta_U}&\\
\Br_a(G)^D\ar[r]^-{\lambda^D}&B^D\ar[r]^-{\vartheta}&(B_1+\Im \Br(k))^D.
}$$
Soit $u\in W\cap U(\RA_k)^{B_1}$, alors $\vartheta(\theta_U(u))=0$ et, d'apr\`es (\ref{mainthminfty-e}), il existe $g\in C$ 
tel que $g^{-1}\cdot u\in W$ et $(\lambda^D\circ \theta_G)(g)=\theta_U(u)$.
D'apr\`es la proposition \ref{corbraueralgebraic1} (1), on a  $\theta_U(g^{-1}\cdot u)=0$.
 \end{proof}

\begin{rem}
 Dans le th\'eor\`eme \ref{mainthminfty}, si $G=1$, alors ce th\'eor\`eme est \'equivalent au lemme formel de Harari (\cite[Cor. 2.6.1]{Ha94}).
 \end{rem} 

Comme cons\'equence directe, on a :

\begin{cor}\label{main1cor}
 Avec les hypoth\`eses et notations du th\'eor\`eme \ref{mainthminfty}, pour tout sous-ensemble fini $S\sbt \Omega_k$, s'il existe un ouvert $X_1$ de $X$ tel que $U\sbt X_1$ 
 et $X_1$ satisfasse l'approximation forte par rapport \`a $\Br(X_1)\cap (A+ B)$ hors de $S $,
 alors $X$ satisfait l'approximation forte par rapport \`a $\Br(X)\cap (A+B)$ hors de $ S $.
 \end{cor}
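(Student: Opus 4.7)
The plan is to chain together \autoref{mainthminfty} with the strong approximation hypothesis on $X_1$. Let us unpack what we have to prove: for a nonempty open $\widetilde{W}\subset X(\RA_k)$ of the form $\widetilde{W}=X(k_S)\times W_S$ with $W_S\sbt X(\RA_k^S)$, such that $\widetilde{W}\cap X(\RA_k)^{\Br(X)\cap(A+B)}\neq\emptyset$, we want to exhibit a rational point $x\in X(k)$ whose image in $X(\RA_k^S)$ lies in $W_S$.

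First, I would shrink $\widetilde{W}$ if necessary so that every element of $A$ has the same local invariant on $\widetilde{W}$ as on the chosen adelic point; this preserves the hypothesis $\widetilde{W}^{(A+B)\cap\Br(X)}\neq\emptyset$. Then I would apply \autoref{mainthminfty} directly: the hypotheses on $A$, $B$, and the Sansuc map $\lambda$ are inherited from the statement of \autoref{mainthminfty}, so there exists
\[
(u_v)\in \widetilde{W}\cap U(\RA_k)^{A+B}.
\]
Because $U\sbt X_1$, this adelic point lies in $X_1(\RA_k)$.

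Next I would verify that $(u_v)$ satisfies the Brauer--Manin condition with respect to $\Br(X_1)\cap(A+B)$, viewed now inside $X_1(\RA_k)$. This is immediate from the functoriality of evaluation of Brauer classes: for each $\xi\in\Br(X_1)\cap(A+B)$, its restriction to $U$ lies in $A+B\sbt\Br(U)$, and for $u_v\in U(k_v)\sbt X_1(k_v)$ one has $\xi(u_v)=(\xi|_U)(u_v)\in\Br(k_v)$, so the sum of local invariants vanishes by the choice of $(u_v)$.

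Finally, by the assumed strong approximation for $X_1$ with respect to $\Br(X_1)\cap(A+B)$ off $S$, the set $X_1(k)$ is dense in $\Pr^S(X_1(\RA_k)^{\Br(X_1)\cap(A+B)})$, and in particular contains a point whose image in $X_1(\RA_k^S)$ lies in the open neighborhood $W_S$ of $\Pr^S(u_v)$. Since $X_1(k)\sbt X(k)$, this produces the desired rational point, proving that $X$ satisfies strong approximation with respect to $\Br(X)\cap(A+B)$ off $S$. No step here is truly an obstacle; the only subtlety is the compatibility check for the Brauer pairing when passing from $U$ to $X_1$, and that is a routine consequence of functoriality.
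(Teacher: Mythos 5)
Your argument is correct and is exactly the intended deduction: the paper states Corollaire \ref{main1cor} as a direct consequence of the Th\'eor\`eme \ref{mainthminfty} without writing a proof, and your chain (take $\widetilde W=X(k_S)\times W_S$ around the given adelic point, apply the theorem to land in $U(\RA_k)^{A+B}\cap\widetilde W$, observe this point satisfies the Brauer--Manin conditions for $\Br(X_1)\cap(A+B)$ by functoriality since it lies in $U\sbt X_1$, then conclude by strong approximation on $X_1$) is that deduction. The only implicit point is the continuity of the natural maps $U(\RA_k)\to X_1(\RA_k)\to X(\RA_k)$ (so that the preimage of $W_S$ is an open neighbourhood of the projection of your point in $X_1(\RA_k^S)$), which is standard (cf. \cite{Co}).
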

 
 \begin{cor}\label{main1cor1}
 Avec les hypoth\`eses et notations du th\'eor\`eme \ref{mainthminfty}, s'il existe un ouvert $X_1$ de $X$ tel que $U\sbt X_1$ 
 et $X_1(k)$ soit dense dans $X_1(\RA_k)^{\Br(X_1)\cap (A+ B)}_{\bullet}$,
 alors $X(k)$ est dense dans $X(\RA_k)^{\Br(X)\cap (A+B)}_{\bullet}$.
 \end{cor}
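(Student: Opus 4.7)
Ma stratégie pour démontrer ce corollaire est parallèle à celle du corollaire \ref{main1cor}, avec une attention supplémentaire portée à la topologie ``$\bullet$''. Je partirais d'un point $(x_v) \in X(\RA_k)^{\Br(X)\cap(A+B)}_{\bullet}$ et d'un voisinage ouvert $W$ de $(x_v)$ dans $X(\RA_k)_{\bullet}$ ; l'objectif est de trouver un point $k$-rationnel de $X$ dont l'image dans $X(\RA_k)_{\bullet}$ appartienne à $W$.

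La première \'etape consiste \`a relever le problème à $X(\RA_k)$. La projection canonique $\pi: X(\RA_k) \to X(\RA_k)_{\bullet}$ est ouverte, et puisque tout élément de $\Br(X)$ prend une valeur constante sur chaque composante connexe de $X(k_v)$ pour $v$ archimédienne, on a l'égalit\'e $\pi^{-1}(X(\RA_k)^{\Br(X)\cap(A+B)}_{\bullet}) = X(\RA_k)^{\Br(X)\cap(A+B)}$. Par suite, $\widetilde{W} := \pi^{-1}(W)$ est un ouvert de $X(\RA_k)$ qui contient tout relevé de $(x_v)$, et qui satisfait donc $\widetilde{W}^{(A+B)\cap\Br(X)} \neq \emptyset$. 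Le théorème \ref{mainthminfty} fournit alors un point adélique $(u_v) \in \widetilde{W} \cap U(\RA_k)^{A+B}$. Comme $U \subset X_1$ et que tout élément de $\Br(X_1) \cap (A+B)$ appartient à $A+B$, on a $(u_v) \in X_1(\RA_k)^{\Br(X_1)\cap(A+B)}$.

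Pour conclure, je remarquerais que l'inclusion ouverte $X_1 \hookrightarrow X$ induit une application continue $X_1(\RA_k)_{\bullet} \to X(\RA_k)_{\bullet}$ : sur le facteur adélique fini c'est l'inclusion canonique, et sur le facteur $\pi_0(X_1(k_\infty))$ (muni de la topologie discrète) chaque composante connexe de $X_1(k_v)$ pour $v$ archimédienne s'envoie dans une unique composante connexe de $X(k_v)$. Notons $W_1$ la préimage de $W$ par cette application ; c'est un voisinage ouvert de la classe de $(u_v)$ dans $X_1(\RA_k)_{\bullet}$, et cette classe appartient à $X_1(\RA_k)^{\Br(X_1)\cap(A+B)}_{\bullet}$. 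L'hypothèse de densité appliquée à $X_1$ fournit alors $x_0 \in X_1(k) \subset X(k)$ dont la classe dans $X_1(\RA_k)_{\bullet}$ tombe dans $W_1$, et donc la classe de $x_0$ dans $X(\RA_k)_{\bullet}$ tombe dans $W$. L'étape cruciale, qui concentre toute la difficult\'e, est l'application du théorème \ref{mainthminfty} : une fois le point adélique de $X$ dépla\c{c}\'e sur $U \subset X_1$ tout en respectant la condition de Brauer-Manin, le reste de l'argument est formel, reposant uniquement sur la continuit\'e automatique entre les deux topologies ``$\bullet$''.
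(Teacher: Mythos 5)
Votre démonstration est correcte et correspond exactement à l'argument que le papier sous-entend en énonçant ce corollaire comme conséquence directe du théorème \ref{mainthminfty} : on relève la condition de Brauer--Manin de $X(\RA_k)_{\bullet}$ à $X(\RA_k)$ (les éléments de $\Br(X)$ étant constants sur les composantes connexes réelles), on applique le théorème pour déplacer le point adélique dans $U(\RA_k)^{A+B}\sbt X_1(\RA_k)$, puis on conclut par l'hypothèse de densité sur $X_1$ et la continuité de $X_1(\RA_k)_{\bullet}\to X(\RA_k)_{\bullet}$. Seule remarque mineure : l'ouverture de la projection $X(\RA_k)\to X(\RA_k)_{\bullet}$ n'est pas nécessaire, sa continuité suffit.
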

 
Soit $G$ un groupe lin\'eaire connexe. 
 Rappelons la notion de pseudo $G$-espace homog\`ene (cf. D\'efinition \ref{defpseudo}).
 
\begin{thm}\label{main1thminfty}
 Soient $G$ un groupe lin\'eaire connexe et $Z$ un pseudo $G$-espace homog\`ene.
 Soient $X$ une $G$-vari\'et\'e lisse, g\'eom\'etriquement int\`egre 
 et $U\sbt X$ un $G$-ouvert  muni d'un $G$-morphisme $U\xrightarrow{f}Z$. 
 Soient $A\sbt \Br(X)$ et $B\sbt \Br_G(U)$ (cf. (\ref{defBr_GX})) deux sous-groupes finis. 
 Alors, pour tout ouvert $W\sbt X(\RA_k)$ satisfaisant  $W^{\Br(X)\cap (A+B+ f^*\Br_G(Z))}\neq\emptyset$,
 
 (1) on a $W\cap U(\RA_k)^{A+B+ f^*\Br_G(Z)}\neq\emptyset$;
 
 (2) si $G(k_{\infty})^+\cdot Z(k)$ est dense dans $Z(\RA_k)^{\Br_G(Z)}$, il existe $z\in Z(k)$ de fibre $U_z$
  tel que 
  $$(G(k_{\infty})^+\cdot W)\cap U_z(\RA_k)^{A+B}\neq\emptyset.$$
\end{thm}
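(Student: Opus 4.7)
La preuve se divise naturellement en deux parties.

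\textbf{Partie (1).} Je pr\'evois d'appliquer le Th\'eor\`eme~\ref{mainthminfty} au couple $(A,B')$ o\`u $B':=B+f^*\Br_G(Z)$. On a $f^*\Br_G(Z)\sbt\Br_G(U)$ par la Proposition~\ref{prop-invariant}(2), donc $B'\sbt\Br_G(U)$. L'\'el\'ement cl\'e est la fonctorialit\'e de l'homomorphisme de Sansuc: puisque $f$ est $G$-\'equivariant, la relation d\'efinissant $\lambda$, \`a savoir $p_1^*\circ\lambda=\rho^*-p_2^*$, entra\^ine imm\'ediatement $\lambda_U\circ f^*=\lambda_Z$ sur $\Br_G(Z)$. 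Il suffit alors de v\'erifier la finitude de $(B'\cap\Ker\lambda_U)/\Im\Br(k)$. Pour tout $c=b+f^*\alpha\in B'\cap\Ker\lambda_U$ avec $b\in B$ et $\alpha\in\Br_G(Z)$, on a $\lambda_Z(\alpha)=-\lambda_U(b)\in\lambda_U(B)$, ensemble fini puisque $B$ l'est; donc $\alpha$ n'admet qu'un nombre fini de classes modulo $\Ker\lambda_Z$. Comme $Z$ est un pseudo $G$-espace homog\`ene, la D\'efinition~\ref{defpseudo} assure que $\Ker\lambda_Z/\Br(k)$ est fini; joint \`a la finitude de $B$, ceci fournit la finitude requise et (1) r\'esulte du Th\'eor\`eme~\ref{mainthminfty}.

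\textbf{Partie (2).} Gr\^ace \`a (1), on choisit $u\in W\cap U(\RA_k)^{A+B+f^*\Br_G(Z)}$. L'orthogonalit\'e \`a $f^*\Br_G(Z)$ entra\^ine $f(u)\in Z(\RA_k)^{\Br_G(Z)}$. L'hypoth\`ese de densit\'e de $G(k_\infty)^+\cdot Z(k)$ dans $Z(\RA_k)^{\Br_G(Z)}$ permet de choisir $g_0\in G(k_\infty)^+$ et $z\in Z(k)$ tels que $g_0\cdot z$ soit aussi proche que souhait\'e de $f(u)$ dans $Z(\RA_k)$. Posant $u':=g_0^{-1}\cdot u$, on obtient un \'el\'ement de $G(k_\infty)^+\cdot W$ dont l'image $f(u')=g_0^{-1}\cdot f(u)$ est arbitrairement proche du point rationnel $z$.

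\textbf{Principal obstacle.} La partie la plus d\'elicate sera de transformer $u'$ en un v\'eritable ad\'elique $u^*\in U_z(\RA_k)^{A+B}$ appartenant encore \`a $G(k_\infty)^+\cdot W$. L'approche envisag\'ee est la suivante: quitte \`a r\'etr\'ecir $W$ \`a un voisinage produit suffisamment petit, on corrige $u'_v$ en chaque place $v$ par un \'el\'ement de $G(k_v)$ proche de l'identit\'e, \`a l'aide du lemme de Hensel aux places finies et du th\'eor\`eme des fonctions implicites aux places archim\'ediennes, de fa\c{c}on \`a imposer $f(u^*_v)=z$ exactement. Ceci exige la lissit\'e de l'action $\rho_z:G\to Z$, $g\mapsto g\cdot z$, au voisinage de $e_G$, que l'on obtient \`a partir de la structure de $Z$ (\'eventuellement en se ramenant \`a une orbite de $G$ dans $Z$ via la Proposition~\ref{lemtorequotient}). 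La pr\'eservation de l'orthogonalit\'e par rapport \`a $A+B$ sous la perturbation r\'esulte de la continuit\'e de l'accouplement de Brauer-Manin pour les classes de $A\sbt\Br(X)$, et pour les classes $b\in B\sbt\Br_G(U)$ de la formule $(g\cdot x)^*(b)=g^*(\lambda_U(b))+x^*(b)$ de la Proposition~\ref{corbraueralgebraic1}(1), qui garantit qu'une correction par un \'el\'ement de $G$ proche de l'identit\'e n'alt\`ere l'invariant de Brauer que de mani\`ere n\'egligeable.
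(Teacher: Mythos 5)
Pour la partie (1), votre argument est essentiellement celui du texte : on pose $B':=B+f^*\Br_G(Z)\sbt\Br_G(U)$ (proposition \ref{prop-invariant} (2)), on utilise $\lambda_Z=\lambda_U\circ f^*$ et la finitude de $\Ker(\lambda_Z)/\Im\Br(k)$ donn\'ee par la d\'efinition \ref{defpseudo} pour v\'erifier la condition de finitude du th\'eor\`eme \ref{mainthminfty}, qui conclut.

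La partie (2) contient en revanche une lacune r\'eelle. Vous translatez $u$ par $g_0^{-1}$ puis proposez de corriger $u'$ place par place par des \'el\'ements de $G(k_v)$ proches de $e_G$ afin de forcer $f(u^*)=z$ ; cela exige que l'application orbite $\rho_z\colon G\to Z$ soit submersive sur un voisinage de $z$ dans $Z$ et que $f(u'_v)$ appartienne \`a l'orbite $G\cdot z$. Or un pseudo $G$-espace homog\`ene n'est pas en g\'en\'eral un espace homog\`ene : d'apr\`es l'exemple \ref{examtoretorseur} (2), $Z$ peut \^etre un $G$-torseur au-dessus d'une base de dimension strictement positive, auquel cas les orbites de $G$ sont les fibres et $f(u'_v)$ n'est pas dans $G\cdot z$ en g\'en\'eral ; aucune correction par l'action de $G$ ne peut alors ramener l'image exactement sur $z$, et le th\'eor\`eme des fonctions implicites appliqu\'e \`a $\rho_z$ ne s'applique pas. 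La d\'emonstration du texte proc\`ede autrement : on r\'etr\'ecit d'abord $W$ en un voisinage $W'$ de $u$ sur lequel le groupe fini $A+B$ est constant, puis on invoque l'ouverture de l'application $U(\RA_k)\to Z(\RA_k)$ (\cite[Thm. 4.5]{Co}) ; l'image $f(W'\cap U(\RA_k))$ est un ouvert contenant $f(u)\in Z(\RA_k)^{\Br_G(Z)}$, donc contient un point $g_0\cdot z$ avec $g_0\in G(k_{\infty})^+$ et $z\in Z(k)$, et l'on choisit $u'\in W'$ v\'erifiant $f(u')=g_0\cdot z$ exactement ; alors $g_0^{-1}\cdot u'\in (G(k_{\infty})^+\cdot W)\cap U_z(\RA_k)^{A+B}$, l'orthogonalit\'e \`a $A+B$ \'etant pr\'eserv\'ee par la translation par $g_0^{-1}$ comme vous le notez via la proposition \ref{corbraueralgebraic1} (1). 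C'est le choix d'un nouveau rel\`evement ad\'elique de $g_0\cdot z$ dans $W'$, et non la perturbation de $u'$ par l'action de $G$, qui rend l'argument valable sans aucune hypoth\`ese de transitivit\'e.
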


\begin{proof}
Notons  $\Br_G(Z)\xrightarrow{\lambda_Z}\Br_a(G)$, $\Br_G(U)\xrightarrow{\lambda_U}\Br_a(G)$ les homomorphismes de Sansuc (cf. D\'efinition \ref{defsansuc}).
Puisque $\Ker (\lambda_Z)/\Im \Br(k)$ est fini et $\lambda_Z=\lambda_U\circ f^*$, 
le quotient $\frac{\Ker(\lambda_U)\cap (B+f^*\Br_G(Z))}{\Im\Br(k)}$ est fini.
Une application du th\'eor\`eme \ref{mainthminfty} donne (1).

D'apr\`es \cite[Thm. 4.5]{Co}, l'application $U(\RA_k)\to Z(\RA_k)$ est ouverte et on obtient (2). 
\end{proof}
 
 \begin{proof}[D\'emonstration du th\'eor\`eme \ref{thmgroupic} (1)]
 Par la proposition \ref{corbraueralgebraic1} (3) et l'approximation forte pour les espaces homog\`enes  \`a stabilisateur g\'eom\'etrique connexe (Borovoi et Demarche \cite[Thm. 1.4]{BD}), $U(k)$ est dense dans $U(\RA_k)^{\Br_G(U)}_{\bullet}$.
 Une application du th\'eor\`eme  \ref{main1thminfty} (2) donne l'\'enonc\'e.
 \end{proof}

\section{La descente par rapport au groupe de Brauer invariant}\label{5}

 Dans toute cette section,  $k$ est un corps de nombres. 
Sauf  mention explicite du contraire, une vari\'et\'e est  une $k$-vari\'et\'e.
La m\'ethode de descente des points ad\'eliques est \'etablie par Colliot-Th\'el\`ene et Sansuc dans \cite{CTS87}.
Dans \cite{CX2}, C. Demarche, F. Xu et l'auteur \'etudient la m\'ethode de descente des points ad\'eliques orthogonaux \`a certains groupes de Brauer dans le cas des torseurs sous un tore.
 On suit leur m\'ethode et  consid\`ere ici  le cas plus g\'en\'eral  des  torseurs sous un groupe lin\'eaire connexe (Th\'eor\`eme \ref{Dmain}).
 
 Rappelons la notion de sous-groupe de Brauer invariant (cf. D\'efinition \ref{def-invariant}).
 
\subsection{Torseur sous un groupe lin\'eaire dont le groupe de Shafarevich est trivial}

Soient $G$ un groupe lin\'eaire connexe, $X$ une vari\'et\'e lisse g\'eom\'e\-triquement int\`egre et $Y\xrightarrow{p}X$ un $G$-torseur. 
Pour tout $\sigma\in H^1(k,G)$, soient $P_{\sigma}$ le $G$-torseur correspondant et $G_{\sigma}$ le tordu de $G$ correspondant. 
Alors $P_{\sigma}$ est un $G_{\sigma}$-torseur et une $(G_{\sigma}\times G)$-vari\'et\'e.
Soit $Y_{\sigma}\xrightarrow{p_{\sigma}}X$ le tordu de $[Y]$, i.e. $Y_{\sigma}=P_{\sigma}\times^GY$.
Alors $[Y_{\sigma}]$ est un $G_{\sigma}$-torseur sur $X$. 
 Soit $\Br_{G_{\sigma}}(Y_{\sigma})$ le sous-groupe de Brauer  $G_{\sigma}$-invariant de $Y_{\sigma}$ (D\'efinition \ref{def-invariant}).

\begin{lem}\label{descent-lem2}
Supposons que  $\Sha^1(G)=1$.
Alors pour tout sous-groupe $B\sbt \Br_G(Y)$, on a:
$$p(Y(\RA_k)^{(p^*\circ (p^*)^{-1})B})=p(Y(\RA_k)^B),$$
o\`u $\Br(X)\xrightarrow{p^*}\Br(Y) $.
\end{lem}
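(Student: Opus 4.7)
Le plan consiste \`a \'etablir l'inclusion non-triviale $p(Y(\RA_k)^{B\cap p^*\Br(X)}) \subset p(Y(\RA_k)^B)$; la r\'eciproque est imm\'ediate car $(p^*\circ (p^*)^{-1})B = B\cap p^*\Br(X) \subset B$. \'Etant donn\'e $(y_v) \in Y(\RA_k)^{B\cap p^*\Br(X)}$, je chercherais $(g_v) \in G(\RA_k)$ tel que le translat\'e $y'_v := g_v \cdot y_v$ soit orthogonal \`a $B$; puisque l'action de $G$ pr\'eserve les fibres de $p$, l'\'egalit\'e $p(y'_v) = p(y_v)$ sera alors automatique.

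Pour construire $(g_v)$, j'associerais d'abord \`a $(y_v)$ le caract\`ere
$$\phi : B \to \BQ/\BZ, \qquad b \mapsto \sum_v \inv_v(b(y_v)),$$
qui par hypoth\`ese s'annule sur $B\cap p^*\Br(X)$. La suite exacte de Sansuc (\ref{braueralg-e1}) du th\'eor\`eme \ref{sansucthm} identifie le quotient $B/(B\cap p^*\Br(X))$ \`a un sous-groupe de $\Br_a(G)$ via l'homomorphisme de Sansuc $\lambda$ (le noyau de $\lambda|_{\Br_G(Y)}$ \'etant l'image $p^*\Br(X)$), si bien que $\phi$ se factorise en $\bar\phi: \lambda(B)\to \BQ/\BZ$. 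Par divisibilit\'e (donc injectivit\'e dans la cat\'egorie des groupes ab\'eliens) de $\BQ/\BZ$, $\bar\phi$ se prolonge en $\tilde\phi \in \Br_a(G)^D$.

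L'\'etape cl\'e sera d'invoquer, sous l'hypoth\`ese $\Sha^1(G)=1$, la surjectivit\'e de l'accouplement de Brauer-Manin $G(\RA_k) \twoheadrightarrow \Br_a(G)^D$: l'argument du lemme \ref{mainleminfty} (via \cite[Thm. 5.1]{D11}) montre que l'image est un sous-groupe ouvert ferm\'e d'indice fini du groupe profini $\Br_a(G)^D$, et la trivialit\'e de $\Sha^1(G)$ forcera cet indice \`a valoir $1$. Ayant obtenu $(g_v) \in G(\RA_k)$ r\'ealisant $-\tilde\phi$ sur $\Br_a(G)$, la proposition \ref{corbraueralgebraic1}(1) donnera, pour tout $b \in B$:
$$\sum_v \inv_v(b(g_v\cdot y_v)) = \sum_v \inv_v(\lambda(b)(g_v)) + \phi(b) = -\tilde\phi(\lambda(b)) + \phi(b) = 0,$$
d'o\`u $(y'_v) \in Y(\RA_k)^B$. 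Le principal obstacle sera de justifier proprement cette derni\`ere surjectivit\'e lorsque $B$ (donc $\lambda(B)$) est infini -- la dualit\'e \`a la Pontryagin-Tate impose de prolonger $\bar\phi$ \`a $\Br_a(G)$ tout entier -- et de g\'erer les places archim\'ediennes, o\`u l'\'evaluation des \'el\'ements de $\Br_a(G)$ sur $G(k_v)$ se factorise par $\pi_0(G(k_v))$, ce qui oblige \`a passer par $G(\RA_k)_\bullet$ avant de relever en $G(\RA_k)$.
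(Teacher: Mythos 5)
Votre d\'emonstration est correcte et suit essentiellement la m\^eme voie que celle de l'article : on translate le point ad\'elique par un \'el\'ement de $G(\RA_k)$ construit gr\^ace \`a la suite exacte de Sansuc (le noyau de $\lambda$ sur $\Br_G(Y)$ \'etant $p^*\Br(X)$, votre caract\`ere $\phi$ se factorise par $\lambda(B)$ puis se prolonge \`a $\Br_a(G)^D$ par injectivit\'e de $\BQ/\BZ$) et \`a la surjectivit\'e de $G(\RA_k)\to \Br_a(G)^D$, laquelle d\'ecoule de $\Sha^1(G)=1$ par la suite de Poitou--Tate de Demarche \cite[Thm. 5.1]{D11} --- exactement la r\'ef\'erence que vous invoquez, et que l'article cite directement sans passer par l'argument d'indice fini du lemme \ref{mainleminfty}. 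Les deux difficult\'es que vous signalez \`a la fin n'en sont pas : le prolongement de $\bar\phi$ est d\'ej\`a acquis, et la surjection $G(\RA_k)\to G(\RA_k)_{\bullet}$ r\`egle la question des places archim\'ediennes.
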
 

\begin{proof}
Puisque $(p^*\circ (p^*)^{-1})B\sbt B $, on a $p(Y(\RA_k)^B)\sbt p(Y(\RA_k)^{(p^*\circ (p^*)^{-1})B}).$

Par le th\'eor\`eme \ref{sansucthm}, on a un diagramme commutatif de suites exactes
$$\xymatrix{(p^*)^{-1}B\ar[r]\ar[d]&B\ar[r]\ar[d]&\Br_a(G)\ar[d]^=\\
\Br(X)\ar[r]&\Br_G(Y)\ar[r]^{\lambda}&\Br_a(G).
}$$
Il induit un diagramme commutatif avec suite exacte:
$$\xymatrix{G(\RA_k)\ar[d]^{a_G}&Y(\RA_k)\ar[d]^{a_Y}\ar[r]^p&X(\RA_k)\ar[d]^{a_X}\\
\Br_a(G)^D\ar[r]^{\lambda^D}&B^D\ar[r]^{p^{*D}}&((p^*)^{-1}B)^D
}$$
o\`u $(-)^D:=\Hom(-,\BQ/\BZ)$ et $a_G$, $a_Y$, $a_X$ sont induits par l'accouplement de Brauer-Manin.
Par la proposition \ref{corbraueralgebraic1}, pour chaque $y\in Y(\RA_k)$ et $g\in G(\RA_k)$, 
on a $a_Y(g\cdot y)=(\lambda^D\circ a_G)(g)+a_Y(y)$.

Puisque $\Sha^1(G)=1$, par la suite exacte de Poitou-Tate de $G$ (Demarche \cite[Thm. 5.1]{D11}), $a_G$ est surjectif.
Pour tout $y\in p(Y(\RA_k)^{(p^*\circ (p^*)^{-1})B})$, on a
$x:=p(y)\in X(\RA_k)^{(p^*)^{-1}B}$.
Alors  $(p^{*D}\circ a_Y)(y)=a_X(x)=0$ et il existe $g\in G(\RA_k)$ tel que $(\lambda^D\circ a_G)(g)=a_Y(y)$.
Donc $a_Y(g^{-1}\cdot y)=0$ et $p(g^{-1}\cdot y)=x$. 
\end{proof}

\begin{prop}\label{DmainProp}
Soient $G$ un groupe lin\'eaire connexe, $X$ une vari\'et\'e lisse g\'eom\'e\-triquement int\`egre et $Y\xrightarrow{p}X$ un $G$-torseur.  
Soit $A\sbt \Br(X)$ un sous-groupe et, pour chaque $\sigma\in H^1(k,G)$, soit $B_{\sigma}\sbt \Br_{G_{\sigma}}(Y_{\sigma})$ un sous-groupe.
Supposons que $\Sha^1(G)=1$ et que, pour tout $\sigma \in H^1(k,G)$, on a 
$(p_{\sigma}^*)^{-1}(B_{\sigma})\sbt A$,
o\`u $\Br(X)\xrightarrow{p_{\sigma}^*}\Br(Y_{\sigma}) $. 
Alors on a:
$$ X(\RA_k)^A=\cup_{\sigma\in H^1(k,G)}p_{\sigma}(Y_{\sigma}(\RA_k)^{B_{\sigma}+p_{\sigma}^*A }  ).$$
\end{prop}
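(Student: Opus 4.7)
The plan is to prove the two inclusions separately, with the nontrivial direction being $\subseteq$.

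First, the inclusion $\supseteq$ will follow from functoriality of the Brauer--Manin pairing. Given $\sigma\in H^1(k,G)$ and $y\in Y_\sigma(\RA_k)^{B_\sigma+p_\sigma^*A}$, for every $a\in A$ one has
\[
\sum_{v}\inv_v(a(p_\sigma(y_v)))=\sum_{v}\inv_v((p_\sigma^*a)(y_v))=0,
\]
so $p_\sigma(y)\in X(\RA_k)^A$.

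For the inclusion $\subseteq$, I would proceed in two steps. Given $x\in X(\RA_k)^A$, the first step is to produce $\sigma\in H^1(k,G)$ and an adelic lift $y\in Y_\sigma(\RA_k)$ with $p_\sigma(y)=x$. The local fibers $p^{-1}(x_v)$ are $G$-torsors over $k_v$ and define a family of classes in $\prod_v H^1(k_v,G)$, and the question is whether this family comes from a global class $\sigma\in H^1(k,G)$. The classical descent formalism of Colliot-Thélène--Sansuc, combined with the hypothesis $\Sha^1(G)=1$ and Demarche's Poitou--Tate sequence for connected linear groups (which gives surjectivity of $a_G:G(\RA_k)\to \Br_a(G)^D$), expresses the obstruction as a Brauer--Manin condition against a subgroup of $\Br(X)$ coming from Brauer classes on $Y$. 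The assumption $(p_\sigma^*)^{-1}(B_\sigma)\subseteq A$ guarantees that orthogonality of $x$ to $A$ encompasses this obstruction, so a global $\sigma$ and a lift $y$ exist.

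The second step is to adjust the lift inside its $G_\sigma(\RA_k)$-orbit to achieve orthogonality to $B_\sigma$. Once $y$ has been produced, it automatically lies in $Y_\sigma(\RA_k)^{p_\sigma^*A}$, because $p_\sigma(y)=x\in X(\RA_k)^A$. I would then apply Lemme \ref{descent-lem2} to the $G_\sigma$-torsor $p_\sigma:Y_\sigma\to X$ and the subgroup $B_\sigma\subseteq\Br_{G_\sigma}(Y_\sigma)$. The hypothesis $(p_\sigma^*)^{-1}(B_\sigma)\subseteq A$ gives $x\in X(\RA_k)^{(p_\sigma^*)^{-1}(B_\sigma)}$, so $y\in p_\sigma^{-1}(X(\RA_k)^{(p_\sigma^*)^{-1}(B_\sigma)})$, and the lemma produces $g\in G_\sigma(\RA_k)$ with $g^{-1}\cdot y\in Y_\sigma(\RA_k)^{B_\sigma}$; as $p_\sigma$ is $G_\sigma$-invariant, $p_\sigma(g^{-1}\cdot y)=x$, hence $g^{-1}\cdot y\in Y_\sigma(\RA_k)^{B_\sigma+p_\sigma^*A}$. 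Applying Lemme \ref{descent-lem2} to the twisted torsor requires $\Sha^1(G_\sigma)=1$, which follows from $\Sha^1(G)=1$ since $G_\sigma$ is an inner form of $G$.

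The main obstacle is Step 1, namely producing the global class $\sigma$: it is here that $\Sha^1(G)=1$ together with the surjectivity in Demarche's Poitou--Tate sequence must be combined precisely with the inclusion $(p_\sigma^*)^{-1}(B_\sigma)\subseteq A$ to convert the hypothesis ``$x$ is orthogonal to $A$'' into ``the family $\{[p^{-1}(x_v)]\}_v$ lies in the image of $H^1(k,G)$''. Once this lifting step is in hand, Step 2 is essentially a routine application of Lemme \ref{descent-lem2}.
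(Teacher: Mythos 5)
Your architecture is the same as the paper's, whose entire proof is one line: quote the unconditional descent identity $X(\RA_k)^A=\cup_{\sigma}p_{\sigma}(Y_{\sigma}(\RA_k)^{p_{\sigma}^*A})$ from \cite[Thm. 1.1]{CX2}, then upgrade each term to $Y_{\sigma}(\RA_k)^{B_{\sigma}+p_{\sigma}^*A}$ via le lemme \ref{descent-lem2}. Your Step 2 reproduces this second half correctly: you note that $y\in Y_\sigma(\RA_k)^{p_\sigma^*A}$ is in particular orthogonal to $p_\sigma^*\bigl((p_\sigma^*)^{-1}B_\sigma\bigr)\sbt p_\sigma^*A$, invoke the lemma to move $y$ inside its $G_\sigma(\RA_k)$-orbit into $Y_\sigma(\RA_k)^{B_\sigma}$ without changing its image $x$, and recover orthogonality to $p_\sigma^*A$ from $x\in X(\RA_k)^A$. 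The one point you pass over there is that the lemma applied to $Y_\sigma\to X$ needs $\Sha^1(G_\sigma)=1$; this does follow from $\Sha^1(G)=1$, but not formally for nonabelian $H^1$ --- one uses Borovoi's identification of $\Sha^1$ with its abelianized version, which is invariant under inner twisting.

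The genuine gap is your Step 1, which you yourself flag as the main obstacle and do not carry out. What you need there --- every $x\in X(\RA_k)^A$ lifts to some $y\in Y_\sigma(\RA_k)^{p_\sigma^*A}$ for some global $\sigma$ --- is exactly \cite[Thm. 1.1]{CX2}, a nontrivial external theorem which the paper cites and which you neither cite nor reprove. Moreover your heuristic for it is misdirected: that theorem holds for an arbitrary subgroup $A\sbt\Br(X)$ and uses neither $\Sha^1(G)=1$ nor $(p_\sigma^*)^{-1}(B_\sigma)\sbt A$. In particular it is not proved by showing that the descent obstruction is a Brauer--Manin condition ``encompassed by $A$'': the relevant obstruction classes need not lie in $A$ under the stated hypotheses, and the gluing of the local classes $[p^{-1}(x_v)]$ into a global $\sigma$ is not where $\Sha^1(G)=1$ enters. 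Both hypotheses of the proposition are consumed entirely in your Step 2. As written, Step 1 is an acknowledged hole, and the route you sketch for filling it would not close it; the fix is simply to invoke \cite[Thm. 1.1]{CX2} at that point.
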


 \begin{proof}
 Le r\'esultat d\'ecoule du \cite[Thm. 1.1]{CX2} et du lemme \ref{descent-lem2}.
 \end{proof}
 
\begin{cor} \label{descent}
Soit $T$ un tore quasi-trivial. Soient $X$ une vari\'et\'e lisse, g\'eom\'etriquement int\`egre et $Y\xrightarrow{p}X$ un $T$-torseur. Soient $U\sbt X$ un ouvert et  $V=p^{-1}(U)$.
 Alors pour tous sous-groupes $A\sbt \Br(U)$, $B\sbt \Br_T(V)$, si $(p^*)^{-1}(B)\sbt A$, o\`u $\Br(U)\xrightarrow{p^*}\Br(V)$, on a 
 $$X(\RA_k)^{\Br(X)\cap A}=p(Y(\RA_k)^{\Br(Y)\cap (B+ p^*A)}).$$
\end{cor}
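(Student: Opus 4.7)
The plan is to reduce the statement to Lemma \ref{descent-lem2} applied to the $T$-torsor $p:Y\to X$, by picking a judicious intermediate subgroup of $\Br_T(Y)$.

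Since $T$ is quasi-trivial, Hilbert 90 combined with Shapiro's lemma gives $H^1(k_v,T)=0$ for every $v\in\Omega_k$; hence $\Sha^1(T)=1$ and, more importantly for later, the projection $Y(\RA_k)\to X(\RA_k)$ is surjective. I would set
$$B_0\ :=\ \Br(Y)\cap (B+p^*A)\ \subset\ \Br(Y),$$
and first verify the $T$-invariance $B_0\subset \Br_T(Y)$. By Proposition \ref{prop-invariant} (4), $\Br_1(V)\subset \Br_T(V)$, so $p^*\Br(U)\subset \Br_T(V)$; combined with the hypothesis $B\subset \Br_T(V)$ this yields $B+p^*A\subset \Br_T(V)$, and then Proposition \ref{prop-invariant} (3) gives $B_0\subset \Br_T(V)\cap \Br(Y)=\Br_T(Y)$.

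The key algebraic step is to check that $(p^*)^{-1}(B_0)=\Br(X)\cap A$, where $p^*$ now means $\Br(X)\to \Br(Y)$. The inclusion $\supset$ is clear: for $\alpha\in \Br(X)\cap A$, one has $p^*\alpha\in p^*A\cap \Br(Y)\subset B_0$. Conversely, if $\alpha\in \Br(X)$ and $p^*\alpha=b+p^*a$ in $\Br(V)$ with $b\in B$ and $a\in A$, then $p^*(\alpha-a)=b\in B$ in $\Br(V)$, so $\alpha-a\in (p^*)^{-1}(B)\subset A$ by hypothesis, whence $\alpha\in A$. Applying Lemma \ref{brauertorseur} to the faithfully flat morphism $p$ with geometrically integral fibres (each isomorphic to $T_{\bk}\cong \BG_{m,\bk}^{\dim T}$) then yields
$$(p^*\circ (p^*)^{-1})(B_0)\ =\ p^*(\Br(X)\cap A).$$

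Applying Lemma \ref{descent-lem2} to $B_0\subset \Br_T(Y)$ gives
$$p(Y(\RA_k)^{B_0})\ =\ p\bigl(Y(\RA_k)^{p^*(\Br(X)\cap A)}\bigr).$$
The right-hand side coincides with $X(\RA_k)^{\Br(X)\cap A}$: the inclusion $\subset$ is functoriality of the Brauer-Manin pairing under $p^*$, and the inclusion $\supset$ uses the surjectivity of $Y(\RA_k)\to X(\RA_k)$ established at the start. Since $B_0=\Br(Y)\cap (B+p^*A)$ by definition, this is exactly the identity to be proved. The only subtle point in the whole argument is the $T$-invariance verification needed to legitimate the use of Lemma \ref{descent-lem2}; after that the proof is purely formal.
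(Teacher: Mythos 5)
Your argument is correct and is essentially the paper's proof with one reference unwound. The paper likewise establishes the two key facts $(p^*)^{-1}(\Br(Y)\cap(B+p^*A))=\Br(X)\cap A$ and $\Br(Y)\cap(B+p^*A)\subset\Br_T(Y)$, and then invokes Proposition \ref{DmainProp}; since that proposition is Lemma \ref{descent-lem2} combined with the descent statement of \cite[Thm.\ 1.1]{CX2}, and since $H^1(k,T)=0$ for $T$ quasi-trivial so that there are no nontrivial twists, Proposition \ref{DmainProp} reduces in this situation to exactly the combination you use, namely Lemma \ref{descent-lem2} plus the surjectivity of $Y(\RA_k)\to X(\RA_k)$. (For that surjectivity, note that $H^1(k_v,T)=0$ alone gives surjectivity place by place; to get it adelically you also need $H^1(\CO_v,\CT)=0$ for a model at almost all $v$, which follows from Lang's theorem and Hensel's lemma --- this is precisely what \cite[Thm.\ 1.1]{CX2} packages.)

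Two local repairs. First, the deduction ``$\Br_1(V)\subset\Br_T(V)$, so $p^*\Br(U)\subset\Br_T(V)$'' is a non sequitur: $p^*\Br(U)$ need not lie in $\Br_1(V)$. The inclusion $p^*\Br(U)\subset\Br_T(V)$ is nevertheless immediate, because $p\circ\rho=p\circ p_2$ on $T\times V$ gives $\rho^*(p^*a)-p_2^*(p^*a)=0$ for all $a\in\Br(U)$; equivalently, apply Proposition \ref{prop-invariant} (2) to the $T$-morphism $V\to U$ with $U$ carrying the trivial $T$-action, for which $\Br_T(U)=\Br(U)$. Second, your appeal to Lemma \ref{brauertorseur} is redundant: once you have proved $(p^*)^{-1}(B_0)=\Br(X)\cap A$ directly, the identity $(p^*\circ(p^*)^{-1})(B_0)=p^*(\Br(X)\cap A)$ is a tautology. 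Neither point affects the validity of the proof.
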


\begin{proof}
Par le lemme \ref{brauertorseur}, on a $(p^*)^{-1}(\Br(Y)\cap (p^*A+B))=\Br(X)\cap A$. 
D'apr\`es la proposition \ref{prop-invariant}, on a $(\Br(Y)\cap (p^*A+B))\sbt \Br_{T}(Y)$.
L'\'enonc\'e r\'esulte de la proposition \ref{DmainProp}.
\end{proof} 
 
\medskip  
 
 \subsection{L'application de la r\'esolution coflasque}

Par \cite{CTS1}, un $\Gamma_k$-module $M$ de type fini est appel\'e \emph{coflasque} si $M$ est sans torsion et, pour tout sous-groupe ferm\'e $\Gamma\sbt \Gamma_k$, on a $H^1(\Gamma ,M)=0$.
Un $k$-tore $T$ est appel\'e \emph{coflasque} si $T^*$ est coflasque.
Alors $H^1(k,T^*)=0$ et, pour tout $v\in \Omega_k$, on a $H^1(k_v,T^*)=0 $.
Dans \cite[Prop. 1.3]{CTS1},  il y a des r\'esolutions par les tores coflasques et les tores quasi-triviaux.

\begin{lem}\label{Dlembrauersurj1}
Soit $T$ un tore. Alors $H^3(k,T^*)\cong \prod_{v\in \infty_k}H^3(k_v,T^*)\cong \prod_{v\in \infty_k}H^1(k_v,T^*)$.
\end{lem}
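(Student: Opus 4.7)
My plan splits the lemma into the two asserted isomorphisms. The second one, $H^3(k_v,T^*)\cong H^1(k_v,T^*)$ for $v\in \infty_k$, is immediate from the $2$-periodicity of the cohomology of a cyclic group: at a complex place $\Gamma_{k_v}$ is trivial and both groups vanish, while at a real place $\Gamma_{k_v}\cong \BZ/2$ and $H^{n+2}(\BZ/2,M)\cong H^n(\BZ/2,M)$ for every $n\geq 1$ and every $\BZ/2$-module $M$.

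For the first isomorphism I would reduce to torsion coefficients via the dévissage $0\to T^*\to T^*\otimes_\BZ \BQ\to T^*\otimes_\BZ \BQ/\BZ\to 0$. The middle term is a $\BQ$-vector space whose $\Gamma_k$-action factors through the finite Galois group of a splitting field of $T$; hence its continuous Galois cohomology vanishes in positive degree, both globally and at each place. Setting $N:=T^*\otimes_\BZ \BQ/\BZ$, the long exact sequences give $H^3(k,T^*)\cong H^2(k,N)$ and $H^3(k_v,T^*)\cong H^2(k_v,N)$, so it suffices to prove $H^2(k,N)\cong \prod_{v\in\infty_k}H^2(k_v,N)$. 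Writing $N=\colim_n T^*/n$ and applying the Poitou-Tate nine-term exact sequence at each finite stage $T^*/n$, then passing to the direct limit, I obtain an exact sequence
$$0\to \Sha^2(k,N)\to H^2(k,N)\to \bigoplus_v H^2(k_v,N)\to H^0(k,N^*)^\vee\to 0,$$
where $N^*=\Hom(N,\mu)\cong \Hom(T^*,\widehat{\BZ}(1))=\lim_n T[n]$.

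It then remains to establish three vanishings, each of which reduces by Shapiro's lemma over a splitting field of $T$ to the case $T=\BG_m$ (i.e.\ $T^*=\BZ$): (a) $H^2(k_v,N)=0$ at each non-archimedean $v$; (b) $H^0(k,N^*)=0$; (c) $\Sha^2(k,N)=0$. Items (a) and (b) will follow from the observation that the inverse limits $\lim_n \mu_n(k_v)$ and $\lim_n \mu_n(k)$ along the power maps $\mu_n\twoheadrightarrow \mu_m$ (for $m\mid n$) are both trivial, since $\mu(k)$ and $\mu(k_v)$ are finite and the transition maps in a finite cyclic inverse system eventually annihilate any class. The main obstacle is (c): by Poitou-Tate, $\Sha^2(k,N)$ is Pontryagin dual to $\lim_n \Sha^1(k,T[n])$, and each $\Sha^1(k,T[n])$ may be nonzero due to Grunwald-Wang phenomena (for $T=\BG_m$, the classical exception when $8\mid n$). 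I would handle (c) by tracking the transition maps in this inverse limit and verifying that the finite exceptional contributions are killed as $n\to\infty$, by an argument parallel to those for (a) and (b). Once the three vanishings are in hand, the Poitou-Tate sequence collapses to the desired isomorphism, and since $\infty_k$ is finite the direct sum and direct product coincide.
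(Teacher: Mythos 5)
The paper disposes of this lemma in two lines: the first isomorphism is quoted from the proof of \cite[Prop.~5.9]{HS05} (line 3), and the second is the $2$-periodicity of the cohomology of $\BZ/2$. Your treatment of the second isomorphism agrees with the paper. For the first, you are reproving the cited fact from scratch; the dévissage through $T^*\otimes\BQ$ and the passage to $H^2(k,N)$ with $N=T^*\otimes\BQ/\BZ$ is sound, as is the use of the Poitou--Tate nine-term sequence for $T^*/n$ and the colimit. Your points (a) and (b) do go through, but not by ``Shapiro over a splitting field'': neither $T^*/n$ nor $T[n]$ is an induced module in general. What actually saves (a) is local duality, $H^2(k_v,T^*/n)\cong T[n](k_v)^\vee$, together with the finiteness of the torsion of $T(k_v)$ for $v$ non-archimedean (and for (b), the fact that $H^0$ injects under restriction plus the finiteness of $T(k)_{\mathrm{tors}}$).

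The genuine gap is (c). First, the proposed reduction of $\Sha^2(k,N)=\bigl(\lim_n \Sha^1(k,T[n])\bigr)^\vee$ to the case $T=\BG_m$ via Shapiro's lemma is not valid: restriction $\Sha^1(k,M)\to \Sha^1(L,M)$ need not be injective, and again $T[n]$ is not induced from the splitting field. Second, even for $T=\BG_m$ you only assert that the Grunwald--Wang contributions die in the inverse limit; this is exactly the delicate point and it is not ``parallel'' to (a) and (b), where the vanishing came from finiteness of a torsion subgroup. One must actually compute the transition maps: e.g.\ for $k=\BQ$ the generator $2^{8}$ of $\Sha^1(\BQ,\mu_{16})$ maps to the trivial class in $\Sha^1(\BQ,\mu_{8})=\{1,16\}\cdot(\BQ^\times)^8/(\BQ^\times)^8$, so the transition maps are zero there --- but establishing this for an arbitrary number field and an arbitrary torus is precisely the content you are missing. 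As written, the argument does not close; you should either carry out the analysis of $\lim_n\Sha^1(k,T[n])$ in general, or simply invoke the standard reference (Harari--Szamuely, or Milne's \emph{Arithmetic Duality Theorems}) for $H^3(k,T^*)\cong\prod_{v\,\mathrm{real}}H^3(k_v,T^*)$, as the paper does.
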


\begin{proof}
Pour tout $v\in  \infty_k$, on a $H^3(k_v,T^*)=H^1(k_v,T^*)$. 
Par la ligne 3 de la d\'emonstration de \cite[Prop. 5.9]{HS05},
on a $H^3(k,T^*)=\prod_{v\in \infty_k}H^3(k_v,T^*)$.
Le r\'esultat en d\'ecoule.
\end{proof}

\begin{lem}\label{Dlembrauersurj}
Soit 
$$1\to G \xrightarrow{\psi } H \xrightarrow{\phi } T \to 1$$
une suite exacte de groupes lin\'eaires connexes avec $T$ un tore. 
Supposons que $H^3(k,T^*)=0$.
Alors
le morphisme $\Br_a(H)\xrightarrow{\psi^*} \Br_a(G)$ est surjectif.
\end{lem}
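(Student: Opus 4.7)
Mon plan est d'\'etablir une extension naturelle \`a un cran de la suite exacte du th\'eor\`eme \ref{sansucthm}, appliqu\'ee au $G$-torseur $H\xrightarrow{\phi}T$, faisant intervenir $H^3(k,T^*)$ comme obstruction \`a la surjectivit\'e.

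Premi\`erement, le th\'eor\`eme \ref{sansucthm} appliqu\'e au $G$-torseur $H\xrightarrow{\phi}T$, combin\'e avec l'identification $\Br_G(H)=\Br_H(H)=\Br_1(H)$ (qui r\'esulte du corollaire \ref{corbraueralgebraic} (1) et du lemme \ref{lembrauerkbar}), fournit la suite exacte
$$\Pic(G)\to \Br(T)\to \Br_1(H)\xrightarrow{\lambda}\Br_a(G),$$
o\`u $\lambda$ co\"incide, modulo $\Br(k)$, avec $\psi^*:\Br_a(H)\to \Br_a(G)$ par la proposition \ref{corbraueralgebraic1} (2). D'autre part, la suite exacte de Sansuc sur $\bk$ pour le m\^eme torseur, avec $\Pic T_{\bk}=0$ (car $T$ est un tore sur un corps alg\'ebriquement clos), fournit la suite exacte courte de $\Gamma_k$-modules
$$0\to T^*\to H^*\to G^*\to 0.$$
La suite exacte longue associ\'ee en cohomologie galoisienne donne alors, sous l'hypoth\`ese $H^3(k,T^*)=0$, la surjectivit\'e de $H^2(k,H^*)\to H^2(k,G^*)$ et l'injectivit\'e de $H^3(k,H^*)\to H^3(k,G^*)$.

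Le cœur du plan consiste alors \`a \'etendre la suite exacte ci-dessus d'un cran \`a droite en
$$\Br_1(H)\xrightarrow{\lambda}\Br_a(G)\to H^3(k,T^*),$$
de sorte que l'hypoth\`ese $H^3(k,T^*)=0$ force $\lambda$ \`a \^etre surjectif, et donc $\psi^*:\Br_a(H)\to\Br_a(G)$ aussi. Cette extension s'obtient en faisant fonctionner la suite spectrale de Hochschild-Serre $H^p(k,H^q(L_{\bk},\BG_m))\Rightarrow H^{p+q}(L,\BG_m)$ pour $L\in\{G,H\}$ (o\`u le point rationnel $e_L$ scinde les termes en $H^p(k,\bk^{\times})$), fonctorielle par rapport \`a $\psi$, combin\'ee avec les deux suites exactes obtenues pr\'ec\'edemment, par une chasse au diagramme reliant $\Br_a(L)$ \`a $H^2(k,L^*)$ et $H^1(k,\Pic L_{\bk})$. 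La partie technique la plus d\'elicate est la v\'erification que les comparaisons entre $\Pic H_{\bk}$ et $\Pic G_{\bk}$ (dont le conoyau vit dans $\Br T_{\bk}$ via la suite de Sansuc sur $\bk$) ne font pas obstacle au diagramme. Une approche alternative proc\`ede par d\'evissage: par \cite[Lem. 2.1]{CX2} on a $\Br_a(L)\cong\Br_a(L^{red})$, ce qui ram\`ene au cas r\'eductif; dans ce cas, puisque $H/G=T$ est ab\'elien, $[H,H]\sbt G$, et l'inclusion inverse \'etant imm\'ediate, on obtient $G^{ss}=H^{ss}$ et une suite exacte induite de tores $1\to G^{tor}\to H^{tor}\to T\to 1$, ce qui (via le th\'eor\`eme \ref{sansucthm} appliqu\'e aux $G^{ss}$-torseurs $G\to G^{tor}$ et $H\to H^{tor}$) r\'eduit au cas o\`u $G$ et $H$ sont des tores, cas dans lequel $\Br_a(L)=H^2(k,L^*)$ et la surjectivit\'e r\'esulte directement de la suite exacte longue associ\'ee \`a $0\to T^*\to H^{tor,*}\to G^{tor,*}\to 0$.
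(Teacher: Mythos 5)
Your main route is, at bottom, the paper's own argument, just dressed up more than necessary. The proof in the paper takes exactly the two ingredients you name --- the Sansuc sequence over $\bk$ giving $0\to T^*\to H^*\to G^*\to 0$, and the four-term sequence $H^2(k,L^*)\to \Br_a(L)\to H^1(k,\Pic(L_{\bk}))\to H^3(k,\bk^{\times}\oplus H^*)$ issue de Hochschild--Serre --- and concludes by the four lemma: under $H^3(k,T^*)=0$ the map $H^2(k,H^*)\to H^2(k,G^*)$ is surjective and $H^3(k,H^*)\to H^3(k,G^*)$ is injective, while $H^1(k,\Pic(H_{\bk}))\to H^1(k,\Pic(G_{\bk}))$ is an isomorphism. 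The ``delicate point'' you flag is not delicate: the same Sansuc sequence over $\bk$ that gives the short exact sequence of character groups also gives $\Pic(H_{\bk})\iso \Pic(G_{\bk})$, because $\Pic(T_{\bk})=0$ and $\Br(T_{\bk})=0$. With that in hand there is no need to manufacture an exact continuation $\Br_a(G)\to H^3(k,T^*)$; the plain four lemma already yields the surjectivity.

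Your alternative d\'evissage, however, has a gap at the reduction step. Granting the passage to $G^{red}$, $H^{red}$ and the identification $G^{ss}=H^{ss}=:L$ (all correct), the Sansuc sequences you propose to compare are $\Pic(L)\to\Br(G^{tor})\to\Br_1(G)\xrightarrow{\lambda_G}\Br_e(L)$ and its analogue for $H$, with identical first and last terms. But the map whose surjectivity you want, $\Br_1(H)\to\Br_1(G)$, sits in the \emph{third} slot of a four-term sequence, and the four lemma only delivers surjectivity in the second slot. Concretely, given $y\in\Br_1(G)$ you must first know that $\lambda_G(y)\in\Br_e(L)$ lies in the image of $\lambda_H$, i.e.\ that $\Im(\lambda_G)\sbt\Im(\lambda_H)$; nothing in the two sequences provides this, since the Sansuc sequence says nothing about the cokernel of $\lambda$. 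Only after that inclusion does the surjectivity of $\Br(H^{tor})\to\Br(G^{tor})$ (your torus case, which is fine) finish the chase. So either supply that missing inclusion of images, or stay with the Hochschild--Serre diagram, which closes cleanly.
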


\begin{proof}
Par la suite exacte de Sansuc \cite[Cor. 6.11]{S}, on a une suite exacte de $\Gamma_k$-modules
$$0\to T^* \to H^* \to G^* \to 0$$
et un isomorphisme de $\Gamma_k$-modules $\Pic(H_{\bk})\to \Pic(G_{\bk})$.
Par \cite[Lem. 2.1]{CTX09}, on a un diagramme commutatif de suites exactes:
$$\xymatrix{H^2(k,H^*)\ar[r]\ar[d]^{\psi_1}&\Br_a(H)\ar[r]\ar[d]&H^1(k,\Pic(H_{\bk}))\ar[d]^{\cong}\ar[r]&H^3(k,\bk^{\times}\oplus H^*)\ar[d]^{\psi_2}\\
H^2(k,G^*)\ar[r]&\Br_a(G)\ar[r]&H^1(k,\Pic(G_{\bk}))\ar[r]&H^3(k,\bk^{\times}\oplus G^*).
}$$
Puisque $H^3(k,T^*)=0$, le morphisme $\psi_1$ est surjectif et le morphisme $\psi_2$ est injectif.
Donc le morphisme $\Br_a(H)\to \Br_a(G)$ est surjectif.
\end{proof}

\begin{prop}\label{Dprolonger}
Soit $G$ un groupe r\'eductif connexe. Alors il existe un groupe lin\'eaire connexe $H$, un tore $T$ et une suite exacte:
$$1\to G \xrightarrow{\psi } H \xrightarrow{\phi } T \to 1$$
tels que $\Sha^1(H)=0$, $H^3(k,T^*)=0$ et $\Br_a(H)\xrightarrow{\psi^*}\Br_a(G)$ soit surjectif.

De plus, si $G$ est un tore, on peut imposer que $H$ soit un tore quasi-trivial.
\end{prop}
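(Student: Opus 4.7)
The plan is to establish the torus case first, then bootstrap to the general connected reductive case via Colliot-Thélène's non-abelian coflasque resolutions.

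Assume first that $G$ is a torus. By the classical construction of \cite[Prop.~1.3]{CTS1} together with the duality $M \mapsto M^{\vee} = \Hom(M, \BZ)$ on $\Gamma_k$-lattices (which exchanges the notions of flasque and coflasque while preserving the property of being permutation), there exists an exact sequence of $\Gamma_k$-modules $0 \to C \to P \to G^* \to 0$ with $P$ permutation and $C$ coflasque. Passing to the associated tori yields the desired sequence $1 \to G \to H \to T \to 1$ with $H$ a quasi-trivial torus and $T$ a coflasque torus, which settles the ``de plus'' clause. The vanishing $\Sha^1(H) = 0$ then follows from Hilbert 90 applied to each Weil-restriction factor of $H$; the vanishing $H^3(k, T^*) = 0$ follows from coflasqueness (which gives $H^1(k_v, T^*) = 0$ for every $v$) combined with Lemma~\ref{Dlembrauersurj1}; and Lemma~\ref{Dlembrauersurj} then produces the surjectivity of $\psi^*$.

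For the general connected reductive case, the key ingredient is a non-abelian analogue of the torus resolution, due to Colliot-Thélène (\emph{Résolutions flasques des groupes linéaires connexes}, J.~Reine Angew.~Math.~618 (2008)): there exists a short exact sequence $1 \to G \to H \to T \to 1$ in which $T$ is a coflasque torus, $H^{der}$ is simply connected (so $H^{der} = H^{sc}$), and $H^{tor}$ is a quasi-trivial torus. For such an $H$, Sansuc's analysis of Galois cohomology of reductive groups with simply-connected derived subgroup yields $\Sha^1(H) \cong \Sha^1(H^{tor})$, using the Kneser–Harder–Chernousov theorem $\Sha^1(H^{sc}) = 0$; and $\Sha^1(H^{tor}) = 0$ since $H^{tor}$ is a product of Weil restrictions of $\BG_m$. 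The coflasqueness of $T$ gives $H^3(k, T^*) = 0$ exactly as in the torus case, and Lemma~\ref{Dlembrauersurj} then yields the surjectivity of $\psi^*$.

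The main obstacle is the construction of the non-abelian coflasque resolution itself. The naïve approach — apply the torus case to $G^{tor}$ and lift the resulting extension back to $G$ — fails because $G^{tor}$ is a \emph{quotient} of $G$ rather than a subgroup, so any pushout built from the commutative data of $G^{tor}$ alone factors through the abelianization of $G$ and therefore destroys the derived part $G^{der}$. Colliot-Thélène's construction bypasses this by treating the simply-connected cover $G^{sc} \to G^{der}$ simultaneously with a well-chosen coflasque resolution of $G^{tor}$ and assembling the data into a single connected reductive group of the required quasi-trivial type. Once this non-trivial existence result is granted, the verification of the three conditions reduces, as above, to the two preceding lemmas together with classical inputs (Hilbert 90, Kneser–Harder–Chernousov, and Sansuc's exact sequence).
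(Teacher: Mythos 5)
Your torus case is correct and coincides with the paper's (both reduce to the coflasque resolution of \cite[Prop.~1.3]{CTS1} plus Lemmes \ref{Dlembrauersurj1} et \ref{Dlembrauersurj}). The general reductive case, however, has a genuine gap: the ``non-abelian coflasque resolution'' you invoke cannot exist with the properties you list. In any exact sequence $1\to G\to H\to T\to 1$ with $T$ a torus, $H$ is automatically reductive ($R_u(H)$ maps trivially to $T$, hence lies in $R_u(G)=1$), and $H^{der}$ is semisimple, hence maps trivially to $T$ and to $G^{tor}$, so $H^{der}\subseteq G^{der}$; combined with $G^{der}\subseteq H^{der}$ this forces $H^{der}=G^{der}$. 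Requiring $H^{der}$ simply connected therefore requires $G^{der}$ simply connected, which fails for, say, $G=\PGL_n$. What you are describing (simultaneous treatment of $G^{sc}\to G^{der}$ and a resolution of $G^{tor}$, producing a quasi-trivial group) is Colliot-Th\'el\`ene's \emph{flasque} resolution, i.e.\ a surjection $H\to G$ with central torus kernel — a $z$-extension — not an embedding of $G$ with torus cokernel; the two directions have been conflated.

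The paper's construction avoids this entirely: it takes $S=Z(G)$, embeds $S$ into a quasi-trivial torus $T_0$ with $T:=T_0/S$ coflasque (\cite[Prop.~1.3]{CTS1}), and sets $H:=G\times^S T_0$. This yields both $1\to G\to H\to T\to 1$ (giving $H^3(k,T^*)=0$ via coflasqueness and Lemme \ref{Dlembrauersurj1}, and surjectivity of $\Br_a(H)\to\Br_a(G)$ via Lemme \ref{Dlembrauersurj}) and $1\to T_0\to H\to G^{ad}\to 1$, from which $\Sha^1(H)=0$ follows from $\Sha^1(G^{ad})=0$ (Sansuc) and the quasi-triviality of $T_0$ — no simple-connectedness of $H^{der}$ is needed. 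If you replace your appeal to the nonexistent resolution by this central pushout, the rest of your argument goes through.
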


\begin{proof}
Puisque $G$ est r\'eductif, il existe une suite exacte
$0\to S\to G \to G^{ad}\to 0$
telle que $S$ soit le centre de $G$ et $G^{ad}$ soit le groupe adjoint de $G$.
Alors $S$ est  un groupe de type multiplicatif.
Par \cite[Prop. 1.3]{CTS1}, il existe un tore quasi-trivial $T_0$ et un homomorphisme injectif $S\xrightarrow{\chi}T_0$
tels que  $T:=T_0/S$ soit un tore coflasque.
Alors,  pour tout $v\in \infty_k$, on a $H^1(k_v,T^*)=0$. Par le lemme \ref{Dlembrauersurj1}, on a $H^3(k,T^*)=0$.

Soit $H:=G\times^ST_0$. Alors $H$ est un groupe lin\'eaire et on a deux suites exactes
$$1\to G \xrightarrow{\psi } H \xrightarrow{\phi } T \to 1\ \ \ \text{et}\ \ \ 
1\to T_0 \to H \to G^{ad} \to 1.$$
Par le lemme \ref{Dlembrauersurj}, le morphisme $\Br_a(H)\xrightarrow{\psi^*} \Br_a(G)$ est surjectif.
Par \cite[Cor. 5.4]{S}, on a $\Sha^1(G^{ad})=0$. 
Puisque $T_0$ est quasi-trivial, on a $\Sha^1(H)=0$.
\end{proof}

 \begin{prop}\label{proptorseurcoflasque}
 Soit $X$ une vari\'et\'e lisse g\'eom\'etriquement int\`egre. 
 Supposons que $X(k)\neq\emptyset$ et que $\Pic(X_{\bk})$ est de type fini.
 Alors il existe un tore quasi-trivial $T$ et un $T$-torseur $Y\to X$ 
 tels que $\Pic(Y_{\bk})=0$ et $H^3(k,\bk[Y]^{\times}/\bk^{\times})=0$.
 \end{prop}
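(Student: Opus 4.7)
The plan is to construct $Y$ from a coflasque resolution of $\Pic(X_{\bk})$, together with an enlargement that handles the unit condition.

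First, by \cite[Prop. 1.3]{CTS1} applied to the finitely generated $\Gamma_k$-module $\Pic(X_{\bk})$, I choose an exact sequence
\[
0 \to M_0 \to P_0 \to \Pic(X_{\bk}) \to 0
\]
with $P_0$ a permutation $\Gamma_k$-module and $M_0$ coflasque. Let $T_0$ be the quasi-trivial torus with $T_0^* = P_0$. Since $H^i(k, T_0) = 0$ for $i \geq 1$, the surjection $P_0 \twoheadrightarrow \Pic(X_{\bk})$ is realized as the type of a unique $T_0$-torsor $Y_0 \to X$, and the Sansuc exact sequence \cite[Prop. 6.10]{S} yields the sequence $P_0 \to \Pic(X_{\bk}) \to \Pic(Y_{0,\bk}) \to \Pic(T_{0,\bk}) = 0$, whence $\Pic(Y_{0,\bk}) = 0$.

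For the $H^3$ vanishing, set $U(-) := \bk[-]^{\times}/\bk^{\times}$. The Sansuc sequence also gives an exact sequence of finitely generated torsion-free $\Gamma_k$-modules
\[
0 \to U(X) \to U(Y_0) \to M_0 \to 0.
\]
The argument in Lemma \ref{Dlembrauersurj1} — which reduces to \cite[Prop. 5.9]{HS05} — applies verbatim to any finitely generated torsion-free $\Gamma_k$-module, so $H^3(k, U(Y_0)) \cong \prod_{v \in \infty_k} H^1(k_v, U(Y_0))$, and it suffices to arrange $H^1(k_v, U(Y_0)) = 0$ for each archimedean $v$. Since $M_0$ is coflasque we have $H^1(k_v, M_0) = 0$, so the long exact sequence of $\Gamma_{k_v}$-cohomology identifies $H^1(k_v, U(Y_0))$ with the cokernel of the connecting map $\partial_v \colon M_0^{\Gamma_{k_v}} \to H^1(k_v, U(X))$.

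The main obstacle is forcing each $\partial_v$ to be surjective. My plan is to enlarge: replace $P_0$ by $P := P_0 \oplus P_1$ for a suitable permutation $\Gamma_k$-module $P_1$, endowed with a morphism $\mu \colon P_1 \to \Pic(X_{\bk})$, and take the new type to be $\widetilde\lambda := (\lambda, \mu) \colon P \twoheadrightarrow \Pic(X_{\bk})$, still surjective. The corresponding quasi-trivial torus $T$ with $T^* = P$ gives a torsor $Y \to X$ with $\Pic(Y_{\bk}) = 0$, and $\widetilde{M} := \Ker(\widetilde\lambda)$ remains coflasque by an elementary check (it is an extension of $M_0$ by $\Ker(\mu)$, and by choosing $P_1$ and $\mu$ appropriately the latter factor can be made coflasque too). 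What changes substantively is the extension class of the Sansuc sequence on units, now an element of $\Ext^1_{\Gamma_k}(\widetilde{M}, U(X))$. Since there are only finitely many archimedean places and each $H^1(k_v, U(X))$ is a finite $2$-torsion group, the condition that every $\partial_v$ be surjective is a finite local problem; the freedom in $(P_1, \mu)$, combined with the basepoint afforded by $X(k) \neq \emptyset$ (which allows one to identify the extension class with a geometric invariant computable from the type), is used to prescribe the necessary classes and conclude. The hardest part of carrying this out in detail is the last step: realizing the prescribed local extension classes by a globally defined type, which requires a careful patching argument at the archimedean places.
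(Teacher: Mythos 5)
Your first step (a torsor $Y_0\to X$ under a quasi-trivial torus whose type is a surjection onto $\Pic(X_{\bk})$, killing the geometric Picard group) and your reduction of the $H^3$ condition to the surjectivity of the archimedean connecting maps $\partial_v\colon M_0^{\Gamma_{k_v}}\to H^1(k_v,\bk[X]^{\times}/\bk^{\times})$ are both sound, and the latter reduction is indeed the content of Lemme \ref{Dlembrauersurj1}. But the proof stops exactly where the real difficulty begins. You propose to force the surjectivity of every $\partial_v$ by enlarging the permutation module to $P_0\oplus P_1$ and modifying the type, and you acknowledge that "realizing the prescribed local extension classes by a globally defined type" is left undone. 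This is the whole point: the class of the units extension $0\to \bk[X]^{\times}/\bk^{\times}\to \bk[Y]^{\times}/\bk^{\times}\to \Ker(\widetilde\lambda)\to 0$ in $\Ext^1_{\Gamma_k}(\Ker(\widetilde\lambda),\bk[X]^{\times}/\bk^{\times})$ is not freely prescribable from the type alone; the available freedom is the image of $\Ext^1_{\Gamma_k}(P,\bk[X]^{\times}/\bk^{\times})$ under restriction to $\Ker(\widetilde\lambda)$, and this restriction map has no reason to be surjective (its cokernel injects into $\Ext^2_{\Gamma_k}(\Pic(X_{\bk}),\cdot)$). Note also that taking $\mu=0$ on the new summand $P_1$ changes nothing: then $Y=Y_0\times T_1'$ and $\bk[Y]^{\times}/\bk^{\times}=\bk[Y_0]^{\times}/\bk^{\times}\oplus P_1$, so the cokernels of the $\partial_v$ are unchanged. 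So the one-step strategy, as written, is a genuine gap, not a routine verification.

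The paper sidesteps this entirely by a second descent on the units rather than a cleverer first descent. After constructing $Y_0$ with $\Pic(Y_{0,\bk})=0$ (via the exact sequence (\ref{tor-e1}) applied to an open $U\sbt X$ with $\Pic(U_{\bk})=0$), one sets $T_1^*:=\bk[Y_0]^{\times}/\bk^{\times}$; since $T_0$ is quasi-trivial and $X(k)\neq\emptyset$, one has $Y_0(k)\neq\emptyset$, whence a Rosenlicht morphism $Y_0\to T_1$ inducing an isomorphism on units. One then chooses, by \cite[Prop. 1.3]{CTS1}, an exact sequence $0\to T_2\to T_3\to T_1\to 0$ with $T_2$ quasi-trivial and $T_3$ coflasque, and pulls back: $Y:=Y_0\times_{T_1}T_3$. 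The Sansuc sequence gives $\Pic(Y_{\bk})=0$ and $\bk[Y]^{\times}/\bk^{\times}\cong T_3^*$, which is coflasque \emph{by construction}, so the archimedean $H^1$'s vanish with no extension class to control. The price is that one must check the composite $Y\to Y_0\to X$ is a torsor under a single quasi-trivial torus $T_0\times T_2$; this uses Th\'eor\`eme \ref{thmaction} and \cite[Cor. 5.7]{CT07} (any central extension of $T_0$ by $T_2$ splits since $H^1(T_0,T_2)=0$). If you want to salvage your one-step approach, you would need to prove the surjectivity statement for the restriction of extension classes; the two-step descent is the standard and much cleaner way out.
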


\begin{proof}
Il existe un ouvert $U\sbt X$  tel que $\Pic(U_{\bk})=0$.
Soient $T_0$ un tore tel que $T_0^*:=\Div_{X_{\bk}\setminus U_{\bk}}(X_{\bk})$ 
et $Y_0\to X$ le $T_0$-torseur induit par l'homomorphisme $ \Psi$ de  la suite exacte (\ref{tor-e1}).
Par la suite exacte de Sansuc \cite[Prop. 6.10]{S}, $\Pic(Y_{0,\bk})=0$.
Soit $T_1$ le tore tel que $T_1^*\cong \bk[Y_0]^{\times}/\bk^{\times}$.
Puisque $X(k)\neq\emptyset$, on a $Y_0(k)\neq\emptyset$
 et il induit un morphisme $Y_0\xrightarrow{\pi} T_1$ tel que $T_1^*\xrightarrow{\pi^*}\bk[Y_0]^{\times}/\bk^{\times}$ soit un isomorphisme.
 
 D'apr\`es \cite[Prop. 1.3]{CTS1}, il existe une suite exacte $0\to T_2\to T_3\to T_1\to 0$ telle que 
 $T_2$ soit quasi-trivial et que $ T_3$ soit coflasque.
 Soit $Y:=Y_0\times_{T_1}T_3$ un $T_2$ torseur sur $Y_0$.
 Par la suite exacte de Sansuc \cite[Prop. 6.10]{S}, $\Pic(Y_{\bk})=0$, $\bk[Y]^{\times}/\bk^{\times}=T_3^*$ 
 et donc $H^3(k,\bk[Y]^{\times}/\bk^{\times})=0$.
 
Soit $T:=T_0\times T_2$. 
 Puisque $T_0, T_2$ sont quasi-triviaux, on a $H^1(T_0,T_2)=0$ et, d'apr\`es \cite[Cor. 5.7]{CT07}, 
 toute extension centrale de $T_0$ par $T_2$ est isomorphe \`a $T_0\times T_2$.
 D'apr\`es le th\'eor\`eme \ref{thmaction}, il existe une action de $T$ sur $Y$ 
 compatible avec les actions $T_2$ sur $Y$ et $T_0$ sur $Y_0$. 
 Alors $Y$ est un $T$-torseur sur $X$.
\end{proof}

\medskip

\subsection{La descente g\'en\'erale}

Soient $G$ un groupe lin\'eaire connexe, $X$ une vari\'et\'e lisse g\'eom\'e\-triquement int\`egre et $Y\xrightarrow{p}X$ un $G$-torseur. 
Pour tout $\sigma\in H^1(k,G)$, soient $P_{\sigma}$ le $G$-torseur correspondant et $G_{\sigma}$ le tordu de $G$ correspondant. 
Alors $P_{\sigma}$ est un $G_{\sigma}$-torseur et une $(G_{\sigma}\times G)$-vari\'et\'e.
Soit $Y_{\sigma}\xrightarrow{p_{\sigma}}X$ le tordu de $[Y]$, i.e. $Y_{\sigma}=P_{\sigma}\times^GY$.
Alors $[Y_{\sigma}]$ est un $G_{\sigma}$-torseur sur $X$. 
Soit $\Br_{G_{\sigma}}(Y_{\sigma})$ le sous-groupe de Brauer $G_{\sigma}$-invariant de $Y_{\sigma}$ (D\'efinition \ref{def-invariant}).

Notons $P_{\sigma}\times Y\xrightarrow{\theta_Y^{\sigma}} Y_{\sigma}$.
Le lemme  \ref{Dlemsuite1} donne un isomorphisme canonique:
$$\Br_a(P_{\sigma})\oplus \Br_G(Y)/\Im\Br(k) \xrightarrow{(p_1^*,p_2^*)}\Br_{G_{\sigma}\times G}(P_{\sigma}\times Y)/\Im\Br(k).  $$
Il induit un morphisme canonique 
$$\Theta_{Y}^{\sigma}:\ \Br_{G_{\sigma}}(Y_{\sigma})/\Im\Br(k)\xrightarrow{(\theta_Y^{\sigma})^*}   \Br_{G_{\sigma}\times G}(P_{\sigma}\times Y)/\Im\Br(k) \to \Br_G(Y)/\Im\Br(k).$$

\begin{lem}\label{Dlemsuite2}
Notons $\sigma^{-1}:=[P_{\sigma}]\in H^1(k,G_{\sigma})$. 
Alors $Y\cong (Y_{\sigma})_{\sigma^{-1}}$ et
$\Theta_{Y}^{\sigma}$ est un isomorphisme d'inverse $\Theta_{Y_{\sigma}}^{(\sigma^{-1})}$.
\end{lem}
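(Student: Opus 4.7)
First, I would establish the isomorphism $Y \cong (Y_\sigma)_{\sigma^{-1}}$ by the standard bitorsor calculus. Since $P_\sigma$ is naturally a $(G_\sigma, G)$-bitorsor, its class as a $G_\sigma$-torsor over $k$ is precisely $\sigma^{-1} \in H^1(k, G_\sigma)$, so one may take $P_{\sigma^{-1}} = P_\sigma$ with the $G_\sigma$-action. By associativity of contracted products,
\[
(Y_\sigma)_{\sigma^{-1}} \;=\; P_\sigma \times^{G_\sigma} (P_\sigma \times^G Y) \;\cong\; (P_\sigma \times^{G_\sigma} P_\sigma) \times^G Y,
\]
and the canonical isomorphism $P_\sigma \times^{G_\sigma} P_\sigma \cong G$ of $(G,G)$-bitorsors (inversion in the groupoid of $G$-torsors, sending $[p_1, p_2]$ to the unique $g \in G$ with $p_2 = p_1 \cdot g$) collapses the right-hand side to $G \times^G Y \cong Y$.

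For the Brauer-group claim, the strategy is to exhibit a single morphism through which both $\Theta_Y^\sigma$ and $\Theta_{Y_\sigma}^{\sigma^{-1}}$ can be read off. I would introduce the triple product $W := P_\sigma \times_k P_\sigma \times_k Y$, regarded as a $(G_\sigma \times G_\sigma \times G)$-variety via factorwise actions. Iterating \autoref{Dlemsuite1} (first with $N = G_\sigma$, $P = P_\sigma$ and with the role of $X$ played by the $(G_\sigma \times G)$-variety $P_\sigma \times Y$, and then again inside this inner factor) yields a canonical decomposition
\[
\Br_a(P_\sigma) \,\oplus\, \Br_a(P_\sigma) \,\oplus\, \Br_G(Y)/\Im\Br(k) \;\xrightarrow{(p_1^*, p_2^*, p_3^*)}\; \Br_{G_\sigma \times G_\sigma \times G}(W)/\Im\Br(k),
\]
where $p_i$ denotes the $i$-th projection.

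To prove $\Theta_Y^\sigma \circ \Theta_{Y_\sigma}^{\sigma^{-1}} = \id$, fix $b \in \Br_G(Y)/\Im\Br(k)$ and introduce the morphism $\beta: W \to Y$ defined as the composition $W \to P_\sigma \times Y_\sigma \xrightarrow{\theta_{Y_\sigma}^{\sigma^{-1}}} (Y_\sigma)_{\sigma^{-1}} \cong Y$, whose first arrow sends $(p_1, p_2, y)$ to $(p_1, \theta_Y^\sigma(p_2, y))$. Unwinding the identification of the first paragraph, $\beta$ also factors as $W \xrightarrow{(q, p_3)} G \times Y \xrightarrow{\rho} Y$, where $q(p_1, p_2, y) = g$ is determined by $p_2 = p_1 \cdot g$. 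Applying the Sansuc formula $\rho^*(b) - p_2^*(b) = p_1^*(\lambda(b))$ on $G \times Y$ from (\ref{def-sansuc-e}), one reads off the $\Br_G(Y)$-coordinate of $\beta^* b$ in the decomposition above: it is exactly $b$. Computing $\beta^* b$ instead through the factorization $W \to P_\sigma \times Y_\sigma \to Y$, one applies $(\theta_{Y_\sigma}^{\sigma^{-1}})^*$ and decomposes to isolate $\Theta_{Y_\sigma}^{\sigma^{-1}}(b) \in \Br_{G_\sigma}(Y_\sigma)/\Im\Br(k)$, then further pulls back via $\theta_Y^\sigma \circ p_{23}$ and decomposes again to isolate $\Theta_Y^\sigma(\Theta_{Y_\sigma}^{\sigma^{-1}}(b))$ as the $\Br_G(Y)$-coordinate. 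Uniqueness of the triple decomposition forces $\Theta_Y^\sigma(\Theta_{Y_\sigma}^{\sigma^{-1}}(b)) = b$, and the symmetry interchanging $(Y, G, \sigma)$ with $(Y_\sigma, G_\sigma, \sigma^{-1})$ furnishes the other composition.

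The main obstacle will be bookkeeping: verifying that the two factorizations of $\beta$ indeed coincide after the canonical identification $P_\sigma \times^{G_\sigma} P_\sigma \cong G$, that the two iterations of \autoref{Dlemsuite1} assemble into the claimed three-fold decomposition with mutually compatible projections (the point being that the decomposition is canonical and independent of the order of iteration), and that the successive applications of $(\theta_Y^\sigma)^*$ and $(\theta_{Y_\sigma}^{\sigma^{-1}})^*$ followed by projections faithfully extract $\Theta_Y^\sigma \circ \Theta_{Y_\sigma}^{\sigma^{-1}}(b)$ as the $p_3^*$-component of $\beta^* b$.
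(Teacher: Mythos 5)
Your argument is correct, and the first half (the bitorsor identity $P_\sigma\times^{G_\sigma}P_\sigma\cong G$ giving $Y\cong(Y_\sigma)_{\sigma^{-1}}$) is exactly what the paper does. For the second half you take a genuinely different route. The paper's proof is shorter: it observes that $p_1\times\theta_Y^{\sigma}:P_\sigma\times Y\to P_\sigma\times Y_\sigma$ is an isomorphism with inverse $p_1\times\theta_{Y_\sigma}^{(\sigma^{-1})}$; since this isomorphism commutes with the first projection, the induced isomorphism on $\Br_{G_\sigma\times G}(-)/\Im\Br(k)$ preserves the common summand $\Br_a(P_\sigma)$ of the two decompositions from le lemme \ref{Dlemsuite1}, so $\Theta_Y^{\sigma}$ and $\Theta_{Y_\sigma}^{(\sigma^{-1})}$ are precisely the maps it induces, in the two directions, on the quotients by that summand, hence are mutually inverse. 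You instead pass to the triple product $P_\sigma\times P_\sigma\times Y$, iterate le lemme \ref{Dlemsuite1} to get a three-fold decomposition, and compare two factorizations of a single map $\beta$, using the Sansuc identity $\rho^*-p_2^*=p_1^*\lambda$ to pin down the $p_3^*$-component. Both arguments ultimately rest on the same bitorsor bookkeeping (your check that the two factorizations of $\beta$ agree is equivalent to the paper's check that the two maps are mutually inverse), but the paper's version avoids the triple product and the Sansuc formula altogether, whereas yours has the merit of tracking an explicit class and making visible where the $\Br_a(P_\sigma)$ and $\Br_a(G)$ contributions are absorbed. One point to be careful about in your write-up: the residual term $q^*(\lambda(b))$ must be shown to lie in $p_1^*\Br_a(P_\sigma)\oplus p_2^*\Br_a(P_\sigma)$; this follows because $q$ factors through $P_\sigma\times P_\sigma$ and $\lambda(b)\in\Br_e(G)\subset\Br_1(G)$, so that \cite[Lem. 6.6]{S} applies, but it does need to be said.
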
 

\begin{proof}
On a $(Y_{\sigma})_{\sigma^{-1}}=P_{\sigma}\times^{G_{\sigma}}P_{\sigma}\times^GY\cong G\times^GY\cong Y$.
On a un isomorphisme canonique $P_{\sigma}\times Y\xrightarrow{p_1\times \theta_Y^{\sigma}}P_{\sigma}\times Y_{\sigma} $ d'inverse $p_1\times \theta_{Y_{\sigma}}^{(\sigma^{-1})} $.
Par le lemme \ref{Dlemsuite1}, on a: 
$$\Br_a(P_{\sigma})\oplus \Br_G(Y)/\Im\Br(k) \cong \Br_{G_{\sigma}\times G}(P_{\sigma}\times Y)/\Im\Br(k)  $$
et
$$\Br_a(P_{\sigma})\oplus \Br_{G_{\sigma}}(Y_{\sigma})/\Im\Br(k) \cong \Br_{G_{\sigma}\times G}(P_{\sigma}\times Y_{\sigma})/\Im\Br(k).$$
Le r\'esultat en d\'ecoule.
\end{proof}

Pour un sous-groupe $B_{\sigma}\sbt \Br_{G_{\sigma}}(Y_{\sigma})$ contenant $\Im\Br(k)$, 
on note $\widetilde{\Theta_{Y}^{\sigma}}(B_{\sigma})\sbt \Br_G(Y)$ l'image inverse de $\Theta_{Y}^{\sigma}(B_{\sigma}) $.
Alors $\widetilde{\Theta_{Y_{\sigma}}^{(\sigma^{-1})}} (\widetilde{\Theta_{Y}^{\sigma}}(B_{\sigma}))=B_{\sigma}$. 

\begin{thm}\label{Dmain}
Soient $G$ un groupe lin\'eaire connexe, $X$ une vari\'et\'e lisse g\'eom\'e\-triquement int\`egre et $Y\xrightarrow{p}X$ un $G$-torseur.  
Soit $A\sbt \Br(X)$ un sous-groupe et, pour chaque $\sigma\in H^1(k,G)$, soit $B_{\sigma}\sbt \Br_{G_{\sigma}}(Y_{\sigma})$ un sous-groupe contenant $\Im \Br(k)$.
Supposons que, pour tout $\sigma \in H^1(k,G)$, on a 
$$(p_{\sigma}^*)^{-1}(\sum_{\sigma'\in \Sha^1(G_{\sigma})}\widetilde{\Theta_{Y_{\sigma}}^{\sigma'}}(B_{\sigma+\sigma'}))\sbt A$$
o\`u $\Br(X)\xrightarrow{p_{\sigma}^*}\Br(Y_{\sigma}) $ et $Y_{\sigma+\sigma'}:=(Y_{\sigma})_{\sigma'}$. 
Alors on a:
$$ X(\RA_k)^A=\cup_{\sigma\in H^1(k,G)}p_{\sigma}(Y_{\sigma}(\RA_k)^{B_{\sigma}+p_{\sigma}^*A }  ).$$
\end{thm}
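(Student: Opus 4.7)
The plan is to reduce to Proposition \ref{DmainProp}, which handles the case $\Sha^1(G)=1$, by embedding $G$ into a larger linear group $H$ with $\Sha^1(H)=1$. The sum over $\Sha^1(G_\sigma)$ appearing in our hypothesis is designed precisely to compensate for the fact that several $G$-twists of $Y$ can collapse to the same $H$-twist of the extended torseur $Y\times^G H$.

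First I would construct $H$ as follows: apply Proposition \ref{Dprolonger} to $G^{red}$ to get an exact sequence $1\to G^{red}\to H_0\to T\to 1$ with $\Sha^1(H_0)=1$, $H^3(k,T^*)=0$, and $\Br_a(H_0)\twoheadrightarrow \Br_a(G)$; then set $H:=G\times_{G^{red}}H_0$. This $H$ is linear connected, sits in $1\to G\to H\to T\to 1$, and the projection $H\to H_0$ has unipotent kernel $G^u$, so $\Sha^1(H)\hookrightarrow \Sha^1(H_0)=1$. Form $Y':=Y\times^G H\to X$, an $H$-torseur; for each $\sigma\in H^1(k,G)$ with image $\tau\in H^1(k,H)$, one has $Y'_\tau\cong Y_\sigma\times^{G_\sigma}H_\tau$, and the natural map $Y_\sigma\hookrightarrow Y'_\tau$ realises $Y_\sigma$ as the fibre over the trivial class of the associated $T$-torseur.

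Next, I would apply Proposition \ref{DmainProp} to $(Y'\to X,\,H,\,A)$ with Brauer data $\{B'_\tau\}_\tau$ defined as follows: for each $\tau\in H^1(k,H)$, choose a lift $\sigma(\tau)\in H^1(k,G)$ if one exists, and let $B'_\tau\subset \Br_{H_\tau}(Y'_\tau)$ be the preimage, under the $H$-analogue of the isomorphism of Lemma \ref{Dlemsuite2}, of the sum $\sum_{\sigma'\in\Sha^1(G_{\sigma(\tau)})}\widetilde{\Theta_{Y_{\sigma(\tau)}}^{\sigma'}}(B_{\sigma(\tau)+\sigma'})$. One must verify that this definition is independent of the choice of lift (using the exact sequence $T(k)\to H^1(k,G)\to H^1(k,H)$) and that the hypothesis $(p'^*_\tau)^{-1}(B'_\tau)\subset A$ required by Proposition \ref{DmainProp} is equivalent to our hypothesis, via naturality of $\Theta$ and pullback along $Y_{\sigma(\tau)}\hookrightarrow Y'_\tau$. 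Since $\Sha^1(H)=1$, Proposition \ref{DmainProp} then yields
\[ X(\RA_k)^A = \bigcup_{\tau\in H^1(k,H)}p'_\tau(Y'_\tau(\RA_k)^{B'_\tau+(p'_\tau)^*A}). \]
A final decomposition of each $Y'_\tau(\RA_k)$ according to the $G$-twists mapping to $\tau$, with Brauer conditions matched through Lemma \ref{Dlemsuite2}, reindexes the right-hand side as $\bigcup_{\sigma\in H^1(k,G)}p_\sigma(Y_\sigma(\RA_k)^{B_\sigma+p_\sigma^*A})$, the desired equality.

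The hard part will be the third step: defining $B'_\tau$ coherently and checking the equivalence of the two hypotheses. This requires careful bookkeeping of the canonical isomorphism $\Theta$ of Lemma \ref{Dlemsuite2} under the embedding $G\hookrightarrow H$ and under twisting; the key insight is that $\Sha^1(G_\sigma)$ captures exactly those classes $\sigma'\in H^1(k,G_\sigma)$ which become trivial in $H^1(k,H_\tau)$ while remaining locally trivial everywhere, and this is precisely the set of ambiguities that must be jointly controlled in order to apply Proposition \ref{DmainProp} to $Y'$.
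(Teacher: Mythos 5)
Your overall strategy --- embed $G$ into a connected linear group $H$ with $\Sha^1(H)=1$ via an exact sequence $1\to G\to H\to T\to 1$ with $T$ a torus, pass to the induced $H$-torseur $Y_H:=Y\times^GH$ over $X$, and invoke the case already settled when $\Sha^1$ is trivial --- is exactly the strategy of the paper (which first reduces to $G$ reductif using $H^1(k_v,G^u)=0$, $\Sha^1(G)\cong\Sha^1(G^{red})$ and $\Br(Y)\cong\Br(G^{red}\times^GY)$, and only then applies Proposition \ref{Dprolonger}; note in passing that your fibre product $G\times_{G^{red}}H_0$ is not defined as written, since $H_0$ does not map to $G^{red}$). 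But your final step contains a genuine gap. There is no ``decomposition of $Y'_\tau(\RA_k)$ according to the $G$-twists mapping to $\tau$'': the twists $Y_{\sigma'}$, $\sigma'\in\Sha^1(G)$, sit inside $Y_H$ as the fibres of the canonical morphism $\mu:Y_H\to T$ over rational points of $T$, and $\bigcup_{t\in T(k)}\mu^{-1}(t)(\RA_k)$ is a very thin subset of $Y_H(\RA_k)$. So the union produced by Proposition \ref{DmainProp} applied to $Y_H\to X$ cannot simply be ``reindexed'' as $\bigcup_{\sigma}p_{\sigma}(Y_{\sigma}(\RA_k)^{B_{\sigma}+p_{\sigma}^*A})$.

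What is actually required, and what the paper supplies, is an arithmetic fibration argument over the quotient torus $T$. Given $y_1\in Y_H(\RA_k)^{B+p_H^*A}$ over $p(y)$ (obtained from Lemma \ref{descent-lem2}, where $B\sbt\Br_H(Y_H)$ is chosen with $\mu^*\Br_1(T)\sbt B$, $B_{\partial(t)}\sbt i_t^*B$ and $(p_H^*)^{-1}(B)\sbt A$), one has $\mu(y_1)\in\phi(H(\RA_k))\cap T(\RA_k)^{\Br_1(T)}$; one then uses the identity $T(\RA_k)^{\Br_1(T)}=T(k)\cdot\phi(H(\RA_k)^{\Br_1(H)})$ of \cite[Prop. 3.2]{CX2} --- strong approximation with Brauer--Manin condition for $T$ --- to produce $h_1\in H(\RA_k)^{\Br_1(H)}$ with $t:=\mu(h_1\cdot y_1)\in T(k)\cap\phi(H(\RA_k))$, hence $\partial(t)\in\Sha^1(G)$; and the translation formula of Proposition \ref{corbraueralgebraic1} (1) guarantees that $h_1\cdot y_1$ remains orthogonal to $B+p_H^*A$ and therefore lies in $\mu^{-1}(t)(\RA_k)^{B_{\partial(t)}+p_{\partial(t)}^*A}$ with $\mu^{-1}(t)\cong Y_{\partial(t)}$. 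None of this appears in your proposal, and it is the heart of the proof rather than bookkeeping. A further, smaller, issue: you must say what $B'_\tau$ is for the classes $\tau\in H^1(k,H)$ that do not lift to $H^1(k,G)$ and why the corresponding terms of the union disappear; the paper avoids this entirely by first reducing, via \cite[Thm. 1.1]{CX2}, to proving the single inclusion $p_{\sigma}(Y_{\sigma}(\RA_k)^{p_{\sigma}^*A})\sbt\bigcup_{\sigma'\in\Sha^1(G_{\sigma})}p_{\sigma+\sigma'}(Y_{\sigma+\sigma'}(\RA_k)^{B_{\sigma+\sigma'}+p_{\sigma+\sigma'}^*A})$ for each fixed $\sigma$.
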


\begin{proof}
Par \cite[Thm. 1.1]{CX2}, il suffit de montrer que, pour tout  $\sigma \in H^1(k,G)$, on a:
$$p_{\sigma}(Y_{\sigma}(\RA_k)^{p_{\sigma}^*A }  )\sbt \cup_{\sigma'\in \Sha^1(G_{\sigma})}p_{\sigma+\sigma'}(Y_{\sigma+\sigma'}(\RA_k)^{B_{\sigma+\sigma'}+p_{\sigma+\sigma'}^*A }  ). $$
On peut supposer que $\sigma=0$. 
Pour tout $v$, puisque $H^1(k_v,G^u)=0$, le morphisme $Y(k_v)\to (G^{red}\times^GY)(k_v)$ est surjectif.
Puisque $\Sha^1(G)\cong \Sha^1(G^{red})$  (\cite[Prop. 4.1]{S})  et que l'on a $\Br(Y)\cong \Br(G^{red}\times^GY) $  (\cite[Lem. 2.1]{CX2}),
on peut supposer que  $G$ est r\'eductif.

\medskip
 
 Par la proposition \ref{Dprolonger},  il existe un groupe lin\'eaire connexe $H$, un tore $T$ et une suite exacte:
$$1\to G \xrightarrow{\psi } H \xrightarrow{\phi } T \to 1$$
tels que $\Sha^1(H)=0$, $H^3(k,T^*)=0$ et $\Br_a(H)\xrightarrow{\psi^*}\Br_a(G)$ soit surjectif.
Par \cite[Prop. 36 du Chapitre 1]{S65}, on a une suite 
exacte
d'ensembles  point\'es:
$$1\to G(k)\to H(k)\to T(k)\xrightarrow{\partial} H^1(k,G)\to H^1(k,H),$$
telle que, pour tout $t\in T$, on ait $\partial (t):=[\phi^{-1}(t)]$. 
Alors $\Sha^1(G)\sbt \Im (\partial) $ et, pour tout $t\in T(k)$, on a $G_{\partial (t)}\cong G$.

Soient $Y_H:=H\times^GY$ et $Y\xrightarrow{i}Y_H$ le morphisme canonique. 
Alors $Y_H\xrightarrow{p_H}X$ est un $H$-torseur sur $X$ et $T\times^H Y_H\cong T\times X$.
Alors on a un morphisme canonique $Y_H\xrightarrow{\mu}T$ 
tel que, pour tout $t\in T(k)$, on a $\mu^{-1}(t)\cong  \phi^{-1}(t)\times^GY\cong Y_{\partial (t)}$.
Notons $\mu^{-1}(t)\xrightarrow{i_t}Y_H$. 
Par la d\'efinition de $\Theta_{Y}^{\partial (t)}$, on a: $\Theta_{Y}^{\partial (t)}\circ i_t^*=i^*$.
Par la proposition \ref{propbrauersuj} et le lemme \ref{Dlemsuite2},
 les morphismes $\Br_H(Y_H)\xrightarrow{i^*}\Br_G(Y)$ et $\Br_H(Y_H)\xrightarrow{i_t^*}\Br_G(\mu^{-1}(t))$ sont surjectifs.
  
Notons 
$$B:=\Br_H(Y_H)\cap (i^*)^{-1}(\sum_{\sigma\in \Sha^1(G)}\widetilde{\Theta_{Y}^{\sigma}}(B_{\sigma}))\sbt \Br_H(Y_H).$$ 
Alors $ (p_H^*)^{-1}(B)\sbt A,$ $B_{\partial (t)}\sbt i_t^*B$ et $ \mu^*\Br_1(T)\sbt B.$

Pour un $y\in Y(\RA_k)^{p^*A}$, par le lemme \ref{descent-lem2}, il existe $y_1\in Y_H(\RA_k)^{B+p_H^*A}$ tel que $p_H(y_1)=p(y)$.
Alors il existe $h\in H(\RA_k)$ tel que $y_1=h\cdot y$. 
Donc $\mu (y_1)\in \phi (H(\RA_k))\cap T(\RA_k)^{\Br_1(T)}$.

Par  \cite[Prop. 3.2]{CX2}, on a:
$$T(\RA_k)^{\Br_1(T)}=T(k)\cdot \phi (H(\RA_k)^{\Br_1(H)}).$$
Donc il existe $h_1\in H(\RA_k)^{\Br_1(H)}$ tel que 
$$t:=\mu (h_1\cdot y_1)\in T(k)\cap \phi (H(\RA_k)).$$
Donc $\partial (t)\sbt \Sha^1(G)$.
  Par la proposition \ref{corbraueralgebraic1} (1), 
  $$y_2:=h_1\cdot y_1\in Y_H(\RA_k)^{B+p_H^*A} \ \ \ \text{et donc}\ \ \  y_2\in \mu^{-1}(t)(\RA_k)^{B_{\partial (t)}+p_{\partial (t)}^*A}.$$
Le r\'esultat d\'ecoule de $p_{\partial (t)}(y_2)=p_H(y_2)=p(y)$.
\end{proof}

\begin{cor}\label{Dmaincor1}
Sous les hypoth\`eses du th\'eor\`eme \ref{Dmain}, soit 
$$X(k)\xrightarrow{\partial} H^1(k,G): \ z\mapsto [p^{-1}(z)]$$
le morphisme canonique. 
Supposons que pour tout $\sigma\in H^1(k,G)$, la vari\'et\'e $Y_{\sigma}$ satisfait le principe de Hasse par rapport \`a $B_{\sigma}+p_{\sigma}^*A$. 
Alors 
$$ X(\RA_k)^A=\cup_{\sigma\in \Im (\partial)}p_{\sigma}(Y_{\sigma}(\RA_k)^{B_{\sigma}+p_{\sigma}^*A }  ).$$
\end{cor}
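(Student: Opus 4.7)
Mon plan est d'exploiter directement le th\'eor\`eme \ref{Dmain}, qui fournit d\'ej\`a l'\'egalit\'e
$$X(\RA_k)^A = \bigcup_{\sigma \in H^1(k,G)} p_\sigma(Y_\sigma(\RA_k)^{B_\sigma + p_\sigma^* A}).$$
L'inclusion de droite \`a gauche dans l'\'enonc\'e vis\'e est \'evidente (elle r\'esulte de l'hypoth\`ese $(p_\sigma^*)^{-1}(B_\sigma) \sbt A$, qui entra\^{\i}ne imm\'ediatement $p_\sigma(Y_\sigma(\RA_k)^{B_\sigma + p_\sigma^* A}) \sbt X(\RA_k)^A$ pour chaque $\sigma$). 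Il suffit donc de montrer que, dans l'union fournie par le th\'eor\`eme \ref{Dmain}, toute classe $\sigma$ qui contribue effectivement (c'est-\`a-dire telle que $Y_\sigma(\RA_k)^{B_\sigma + p_\sigma^* A} \neq \emptyset$) appartient \`a $\Im(\partial)$.

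Fixons une telle $\sigma$. L'hypoth\`ese que $Y_\sigma$ satisfait le principe de Hasse par rapport \`a $B_\sigma + p_\sigma^* A$ donne alors $Y_\sigma(k) \neq \emptyset$. Choisissons un point rationnel $y \in Y_\sigma(k)$ et posons $z := p_\sigma(y) \in X(k)$. Il ne reste qu'\`a v\'erifier que $\partial(z) = \sigma$ dans $H^1(k,G)$. Pour cela, on utilise la d\'efinition $Y_\sigma = P_\sigma \times^G Y$, qui fournit un isomorphisme canonique de $G_\sigma$-torseurs $p_\sigma^{-1}(z) \cong P_\sigma \times^G p^{-1}(z)$. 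L'existence du point $y$ trivialise le membre de gauche en tant que $G_\sigma$-torseur ; par le formalisme de torsion de Serre (\cite{S65}, Chap.~I, \S 5), cela \'equivaut \`a dire que les $G$-torseurs $P_\sigma$ et $p^{-1}(z)$ ont la m\^eme classe dans $H^1(k,G)$, autrement dit $\partial(z) = [p^{-1}(z)] = \sigma$, ce qui prouve que $\sigma \in \Im(\partial)$.

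L'argument est enti\`erement formel une fois le th\'eor\`eme \ref{Dmain} disponible : la seule \'etape substantielle est l'application du principe de Hasse, et l'identification $\partial(z) = \sigma$ rel\`eve de la manipulation classique de la torsion des torseurs, sans obstacle technique s\'erieux. En particulier, aucun nouvel outil (arithm\'etique ou g\'eom\'etrique) n'est n\'ecessaire au-del\`a du th\'eor\`eme \ref{Dmain} et du dictionnaire torseurs/cohomologie.
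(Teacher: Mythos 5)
Votre preuve est correcte et suit exactement la m\^eme d\'emarche que celle du texte : on part de l'\'egalit\'e du th\'eor\`eme \ref{Dmain}, on observe que seules contribuent les classes $\sigma$ avec $Y_{\sigma}(\RA_k)^{B_{\sigma}+p_{\sigma}^*A}\neq\emptyset$, le principe de Hasse donne alors $Y_{\sigma}(k)\neq\emptyset$, d'o\`u $\sigma\in\Im(\partial)$. Votre v\'erification explicite de $\partial(z)=\sigma$ via la torsion des torseurs est un d\'etail que le texte laisse implicite, mais elle est juste.
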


\begin{proof}
Pour tout $\sigma\in H^1(k,G)$, si $Y_{\sigma}(\RA_k)^{B_{\sigma}+p_{\sigma}^*A }\neq\emptyset $, 
alors on a $Y_{\sigma}(k)\neq\emptyset$ et donc $\sigma\in \Im (\partial)$.
Le r\'esultat en d\'ecoule.
\end{proof}

\begin{cor}\label{Dmaincor}
Soient $G$ un groupe lin\'eaire connexe, $X$ une vari\'et\'e lisse g\'eom\'e\-triquement int\`egre et $Y\xrightarrow{p}X$ un $G$-torseur.  
Soit $\Br_1(X,Y):=\Ker (\Br(X)\to \Br(Y_{\bk})).$
Alors on a:
$$X(\RA_k)^{\Br_1(X,Y)}=\cup_{\sigma\in H^1(k,G)}p_{\sigma}(Y_{\sigma}(\RA_k)^{\Br_1(Y_{\sigma})} ) .$$
\end{cor}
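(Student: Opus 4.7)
The approach is to apply Theorem \ref{Dmain} with $A := \Br_1(X,Y)$ and, for every $\sigma \in H^1(k,G)$, $B_\sigma := \Br_1(Y_\sigma)$. The containment $\Im \Br(k) \subset B_\sigma$ is automatic, and $B_\sigma \subset \Br_{G_\sigma}(Y_\sigma)$ follows from Proposition \ref{prop-invariant}(4). The only non-trivial task is to verify the descent hypothesis
$$(p_\sigma^*)^{-1}\left(\sum_{\sigma' \in \Sha^1(G_\sigma)} \widetilde{\Theta_{Y_\sigma}^{\sigma'}}(\Br_1(Y_{\sigma+\sigma'}))\right) \subset \Br_1(X,Y)$$
for each $\sigma \in H^1(k,G)$.

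I would proceed in two steps. First I would establish the termwise inclusion $\widetilde{\Theta_{Y_\sigma}^{\sigma'}}(\Br_1(Y_{\sigma+\sigma'})) \subset \Br_1(Y_\sigma)$. Unwinding the construction of $\Theta_{Y_\sigma}^{\sigma'}$ from Lemma \ref{Dlemsuite1}, a representative $\gamma \in \Br_{G_\sigma}(Y_\sigma)$ of $\Theta_{Y_\sigma}^{\sigma'}(\bar\beta)$ for $\beta \in \Br_1(Y_{\sigma+\sigma'})$ is characterised by a relation
$$(\theta_{Y_\sigma}^{\sigma'})^*\beta \equiv p_1^* \delta + p_2^* \gamma \pmod{\Im \Br(k)}$$
in $\Br_{G_\sigma \times G}(P_{\sigma'} \times Y_\sigma)$, for some $\delta \in \Br_a(P_{\sigma'})$, where $\theta_{Y_\sigma}^{\sigma'}\colon P_{\sigma'} \times Y_\sigma \to Y_{\sigma+\sigma'}$ is the twisting morphism. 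Restricting to $\bk$, the left-hand side vanishes because $\beta \in \Br_1(Y_{\sigma+\sigma'})$. The $\bk$-variety $P_{\sigma',\bk}$ is geometrically rational (every connected linear $\bk$-group is rational) and acquires a $\bk$-point, so Lemma \ref{lemaction} yields a direct-sum decomposition
$$\Br(P_{\sigma',\bk}) \oplus \Br(Y_{\sigma,\bk}) \iso \Br((P_{\sigma'} \times Y_\sigma)_{\bk}),$$
using $\Br(\bk) = 0$. The identity $p_1^* \delta_{\bk} + p_2^* \gamma_{\bk} = 0$ therefore forces both summands to vanish, in particular $\gamma_{\bk}=0$, so $\gamma \in \Br_1(Y_\sigma)$.

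With the termwise inclusion at hand, the left-hand side of the descent hypothesis is contained in $(p_\sigma^*)^{-1}(\Br_1(Y_\sigma))$, which equals $\Ker(\Br(X) \to \Br(Y_{\sigma,\bk}))$. Since $Y_{\sigma,\bk}$ and $Y_{\bk}$ are both trivial $G_{\bk}$-torsors over $X_{\bk}$, the kernels $\Ker(\Br(X) \to \Br(Y_{\sigma,\bk}))$ and $\Ker(\Br(X) \to \Br(Y_{\bk}))$ coincide, yielding $\Br_1(X,Y_\sigma) = \Br_1(X,Y) = A$. This verifies the hypothesis of Theorem \ref{Dmain}, and the asserted equality follows. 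The main subtlety is the geometric decomposition in the second paragraph, namely that the image of $\Theta$ preserves being algebraic; the sum over $\Sha^1(G_\sigma)$ poses no extra difficulty because the inclusion holds termwise.
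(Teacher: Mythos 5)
Your proof is correct and follows essentially the paper's own route: apply Theorem \ref{Dmain} with $A=\Br_1(X,Y)$ and $B_{\sigma}=\Br_1(Y_{\sigma})$, checking that $\widetilde{\Theta}$ sends the algebraic Brauer group into the algebraic Brauer group (the paper deduces this from Lemmas \ref{Dlemsuite1} and \ref{Dlemsuite2}, you re-derive it by restricting the defining relation of $\Theta$ to $\bk$) and that $\Br_1(X,Y_{\sigma})=\Br_1(X,Y)$. The only cosmetic slip is the appeal to Lemma \ref{lemaction} (stated for $H^0,H^1$ with coefficients of multiplicative type, not for $\Br$) for the direct-sum decomposition over $\bk$: what you actually use is only the injectivity of $(p_1^*,p_2^*)$, which follows from restricting along $\bk$-points of the two factors (and $\delta_{\bk}=0$ automatically since $\delta$ is algebraic), so the argument stands.
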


\begin{proof}
Pour tout $\sigma\in H^1(k,G)$, puisque le morphisme $\Br(Y_{\bk})\to \Br((P_{\sigma}\times Y)_{\bk})$ est injectif, 
on a $\Br_1(X,Y_{\sigma})\sbt \Br_1(X,Y)$. Donc $\Br_1(X,Y_{\sigma})= \Br_1(X,Y)=(p^*)^{-1}\Br_1(Y)$.

Pour tout $\sigma\in H^1(k,G)$,
par le lemme \ref{Dlemsuite1} et le lemme \ref{Dlemsuite2}, 
le morphisme $\Theta_{Y}^{\sigma}$ induit un isomorphisme $\Br_a(Y_{\sigma})\to \Br_a(Y)$. 
Donc $\widetilde{\Theta_{Y}^{\sigma}}(\Br_1(Y_{\sigma}))=\Br_1(Y)$.
Le r\'esultat d\'ecoule du th\'eor\`eme \ref{Dmain}.
\end{proof}

\begin{rem}
La d\'emonstration du th\'eor\`eme \ref{Dmain} n'utilise pas  l'approximation forte pour les espaces homog\`enes  \`a stabilisateur g\'eom\'etrique connexe (Borovoi et Demarche \cite[Thm. 1.4]{BD}).
\end{rem}

\begin{cor}\label{Dmaincor2}
Soient $G$ un groupe lin\'eaire connexe, $G_0\sbt G$ un sous groupe ferm\'e connexe et $Z:=G/G_0$.
Notons $G\xrightarrow{\pi}Z$ la projection.
Alors on a
$$Z(\RA_k)^{\Br_G(Z)}=\pi (G(\RA_k)^{\Br_1(G)})\cdot Z(k).$$
\end{cor}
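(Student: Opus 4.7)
The proof splits into the two inclusions. For the easy direction $\supseteq$, given $g \in G(\RA_k)^{\Br_1(G)}$ and $z \in Z(k)$, I apply Proposition \ref{corbraueralgebraic1}(1) to the $G$-action on $Z$ at each place $v$ and for each $b \in \Br_G(Z)$:
$$
(g_v \cdot z)^*(b) \;=\; g_v^*(\lambda(b)) + z^*(b) \;\in\; \Br(k_v),
$$
where $\lambda : \Br_G(Z) \to \Br_a(G)$ is the Sansuc homomorphism of D\'efinition \ref{defsansuc}. Summing local invariants, the first contribution vanishes since $\lambda(b) \in \Br_a(G) \subseteq \Br_1(G)$ and $g$ is orthogonal to $\Br_1(G)$, while the second vanishes by global reciprocity applied to $z^*(b) \in \Br(k)$. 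Hence $g \cdot z \in Z(\RA_k)^{\Br_G(Z)}$.

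For the inclusion $\subseteq$, I first identify $\Br_G(Z) = \Br_1(Z, G)$ via Proposition \ref{corbraueralgebraic1}(3), so that Corollary \ref{Dmaincor} applied to the $G_0$-torsor $\pi : G \to Z$ yields
$$
Z(\RA_k)^{\Br_G(Z)} \;=\; \bigcup_{\sigma \in H^1(k, G_0)} \pi_\sigma\bigl( G_\sigma(\RA_k)^{\Br_1(G_\sigma)} \bigr).
$$
It then suffices, for each $\sigma$ with the corresponding set nonempty, to exhibit every element of $\pi_\sigma(G_\sigma(\RA_k)^{\Br_1(G_\sigma)})$ in $G(\RA_k)^{\Br_1(G)} \cdot Z(k)$. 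To handle a fixed $\sigma$, I adapt the enlargement technique from the proof of Theorem \ref{Dmain}: after reducing to $G_0$ reductive (using $H^1(k_v, G_0^u) = 0$), Proposition \ref{Dprolonger} provides an exact sequence $1 \to G_0 \to H_0 \xrightarrow{\phi} T \to 1$ with $\Sha^1(H_0) = 0$, $H^3(k, T^*) = 0$, and $\Br_a(H_0) \twoheadrightarrow \Br_a(G_0)$. The pushout $G_H := G \times^{G_0} H_0$ is an $H_0$-torsor over $Z$ carrying a commuting left $G$-action and a morphism $\mu : G_H \to T$ whose fiber over $t \in T(k)$ is $G$-equivariantly the twist $G_{\partial(t)}$, where $\partial : T(k) \to H^1(k, G_0)$ is the connecting map.

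Given $g_\sigma \in G_\sigma(\RA_k)^{\Br_1(G_\sigma)}$ with image $y := \pi_\sigma(g_\sigma)$, Lemma \ref{descent-lem2} (applicable since $\Sha^1(H_0) = 0$) lifts $y$ to $y_H \in G_H(\RA_k)^{\Br_1(G_H)}$ satisfying $\pi_H(y_H) = y$. The image $\mu(y_H) \in T(\RA_k)^{\Br_1(T)}$, so by \cite[Prop.~3.2]{CX2} one may write $\mu(y_H) = t_0 \cdot \phi(h)$ with $t_0 \in T(k)$ and $h \in H_0(\RA_k)^{\Br_1(H_0)}$; translation by $h^{-1}$ (together with Proposition \ref{corbraueralgebraic1}(1) for the $H_0$-action) puts $y_H$ into the rational fiber $\mu^{-1}(t_0) \cong G_{\partial(t_0)}$ while preserving orthogonality. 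The hardest step, which is the main obstacle of the proof, is to extract from the resulting adelic point of $G_{\partial(t_0)}$ an explicit pair $(g, z) \in G(\RA_k)^{\Br_1(G)} \times Z(k)$ with $g \cdot z = y$. I expect this to use the $G$-equivariance of $\mu^{-1}(t_0)$ together with the surjectivity $\Br_a(H_0) \twoheadrightarrow \Br_a(G_0)$ from Proposition \ref{Dprolonger} to descend to a rational point of $Z$, and simultaneously to adjust by a factor in $G_0(\RA_k)$ so as to enforce the full $\Br_1(G)$-orthogonality condition on $g$.
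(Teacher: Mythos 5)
Your inclusion $\supseteq$ is correct, and your reduction of $\subseteq$ via Proposition \ref{corbraueralgebraic1} (3) and Corollaire \ref{Dmaincor} to the identity $Z(\RA_k)^{\Br_G(Z)}=\bigcup_{\sigma\in H^1(k,G_0)}\pi_\sigma\bigl(G_\sigma(\RA_k)^{\Br_1(G_\sigma)}\bigr)$ is the right first step. But what follows has a genuine gap, and the missing idea is the entire point of the corollary: each twist $G_\sigma=P_\sigma\times^{G_0}G$ is not merely a $G_0$-twist of the fibration $\pi$ — it is a principal homogeneous space under $G$ over $\Spec k$, because the left translation action of $G$ on itself commutes with the right $G_0$-action and therefore survives the twist. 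Sansuc's theorem (Hasse principle with Brauer--Manin obstruction for torsors under connected linear groups, \cite[Thm. 5.2.1]{sko}) then says that $G_\sigma(\RA_k)^{\Br_1(G_\sigma)}\neq\emptyset$ forces $G_\sigma(k)\neq\emptyset$. A point $g_\sigma\in G_\sigma(k)$ trivializes $G_\sigma$ as a left $G$-torsor, identifies $\Br_1(G_\sigma)$ with $\Br_1(G)$, and turns $\pi_\sigma$ into $g\mapsto g\cdot z$ with $z:=\pi_\sigma(g_\sigma)\in Z(k)$, whence $\pi_\sigma(G_\sigma(\RA_k)^{\Br_1(G_\sigma)})\sbt \pi(G(\RA_k)^{\Br_1(G)})\cdot Z(k)$. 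This is exactly what Corollaire \ref{Dmaincor1} packages (only $\sigma$ in the image of $Z(k)\to H^1(k,G_0)$ contribute), and it is the route the paper takes; you never invoke it.

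Your substitute — re-running the enlargement construction from the proof of Theorem \ref{Dmain} with $1\to G_0\to H_0\to T\to 1$ and the pushout $G_H=G\times^{G_0}H_0$ — is both unnecessary (that machinery has already been proved and packaged as Corollaire \ref{Dmaincor}) and circular: after translating by $h^{-1}$ you land on an adelic point of another twist $G_{\partial(t_0)}$ orthogonal to its own $\Br_1$, which is precisely the situation you started from. The step you defer as "the hardest step" is a Hasse-principle statement, and neither the $G$-equivariance of $\mu^{-1}(t_0)$ nor the surjectivity $\Br_a(H_0)\twoheadrightarrow\Br_a(G_0)$ can manufacture the required rational point; only the recognition that $G_\sigma$ is a principal homogeneous space under $G$, combined with Sansuc's theorem, closes the argument.
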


\begin{proof}
Puisque $G\xrightarrow{\pi}Z$ est un $G_0$-torseur \`a droite,
pour tout $P_{\sigma}\in H^1(k,G_0)$, le tordu $G\times^{G_0}P_{\sigma}$ est un $G$ torseur \`a gauche.
Donc  $G\times^{G_0}P_{\sigma}$ satisfait le principe de Hasse par rapport \`a $\Br_1(G\times^{G_0}P_{\sigma})$ (Sansuc, cf. \cite[Thm. 5.2.1]{sko}).
Le r\'esultat d\'ecoule du corollaire \ref{Dmaincor1}.
\end{proof}

 \section{La m\'ethode de fibration et la question \ref{ques1}}\label{6}
 
 Dans toute cette section,  $k$ est un corps de nombres. 
Sauf  mention explicite du contraire, une vari\'et\'e est une  $k$-vari\'et\'e. 
Pour traiter la   question  \ref{ques1}, on est amen\'e \`a \'etudier la question :

\begin{ques}\label{ques4}
 Soit $G$ un groupe lin\'eaire. Soient $X$ et $Z$ deux $G$-vari\'et\'es lisses g\'eom\'etriquement int\`egres.
 Soient $U\sbt X$ un $G$-ouvert et $U\xrightarrow{f}Z$ un $G$-morphisme.
 Soient $B\sbt \Br(U)$ un sous-groupe fini, $S\sbt \Omega_k$ un sous-ensemble fini non vide et $W\sbt X(\RA_k)$ un ouvert.
 Sous quelles conditions peut-on montrer que, si $W^{\Br(U)\cap (B+f^*\Br_G(Z))}\neq\emptyset $, 
 alors il existe $z\in Z(k)$ de fibre $U_z$ tel que 
 $((G(k_S)\cdot W)\cap U_z(\RA_k))^{B}\neq\emptyset$?
 \end{ques}
 
 Dans \cite{CTH}, Colliot-Th\'el\`ene et Harari \'etudient la m\'ethode de fibration au dessus de $\BA^1$. 
 On suit leur m\'ethode et  r\'epond \`a la question \ref{ques4} d'abord  dans le cas o\`u $Z$ est un tore quasi-trivial 
 et $f$ s'\'etend \`a un morphisme de $X$ vers une $Z$-vari\'et\'e torique standard  (Th\'eor\`eme \ref{thmfibration}).
Ensuite, en utilisant ce r\'esultat,  on r\'esoud cette question dans le cas o\`u $Z$ est un tore (Th\'eor\`eme \ref{mainlem}).
En utilisant la descente (\S 5), on \'etablit le th\'eor\`eme \ref{main1thm} dans le cas plus g\'en\'eral o\`u $Z$ est un pseudo $G$-espace homog\`ene.
Ceci sera utilis\'e dans la section \ref{7} pour traiter le cas d'un $G$-espace homog\`ene 
$Z$  \`a stabilisateur g\'eom\'etrique connexe.

Rappelons la notion de sous-groupe de Brauer invariant (cf. D\'efinition \ref{def-invariant}).
 
 \subsection{L'approximation forte raffin\'ee pour l'espace affine} \label{6.1}
 
Pour un ouvert $U$ d'un espace affine $\BA^n$ satisfaisant $\codim (\BA^n\setminus U,\BA^n)\geq 2$,
 l'approximation forte hors d'une place $v_0$ a \'et\'e \'etablie par  Fei Xu et l'auteur   dans \cite[Prop. 3.6]{CX}, 
  et raffin\'ee lorsque  la place  $v_0$ est archim\'edienne dans \cite[Prop. 4.6 et Cor. 4.7 ]{CX1}, 
 o\`u l'on montre que $U(k)\cap T(k_{v_0})^+\sbt \BA^n(k_{v_0})$ est dense dans $U(\RA_k^{v_0})$
   pour une vari\'et\'e torique $(T\hookrightarrow \BA^n)$ comme (\ref{strangapptoricstandthm-e}) ci-dessous.
 On g\'en\'eralise maintenant  ce r\'esultat au cas o\`u $v_0$ est une place   quelconque. 

 \begin{thm}\label{strangapptoricstandthm}
 Soient $K$ une $k$-alg\`ebre finie s\'eparable de degr\'e $n$ et 
\begin{equation}\label{strangapptoricstandthm-e}
(T\hookrightarrow \BA^n):= (\Res_{K/k}\BG_m\hookrightarrow \Res_{K/k}\BA^1)
\end{equation} 
 la  vari\'et\'e torique correspondante.
  Soit $U\sbt \BA^n$ un ouvert tel que $\codim(\BA^n\setminus U,\BA^n)\geq 2$.
 Soient $v_0$ une place et $D_{v_0}\sbt T(k_{v_0})$ un sous-groupe ouvert d'indice fini.
 Alors, pour tout ouvert $W\sbt U(\RA^{v_0})$  non vide  et tout ouvert $W_0\sbt  \BA^n(k_{v_0})$  non vide $D_{v_0}$-invariant, on a 
 $$U(k)\cap (W_0\times W)\neq \emptyset.$$
 \end{thm}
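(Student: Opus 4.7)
The strategy extends \cite[Prop.~3.6]{CX} and \cite[Prop.~4.6]{CX1} (the latter handled archimedean $v_0$ with $D_{v_0}=T(k_{v_0})^+$): one first applies strong approximation for $U$ off $\{v_0\}$ to produce an initial $k$-point of $U$, then modifies it by multiplying by a suitable unit of $K$, exploiting the finiteness of $T(k_{v_0})/D_{v_0}$ to play the role that the sign argument plays in the archimedean case.

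After shrinking $W$, one may suppose $W=\prod_{v\in S_0}W_v\times\prod_{v\notin S_0\cup\{v_0\}}U(\CO_v)$ for some non-empty finite $S_0\subset\Omega_k\setminus\{v_0\}$ containing every archimedean place distinct from $v_0$. Because $T\cap U$ is Zariski-dense in $\BA^n$, one may further arrange $W_v\subset(T\cap U)(k_v)$ for $v\in S_0$ and replace $W_0$ by $W_0\cap(T\cap U)(k_{v_0})$, which remains open, non-empty and $D_{v_0}$-invariant (since $D_{v_0}\subset T(k_{v_0})$ preserves $T(k_{v_0})$). Applying \cite[Prop.~3.6]{CX}, one then obtains $y\in U(k)$ with $y_v\in W_v$ for $v\in S_0$ and $y_v\in U(\CO_v)$ for $v\notin S_0\cup\{v_0\}$; since $W_v\subset T$ for $v\in S_0$ and $T$ is Zariski-open in $\BA^n$, the point $y$ automatically lies in $T(k)=K^\times$. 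The only condition that may fail is $y_{v_0}\in W_0$.

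The key step is to replace $y$ by $t\cdot y$ for a well-chosen $t\in K^\times$ satisfying: (a) $t$ is close to $1$ at each $v\in S_0$; (b) $t$ is a $v$-adic unit at each non-archimedean $v\notin S_0\cup\{v_0\}$; and (c) $t_{v_0}\cdot y_{v_0}\in W_0$. Conditions (a) and (b) ensure that $t\cdot y$ preserves the $v$-adic conditions on $y$ for $v\neq v_0$, and the fact that $(t\cdot y)_v\in W_v\subset U(k_v)$ for $v\in S_0$, combined with the Zariski-openness of $U\subset\BA^n$, then forces $t\cdot y\in U(k)$. By the $D_{v_0}$-invariance of $W_0$, condition (c) only requires $t_{v_0}$ to have a prescribed image in the \emph{finite} quotient $T(k_{v_0})/D_{v_0}$.

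The existence of such a $t$ would be obtained by combining Dirichlet's $S$-unit theorem (applied to the set $S_K$ of places of $K$ lying over $S_0\cup\{v_0\}$) with weak approximation for the quasi-trivial torus $T=\Res_{K/k}\BG_m$: the image of the $S$-unit group $\CO_{K,S_K}^\times$ in $\prod_{v\in S_0}T(k_v)\times T(k_{v_0})/D_{v_0}$ is a discrete subgroup whose rank (controlled by $|S_K|$, which may be enlarged by adding places to $S_0$) is sufficient to meet any small open neighbourhood of $1$ at $S_0$ lying over any prescribed class at $v_0$. The point $x:=t\cdot y$ then satisfies all the required conditions. The main obstacle is precisely this last existence step: one must verify that, in the presence of the integrality constraints at the places outside $S_0\cup\{v_0\}$, the $S$-unit group surjects onto the relevant finite quotient at $v_0$. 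The $D_{v_0}$-invariance of $W_0$ is exactly what reduces the $v_0$-condition to a finite quotient, where any residual class-field-theoretic obstruction is absorbed by enlarging $S_0$.
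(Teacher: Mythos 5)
Your strategy --- first produce $y\in U(k)$ meeting all the conditions away from $v_0$, then correct the $v_0$-component by multiplying by a global $t\in T(k)=K^{\times}$ that is a unit outside $S_0\cup\{v_0\}$ and close to $1$ on $S_0$ --- is genuinely different from the paper's, and it breaks down exactly at the step you yourself flag. The set of admissible $t$ is (a neighbourhood of $1$ in) the $S_0\cup\{v_0\}$-unit group of $K$, and its image in the finite quotient $T(k_{v_0})/D_{v_0}$ can be trivial. Already for $k=K=\BQ$, $n=1$, $v_0=p$ finite and $D_{v_0}=(1+p\BZ_p)\cdot p^{\BZ}$ (index $p-1$): an element $t\in\BQ^{\times}$ that is a unit at every finite place $\neq p$ is $\pm p^{a}$, and requiring $t$ to be multiplicatively close to $1$ in $\BR^{\times}$ (to preserve $W_\infty$) forces $t=1$, whereas $\BQ_p^{\times}/D_{v_0}\cong\BF_p^{\times}$ is nontrivial; so no correction is available when $y$ lands in the wrong class. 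Enlarging $S_0$ does not absorb this: the constraint you must then impose at a new place $v$ of $S_0$ (namely $t_vy_v\in W_v\sbt \CU(\CO_v)$) is at least as restrictive as the integrality constraint it replaces, and allowing $t$ to be a genuine non-unit somewhere destroys condition (b). The tension you are fighting is precisely the failure of strong approximation for $\Res_{K/k}\BG_m$, i.e.\ the content of the theorem itself. There is a second gap for $n\geq 2$: since $U$ is not $T$-invariant, even a unit $t_v$ can send the reduction of $y_v$ into the closure of $\BA^n\setminus U$ modulo $v$ at places outside $S_0$, so $(t\cdot y)_v\notin\CU(\CO_v)$ and $t\cdot y\notin W$; for the same reason your claim that $W_0\cap(T\cap U)(k_{v_0})$ is $D_{v_0}$-invariant is false (though that particular reduction is not needed).

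The paper's proof is designed to avoid both problems by working additively rather than multiplicatively. It first translates by some $\alpha\in T(k)\cap W_0$ (weak approximation for the quasi-trivial torus) to reduce to $W_0=D_{v_0}$; for $n=1$ and $v_0$ finite, $U=\BA^1$ and the correction is the explicit sum $\alpha+\beta^{mN}$, where $\alpha,\beta$ come from strong approximation on $\BA^1$ off $v_0$, the product formula forces $v_0(\beta)<0$, and $m$ is chosen so that the perturbation is small at $S$, integral outside, and dominant at $v_0$ with unit part in $1+\pi_{v_0}^{N}\CO_{v_0}$. For general $K$ it chooses a generator $\theta$ of $K/k$ with very negative valuations above $v_0$ so that the coordinate isomorphism $\Res_{K/k}\BA^1\cong\BA^n$ carries a product $\prod_iD_i$ of finite-index subgroups of $k_{v_0}^{\times}$ into $D_{v_0}$, reducing to the split case, which is then handled by the fibration argument of \cite[Prop. 4.6]{CX1}. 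To repair your argument you would have to replace the $S$-unit existence step by an additive approximation of this kind; as it stands the key existence claim is not only unproved but false in simple examples.
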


\begin{proof}
\'Etape (0).  Par approximation faible pour le tore quasi-trivial  $T$, il existe un $\alpha\in T(k)\cap W_0$. 
Ainsi on a $D_{v_0}\sbt \alpha^{-1}\cdot W_0$.
En rempla\c{c}ant  $U$ par $\alpha^{-1}U$ et $W$ par $\alpha^{-1}W$,
on peut supposer que $W_0=D_{v_0} $.
Si $v_0$ est complexe, on a $D_{v_0}=T(k_{v_0})$ et l'\'enonc\'e d\'ecoule de \cite[Prop. 3.6]{CX}.
On suppose que $v_0$ n'est pas complexe.

\'Etape (1). Supposons que $v_0\in \infty_k$ et $T=\BG_m^n$.
Puisque $D_{v_0}\sbt T(k_{v_0})$ est ouvert et donc ferm\'e,
le sous-groupe $D_{v_0} $ contient la composante connexe de l'identit\'e de $T(k_{v_0})$.
Ainsi l'\'enonc\'e est \'equivalent \`a \cite[Prop. 4.6]{CX1}. 

\'Etape (2). Supposons que $v_0<\infty_k$, $n=1$ et $T=\BG_m$.
On a $U=\BA^1$.
Par d\'efinition, Il existe un sous-ensemble fini $S\sbt \Omega_k\setminus \{v_0\} $ contenant $\infty_k$, 
un \'el\'ement $a_v\in k_v$ pour chaque $v\in S$ et $0<\epsilon<1$ tels que 
$$\prod_{v\in S}\{x\in k_v\ :\ |x-a_v|<\epsilon\}\times\prod_{v\notin S\cup \{v_0\} }\CO_v\sbt W.$$
On fixe une uniformisante $\pi_{v_0}$ de $k_{v_0}$.
Pour le sous-groupe d'indice fini $D_{v_0}\sbt k_{v_0}^{\times}$, il existe un entier $N>0$ tel que
$$ \bigcup_{i\in \BZ}(1+\pi_{v_0}^N\CO_{v_0})\cdot \pi_{v_0}^{iN}\sbt D_{v_0}.$$
Par approximation forte sur  $\BA^1$, il existe $\alpha , \beta \in k^{\times}$ tels que
$$\alpha\in k_{v_0}\times \prod_{v\in S}\{x\in k_v\ :\ |x-a_v|<\frac{1}{2}\epsilon\}\times\prod_{v\notin S\cup \{v_0\} }\CO_v$$
et 
$$\beta\in k_{v_0}\times \prod_{v\in S}\{x\in k_v\ :\ |x|<\frac{1}{2}\epsilon\}\times\prod_{v\notin S\cup \{v_0\} }\CO_v.$$
D'apr\`es la formule du produit, $l:=-v_0(\beta)>0$ et donc $\pi_{v_0}^l\beta\in \CO_{v_0}^{\times}$.
Alors il existe $m\in \BZ$ assez grand tel que 
$$(\beta \pi_{v_0}^l)^m\in (1+\pi_{v_0}^N\CO_{v_0})\ \ \ \text{et}\ \ \ v_0(\alpha \beta^{-m})>N. $$
Ainsi 
$$\alpha+\beta^{mN}=\pi_{v_0}^{-mlN}(\beta\pi_{v_0}^l)^{mN}(1+\alpha \beta^{-mN})\in  (1+\pi_{v_0}^N\CO_{v_0})\pi_{v_0}^{-mlN}\sbt D_{v_0}.$$
Ceci implique que $ \alpha+\beta^{mN}\in D_{v_0}\times W$.

\'Etape (3). Supposons que $v_0<\infty_k$, $T\cong \BG_m^n$ et, sous cet isomorphisme, $D_{v_0}\cong  \prod_{i=1}^nD_i$ avec $D_i\sbt k_{v_0}^{\times}$ un sous-groupe ouvert d'indice fini. 
On \'etabli le r\'esultat en utilisant projection $\BA^n\to \BA^1$ sur le premier facteur.
L'argument donn\'e pour $v_0$ r\'eel dans \cite[Prop. 4.6]{CX1} vaut pour tout $v_0$ en rempla\c{c}ant $\BR$ par $D_1$.

\'Etape (4). En g\'en\'eral, d'apr\`es (1) et (3), il reste \`a montrer qu'il existe
 des sous-groupes ouverts $\{D_i\}_{i=1}^n$ de $k_{v_0}^{\times}$ d'indice fini et un isomorphisme 
$$\psi:\ \Res_{K/k}\BA^1\to \BA^n\ \ \ \text{ tels que}\ \ \  \prod_{i=1}^nD_i\sbt \psi (D_{v_0}). $$
Si $v_0$ est r\'eel, ceci est \'etabli dans la d\'emonstration de \cite[Cor. 4.7]{CX1}.
On peut alors supposer que $v_0<\infty_k$ et $K=k(\theta)$ soit un corps.

Soit $\{w_j\}_j$ les  places de $K$ au-dessus de  $v_0$ et, pour tout $j$, 
soient $\pi_j$ une uniformisante de $K_{w_j}$ et $\CO_j$ l'anneau des entiers de $K_{w_j}$
Pour  $\pi_{v_0}$  une uniformisante de $k_{v_0}$ et pour chaque $j$,
soit $e_j:=w_j(\pi_{v_0})\leq n$. 
Puisque $D_{v_0}$ est un sous-groupe ouvert de $T(k_{v_0})\cong \prod_jK_{w_j}^{\times}$,
 il existe un entier $N\in \BZ_{>0}$ tel que
\begin{equation} \label{strang-e2}
 \prod_j [\bigcup_{l\in \BZ} (1+\pi_j^N\CO_j )\pi_j^{lN}]\sbt D_{v_0}. 
 \end{equation}
Apr\`es avoir remplac\'e $\theta$ par $\theta+\pi_{v_0}^{-cN} $ avec $c\gg 0$ suffisamment divisible,
on peut supposer que 
\begin{equation}\label{strang-e1}
w_j(\theta)=-e_jcN,\ \ \ \theta\pi_j^{e_jcN}\in (1+\pi_j^N\CO_j)\ \ \ \text{et}\ \ \  (\pi_{v_0}\pi_j^{-e_j})^c\in (1+\pi_j^N\CO_j).
\end{equation}
Soit $M:=cn^2N$.
Alors, pour tout $j$ et tous entiers $m,m'$ avec $0\leq m< m'\leq n-1$,
on a 
$$|m'-m ||w_j(\theta)|<ne_jcN \leq M\ \ \ \text{ et donc }\ \ \  0<(w_j(\theta^m)-w_j(\theta^{m'}))<M .$$
Donc, pour tous $\alpha,\alpha'\in k_{v_0}^{\times}$ satisfaisant $M|v_0(\alpha) $ et $M|v_0(\alpha')$,
on a 
\begin{equation}\label{strang-e}
w_j(\alpha \theta^{m})\neq w_j(\alpha' \theta^{m'})\ \ \ \text{et}\ \ \   
\begin{cases} w_j(\alpha')>w_j(\alpha) & \text{si} \ \ \ w_j(\alpha \theta^{m})< w_j(\alpha' \theta^{m'})\\
 w_j(\alpha)\geq w_j(\alpha')  & \text{si}\ \ \  w_j(\alpha \theta^{m})> w_j(\alpha' \theta^{m'}) 
\end{cases}.
\end{equation}
Donc
\begin{equation}\label{strang-e3}
\begin{cases}  w_j(\alpha' \theta^{m'})-w_j(\alpha \theta^{m})>M- (m'-m)(-w_j(\theta)) & \text{si} \ \ \ w_j(\alpha \theta^{m})< w_j(\alpha' \theta^{m'})\\
 w_j(\alpha \theta^{m})- w_j(\alpha' \theta^{m'})> (m'-m)(-w_j(\theta))   & \text{si}\ \ \  w_j(\alpha \theta^{m})> w_j(\alpha' \theta^{m'}) 
\end{cases}.
\end{equation}

L'\'el\'ement $\theta$ induit un isomorphisme $\psi: \Res_{K/k}\BA^1\to \BA^n$, qui est d\'efini par 
$$\Res_{K/k}\BA^1(A)=\sum_{i=0}^{n-1}A\theta^i\xrightarrow{\psi} \BA^n(A)=A^n:\ \ 
\ \sum_{i=0}^{n-1}a_i\theta^i\mapsto (a_0,\cdots , a_{n-1})$$
pour toute $k$-alg\`ebre $A$.
Soit 
$$D:=\bigcup_{l\in \BZ} (1+\pi_{v_0}^N\CO_{v_0})\pi_{v_0}^{lM}\sbt k_{v_0}^{\times}$$
un sous-groupe ouvert d'indice fini. 
D'apr\`es (\ref{strang-e1}), on a $D\sbt \cup_{l\in \BZ} (1+\pi_j^N\CO_j )\pi_j^{lN}$.
Pour tout $j$ et tout $(x_0,\cdots, x_{n-1})\in D^n$, il existe un $0\leq i_j\leq n-1$ tel que
$$w_j(x_{i_j}\theta^{i_j})=\min_{0\leq i\leq n-1}\{w_j(x_i\theta^i)\}=w_j(\sum_{i=0}^{n-1}x_i\theta^i),$$
o\`u la deuxi\`eme \'egalit\'e d\'ecoule  par (\ref{strang-e}). 
Pour $i\neq i_j$, d'apr\`es (\ref{strang-e3}), on a
$$w_j(x_i\theta^i)-w_j(x_{i_j}\theta^{i_j})\geq
 \begin{cases} (i-i_j)e_jcN\geq N & \text{si} \ \ \ i>i_j\\
 M-(i_j-i)e_jcN\geq N  & \text{si}\ \ \  i<i_j .
\end{cases} $$
Alors
$$\psi^{-1}(x_0,\cdots, x_{n-1})=\sum_{i=0}^{n-1}x_i\theta^i \in x_{i_j}\theta^{i_j}(1+\pi_j^N\CO_j) \sbt  \bigcup_{l\in \BZ} (1+\pi_j^N\CO_j )\pi_j^{lN}.$$
D'apr\`es (\ref{strang-e2}), on a $D^n\sbt \psi (D_{v_0})$.
\end{proof}

 On rappelle la d\'efinition des vari\'et\'es toriques standards (\cite[D\'ef. 2.12]{CX})

\begin{defi}\label{deftoricstandard}
Soit $K$ une $k$-alg\`ebre finie s\'eparable. La vari\'et\'e torique standard par rapport \`a $K/k$ est la sous-vari\'et\'e torique
$(\Res_{K/k}\BG_m\hookrightarrow Z)$ de $(\Res_{K/k}\BG_m\hookrightarrow \Res_{K/k}\BA^1)$ avec 
$$Z:=\Res_{K/k}\BA^1\setminus [(\Res_{K/k}\BA^1\setminus \Res_{K/k}\BG_m)_{sing}].$$
\end{defi}

\begin{cor}\label{strangapptoricstand}
Soit $(T\hookrightarrow Z)$ une vari\'et\'e torique standard. 
Soient $v_0$ une place de $k$ et $D_{v_0}\sbt T(k_{v_0})$ un sous-groupe ouvert d'indice fini.
Pour tout ferm\'e $F\sbt Z$ de codimension $\geq 2$ et tout ouvert non vide $W\sbt (Z\setminus F)(\RA_k)$, si $(D_{v_0}\cdot W)\cap (Z\setminus F)(\RA_k)=W$, alors on a $T(k)\cap W\neq\emptyset$. 
\end{cor}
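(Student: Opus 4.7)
L'id\'ee est de se ramener au th\'eor\`eme \ref{strangapptoricstandthm} en prenant $U:=Z\setminus F$ comme ouvert ambiant dans $\BA^n:=\Res_{K/k}\BA^1$. D'apr\`es le lemme \ref{lemtorique} et la d\'efinition \ref{deftoricstandard}, $Z$ est un ouvert de $\BA^n$ satisfaisant $\codim(\BA^n\setminus Z,\BA^n)\geq 2$, d'o\`u $\codim(\BA^n\setminus U,\BA^n)\geq 2$ puisque $F$ est lui-m\^eme de codimension $\geq 2$ dans $Z$. Le point d\'elicat sera de s'assurer que le $k$-point fourni par le th\'eor\`eme \ref{strangapptoricstandthm} tombe dans $T(k)$ et pas seulement dans $U(k)=(Z\setminus F)(k)$: pour cela, il faut arranger la donn\'ee $W_0$ de fa\c{c}on qu'elle soit enti\`erement contenue dans $T(k_{v_0})$.

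Pour construire une telle donn\'ee, on observe d'abord que la vari\'et\'e $U$ est lisse (car $Z$ l'est et $F$ y est de codimension $\geq 2$) et que $T$ en est un ouvert Zariski dense (son compl\'ementaire dans $U$ est de codimension $\geq 1$). Par cons\'equent, $T(k_{v_0})$ est un ouvert dense, pour la topologie $v_0$-adique, dans $U(k_{v_0})$. La projection \`a la place $v_0$ de l'ouvert non vide $W\sbt U(\RA_k)$ \'etant un ouvert non vide de $U(k_{v_0})$, elle rencontre $T(k_{v_0})$. En r\'etr\'ecissant, on trouve un ouvert produit \'el\'ementaire $W_{v_0}\times W^{v_0}\sbt W$, avec $W_{v_0}$ ouvert non vide de $T(k_{v_0})$ et $W^{v_0}$ ouvert non vide de $U(\RA_k^{v_0})$. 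On pose alors $W_0:=D_{v_0}\cdot W_{v_0}$: c'est un ouvert non vide $D_{v_0}$-invariant de $\BA^n(k_{v_0})$, contenu dans $T(k_{v_0})$.

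L'application du th\'eor\`eme \ref{strangapptoricstandthm} au triplet $(U,W_0,W^{v_0})$ fournit un $k$-point $x\in U(k)\cap (W_0\times W^{v_0})$. Comme $x_{v_0}\in W_0\sbt T(k_{v_0})$ et que $T$ est un sous-$k$-sch\'ema ouvert de $Z$, le $k$-point $x$ de $Z$ appartient en fait \`a $T(k)$. Il reste \`a v\'erifier que $x\in W$. En \'ecrivant $x_{v_0}=g\cdot y_{v_0}$ avec $g\in D_{v_0}$ et $y_{v_0}\in W_{v_0}$, et en posant $y_v:=x_v$ pour $v\neq v_0$, on obtient un point $y\in W_{v_0}\times W^{v_0}\sbt W$ tel que $x=g\cdot y\in D_{v_0}\cdot W$. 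Puisque $x\in U(\RA_k)=(Z\setminus F)(\RA_k)$, l'hypoth\`ese $(D_{v_0}\cdot W)\cap (Z\setminus F)(\RA_k)=W$ donne $x\in W$, ce qui conclura la d\'emonstration.
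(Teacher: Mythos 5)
Votre démonstration est correcte et suit exactement la même voie que celle du texte : la preuve de l'article se réduit en une ligne au théorème \ref{strangapptoricstandthm} grâce à la borne $\codim(\BA^n\setminus(Z\setminus F),\BA^n)\geq 2$, et votre argument explicite précisément les détails de cette réduction (choix de $W_0=D_{v_0}\cdot W_{v_0}\sbt T(k_{v_0})$, passage de $U(k)$ à $T(k)$, et usage de l'hypothèse d'invariance $(D_{v_0}\cdot W)\cap (Z\setminus F)(\RA_k)=W$ pour retomber dans $W$).
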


\begin{proof}
Puisque $\codim( \Res_{K/k}\BA^1\setminus Z,\Res_{K/k}\BA^1)\geq 2$,
une application du th\'eor\`eme \ref{strangapptoricstandthm} donne le r\'esultat.
\end{proof}

\subsection{Fibration sur une vari\'et\'e torique standard}

On a besoin d'une g\'en\'eralisation du th\'eor\`eme de Tchebotarev ``g\'eom\'etrique" (Ekedahl  \cite[Lem. 2.1]{E}):

\begin{lem}\label{lemfibration}
Soient $X$, $Y$, $Z$ des sch\'emas int\`egres de type fini sur $\CO_k$, et $Y\xrightarrow{p}X$, $X\xrightarrow{f}Z$ deux $\CO_k$-morphismes.
Supposons que $f$ et $f\circ p$ sont lisses \`a fibres g\'eom\'etriquement int\`egres et $p$ est fini \'etale galoisenne de groupe de Galois $\Gamma$. 
Soit $C$ une classe de conjugaison de $\Gamma$ et $c:=|C|/|\Gamma |$. 
Pour chaque place $v\in\Omega_k$ et chaque $z\in Z(k(v))$, soit $N_C(z)$ le nombre  de $x\in X_z(k(v))$ tel que le Frobenius $Fr_x$ de $x$ soit dans $C$. Alors 
$$\frac{N_C(z)}{|X_z(k(v))|}-c=O(|k(v)|^{-1/2})$$
o\`u la constante dans $O(-)$ ne d\'epend ni de $v$ ni de $z$.
\end{lem}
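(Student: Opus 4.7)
The plan is to reduce to a uniform application of Deligne's Weil II bounds fiberwise, using constructibility of higher direct images on $Z$ to obtain the uniformity in $v$ and $z$. This is a fibered version of Ekedahl's argument \cite[Lem. 2.1]{E}.

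First, I decompose the indicator function of $C$ as a sum of irreducible characters of $\Gamma$:
\[
\mathbb{1}_C = c\cdot \mathbb{1}_\Gamma + \sum_{\chi \neq 1} a_\chi \chi,
\]
where $\chi$ runs over non-trivial irreducible $\overline{\BQ}_\ell$-characters of $\Gamma$. To each such $\chi$ I associate the lisse $\overline{\BQ}_\ell$-sheaf $\CL_\chi$ on $X$ coming from the $\Gamma$-cover $p: Y\to X$, which is pure of weight $0$ and satisfies $\tr(\Fr_x \mid \CL_\chi) = \chi(\Fr_x)$ for every $x\in X(k(v))$. For each $z\in Z(k(v))$ of fiber dimension $d_z := \dim X_z$, summing yields
\[
N_C(z) - c\cdot |X_z(k(v))| = \sum_{\chi \neq 1} a_\chi \sum_{x\in X_z(k(v))} \chi(\Fr_x).
\]

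Next, I apply the Grothendieck--Lefschetz trace formula on each geometric fiber:
\[
\sum_{x\in X_z(k(v))} \chi(\Fr_x) = \sum_{i=0}^{2d_z} (-1)^i \tr\bigl(\Fr_{k(v)} \,\big|\, H^i_c(X_{z,\overline{k(v)}}, \CL_\chi)\bigr).
\]
The key observation is that the top term $H^{2d_z}_c$ vanishes for $\chi\neq 1$: by the hypothesis that $f\circ p$ has geometrically integral fibers, $Y_z$ is geometrically connected, so the monodromy representation of $\pi_1(X_{z,\overline{k(v)}})$ on $\CL_\chi$ is the full irreducible representation $\chi$, which has no invariants. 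By Deligne's Weil~II, the eigenvalues of Frobenius on $H^i_c(X_{z,\overline{k(v)}}, \CL_\chi)$ have absolute value at most $|k(v)|^{i/2}$, so the trace sum is bounded by
\[
\Bigl|\sum_{x\in X_z(k(v))} \chi(\Fr_x)\Bigr| \leq \sum_{i<2d_z} h^i_c(X_{z,\overline{k(v)}}, \CL_\chi) \cdot |k(v)|^{i/2} = O(|k(v)|^{d_z - 1/2}).
\]
Similarly, Lang--Weil gives $|X_z(k(v))| = |k(v)|^{d_z} + O(|k(v)|^{d_z - 1/2})$.

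The crucial uniformity step is that all the implicit constants can be taken independent of $v$ and $z$. This follows from constructibility: for each $i$ and each $\chi$, the sheaf $R^i f_! \CL_\chi$ is a constructible $\overline{\BQ}_\ell$-sheaf on $Z$, so by Noetherian induction one stratifies $Z$ into finitely many locally closed subsets on which it is lisse, bounding the stalk ranks $h^i_c(X_{z,\overline{k(v)}}, \CL_\chi)$ by a constant depending only on the family $f\colon X\to Z$ and $p\colon Y\to X$. The same applies to the Lang--Weil constant for $|X_z(k(v))|$. Dividing the two estimates gives
\[
\frac{N_C(z)}{|X_z(k(v))|} - c = O(|k(v)|^{-1/2})
\]
with the constant depending only on the data of the morphisms over $\CO_k$, and in particular independent of $v$ and $z$. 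The main obstacle is precisely this uniformity — Ekedahl's original statement handles only the case where $Z = \Spec \CO_k$, and one must verify that passing to the family does not introduce dependence on $z$, which is secured here by the constructibility of $R^if_!\CL_\chi$ on the base $Z$.
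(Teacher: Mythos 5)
Your proof is correct and follows essentially the same route as the paper's: decompose $\mathbb{1}_C$ into irreducible characters, apply the Grothendieck--Lefschetz trace formula fiberwise to the associated lisse sheaves, kill the top compactly supported cohomology using geometric integrality of the fibers of $f\circ p$, bound the remaining eigenvalues by Weil~II, and obtain uniformity in $v$ and $z$ from the constructibility of $R^if_!\CL_\chi$ on $Z$. The paper's proof is terser but invokes exactly these ingredients.
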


\begin{proof}
Le r\'esultat d\'ecoule de la d\'emonstration standard de \cite[Lem. 2.1]{E} avec la formule des traces de Lefschetz
$$\sum_{x\in X_z(k(v))}\chi (Fr_x)=\sum_{0\neq i\neq 2dim(X_z)}(-1)^iTr(Fr^*,H_c^I(X_{\bar{z}}),V_{\chi}),$$
o\`u $V_{\chi}$ est d\'efini dans  la d\'emonstration de \cite[Lem. 2.1]{E}.
Puisque $R^if_{!}V_{\chi} $ est constructible et $H^i_c(X_{\bar{z}},V_{\chi})=(R^if_{!}V_{\chi} )_{\bar{z}}$, 
les dimensions des $H^i_c(X_{\bar{z}},V_{\chi}) $ sont born\'ees uniform\'ement.
De plus, la valeur absolue de la valeur propre de $Fr^*$ en $H^i_c(X_{\bar{z}},V_{\chi}) $ est inf\'erieure ou \'egale \`a $|k(v)|^{\frac{i}{2}}$. 
Le reste de la d\'emonstration est la m\^eme que celle de  \cite[Lem. 2.1]{E}.
\end{proof}

\begin{lem}\label{lem2fibration}
Soient $Z$ un ouvert de $\BA^n$ satisfaisant $\codim (\BA^n\setminus Z,\BA^n)\geq 2$ et $V$ un ouvert de $Z$.
Alors la suite exacte (\ref{purity}) induit un isomorphisme
\begin{equation}
\Br_a(V)\xrightarrow{\partial}H^1(k,\Div_{Z_{\bk}\setminus V_{\bk}}(Z_{\bk})\otimes \BQ/\BZ).
\end{equation}
\end{lem}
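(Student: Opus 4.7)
The plan is to compute $\Br_a(V)$ via the Hochschild--Serre spectral sequence and then identify it with $H^1(k,P\otimes\BQ/\BZ)$, where $P:=\Div_{Z_{\bk}\setminus V_{\bk}}(Z_{\bk})$. The hypothesis $\codim(\BA^n\setminus Z,\BA^n)\geq 2$ plays a double role: it gives $\Pic(V_{\bk})=0$ (since $V_{\bk}$ is open in $\BA^n_{\bk}$ and $\Pic(\BA^n_{\bk})=0$) together with a canonical isomorphism of $\Gamma_k$-modules $\bk[V]^\times/\bk^\times\cong P$. Indeed, a rational function on $\BA^n_{\bk}$ invertible on $V_{\bk}$ has divisor supported on $\BA^n_{\bk}\setminus V_{\bk}$, whose codimension-one part coincides with that of $Z_{\bk}\setminus V_{\bk}$ by the codimension hypothesis; conversely every such divisor is principal since $\Pic(\BA^n_{\bk})=0$, while $\bk[\BA^n_{\bk}]^\times=\bk^\times$. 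In particular $P$ is a permutation $\Gamma_k$-module, so $H^1(k,P)=0$.

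The Hochschild--Serre spectral sequence for $\BG_m$ on $V$, combined with $\Pic(V_{\bk})=0$, then gives a canonical identification $\Br_1(V)\cong H^2(k,\bk[V]^\times)$. Taking Galois cohomology of $0\to\bk^\times\to\bk[V]^\times\to P\to 0$ yields the exact sequence
\[
H^1(k,P)\;\to\;\Br(k)\;\to\;\Br_1(V)\;\to\;H^2(k,P)\;\to\;H^3(k,\bk^\times).
\]
Using $H^1(k,P)=0$ together with the classical vanishing $H^3(k,\bk^\times)=0$ for any number field $k$ (a consequence of $H^3(k_v,\BG_m)=0$ at every place $v$ and Poitou--Tate), one obtains after quotienting by $\Im\Br(k)$ a canonical isomorphism $\Br_a(V)\iso H^2(k,P)$. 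Composing with the standard identification $H^2(k,P)\cong H^1(k,P\otimes\BQ/\BZ)$, coming from $0\to P\to P\otimes\BQ\to P\otimes\BQ/\BZ\to 0$ and the vanishing of $H^{\ast}(k,P\otimes\BQ)$ in positive degrees, yields the desired isomorphism.

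It remains to check that this composite coincides with the residue $\partial$ from (\ref{purity}). By excising from $Z$ both the singular locus of $(Z\setminus V)_{\mathrm{red}}$ and its components of codimension $\geq 2$---operations that affect neither $\Br(V)$ nor $P$---one reduces to the case where $D:=Z\setminus V$ is smooth of pure codimension one in $Z$. In that setting $\partial$ is described by the connecting map for the short exact sequence of \'etale sheaves $0\to\BG_{m,Z}\to j_{*}\BG_{m,V}\to \underline{\Div}_{Z\setminus V}(Z)\to 0$ of \cite[Lem. 1.6.2]{CTS87}, whose sections over $Z_{\bk}$ recover the sequence $0\to\bk^\times\to\bk[V]^\times\to P\to 0$ used above. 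The desired compatibility then amounts to matching two incarnations of the same connecting homomorphism; this identification of residue maps is the principal technical point of the proof.
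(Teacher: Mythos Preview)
Your argument is correct and follows a genuinely different route from the paper's. You compute $\Br_a(V)$ via the Hochschild--Serre spectral sequence for $\BG_m$ on $V$ and the Galois cohomology of $0\to\bk^\times\to\bk[V]^\times\to P\to 0$, closing the calculation with the vanishing $H^3(k,\bk^\times)=0$ specific to number fields. The paper instead works directly with the purity sequence~(\ref{purity}) for the pair $(Z,V)$: from $\Br_a(Z)=0$ and $H^i(Z_{\bk},\BQ/\BZ(1))=0$ for $i=1,2$, together with $V(k)\neq\emptyset$, it deduces that $H^3(Z,\BQ/\BZ(1))\to H^3(V,\BQ/\BZ(1))\oplus H^3(Z_{\bk},\BQ/\BZ(1))$ is injective, and then reads the isomorphism off from exactness. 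The paper's approach has two advantages: it does not use the arithmetic input $H^3(k,\bk^\times)=0$ (so the argument goes through over any field of characteristic~$0$), and since it manipulates $\partial$ itself there is no separate compatibility to verify. In your argument that final identification---matching your Hochschild--Serre isomorphism with the residue map---is the one point you leave somewhat informal; it is standard, but in the paper's route the issue simply does not arise. (A small aside: you do not actually need $H^1(k,P)=0$ to obtain $\Br_a(V)\cong H^2(k,P)$, since exactness already gives $\Br_1(V)/\Im\Br(k)\cong H^2(k,P)$.)
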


\begin{proof}
On peut supposer que $Z\setminus V$ est lisse. 
Puisque $\codim (\BA^n\setminus Z,Z)\geq 2$,
on a $\Br_a(Z)=0$ et $H^i(Z_{\bk},\BQ/\BZ(1))=0$ pour $i=1,2$. 
Puisque $V(k)\neq\emptyset$, le morphisme 
$$H^3(Z,\BQ/\BZ(1))\to H^3(V,\BQ/\BZ(1))\oplus H^3(Z_{\bk},\BQ/\BZ(1))$$
 est donc injectif.
Le r\'esultat d\'ecoule de la suite exacte (\ref{purity}).
\end{proof}

\begin{thm}\label{thmfibration}
Soient $(T\hookrightarrow Z)$ une vari\'et\'e torique standard, $G$ un groupe lin\'eaire connexe et 
$G\xrightarrow{\varphi} T$ un homomorphisme surjectif  de noyau connexe.
Soient $X$ une $G$-vari\'et\'e lisse, g\'eom\'etriquement int\`egre et
 $X\xrightarrow{f}Z$ un $G$-morphisme lisse surjectif \`a fibres g\'eom\'etriquement int\`egres.
Notons $U:=X\times_ZT$. 
Soit $B\sbt \Br_G(U)$ (cf. (\ref{defBr_GX})) un sous-groupe fini.
Soient $v_0\in \Omega_k$ une place et $G(k_{v_0})^{0}\sbt  G(k_{v_0})$ un sous-groupe d'indice fini.
 Alors, pour tout ferm\'e $F\sbt X$ de codimension $\geq 2$ et tout ouvert $W$ de $(X\setminus F)(\RA_k)$ satisfaisant $W^{ \Br(X)\cap (B+f|_U^*\Br_1(T))}\neq \emptyset$,  il existe $t\in T(k)$ de fibre $F_t\sbt X_t$,   tel que
 $$\codim(F_t,X_t)\geq 2\ \ \ \text{et}\ \ \  (X_t\setminus F_t)(\RA_k)^B\cap (G(k_{v_0})^{0}\cdot W)\neq\emptyset.$$
\end{thm}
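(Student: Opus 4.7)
The plan is to combine the $G$-equivariant formal lemma of Theorem \ref{mainthminfty} with a fibration method in the style of Colliot-Th\'el\`ene and Harari, using the refined strong approximation for standard toric varieties (Corollary \ref{strangapptoricstand}) as the downstairs ingredient and the geometric Chebotarev theorem (Lemma \ref{lemfibration}) to control Brauer-Manin on the fibers. Throughout, set $G_0:=\Ker(\varphi)$, a connected linear group, so that the fibers of $f$ are $G_0$-varieties, and by Proposition \ref{propbrauersuj} restriction gives $\Br_G(U)\to \Br_{G_0}(U_t)$ of the $G$-invariant Brauer group.

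As a first step, I would apply Theorem \ref{mainthminfty} to the pair $(X,U)$ with the finite subgroup $B':=B+f|_U^*\Br_1(T)\subset \Br_G(U)$. The finiteness of $(B'\cap \Ker(\lambda))/\Im(\Br(k))$ follows because the Sansuc homomorphism $\lambda$ restricted to $f|_U^*\Br_1(T)$ factors through $\Br_a(T)\xrightarrow{\varphi^*} \Br_a(G)$, whose kernel modulo $\Im\Br(k)$ is finite by the Sansuc sequence attached to $1\to G_0\to G\to T\to 1$ (using that $\Pic(G_0)$ is finite). This produces an adelic point $(x_v)\in W\cap U(\RA_k)^{B'}$, whose image $(f(x_v))$ lies in $T(\RA_k)^{\Br_1(T)}$. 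Since $\varphi$ is surjective with connected kernel, $\varphi(G(k_{v_0})^0)$ has finite index in $T(k_{v_0})$, so Corollary \ref{strangapptoricstand} yields $t\in T(k)$ with $t=\varphi(g_{v_0})\cdot f(x_{v_0})$ for some $g_{v_0}\in G(k_{v_0})^0$ and $t$ adelically close to $f(x_v)$ at all other places. Smoothness of $f$ and Hensel's lemma then lift, at each $v\neq v_0$, the point $x_v$ to some $x'_v\in X_t(k_v)$ close to $x_v$ and avoiding $F$; at $v_0$, set $x'_{v_0}:=g_{v_0}\cdot x_{v_0}\in X_t(k_{v_0})\cap (G(k_{v_0})^0\cdot W)$.

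The hard part is to further constrain $t$ so that the adelic point $(x'_v)\in U_t(\RA_k)$ is orthogonal to the image of $B$ in $\Br_{G_0}(U_t)$. At places $v\neq v_0$ of close approximation, $\langle x'_v, b|_{U_t}\rangle$ differs from $\langle x_v,b\rangle$ only via contributions from residues along the components of $X_t\setminus U_t$ to which $x'_v$ may specialize. To tame these residues I would, for each $b\in B$, invoke Lemma \ref{lem-fibreinvariant} to obtain a $G$-equivariant cyclic \'etale cover $D'\to D$ of the boundary trivializing $\partial b$, then apply the geometric Chebotarev theorem (Lemma \ref{lemfibration}) along the fibration $X\to Z$ to this cover: for a positive density of primes $v$ at which $t$ specializes well, the Frobenius at $x'_v$ lifts to $D'$, killing the local contribution. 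The flexibility left in Corollary \ref{strangapptoricstand} (which only pins $t$ at finitely many places) then allows one to arrange this simultaneously for all $b\in B$; at $v_0$, the $G$-invariance of $b$ together with Proposition \ref{corbraueralgebraic1}(1) controls the contribution from the translation by $g_{v_0}$. Finally, $\codim(F_t,X_t)\geq 2$ is automatic for $t$ lying in a suitable dense open subset of $T$, by generic smoothness of $F\cap f^{-1}(t)\to t$.
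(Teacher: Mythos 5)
Your toolbox is the right one (purity residues, the $G$-equivariant covers of Lemma \ref{lem-fibreinvariant}, geometric Chebotarev, and Corollary \ref{strangapptoricstand}), and the reduction to a rational point of $T$ via refined strong approximation on the standard toric variety matches the paper. But the step you yourself flag as ``the hard part'' is where the argument breaks down. Once $t\in T(k)$ is chosen, the set of places $v$ at which $t$ specializes into the boundary $Z\setminus T$ is a finite set depending on $t$ that cannot be confined to the finitely many places where Corollary \ref{strangapptoricstand} lets you impose conditions; at \emph{every} such place the lifted point $x'_v$ contributes $\partial_i(b)(\Fr_{\bar x'_v})$ to the Brauer--Manin sum. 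Saying that ``for a positive density of primes the Frobenius lifts to $D'$, killing the local contribution'' does not address this: you need the \emph{sum over all places} to vanish, not the vanishing of the summands over some positive-density subset, and the Frobenii at the remaining bad places of $t$ are not controllable by finitely many local conditions on $t$. As written, the fiber point you construct has no reason to be orthogonal to $B$.

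The mechanism the paper uses (following Colliot-Th\'el\`ene--Harari) is the reverse of what you propose: via purity one identifies $(B+f|_U^*\Br_1(T))/(B_X+f|_U^*\Br_1(T))$, where $B_X=\Br(X)\cap(B+f|_U^*\Br_1(T))$, with a subgroup of $\oplus_i\Hom(\Gamma_i,\BQ/\BZ)$ for $\Gamma_i=\Gal(D_i'/k_i'D_i)$; one then picks one auxiliary place $v_i$ per boundary component and \emph{forces} $t$ to reduce transversally into $\CC_{i,n_i}$ at $v_i$ (the open sets $E_i$ --- a condition at finitely many places, hence compatible with Corollary \ref{strangapptoricstand}); Lemma \ref{lemfibration}, applied to $\CD_i'\to\CD_i\to\CC_i$ uniformly in the base point, guarantees that Frobenius is surjective from the special fiber $((\CD_{i,n_i})_{t_i}\setminus\CF_{t_i})(k(v_i))$ onto $\Gamma_i$; and Harari's formula $b(P)=\partial_i(b)(\Fr_{\bar P})$ then lets one \emph{re-choose} the local point at $v_i$ so as to subtract off the total accumulated obstruction, which lies in the image of $\oplus_i\Gamma_i$ precisely because the starting adelic point was orthogonal to $B_X$. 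This correction step is absent from your proposal and is the heart of the proof. Two smaller omissions: you must shrink $G(k_{v_0})^0$ so that $\lambda(B)$ vanishes on it before Proposition \ref{corbraueralgebraic1}(1) gives you control of the translation by $g_{v_0}$, and at the places outside your finite approximation set the lifting of $t$ into $\CX\setminus\CF$ requires Lang--Weil estimates plus Hensel on an integral model, not smoothness of $f$ alone. (Your opening appeal to Theorem \ref{mainthminfty} is harmless but is a detour the paper does not need.)
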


\begin{proof}
Puisque $f$ est lisse, il existe un ferm\'e $F_1\sbt Z$ tel que $f(X\setminus F)=Z\setminus F_1$.

Soient $\{C_i\}_{i\in I}$ les composantes connexes de $Z\setminus T$ et $D_i:=X\times_ZC_i$ les composantes connexes de $X\setminus U$. 
Par hypoth\`ese, les vari\'et\'es $C_i$ et $D_i$ sont lisses int\`egres.
D'apr\`es (\ref{purity}), on a une suite exacte:
$$0\to  \Br(X)\cap (B+f|_U^*\Br_1(T))\to   (B+f|_U^*\Br_1(T))\xrightarrow{\partial } \oplus_i H^1(D_i,\BQ/\BZ)$$
Par le lemme \ref{lem-fibreinvariant}, pour tout $i$, 
il existe  un rev\^etement fini \'etale galoisien ab\'elien $D_i'\xrightarrow{\pi_i}D_i$ tel que 
$D_i'$ soit une $G$-vari\'et\'e int\`egre, $\pi_i$ soit un $G$-morphisme et $\partial (b)\in H^1(D_i'/D_i,\BQ/\BZ)$.

Soient $k_i$ la fermeture int\'egrale de $k$ dans $k(C_i)$ et $k_i'$ la fermeture int\'egrale de $k$ dans $k(D_i')$.
Par hypoth\`ese,  la fermeture int\'egrale $k$ dans $k(D_i)$ est $k_i$. 
D'apr\`es \cite[Prop. 2.2]{CX1}, le morphisme $D_i'\to C_i\times_{k_i}k_i'$ est lisse \`a fibres g\'eom\'etriquement int\`egres.

Soit $B_X:= \Br(X)\cap (B+f|_U^*\Br_1(T))$.
 D'apr\`es le lemme \ref{brauertorseur}, $B_X$ est fini et $B_X\sbt \Br_G(X)$.
D'apr\`es le lemme \ref{lem2fibration}, on a un diagramme commutatif avec suite exacte:
\begin{equation}\label{fibratione2}
\xymatrix{&&B\ar[r]^-{\partial}\ar[d]& \oplus_i H^1(D_i'/k_i'D_i,\BQ/\BZ)\ar[d]^{\cong}\\
0\ar[r]& (B_X+f|_U^*\Br_1(T))\ar[r]& (B+f|_U^*\Br_1(T))\ar[r]^-{\bar{\partial}} &\oplus_i H^1((D_i'\times_{k_i'}\bk)/(D_i\times_{k_i}\bk)  ,\BQ/\BZ).
}
\end{equation}

Soit $\Br_G(X)\xrightarrow{\lambda}\Br_e(G)$ l'homomorphisme de Sansuc (cf: D\'efinition \ref{defsansuc}).
Apr\`es avoir r\'etr\'eci $G(k_{v_0})^0$, on peut supposer que tout \'el\'ement de $\lambda (B_X) $ s'annule sur $G(k_{v_0})^0$.
Par la proposition \ref{corbraueralgebraic1}, pour tout $x\in X(\RA_k)$, 
on a que tout \'el\'ement de $B_X $ est constant sur $G(k_{v_0})^0\cdot x$.

Soient $l:=\dim (Z)$, $\Gamma_i:=\Gal(D_i'/k_i'D_i)$ et $W_1:=(G(k_{v_0})^{0}\cdot W)\cap (X\setminus F)(\RA_k)$.

Puisque $T$ est quasi-trivial, il existe des extensions de corps $L_j/k$ telles que $T\cong \Res_{\prod_jL_j/k}\BG_m$.
En fait, $\prod_jL_j\cong \prod_ik_i$, mais on ne l'utilise pas.
 Soit $\Omega_0$ l'ensemble des places $v\in \Omega_k$ totalement d\'ecompos\'ees dans la fermeture galoisienne de $k_i'/k$ pour tout $i$ et de $L_j/k$ pour tout $j$. 
Alors $\Omega_0$ est infini et pour tout $v\in \Omega_0$, on a $T_{k_v}\cong \BG^l_{m,k_v}$. 
Par la d\'efinition \ref{deftoricstandard}, on a
$$Z_{k_v}\cong \Spec\ k_v[t_1,\cdots, t_l]\setminus \cup_{n\neq m}V(t_n,t_m)\ \ \ \text{et}\ \ \ 
T_{k_v}\cong \Spec\ k_v[t_1,t_1^{-1},\cdots,t_l,t_l^{-1}].$$

\medskip

Soit $S$ un sous-ensemble fini de $\Ok$ tel que $v_0\cup \infty_k\sbt S$.
On agrandit $S$ de fa\c{c}on \`a avoir les propri\'et\'es suivantes:

(a) Le $k$-morphisme $f$ s'\'etend en un $\CO_S$-morphisme lisse \`a fibres g\'eom\'etriquement int\`egres $\CX\to \CZ$ de $\CO_S$-sch\'emas lisses, tel que pour tout point ferm\'e $z\in \CZ\setminus \CF_1$, la fibre $f^{-1}(z)$ poss\`ede un $k(z)$-point $x\notin \CF$,
o\`u $\CF$ est l'adh\'erence de $F$ dans $\CX$ et $\CF_1$ est l'adh\'erence de $F_1$ dans $\CZ$. 
Ceci est  possible par les estim\'ees de Lang-Weil \cite[Thm. 1, \'etape 3]{sko1}.
Par le lemme de Hensel,  pour tout $v\notin S$, l'application $(\CX\setminus \CF)(\CO_v)\to (\CZ\setminus \CF_1)(\CO_v)$ est donc surjective.

(b) Les extensions $k_i/k$ et $k_i'/k_i$ induisent des rev\^etements finis \'etales $\CO_{k_i,S}/\CO_S$ et $\CO_{k_i',S}/\CO_{k_i,S}$.

(c) Le sous-sch\'ema $\CC_i:=\overline{C_i}\sbt \CZ$ est lisse \`a fibres g\'eom\'etriquement int\`egres sur $\CO_{k_i,S}$. 
Soient $\CD_i:=\CC_i\times_{\CZ}\CX$, $\CT:=\CZ\setminus\cup_i \CC_i$ et $\CU:=\CT\times_{\CZ}\CX$.

(d) Les \'el\'ements de $B_X$ appartiennent \`a $\Br(\CX)$ et les \'el\'ements de $B$ appartiennent \`a $\Br(\CU)$.

(e)  Le rev\^etement $D_i'\xrightarrow{\pi_i}D_i$ s'\'etend en un $\CO_S$-rev\^etement fini \'etale galoisien ab\'elien $\CD_i'\to \CD_i$ tel que $\CD_i'$ soit un sch\'ema sur $\CO_{k_i',S}$,
 les r\'esidus des \'el\'ements de $B$ soient dans $H^1(\CD_i'/\CD_i,\BQ/\BZ)$ et
 $\CD_i'\to \CC_i\times_{\CO_{k_i,S}}\CO_{k_i',S}$ soit lisse \`a fibres g\'eom\'etriquement int\`egres. 

(f) Il existe un ouvert $W_2\sbt W_1\sbt (X\setminus F)(\RA_k)$ tel que 
$$W_2^{B_X}=W_2\neq\emptyset,\ \ \  W_2=(G(k_{v_0})^{0}\cdot W_2)\cap (X\setminus F)(\RA_k),\ \ \ \text{et}\ \ \ W_2=W_{v_0}\times W_{S\setminus v_0}\times \prod_{v\notin S}(\CX\setminus \CF)(\CO_v)$$
avec $W_{S\setminus v_0}\sbt \prod_{v\in S\setminus \{v_0\}}(X\setminus F)(k_v)$ un ouvert et $W_{v_0}\sbt (X\setminus F)(k_{v_0})$ un ouvert. 

(g) Pour tout $v\in \Omega_0\setminus (\Omega_0\cap S)$, 
on note $\CC_{i,v}:=\CC_i\times_{\CO_{k_i,S}}\CO_v $, $\CD_{i,v}:=\CD_i\times_{\CO_{k_i,S}}\CO_v $ et $\CD'_{i,v}:=\CD'_i\times_{\CO_{k'_i,S}}\CO_v $.
On a un diagramme commutatif de sch\'emas int\`egres:
\begin{equation}\label{fibratione1}
\xymatrix{\CD'_{i,v}\ar[r]\ar[d]\ar@{}[rd]|{\square}&\CD_{i,v}\ar[r]\ar[d]\ar@{}[rd]|{\square}&
\CC_{i,v}\ar[r]\ar[d]\ar@{}[rd]|{\square}& \CO_v\ar[d]\\
\CD_i'\ar[r]& \CD_i\times_{\CO_{k_i,S}}\CO_{k_i',S}\ar[r]&  \CC_i\times_{\CO_{k_i,S}}\CO_{k_i',S}\ar[r]& \Spec\ \CO_{k_i',S}.
}\end{equation}
De plus, on a $\Gamma_i\cong \Gal(\CD_i'/(\CD_i\times_{\CO_{k_i,S}}\CO_{k_i',S}))\cong \Gal(\CD_{i,v}'/\CD_{i,v})$.

(h) Pour tout $v\in \Omega_0\setminus (\Omega_0\cap S)$, tout $\sigma\in \Gamma_i$ et tout $c\in \CC_{i,v}(k(v)) $ avec $c\notin \CF_1$, la fibre $(\CD_{i,v})_c$ poss\`ede un $k(v)$-point $d$ avec $d\notin \CF$ dont le Frobenius est $\sigma$.
Ceci est possible en appliquant le lemme \ref{lemfibration} \`a (\ref{fibratione1}).

(i) Pour tout $v\in \Omega_0\setminus (\Omega_0\cap S)$, on note $(-)_{\CO_v}:=(-)\times_{\CO_{k,S}}\CO_v$ et on a
$$\CZ_{\CO_v}\cong \Spec\ \CO_v[t_1,\cdots ,t_l ]\setminus \cup_{n\neq m}V(t_n,t_m),\ \ \ 
\CT_{\CO_v}\cong \Spec\ \CO_v[t_1, t_1^{-1},\cdots ,t_l,t_l^{-1} ]$$
et il existe une partition $\{ 1,\cdots ,l\}=\coprod_i I_i$
 telle que $\CC_{i,\CO_v}=\cup_{n\in I_i}V(t_n)\sbt \CZ$ et que $V(t_n)\cong  \CC_{i,v}$.

\medskip

Pour chaque $i$, on choisit  une place $v_i\in \Omega_0\setminus (\Omega_0\cap S)$ et un $n_i\in I_i$ tels que pour $i\neq j$, on ait $v_i\neq v_j$.
Soient $\CC_{i,n_i}:=V(t_{n_i})\sbt \CC_{i,\CO_{v_i}}$, $\CD_{i,n_i}:=\CC_{i,n_i}\times_{\CZ}\CX$ et
$$E_i:=\{(t_1,\cdots ,t_l)\in \CO_{v_i}^l: t_{n_i}\in m_{v_i}\setminus m^2_{v_i}\ \ \ \text{et}\ \ \ t_n\in \CO_{v_i}^{\times}\ \ \ \text{pour} \ \ \ n\neq n_i   \}$$
 un ouvert de $  \CZ(\CO_{v_i})$. 
Alors  $\CC_{i,n_i}\cong \CC_{i,v_i}$, $\CD_{i,n_i}\cong \CD_{i,v_i}$ et 
$\CD_i'\times_{\CD} \CD_{i,n_i} \cong \bigsqcup_{[k_i':k_i]} \CD_{i,v_i}'$.
Donc le Frobenius en un point de $\CD_{i,n_i}(k(v_i))$ pour le rev\^etement $\CD_i'/\CD_i$ est dans $\Gamma_i$.

  Pour tout $b\in B$ et tout $P_i\in \CX(\CO_{v_i})$ avec $f(P_i)\in E_i$,
  on a  $\bar{P_i}:=P_i(k(v_i))\in \CD_{i,n_i}(k(v_i))$ et alors la formule \cite[Cor. 2.4.3 et p. 244-245]{Ha94} (voir \cite[Formule (3.6)]{CTH})
\begin{equation}\label{fibratione3}
b(P_i)=\partial_i (b)(Fr_{\bar{P_i}})\in \BQ/\BZ,
\end{equation} 
 o\`u  $Fr_{\bar{P_i}}\in \Gamma_i\sbt  \Gal(\CD_i'/\CD_i)$ est le Frobenius en $\bar{P_i}$ pour le rev\^etement $\CD_i'/\CD_i$
 et 
 $$\partial_i: \Br(\CU)\xrightarrow{\partial}H^1(\CD_i'/\CD_i,\BQ/\BZ)\to H^1(\CD_{i,v_i}'/\CD_{i,v_i},\BQ/\BZ) =\Hom(\Gamma_i,\BQ/\BZ).$$
Par   fonctorialit\'e, l'application $\bar{\partial}$ de (\ref{fibratione2}) satisfait $\bar{\partial}=\oplus_i \partial_i$.

\medskip

Notons $T(k_{v_0})^0:=\varphi (G(k_{v_0})^0) $.
Puisque $H^1(k_{v_0},\Ker(\varphi))$ est fini (\cite[Thm. 6.14]{PR}), le sous-groupe $T(k_{v_0})^0\sbt T(k_{v_0})$ est d'indice fini.
D'apr\`es \cite[Thm. 4.5]{Co}, $f(W_2)$ est un ouvert de $(Z\setminus F_1)(\RA_k)$.
Pour tout $z\in (Z\setminus F_1)(k_{v_0})$, l'ouvert $(X_z\setminus F_z)(k_{v_0})$ est dense dans $X_z(k_{v_0})$.
Donc 
$$(T(k_{v_0})^0\cdot f(W_{v_0}))\cap (Z\setminus F_1)(k_{v_0})=f(W_{v_0})\ \ \ \text{et}\ \ \ 
(T(k_{v_0})^{0}\cdot f(W_2))\cap (Z\setminus F_1)(\RA_k)=f(W_2).$$
Par le corollaire \ref{strangapptoricstand}, il existe $t\in T(k)\cap f(W_2)$ tel que $t|_{v_i}\in E_i$ pour tout $i$ et que $\codim(F_t,X_t)\geq 2$. 
Alors il existe $(P_v)\in W_2$ tel que $f(P_v)=t$.
 
 Soit $t_i\in \CZ(k(v_i))$ la sp\'ecialisation de $t$, alors $t_i\in \CC_{i,n_i}(k(v_i))$ et $t_i\notin \CF_1$.
D'apr\`es (\ref{fibratione2}), on a un diagramme avec suite exacte:
$$\xymatrix{\oplus_i ((\CD_{i,n_i})_{t_i}\setminus \CF_{t_i})(k(v_i))\ar@{->>}[d]^{Fr}&(X_t\setminus F_t)(\RA_k)\cap W_2\ar[d]^{a_U}&\\
\oplus_i\Gamma_i\ar[r]^{\bar{\partial}^D}&(B+f|_U^*\Br_1(T))^D\ar[r]^{\Res}&(B_X+f|_U^*\Br_1(T))^D,
}$$
o\`u  $a_U$ est l'accouplement de Brauer-Manin, $(-)^D:=\Hom(-,\BQ/\BZ)$ 
et, pour $u\in f^{-1}(t_i)$, l'\'el\'ement $Fr(u)$ est son Frobenius. Donc $Fr$ est surjectif par (h). 
Puisque $\Res\circ a_U=0$, il existe $\{\sigma_i\}_i\in \oplus_i\Gamma_i$ tel que $\bar{\partial}^D(\{\sigma_i\})=a_U(\{P_v\})$.
Alors il existe $u_i\in ((\CD_{i,n_i})_{t_i}\setminus \CF_{t_i})(k(v_i))$ tel que $Fr(u_i)=Fr(\bar{P_{v_i}})-\sigma_i$.
Par le lemme de Hensel, il existe un point $Q_{v_i}\in (\CX_t\setminus \CF_t)(\CO_{v_i})$  relevant  $u_i$. Soit $Q_v:=P_v$ pour tout $v$ distinct de l'un des $v_{i}$.
Par (\ref{fibratione3}), $\{Q_v\}$ satisfait les conditions.
\end{proof}

 \subsection{Fibration sur un tore}
 
 \begin{lem}\label{pointouvert}
 Soient $X$ et $Z$ deux vari\'et\'es lisses g\'eom\'etriquement int\`egres, et $X\xrightarrow{f}Z$ un morphisme lisse surjectif \`a fibres g\'eom\'etriquement int\`egres. Soit $U\sbt X$ un ouvert tel que $f|_U$ soit surjectif. Soient $W\sbt X(\RA_k)$ un ouvert et $x\in W$. Alors il existe $u\in W\cap U(\RA_k)$ tel que $f(u)=f(x)$.
 \end{lem}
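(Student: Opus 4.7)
L'id\'ee est de construire $u=(u_v)$ place par place en restant dans la fibre de $f$ au-dessus de $f(x)$. On commence par choisir un ensemble fini $S\subset\Omega_k$ contenant $\infty_k$ et suffisamment grand de sorte que $f: X\to Z$ (resp. $U\hookrightarrow X$) s'\'etende en un $\CO_S$-morphisme lisse $\CX\to\CZ$ \`a fibres g\'eom\'etriquement int\`egres (resp. en une immersion ouverte $\CU\hookrightarrow\CX$), et que l'on ait $W\supset W_S\times\prod_{v\notin S}\CX(\CO_v)$, o\`u $W_S=\prod_{v\in S}W_v$ avec chaque $W_v\subset X(k_v)$ un voisinage $v$-adique ouvert de $x_v$. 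On peut supposer que $\dim(X)>\dim(Z)$, car sinon $f$ \'etant lisse \`a fibres g\'eom\'etriquement int\`egres est un isomorphisme, d'o\`u  $U=X$ et l'\'enonc\'e est trivial.

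Pour chaque $v\in S$, posons $z_v:=f(x_v)\in Z(k_v)$. La fibre $X_{z_v}$ est alors une $k_v$-vari\'et\'e lisse g\'eom\'etriquement int\`egre munie du $k_v$-point $x_v$. L'ouvert non vide $U_{z_v}\subset X_{z_v}$ a un compl\'ementaire de dimension strictement inf\'erieure \`a $\dim(X_{z_v})>0$, donc $U_{z_v}(k_v)$ est un ouvert dense dans $X_{z_v}(k_v)$ pour la topologie $v$-adique (qui est une vari\'et\'e analytique $k_v$-analytique sans point isol\'e autour de $x_v$). On peut donc choisir $u_v\in U_{z_v}(k_v)\cap W_v$ de sorte que $f(u_v)=z_v=f(x_v)$.

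Pour chaque $v\notin S$, il faut trouver $u_v\in\CU(\CO_v)$ avec $f(u_v)=f(x_v)$ (c'est automatiquement dans $W_v$). Quitte \`a agrandir $S$, on peut, par les estim\'ees de Lang-Weil (comme dans l'\'etape (a) de la d\'emonstration du Th\'eor\`eme \ref{thmfibration}), supposer que pour tout $v\notin S$ et tout point ferm\'e $z\in\CZ$ de corps r\'esiduel $k(z)$, la fibre $\CU_z$ poss\`ede un $k(z)$-point. Appliqu\'e au point ferm\'e de $\CZ$ donn\'e par la sp\'ecialisation de $f(x_v)$, ceci donne un $k(v)$-point de la r\'eduction de la fibre $\CU_{f(x_v)}$ sur $\CO_v$. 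Comme $\CU\to\CZ$ est lisse \`a fibres g\'eom\'etriquement int\`egres, le lemme de Hensel permet de relever ce point en un $u_v\in\CU(\CO_v)$ au-dessus de $f(x_v)$. L'\'el\'ement $u:=(u_v)_{v\in\Omega_k}$ satisfait alors $u\in W\cap U(\RA_k)$ et $f(u)=f(x)$.

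Le seul point potentiellement d\'elicat est l'existence uniforme, pour presque tout $v$, d'un $k(v)$-point de la fibre sp\'eciale $\CU_{f(x_v)}$, mais ceci r\'esulte exactement du m\^eme argument Lang-Weil-Hensel d\'ej\`a utilis\'e dans la preuve du Th\'eor\`eme \ref{thmfibration} (\'etape (a)).
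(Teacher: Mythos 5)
Your proof is correct and follows essentially the same route as the paper's: at the finitely many places of $S$ you use the density of $U_{z_v}(k_v)$ in the smooth fiber $X_{z_v}(k_v)$ to stay close to $x_v$, and at the remaining places you use Lang--Weil plus Hensel to get surjectivity of $\CU(\CO_v)\to\CZ(\CO_v)$; this is exactly the paper's argument, only spelled out in more detail.
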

 
 \begin{proof}
 Pour chaque $z\in Z$, la fibre $X_z$ est lisse int\`egre et l'ouvert $U_z\sbt X_z$ est donc dense.
 Soit $\{z_v\}_v=f(x)$. 
 Pour chaque $v$, l'ouvert $U_{z_v}(k_v)$ est dense en $X_{z_v}(k_v)$.
Puisque $f|_U$ est surjectif, le morphisme $f|_U$ est lisse \`a fibres g\'eom\'etriquement int\`egres.
 Apr\`es avoir fix\'e un mod\`ele int\`egre $\CU\rightarrow \CZ$ de $f|_U$, on a que, pour presque toute place $v$, le morphisme $\CU(\CO_v)\to \CZ(\CO_v)$ est surjectif. Le r\'esultat en d\'ecoule. 
 \end{proof}
 
 Le th\'eor\`eme suivante, d'\'enonc\'e un peu technique, joue un r\^{o}le cl\'e dans la d\'emonstration du
  th\'eor\`eme \ref{main1thm}.
 
 \begin{thm}\label{mainlem}
 Soient $T$, $T_0$ deux tores avec $T_0$ quasi-trivial, $G$ un groupe lin\'eaire connexe,  $G\xrightarrow{\varphi} T_0\times T$ un homomorphisme surjectif de noyau connexe et  $G_0\sbt G$ un sous-groupe ferm\'e connexe.
Soient $X$ une $G$-vari\'et\'e lisse g\'eom\'etriquement int\`egre,  $U\sbt X$ un $G$-ouvert et $U\xrightarrow{f}T_0\times T$ un $G$-morphisme. Soit $B\sbt \Br_G(U)$ (cf. (\ref{defBr_GX})) un sous-groupe fini.
Supposons que: 

(1)  la composition
$ T_0^* \xrightarrow{p_1^*}T_0^*\times T^* \xrightarrow{f^*} \bk[U]^{\times}/\bk^{\times} \xrightarrow{\div_X} \Div_{X_{\bk}\setminus U_{\bk}}(X_{\bk})$
est un isomorphisme;

(2) pour l'action de $G_0$ sur $X$, le morphisme $\bk[X]^{\times}/\bk^{\times} \to \bk[G_0]^{\times}/\bk^{\times} $ d\'efini par Sansuc (\cite[(6.4.1)]{S}) est injectif.

Alors, pour tout $v_0\in \Omega_k$, tout sous-groupe ouvert d'indice fini $G(k_{v_0})^0\sbt G(k_{v_0})$ et
tout ouvert $W\sbt X(\RA_k)$ satisfaisant $W^{ \Br(X)\cap (B+ f^*\Br_1(T_0\times T))}\neq\emptyset$,
il existe $t\in (T_0\times T)(k)$ de fibre $U_t$, tel que 
$$(G(k_{v_0})^0\cdot G_0(k_{\infty})^+\cdot W)\cap U_t(\RA_k)^{B}\neq\emptyset .$$
 \end{thm}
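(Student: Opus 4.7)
The strategy is to fibre $X$ successively over the two torus factors: first over $T_0$ (which is quasi-trivial and thus admits a standard toric embedding), to handle the $v_0$-approximation via Theorem~\ref{thmfibration}; then over $T$ (not quasi-trivial), to handle the archimedean approximation by combining Theorem~\ref{thmfibration} with a descent argument based on Corollary~\ref{descent}.

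\emph{Setup and first fibration.} Apply Proposition~\ref{propextension} to the map $U \xrightarrow{f} T_0 \times T$: condition~(1) provides an extension $f' : X \to \BA^\ell \times T$ of $\widetilde{\phi} \circ f$ which is a $G$-morphism for the modified action via $\widetilde{\phi} \circ \varphi$ (still surjective with connected kernel, since $\widetilde{\phi}$ is an automorphism of $T_0 \times T$); here $\BA^\ell \cong \Res_{K/k}\BA^1$ is the standard toric embedding of $T_0$. Then apply Proposition~\ref{propopensubset}(2) with base torus $T_0 \times T$ inside $\BA^\ell \times T$ to obtain an open $X_1 \sbt X$ with $\codim(X \setminus X_1, X) \geq 2$, $X_1 \cap f'^{-1}(T_0 \times T) = U$, and $X_1 \to Z_0 \times T$ smooth surjective with geometrically integral fibres, where $Z_0 := \BA^\ell \setminus [(\BA^\ell \setminus T_0)_{sing}]$ is the standard toric variety for $T_0$. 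By purity $\Br(X) \xrightarrow{\sim} \Br(X_1)$, and a Hensel-lifting argument allows replacing $W$ by $W \cap X_1(\RA_k)$ without losing the Brauer hypothesis. Compose with the projection to view $X_1 \to Z_0$ as a smooth surjection with geometrically integral fibres, and apply Theorem~\ref{thmfibration} with structure group $G \to T_0$ and $F := X \setminus X_1$; the given hypothesis implies the needed $W^{\Br(X) \cap (B + (X_1 \to Z_0)^* \Br_1(T_0))} \neq \emptyset$ via $\Br_1(T_0) \hookrightarrow \Br_1(T_0 \times T)$. This yields $t_0 \in T_0(k)$, a closed $F_{t_0} \sbt X_{t_0}$ of codim $\geq 2$, and an adelic point $(z_v) \in (X_{t_0} \setminus F_{t_0})(\RA_k)^B \cap (G(k_{v_0})^0 \cdot W)$. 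Since $\{t_0\} \times T \sbt T_0 \times T$, the fibre $X_{t_0}$ lies in $U$ and carries a smooth surjection $f'|_{X_{t_0}} : X_{t_0} \to T$ with geometrically integral fibres.

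\emph{Second fibration by descent.} Conditions~(1) and~(2) together give $T^* \hookrightarrow G_0^*$ via Sansuc's injection, so $G_0 \to T$ is surjective. As $T$ need not be quasi-trivial, Theorem~\ref{thmfibration} does not apply directly to $X_{t_0} \to T$; one proceeds by descent. Choose an exact sequence $0 \to T_4 \to T_5 \to T \to 0$ with $T_5$ quasi-trivial (such a resolution exists for any torus $T$); form the $T_4$-torsor $Y := X_{t_0} \times_T T_5 \to X_{t_0}$, and, via Theorem~\ref{thmaction} and Corollary~\ref{cor2action}, lift the relevant group action to an action on $Y$ of a linear connected group $H$, a central extension of the $t_0$-stabilizer in $G$ by $T_4$. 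Now repeat the setup and first fibration for $Y \to T_5$, taking any archimedean place as $v_0$ and using the $G_0(k_\infty)^+$-action lifted inside $H(k_\infty)^+$; this produces $t_5 \in T_5(k)$, mapping to $t_1 \in T(k)$, and an adelic point on $Y$ above $t_5$ satisfying a Brauer condition pulled back from $B$. Finally, descend back to $X_{t_0}$ via Corollary~\ref{descent} (for the $T_4$-torsor $Y \to X_{t_0}$) to obtain an adelic point in the fibre $U_{(t_0, t_1)}(\RA_k)^B$; combining with the first fibration, this point lies in $(G(k_{v_0})^0 \cdot G_0(k_\infty)^+ \cdot W) \cap U_{(t_0, t_1)}(\RA_k)^B$.

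The main obstacle is the second fibration step. One must track Brauer groups carefully through the $T_4$-torsor $Y \to X_{t_0}$, using the Sansuc exact sequence~(\ref{braueralg-e1}) and Corollary~\ref{descent}, to verify that the $B$-condition on $X_{t_0}$ corresponds to a Brauer condition on $Y$ within the scope of Theorem~\ref{thmfibration}. One must also check that the lifted $H$-action at archimedean places projects back to the intended $G_0(k_\infty)^+$-modification on $X_{t_0}$, without perturbing the value $t_0 \in T_0(k)$ already fixed by the first fibration.
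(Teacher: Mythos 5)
Your setup is on the right track (Propositions~\ref{propextension} and \ref{propopensubset}, the observation that hypotheses (1) and (2) force $G_0\to T$ to be surjective), but the order in which you treat the two torus factors is fatal. You apply Theorem~\ref{thmfibration} to the $T_0$-direction first, at the place $v_0$. That theorem only returns a point orthogonal to the \emph{finite} group $B$ fed into it; all orthogonality to $\Br(X)\cap f^*\Br_1(T)$ --- an infinite group modulo constants, identified with $\phi^*\Br_1(T)$ for $\phi=p_2\circ f_X$ --- is discarded at this stage (you explicitly keep only $\Br(X)\cap(B+\Br_1(T_0))$ in the hypothesis). But your second step, finding $t_1\in T(k)$ in the image of the resulting adelic neighbourhood, is exactly a strong approximation statement for the torus $T$ off $\infty_k$, which by Harari's theorem requires orthogonality to \emph{all} of $\Br_1(T)$. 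This cannot be recovered by shrinking $W$ (the set $T(\RA_k)^{\Br_1(T)}$ is closed but not open), and the proposed descent to a quasi-trivial resolution $T_5\to T$ does not repair it: lifting the adelic point to some twist of the torsor $Y=X_{t_0}\times_T T_5$ is obstructed by the same infinite family of pulled-back classes, and since $Y\to T_5$ is onto the torus itself there are no boundary divisors for Theorem~\ref{thmfibration} to exploit --- quasi-trivial tori do not satisfy strong approximation off a finite set of places unconditionally either.

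The paper runs the two steps in the opposite order. It first uses the full hypothesis $W^{\Br(X)\cap(B+f^*\Br_1(T_0\times T))}\neq\emptyset$, the identification $\Br(X)\cap f^*\Br_1(T_0\times T)\cong\phi^*\Br_1(T)$, and the surjection $G_0(k_{\infty})^+\twoheadrightarrow T(k_{\infty})^+$ (this is where hypothesis (2) enters, via Sansuc's map) to apply Harari's theorem on $T$ and find $t\in T(k)\cap\phi(G_0(k_{\infty})^+\cdot W_1)$. Only then does it apply Theorem~\ref{thmfibration}, to the fibre $X_{1,t}\to Z_0$ over the standard toric variety of the quasi-trivial factor $T_0$, with the place $v_0$. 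Passing to that fibre needs one further ingredient your proposal does not address: Corollary~\ref{prop-fibreinvariant}, which provides a finite $B_2\sbt\Br(X)$ restricting onto $i_t^*B\cap\Br(X_{1,t})$, so that the Brauer--Manin hypothesis of Theorem~\ref{thmfibration} actually holds on the fibre. Reorganized in this order, no descent to $T_5$ is needed at all.
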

 
 \begin{proof}
 Par la proposition \ref{propextension}, apr\`es avoir remplac\'e $f$ par $\widetilde{\phi} \circ f$ et $\varphi $ par $\widetilde{\phi} \circ\varphi$ avec $\widetilde{\phi} $ un automorphisme de $T_0\times T$, on peut supposer que:
 
 (i) il existe une vari\'et\'e torique $(T_0\hookrightarrow \BA^l)$ satisfaisant (\ref{exten-e1});
 
 (ii) le morphisme $f$ s'\'etend \`a  un $G$-morphisme $X\xrightarrow{f_X} Z$ o\`u $Z:=\BA^l\times T\supset T_0\times T$;
 
 (iii) on a $f_X(U)\sbt T_0\times T$ et un isomorphisme
 $\Div_{Z_{\bk}\setminus (T_0\times T)_{\bk}}(Z_{\bk})\xrightarrow{f_X^*}\Div_{X_{\bk}\setminus U_{\bk}}(X_{\bk}).$ 
 
 Soit $Z_0:=\BA^l\setminus [(\BA^l\setminus T_0)_{sing}]$. Alors $(T_0\hookrightarrow Z_0)$ est une vari\'et\'e torique standard et
 $$Z_1:=Z_0\times T\cong Z\setminus [(Z\setminus T_0\times T)_{sing}].$$
 D'apr\`es la proposition \ref{propopensubset}, il existe un $G$-ouvert $X_1\sbt X$ tel que 
 $f(X_1)\sbt Z_1$, $X_1\cap f_X^{-1}(T)=U$, $\codim(X\setminus X_1,X)\geq 2$, $\Br(X)\cong \Br(X_1)$ 
 et que   le morphisme $X_1\xrightarrow{f_X|_{X_1}}Z_1$ soit lisse surjectif \`a fibres g\'eom\'etriquement int\`egres. 
   
Notons  $\phi: X\xrightarrow{f_X}\BA^l\times T\xrightarrow{p_2}T$.
 Pour chaque $t\in T(k)$, notons $X_{1,t}:=\phi^{-1}(t)\cap X_1$, $U_t:=U\cap X_{1,t}$,  $X_{1,t}\xrightarrow{i_t}X_1$ et $X_{1,t}\xrightarrow{f_t}Z_0$. 
 On a le diagramme:
 $$\xymatrix{X_{1,t}\ar@{^{(}->}[r]^{i_t}\ar[d]^{f_t}&X_1\ar@{^{(}->}[r]\ar[d]&X\ar[d]_{f_X}\ar@/^2pc/[dd]^{\phi}&G\ar[d]^{\varphi}\\
 Z_0\times t\ar[d]\ar@{^{(}->}[r]&Z_1\cong Z_0\times T\ar@{^{(}->}[r]\ar[d]&\BA^l\times T\ar[d]&T_0\times T\ar[d]^{p_2}\\
 t\ar@{^{(}->}[r]&T\ar[r]^=&T&T.
 }$$
 
On a les propri\'et\'es ci-dessous:

(a) le morphisme $f_t$ satisfait les hypoth\`eses g\'eom\'etriques du th\'eor\`eme \ref{thmfibration} 
  par rapport \`a $\Ker(G\xrightarrow{p_2\circ \varphi}T)\to T_0 $;

(b) l'homomorphisme $G_0\xrightarrow{p_2\circ \psi}T$ est surjectif et donc $(p_2\circ \psi)(G_0(k_{\infty})^+)=T(k_{\infty})^+$;

(c) Soient $B_1:=B+f^*\Br_1(T_0\times T)$ et $B_2\sbt (\Br(X)\cap B_1)$ un sous-groupe fini tels que le morphisme
$B_2\to \frac{\Br(X)\cap B_1}{\Br(X)\cap f^*\Br_1(T_0\times T)} $ soit surjectif, alors $ i_t^*B\cap \Br(X_{1,t})\sbt i_t^*B_2$.
 
 L'\'enonc\'e (a) est claire. 
 
Pour (b), d'apr\`es \cite[Prop. 2.2]{CX1}, $\phi$ est lisse surjectif \`a fibres g\'eom\'e\-triquement int\`egres.
Donc $\bk[T]^{\times}/\bk^{\times} \xrightarrow{\phi^*}\bk[X]^{\times}/\bk^{\times} $ est injectif.  
Notons $\bk[X]^{\times}/\bk^{\times} \xrightarrow{\theta_X} \bk[G_0]^{\times}/\bk^{\times} $ et 
$\bk[T]^{\times}/\bk^{\times} \xrightarrow{\theta_T} \bk[G_0]^{\times}/\bk^{\times} $ les morphismes
d\'efinis par Sansuc (\cite[(6.4.1)]{S}).
Ainsi $\theta_T=\theta_X\circ \phi^*$ est injectif.
Par l'argument de Sansuc (\cite[P. 39]{S}),
$\theta_T=(( p_2\circ \psi)|_{G_0})^*$.
Alors $G_0\xrightarrow{p_2\circ \psi}T$ est surjectif.

Pour (c), d'apr\`es le lemme \ref{brauertorseur}, on a
$$
  \Br(X_1)\cap f^*\Br_1(T_0\times T)\cong f_X^*\Br_1(Z_1)\cong \phi^*\Br_1(T)\ \ \ \text{et}\ \ \  B_1\cap \Br(X_1)=B_2+\phi^*\Br_1(T).
$$
Par  le corollaire \ref{prop-fibreinvariant}, on a 
$$
i_t^*B_2=i_t^*(B_2+\phi^*\Br_1(T))=i_t^*(B_1\cap \Br(X_1))=i_t^*B_1\cap \Br(X_{1,t})\supset i_t^*B\cap \Br(X_{1,t}).
$$
Ceci donne (c).
 
\medskip 
 
On consid\`ere l'ouvert $W$ de l'\'enonc\'e.  Apr\`es avoir r\'etr\'eci $W$, on peut supposer que tout \'el\'ement de $B_2$ s'annule sur $W$. 

 On note $W_1:=W\cap X_1(\RA_k)$. Soit $x\in W^{\phi^*\Br_1(T)}$.
En appliquant le lemme \ref{pointouvert} au triple $(X_1\sbt X,X\xrightarrow{\phi}T, W)$, 
on voit qu'il existe $x_1\in W_1$, tel que $\phi(x_1)=\phi(x)$.
Donc $W_1^{\phi^*\Br_1(T)}\neq \emptyset$ et $\phi (W_1)^{\Br_1(T)}\neq \emptyset$.

Soit $W_2:= G_0(k_{\infty})^+ \cdot W_1$.
D'apr\`es (b), on a $\phi (W_2)=T(k_{\infty})^+\cdot \phi (W_1)$.
Puisque $T$ satisfait l'approximation forte par rapport \`a $\Br_1(T)$ hors de $\infty_k$ (Harari \cite[Thm. 2]{Ha08}),
il existe $t\in T(k)\cap \phi(W_2)$. Donc $X_{1,t}(\RA_k)\cap W_2\neq\emptyset$. 
D'apr\`es (c), 
$(X_{1,t}(\RA_k)\cap W_2)^{i_t^*B\cap \Br(X_{1,t})}\neq\emptyset.$

Soit $W_3:=G(k_{v_0})^0\cdot W_2\supset W_2$.
 D'apr\`es le th\'eor\`eme \ref{thmfibration}, il existe 
  $u\in W_3\cap U_t(\RA_k)^{i_t^*B}$ tel que $f_t(u)\in T_0(k)$.
 \end{proof}

 \begin{cor}\label{mainlemcod2}
 Avec les hypoth\`eses et notations du th\'eor\`eme \ref{mainlem}, soit $F\sbt X$ un sous-sch\'ema ferm\'e $G_0$-invariant de codimension $\geq 2$.
Alors, pour tout $v_0\in \Omega_k$ et tout $\tilde{W}\sbt (X\setminus F)(\RA_k)$ satisfaisant $\tilde{W}^{ \Br(X)\cap (B+ f^*\Br_1(T_0\times T))}\neq\emptyset$, il existe un $t\in (T_0\times T)(k)$ tel que 
$$\codim(F\cap U_t,U_t)\geq 2\ \ \ \text{et}\ \ \ (G(k_{v_0})^0\cdot G_0(k_{\infty})^+\cdot \tilde{W})\cap (U_t\setminus (F\cap U_t))(\RA_k)^{B}\neq\emptyset.$$
 \end{cor}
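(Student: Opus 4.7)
My plan is to adapt the proof of Theorem \ref{mainlem} by tracking the closed $G_0$-invariant subset $F$ through the construction, and enforcing the codimension condition on $F \cap U_t$ via a semicontinuity argument combined with the flexibility afforded by strong approximation on $T$. The strategy is to redo the same two-step fibration argument—first approximating along $\phi: X \to T$, then along $f_t: X_{1,t} \to Z_0$—but imposing that the rational point $t \in T(k)$ produced in the first step lies in a Zariski-open subset over which the fiber of $F$ has the right codimension.

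The preliminary geometric setup is exactly the one in the proof of Theorem \ref{mainlem}. Using Proposition \ref{propextension}, I may assume that $f$ extends to a $G$-morphism $f_X : X \to \BA^l \times T$ with $(T_0 \hookrightarrow \BA^l)$ as in (\ref{exten-e1}), and Proposition \ref{propopensubset} produces a $G$-ouvert $X_1 \subset X$ with $\codim(X \setminus X_1, X) \geq 2$ such that $\phi := p_2 \circ f_X$ restricts to a smooth surjective morphism $X_1 \to T$ with geometrically integral fibers. Set $F_1 := F \cap X_1$, which is closed of codimension $\geq 2$ in $X_1$. Because $\phi|_{X_1}$ is flat with integral fibers, semicontinuity of fiber dimension yields a Zariski-dense open $T^{\circ} \subset T$ such that, for every $t \in T^{\circ}$, the intersection $F_1 \cap X_{1,t}$ is either empty or of codimension $\geq 2$ in $X_{1,t}$.

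I then carry out the approximation step, setting $W_1 := \tilde W \cap X_1(\RA_k)$ and $W_2 := G_0(k_\infty)^+ \cdot W_1$; since $F$ is $G_0$-invariant, $W_2$ remains in $(X \setminus F)(\RA_k)$. As in the proof of \ref{mainlem}, $\phi(W_2) = T(k_\infty)^+ \cdot \phi(W_1)$ is open in $T(\RA_k)^{\Br_1(T)}$. Invoking Harari's strong approximation for $T$ with respect to $\Br_1(T)$ off $\infty_k$ together with the $v$-adic density of $T^\circ(k_v)$ in $T(k_v)$ at every finite place (which makes $T^\circ(\RA_k^{\infty}) \cap \phi(W_2|_{\RA_k^\infty})$ a nonempty open of $T(\RA_k^\infty)^{\Br_1(T)}$), I produce $t \in T(k) \cap T^\circ$ such that $X_{1,t}(\RA_k) \cap W_2 \neq \emptyset$. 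Since $U_t$ is open in $X_{1,t}$, the choice $t \in T^\circ$ guarantees $\codim(F \cap U_t, U_t) \geq 2$.

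The final step is to apply Theorem \ref{thmfibration} to the fibration $f_t : X_{1,t} \to Z_0$, now with the codimension-$\geq 2$ closed subset $F \cap X_{1,t}$ and the open $(X_{1,t} \setminus F)(\RA_k) \cap W_2$, exactly as was done in the proof of \ref{mainlem} in the case $F = \emptyset$; the required Brauer orthogonality is handled by point (c) of that proof, giving $i_t^* B \cap \Br(X_{1,t}) \subset i_t^* B_2$. This yields a point $u \in (U_t \setminus (F \cap U_t))(\RA_k)^B$ lying inside $G(k_{v_0})^0 \cdot W_2 \subset G(k_{v_0})^0 \cdot G_0(k_\infty)^+ \cdot \tilde W$, which is the desired conclusion. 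The only genuinely new point is the simultaneous realization of $t \in T(k) \cap T^\circ \cap \phi(W_2)$; it rests on the openness of $\phi(W_2)$ in $T(\RA_k)$, the density of $k$-points in $T(\RA_k^\infty)^{\Br_1(T)}$, and the Zariski density of $T^\circ$. Everything else is a routine rerun of the argument of Theorem \ref{mainlem}.
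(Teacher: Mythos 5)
Votre démonstration est correcte et suit essentiellement la même voie que celle de l'article, qui se contente de reprendre l'argument du théorème \ref{mainlem} en remplaçant $W_1,W_2,W_3$ par leurs intersections avec $(X_1\setminus F)(\RA_k)$ et en laissant le théorème \ref{thmfibration} (déjà énoncé avec un fermé de codimension $\geq 2$) faire le reste. Votre seul ajout est de rendre explicite, via la semi-continuité de la dimension des fibres et le choix de $t$ dans un ouvert dense $T^{\circ}$, la condition $\codim(F\cap X_{1,t},X_{1,t})\geq 2$ nécessaire pour appliquer le théorème \ref{thmfibration}, point que l'article laisse implicite.
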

 
 \begin{proof}
 Avec les constructions et notations de la d\'emonstration du th\'eor\`eme \ref{mainlem}, 
 soit 
 $$\tilde{W}_1:=\tilde{W}\cap (X_1\setminus F)(\RA_k),\ \ \  \tilde{W}_2:=G_0(k_{\infty})^+\cdot \tilde{W}_1\ \ \ \text{et}\ \ \  
 \tilde{W}_3:=(G(k_{v_0})^0\cdot \tilde{W}_2)\cap (X_1\setminus F)(\RA_k).$$
 Le r\'esultat d\'ecoule du m\^eme argument que dans la d\'emonstration du th\'eor\`eme \ref{mainlem},
  en rempla\c{c}ant $W_1,W_2,W_3$ par $\tilde{W}_1, \tilde{W}_2, \tilde{W}_3$.
 \end{proof}

\subsection{Fibration sur un pseudo espace homog\`ene }

Soit $G$ un groupe lin\'eaire connexe. 
 Rappelons la notion de pseudo $G$-espace homog\`ene (cf. D\'efinition \ref{defpseudo}).
 Soit $Z$ un pseudo $G$-espace homog\`ene, 
 on peut d\'efinir son quotient torique maximal $Z\xrightarrow{\pi} Z^{tor}$ et le stabilisateur de $G$ sur $Z^{tor}$
 (cf. D\'efinition \ref{defitorequotient}).

\begin{thm}\label{main1thm}
 Soient $G$ un groupe lin\'eaire connexe, $Z$ un pseudo $G$-espace homog\`ene,
 $Z\xrightarrow{\pi} Z^{tor}$  le quotient torique maximal et $G_0$ le stabilisateur de $G$ sur $Z^{tor}$.
 Soit $X$ une $G$-vari\'et\'e lisse g\'eom\'etriquement int\`egre telle que $\bk[X]^{\times}/\bk^{\times}=0$.
  Soient $U\sbt X$ un $G$-ouvert et $U\xrightarrow{f}Z$ un $G$-morphisme. 
 Soient $A\sbt \Br(X)$, $B\sbt \Br_G(U)$ (cf. (\ref{defBr_GX})) deux sous-groupes finis.
Pour tout ouvert $W\sbt X(\RA_k)$ satisfaisant  $W^{\Br(X)\cap (A+B+ f^*\Br_G(Z))}\neq\emptyset$, on a:

(1) pour toute place $v_0\in \Omega_k$ et tout sous-groupe ouvert d'indice fini $G(k_{v_0})^0\sbt G(k_{v_0}) $,
 il existe un $t\in Z^{\tor}(k)$ de fibre $U_t\xrightarrow{f_t}Z_t$,  tel que 
$$(G(k_{v_0})^0\cdot W)\cap U_t(\RA_k)^{A+B+f_t^*\Br_{G_0}(Z_t)}\neq\emptyset;$$

(2) s'il existe un sous-ensemble fini non vide $S\sbt \Omega_k$ tel que, 
pour tout sous-groupe ouvert d'indice fini $G_0(k_S)^0\sbt G_0(k_S)$  et tout $t\in Z^{tor}(k)$ de fibre $Z_t$, 
l'adh\'erence  $\overline{G_0(k_S)^0\cdot Z_t(k)}$ contient $Z_t(\RA_k)^{\Br_{G_0}(Z_t)}$, 
alors, pour tout sous-groupe ouvert d'indice fini $G(k_S)^0\sbt G(k_S)$,
 il existe un $z\in Z(k)$ de fibre $U_z$ tel que 
 $(G(k_S)^0\cdot W)\cap U_z(\RA_k)^{A+B}\neq\emptyset. $
 \end{thm}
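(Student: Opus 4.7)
The strategy is a reduction to Theorem~\ref{mainlem} via the maximal toric quotient. Concretely, one factors $f : U \to Z$ through $\pi : Z \to Z^{\tor}$, so that the fibration base becomes a torus, to which Theorem~\ref{mainlem} applies directly.

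First, I set $T := Z^{\tor}$ and $\phi := \pi \circ f : U \to T$. By Definition~\ref{defitorequotient}, the composed homomorphism $G \to T$ is surjective with connected kernel $G_0$, and for $t \in T(k)$ the fiber $U_t := \phi^{-1}(t)$ coincides with $f^{-1}(Z_t)$, where $Z_t := \pi^{-1}(t)$ is a $G_0$-espace homog\`ene. To bring the data into the form required by Theorem~\ref{mainlem}, I construct a quasi-trivial torus $T_0$ and extend $\phi$ to a $G$-morphism $\tilde{\phi} : U \to T_0 \times T$ verifying hypothesis~(1) of that theorem; hypothesis~(2) is vacuous since $\bk[X]^{\times}/\bk^{\times} = 0$. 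The assumption $\bk[X]^{\times} = \bk^{\times}$ yields the injectivity of $\bk[U]^{\times}/\bk^{\times} \hookrightarrow \Div_{X_{\bk} \setminus U_{\bk}}(X_{\bk})$; taking $T_0$ with $T_0^*$ equal to the permutation module $\Div_{X_{\bk} \setminus U_{\bk}}(X_{\bk})$ (after discarding a closed subset of codimension $\geq 2$ if necessary), Proposition~\ref{propextension} produces the desired extension $\tilde{\phi}$.

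Theorem~\ref{mainlem} then yields some $\tilde{t} \in (T_0 \times T)(k)$ together with an adelic point in $U_{\tilde{t}}$ lying in $G(k_{v_0})^0 \cdot G_0(k_{\infty})^+ \cdot W$ and orthogonal to $B$ enlarged by $A$. Projecting $\tilde{t}$ to $T = Z^{\tor}$ gives the required $t \in Z^{\tor}(k)$, with $U_{\tilde{t}} \subset U_t$. The main technical obstacle is the Brauer-theoretic bookkeeping: one must verify that the hypothesis $W^{\Br(X) \cap (A+B+f^*\Br_G(Z))} \neq \emptyset$ translates, via the pullback $\tilde{\phi}^*\Br_1(T_0 \times T)$, into the hypothesis required by Theorem~\ref{mainlem}, and conversely that the resulting adelic point of $U_t$ is orthogonal to the full group $A + B + f_t^*\Br_{G_0}(Z_t)$. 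This is handled via the invariant Brauer group framework of Section~\ref{3}, especially Proposition~\ref{propbrauersuj} applied to the morphism $U \to T$ — which supplies the exact sequence relating $\Br_G(U)$ to $\Br_{G_0}(U_t)$ — together with Corollary~\ref{prop-fibreinvariant}.

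For part~(2), I apply part~(1) at some $v_0 \in S$, choosing $G(k_{v_0})^0$ so that $G(k_S)^0 \supset G(k_{v_0})^0$ (after enlarging $W$ by the translates under the remaining places of $S$, which is harmless). This yields $t \in Z^{\tor}(k)$ and an adelic point $u \in U_t(\RA_k)$ in $G(k_S)^0 \cdot W$, orthogonal to $A + B + f_t^* \Br_{G_0}(Z_t)$. Its image $f_t(u)$ then lies in $Z_t(\RA_k)^{\Br_{G_0}(Z_t)}$, so by the hypothesis of~(2) there exists $z \in Z_t(k) \subset Z(k)$ such that $G_0(k_S)^0 \cdot z$ is arbitrarily close to $f_t(u)$, for any prescribed finite-index open $G_0(k_S)^0 \subset G_0(k_S)$. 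Combining this with the openness of $U(\RA_k) \to Z(\RA_k)$ on fibers (\cite[Thm.~4.5]{Co}) yields an adelic point of $U_z$ arbitrarily close to $u$, still meeting $G(k_S)^0 \cdot W$ and still orthogonal to $A + B$, which completes the proof.
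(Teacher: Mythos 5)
Your overall strategy (reduce to Th\'eor\`eme \ref{mainlem} over the toric quotient, then handle (2) by the density hypothesis on the fibres $Z_t$) is the right one, and your treatment of part (2) matches the paper's. But part (1) has a genuine gap: you cannot apply Th\'eor\`eme \ref{mainlem} (via Proposition \ref{propextension}) directly to $U\sbt X$. Hypothesis (1) of Th\'eor\`eme \ref{mainlem} demands a morphism $U\to T_0\times T$ for which $T_0^*\to \bk[U]^{\times}/\bk^{\times}\xrightarrow{\div_X}\Div_{X_{\bk}\setminus U_{\bk}}(X_{\bk})$ is an \emph{isomorphism}. The hypothesis $\bk[X]^{\times}=\bk^{\times}$ only gives injectivity of $\div_X$; its image is $\Ker\bigl(\Div_{X_{\bk}\setminus U_{\bk}}(X_{\bk})\to\Pic(X_{\bk})\bigr)$, so whenever $\Pic(X_{\bk})\neq 0$ and some boundary divisor is nonprincipal, no morphism $U\to T_0$ can make the composition surjective, and discarding a closed subset of codimension $\geq 2$ changes neither $\Div_{X_{\bk}\setminus U_{\bk}}(X_{\bk})$ nor this obstruction. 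This is exactly why the paper first performs a descent: it takes the $T_0$-torsor $Y_0\to X$ of type the identity on $\Div_{X_{\bk}\setminus U_{\bk}}(X_{\bk})$ (Proposition \ref{proptor} guarantees property (a) upstairs), applies Th\'eor\`eme \ref{mainlem} to $V\sbt Y$ over $T_0\times Z_1^{tor}$, and then transfers the conclusion back to $X$ via Corollaire \ref{descent} (equation (\ref{main1thm-e1})). None of this descent apparatus appears in your argument, and without it the key hypothesis of Th\'eor\`eme \ref{mainlem} simply fails.

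A second, related gap concerns the orthogonality to $f_t^*\Br_{G_0}(Z_t)$ in the conclusion of (1). Th\'eor\`eme \ref{mainlem} only produces a point orthogonal to a \emph{finite} group $B$ plus $f^*\Br_1$ of the torus base, whereas $\Br_{G_0}(Z_t)$ is in general infinite. The paper controls this by replacing $Z$ with an auxiliary $T_1$-torsor $Z_1\to Z$ chosen (Proposition \ref{proptorseurcoflasque}) so that $\Pic(Z_{1,\bk})=0$ and $H^3(k,\bk[Z_1]^{\times}/\bk^{\times})=0$: the first condition identifies $\Br_1(T_0\times Z_1)$ with $\Br_1(T_0\times Z_1^{tor})$ (property (d)), and the second, via Lemme \ref{Dlembrauersurj}, makes the restriction $\Br_{T_0\times H}(T_0\times Z_1)\to \Br_{H_0}(Z_{1,t_1})$ surjective (property (e)); only then does orthogonality to the finite group produced by Th\'eor\`eme \ref{mainlem} imply orthogonality to all of $f_t^*\Br_{G_0}(Z_t)$. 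Your appeal to Proposition \ref{propbrauersuj} and Corollaire \ref{prop-fibreinvariant} for the map $U\to Z^{tor}$ does not supply this surjectivity, since the relevant cokernel $\coker(\Br_a(G)\to\Br_a(G_0))$ need not vanish without the coflasque resolution. To repair the proof you need to insert the full descent step (torsors $Y\to X$ and $Z_1\to Z$, the group $H$ from Th\'eor\`eme \ref{thmaction}, and Corollaire \ref{descent}) before invoking Th\'eor\`eme \ref{mainlem}.
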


\begin{proof}
On consid\`ere (1).

Soient $T_0$ un tore tel que $T_0^*\cong \Div_{X_{\bk}\setminus U_{\bk}}(X_{\bk})$ et $Y_0\to X$ le $T_0$-torseur induit par l'homomorphisme $ \Psi$ de  la suite exacte (\ref{tor-e1}).

D'apr\`es la proposition \ref{proptorseurcoflasque}, il existe un tore quasi-trivial $T_1$ et un $T_1$-torseur $Z_1\xrightarrow{p_{1,Z}} Z$
 tels que $\Pic(Z_{1,\bk})=0$ et $H^3(k,\bk[Z_1]^{\times}/\bk^{\times})=0$.
 Par le th\'eor\`eme \ref{thmaction} et le corollaire \ref{coraction}, 
 il existe un groupe lin\'eaire connexe $H$ muni d'un homomorphisme surjectif $H\to G$ de noyau central $T_1$ 
 tel que $Z_1$ soit une $H$-vari\'et\'e et que $p_{1,Z}$ soit un $H$-morphisme.
De plus, $Z_1(k)\neq\emptyset$ et, d'apr\`es la proposition \ref{lemtoretorseur}, $Z_1$ est un pseudo $H$-espace homog\`ene.

Soient $T_2:=T_0\times T_1$ et $V_1:=U\times_ZZ_1$ un $T_1$-torseur sur $U$.
 Puisque $H^1(X,T_1)\to H^1(U,T_1)$ est surjectif, il existe un $T_1$-torseur $Y_1\to Y$ tel que $[Y_1]_U=[V_1]$.
L'isomorphisme canonique
 $H^1(X,T_0)\oplus H^1(X,T_1)\xrightarrow{\theta}H^1(X,T_2)$
donne un $T_2$-torseur $Y\to X$ tel que $[Y]=\theta ([Y_0],[Y_1])$.
 Maintenant on obtient des $T_1$-torseurs $Z_1\to Z$, $V_1\to U$ et des $T_2$-torseurs $Y\to X$, $V\to U$ 
 tels que $f^*[Z_1]=[V_1]$, $[Y]|_U=[V]$ et $[V]=[T_0\times V_1]$. 
 
 Par le th\'eor\`eme \ref{thmaction}, le corollaire \ref{cor1action} et le corollaire \ref{coraction}, 
 il existe un homomorphisme surjectif $T_0\times H\xrightarrow{\psi}G$ de noyau central $T_2$ et un diagramme commutatif de $T_0\times H$-vari\'et\'es et de $T_0\times H$-morphismes:
 \begin{equation}
 \xymatrix{ Y\ar[d]^p\ar@{}[rd]|{\square}&V\ar@{_{(}->}[l]\ar[d]^p\ar[r]_-{\tau}\ar@/^1pc/[rrr]^{f_V}&
 T_0\times V_1\ar[r]_-{f_1} \ar[d]\ar@{}[rd]|{\square}&
 T_0\times Z_1\ar[d]^{p_Z}\ar[r]_-{id\times \pi_1}& T_0\times Z_1^{\tor}\ar[d]^{p_{Z^{tor}}}\\
X& U\ar@{_{(}->}[l]\ar[r]^=&U\ar[r]^f& Z\ar[r]^-{\pi}&Z^{tor},
 }
 \end{equation}
 o\`u  $\tau$ est une trivialisation, $Z_1\xrightarrow{\pi_1}Z_1^{tor}$ est le quotient torique maximal, $f_V:=(id\times\pi_1)\circ f_1\circ \tau$ est la composition et $p_Z:=p_{1,Z}\circ p_2$.  
 
\medskip 
 
 On a des propri\'et\'es:
 
 (a) on peut supposer que la composition
$$T_0^*\xrightarrow{p_1^*}\bk[T_0\times V_1]^{\times}/\bk^{\times}\xrightarrow{\tau^*}\bk[V]^{\times}/\bk^{\times}\xrightarrow{\div} \Div_{Y_{\bk}\setminus V_{\bk}}(Y_{\bk})$$
est un isomorphisme. 

(b) le stabilisateur $H_0$ de $H$ sur $Z_1^{tor}$ est connexe et donc
 les morphismes $f_V$, $\pi$, $\pi_1$ et $\pi\circ f$ sont lisses \`a fibres g\'eom\'e\-triquement int\`egres (\cite[Prop. 2.2]{CX1}).

(c) on a  
\begin{equation}\label{main1thm-e1}
X(\RA_k)^{\Br(X)\cap (B+ f^*\Br_G(Z))}=p(Y(\RA_k)^{\Br(Y)\cap [p^*B+ (f_1\circ \tau )^*\Br_{T_0\times H}(T_0\times Z_1)]}).
\end{equation}

(d) il existe un sous groupe fini $B_1\sbt \Br_{T_0\times H}(V)$ tel que 
$$B_1+f_V^*\Br_1(T_0\times Z_1^{tor})=(f_1\circ \tau )^*\Br_{T_0\times H}(T_0\times Z_1)\sbt \Br(V).$$
 
(e) pour tout $(t_0,t_1)\in (T_0\times Z_1^{tor})(k)$,  notons 
 $V_{(t_0,t_1)}\xrightarrow{f_1|_{(t_0,t_1)}}t_0\times Z_{1,t_1}\to (t_0,t_1)$
 la fibre de $V\xrightarrow{f_1\circ \tau} T_0\times Z_1\to T_0\times Z_1^{tor} $ et on a que
 la restriction $ \Br_{T_0\times H}(T_0\times Z_1)\twoheadrightarrow \Br_{H_0}(Z_{1,t_1})$ est surjective. 

L'\'enonc\'e (a) r\'esulte de la proposition \ref{proptor}.
La proposition \ref{lemtorequotient} et le lemme \ref{lemlemtorequotient} donne (b).

Pour (c), puisque $\Pic(T_2)=0$, par le corollaire \ref{corbraueralgebraic} et la suite exacte de Sansuc \cite[Prop. 6.10]{S}, on a deux diagrammes commutatifs de suites exactes
 $$\xymatrix{0\ar[r]&\Br_G(Z)\ar[r]^-{p_Z^*|_{\Br_G}}\ar[d]^{f^*|_{\Br_G}}&\Br_{T_0\times H}(T_0\times Z_1)\ar[d]^-{(f_1\circ \tau)^*|_{\Br_{T_0\times H}}}\ar[r]&\Br_a(T_2)\ar[d]^=\\
 0\ar[r]&\Br_G(U)\ar[r]^-{p^*|_{\Br_G}}&\Br_{T_0\times H}(V)\ar[r]&\Br_a(T_2)
 } \ \ \ \text{et}\ \ \ 
 \xymatrix{0\ar[r]&\Br(Z)\ar[r]^-{p_Z^*}\ar[d]^-{f^*}&\Br(T_0\times Z_1)\ar[d]^{(f_1\circ \tau)^*}\\
 0\ar[r]&\Br(U)\ar[r]^{p^*}&\Br(V).
 }$$
 et $(p^*)^{-1}\Br_{T_0\times H}(V)=\Br_G(U)$.
Donc 
$(p^*)^{-1}((f_1\circ \tau )^*\Br_{T_0\times H}(T_0\times Z_1))=f^*\Br_G(Z).$
Une application du corollaire \ref{descent} au torseur $Y\xrightarrow{p}X$ et aux sous-groupes: 
 $$ (B+ f^*\Br_G(Z))\sbt \Br(U)\ \ \ \text{ et}\ \ \  (f_1\circ \tau )^*\Br_{T_0\times H}(T_0\times Z_1)\sbt \Br_{T_0\times H}(V)$$
donne (c).

 Pour (d), par la construction, on a $\bk[Z_1^{tor}]^{\times}\cong \bk[Z_1]^{\times}$, $\Pic(Z_{1,\bk})=0$ et $Z_1(k)\neq\emptyset$. 
Par la suite spectrale de Hochschild-Serre et \cite[Lem. 6.6]{S}, on a $ \Br_1(T_0\times Z_1^{tor})\cong \Br_1(T_0\times Z_1)$.
L'\'enonc\'e (d) d\'ecoule de la proposition \ref{lembrauerinv-braueralg}.

Pour (e), puisque $H^3(k,Z_1^{tor,*})=0$, d'apr\`es le lemme \ref{Dlembrauersurj}, le morphisme $\Br_a(H)\to \Br_a(H_0)$ est surjectif.
  La proposition \ref{propbrauersuj} donne (e).

\medskip

On consid\`ere l'ouvert $W$ de l'\'enonc\'e.  Apr\`es avoir r\'etr\'eci $W$, on peut supposer que tout \'el\'ement de $A$ s'annule sur $W$.
D'apr\`es (c) et (d), on a $(p^{-1}(W))^{\Br(Y)\cap (p^*B+B_1+ f_V^*\Br_1(T_0\times Z_1^{tor}))}\neq \emptyset$. 
  
 Par la suite exacte de Sansuc \cite[Prop. 6.10]{S}, le morphisme canonique $\bk[Y]^{\times}/\bk^{\times}\to \bk[T_2]^{\times}/\bk^{\times}  $ est injectif.
 Puisque $p^*B+B_1\sbt \Br_{T_0\times H}(V)$ est fini, 
une application du th\'eor\`eme \ref{mainlem} au triple
   \begin{equation}\label{triple-e}
  (T_0\times H\to T_0\times Z_1^{tor}, V\sbt Y, V\xrightarrow{f_V}T_0\times Z_1^{tor})
  \end{equation}
   montre qu'il existe
   $$v\in [(T_0\times H)(k_{v_0})^0\cdot T_2(k_{\infty})^+\cdot p^{-1}(W)]\cap V(\RA_k)^{p^*B+B_1}\ \ \  \text{tel que} \ \ \ (t_0,t_1):=f_V(v)\in (T_0\times Z_1^{tor})(k),$$
o\`u $(T_0\times H)(k_{v_0})^0:=\psi^{-1}( G(k_{v_0})^0 )$.

D'apr\`es (d) et (e)  on a 
$v\in V_{(t_0,t_1)}(\RA_k)^{p^*B+f_1|_{(t_0,t_1)}^*\Br_{H_0}(Z_{1,t_1}))}$. Donc
$t:=\pi_{Z^{tor}}((t_0,t_1))\in Z^{tor}(k)$ et
$u:=p(v)\in (G(k_{v_0})^0\cdot W)\cap U_t(\RA_k)^{B+f^*\Br_{G_0}(Z_t)}.$
  Ce qui donne (1).
  
  \medskip
  
On consid\`ere (2).

Fixons $v_0\in S$. On a le plongement  canonique 
de groupes $G(k_{v_0})\sbt G(k_S)$.
Puisque $G(k_S)^0\sbt G(k_S)$ est ouvert d'indice fini,  les sous-groupes 
$$G(k_{v_0})^0:=G(k_S)^0\cap G(k_{v_0})\sbt G(k_{v_0})  \ \ \ \text{et}\ \ \ G_0(k_S)^0:=G(k_S)^0\cap G_0(k_S)\sbt G_0(k_S) $$
 sont ouverts d'indice fini.
 Pour tout $t\in Z^{tor}(k)$, l'ensemble $W_t:=(G(k_{v_0})^0\cdot W)\cap U_t(\RA_k)^{A+B}$ est ouvert dans $U_t(\RA_k)$.
D'apr\`es (1), il existe $t\in Z^{tor}(k)$ tel que $W_t^{f^*\Br_{G_0}(Z_t)}\neq\emptyset$ et donc $f_t(W_t)^{\Br_{G_0}(Z_t)}\neq\emptyset$.
D'apr\`es \cite[Thm. 4.5]{Co}, $f_t(W_t)\sbt Z_t(\RA_k)$ est ouvert.
Par hypoth\`ese, il existe $z\in Z_t(k)\cap f_t(G_0(k_S)^0\cdot W_t)$ et ceci \'etablit (2).
\end{proof}

\begin{rem}
On peut \'etablir le th\'eor\`eme \ref{main1thminfty} par la m\'ethode de la d\'emonstration du th\'eor\`eme \ref{main1thm}.
Mais l'argument dans \S \ref{4} est plus simple.
\end{rem}

\begin{cor}
Avec les hypoth\`eses et notations du th\'eor\`eme \ref{main1thm}, soit $F\sbt X$ sous-sch\'ema ferm\'e de codimension $\geq 2$.
Alors,  pour tout $v_0\in \Omega_k$, tout sous-groupe ouvert d'indice fini $G(k_{v_0})^0\sbt G(k_{v_0})$
 et tout ouvert $\tilde{W}\sbt (X\setminus F)(\RA_k)$ satisfaisant $\tilde{W}^{\Br(X)\cap (A+B+ f^*\Br_G(Z))}\neq \emptyset$,
  il existe un $t\in Z^{\tor}(k)$ tel que 
$$\codim(F\cap U_t,U_t)\geq 2\ \ \ \text{et}\ \ \ (G(k_{v_0})^0\cdot \tilde{W})\cap (U_t\setminus (F\cap U_t))(\RA_k)^{B+ A+f_t^*\Br_{G_0}(Z_t)}\neq\emptyset.$$
\end{cor}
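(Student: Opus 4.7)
Le plan est de reprendre mot pour mot la d\'emonstration du th\'eor\`eme \ref{main1thm} (1) en gardant la trace du ferm\'e $F$, et en rempla\c{c}ant l'application du th\'eor\`eme \ref{mainlem} par une application du corollaire \ref{mainlemcod2}. Concr\`etement, on effectue d'abord toutes les constructions g\'eom\'etriques de la d\'emonstration du th\'eor\`eme \ref{main1thm} : choix du tore $T_0$, du $T_1$-torseur $Z_1\to Z$, du groupe lin\'eaire $H$ et du $T_2$-torseur $Y\to X$, avec $T_2 = T_0\times T_1$, ainsi que le diagramme commutatif menant au triple (\ref{triple-e}).

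Ensuite, on consid\`ere le ferm\'e $\tilde{F}:=p^{-1}(F)\sbt Y$. Puisque $p: Y\to X$ est un $T_2$-torseur, il est fid\`element plat \`a fibres de dimension $\dim (T_2)$, donc $\codim(\tilde{F},Y)=\codim(F,X)\geq 2$. De plus, $\tilde{F}$ est $T_2$-invariant, puisque c'est l'image r\'eciproque par un morphisme $T_2$-\'equivariant. On part alors de l'ouvert $\tilde{W}\sbt (X\setminus F)(\RA_k)$ satisfaisant $\tilde{W}^{\Br(X)\cap (A+B+f^*\Br_G(Z))}\neq\emptyset$, et apr\`es l'avoir r\'etr\'eci pour annuler $A$, on utilise la formule de descente (\ref{main1thm-e1}) pour se ramener \`a un ouvert de $(Y\setminus \tilde{F})(\RA_k)$ orthogonal \`a $p^*B+B_1+f_V^*\Br_1(T_0\times Z_1^{tor})$.

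\`A ce stade, au lieu d'appliquer le th\'eor\`eme \ref{mainlem} comme dans la d\'emonstration du th\'eor\`eme \ref{main1thm}, on applique le corollaire \ref{mainlemcod2} au triple (\ref{triple-e}), avec le ferm\'e $\tilde{F}$ (qui est $T_2$-invariant, $T_2$ jouant le r\^ole du $G_0$ du th\'eor\`eme \ref{mainlem}). Ceci fournit $(t_0,t_1)\in (T_0\times Z_1^{tor})(k)$ et un point ad\'elique $v\in V_{(t_0,t_1)}\setminus (\tilde{F}\cap V_{(t_0,t_1)})$ tels que $\codim(\tilde{F}\cap V_{(t_0,t_1)}, V_{(t_0,t_1)})\geq 2$ et que $v\in [(T_0\times H)(k_{v_0})^0\cdot T_2(k_{\infty})^+\cdot p^{-1}(\tilde{W})]\cap V_{(t_0,t_1)}(\RA_k)^{p^*B+B_1}$. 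On pose alors $t:=\pi_{Z^{tor}}(t_0,t_1)\in Z^{tor}(k)$ et $u:=p(v)$.

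Il reste \`a v\'erifier la condition sur la codimension et la condition orthogonale. Pour la codimension, puisque $V_{(t_0,t_1)}\to U_t$ est un $T_2$-torseur (\`a torsion pr\`es), il est fid\`element plat \`a fibres de dimension $\dim(T_2)$, donc $\codim(F\cap U_t,U_t)=\codim(\tilde{F}\cap V_{(t_0,t_1)}, V_{(t_0,t_1)})\geq 2$. Pour la condition orthogonale, les propri\'et\'es (d) et (e) de la d\'emonstration du th\'eor\`eme \ref{main1thm} donnent $v\in V_{(t_0,t_1)}(\RA_k)^{p^*B+f_1|_{(t_0,t_1)}^*\Br_{H_0}(Z_{1,t_1})}$, d'o\`u $u\in (G(k_{v_0})^0\cdot \tilde{W})\cap (U_t\setminus (F\cap U_t))(\RA_k)^{B+f_t^*\Br_{G_0}(Z_t)}$ ; on y rajoute $A$ en observant que $A$ s'annule d\'ej\`a sur $\tilde{W}$ donc sur $u$. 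L'obstacle principal est uniquement technique : il faut s'assurer que la formule de descente (\ref{main1thm-e1}) fonctionne encore en pr\'esence du ferm\'e $F$, ce qui d\'ecoule de la bijectivit\'e de $(p^*)^{-1}$ sur les sous-groupes consid\'er\'es, tout le reste \'etant formel.
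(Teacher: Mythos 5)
Votre démonstration est correcte et suit essentiellement la même voie que celle du papier : restreindre la formule de descente (\ref{main1thm-e1}) au complémentaire de $F$ (et de $p^{-1}(F)$, qui est bien $T_2$-invariant de codimension $\geq 2$), puis remplacer l'application du théorème \ref{mainlem} par celle du corollaire \ref{mainlemcod2} au triple (\ref{triple-e}). Le papier condense cela en deux phrases ; vous explicitez les mêmes vérifications (invariance et codimension de $\tilde{F}$, redescente de la codimension et de l'orthogonalité via les propriétés (d) et (e)), sans écart de méthode.
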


\begin{proof}
Avec les constructions et notations de la d\'emonstration du th\'eor\`eme \ref{main1thm}, d'apr\`es (\ref{main1thm-e1}), 
on a 
$$(X\setminus F)(\RA_k)^{\Br(X)\cap (B+ f^*\Br_G(Z))}=p((Y\setminus p^{-1}F)(\RA_k)^{\Br(Y)\cap (p^*B+ B_1+f_V^*\Br_1(T_0\times Z_1^{tor}))}).$$
Une application du corollaire \ref{mainlemcod2} au triple (\ref{triple-e}) donne le r\'esultat.
\end{proof}

 \section{Le r\'esultat principal}\label{7}
 Dans toute cette section,  $k$ est un corps de nombres. 
Sauf  mention explicite du contraire, une vari\'et\'e est une  $k$-vari\'et\'e. 
Dans cette section, on \'etablit le r\'esultat principal:  le th\'eor\`eme \ref{main1cor2} (ou le th\'eor\`eme \ref{main2thm} sur la version de la fibration).
 
Rappelons la notion de sous-groupe de Brauer invariant (cf. D\'efinition \ref{def-invariant}). 
 
 Soit $G$ un groupe lin\'eaire connexe.
 Soit $S\sbt \Omega_k$ un sous-ensemble fini.
 On consid\`ere tout sous-groupe ouvert d'indice fini $ G(k_S)^0$ de  $G(k_S)$.
 Ainsi $G(k_S)^0$ est ferm\'e dans  $G(k_S)$ et on a directement:
 
 \begin{lem}\label{lemsousgroupeouvert}
 Si $S=\infty_k$, alors $G(k_{\infty})^+\sbt G(k_{\infty})$ est un sous-groupe ouvert d'indice fini 
 et tout tel sous-groupe $G(k_S)^0$ contient $G(k_{\infty})^+$.
 \end{lem}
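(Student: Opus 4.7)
Le plan est de scinder l'\'enonc\'e en deux assertions ind\'ependantes: l'ouverture et l'indice fini de $G(k_{\infty})^+$ dans $G(k_{\infty})$ d'une part, et l'inclusion $G(k_{\infty})^+\sbt G(k_S)^0$ pour tout sous-groupe ouvert d'indice fini $G(k_S)^0\sbt G(k_S)=G(k_{\infty})$ d'autre part. La preuve reste essentiellement topologique: elle s'appuie sur la structure de groupe de Lie de $G(k_v)$ aux places archim\'ediennes et sur des propri\'et\'es g\'en\'erales des sous-groupes ouverts d'un groupe topologique.

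Pour la premi\`ere assertion, je remarquerais d'abord que $\infty_k$ est fini et que $G(k_{\infty})=\prod_{v\in \infty_k}G(k_v)$ est un produit fini, de sorte que la composante connexe de l'identit\'e de ce produit est le produit des composantes connexes: $G(k_{\infty})^+=\prod_{v\in \infty_k}G(k_v)^+$. Pour chaque place $v\in \infty_k$, le groupe $G(k_v)$ est un groupe de Lie r\'eel (complexe si $v$ est complexe, vu comme r\'eel), et comme $G$ est un groupe alg\'ebrique lin\'eaire connexe sur $k$, il est classique que $G(k_v)$ ne poss\`ede qu'un nombre fini de composantes connexes (le cas $k_v=\BC$ donne m\^eme la connexit\'e). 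Chaque composante d'une vari\'et\'e diff\'erentielle \'etant ouverte, $G(k_v)^+$ est un sous-groupe ouvert d'indice fini de $G(k_v)$. Par produit fini, $G(k_{\infty})^+$ est ouvert d'indice fini dans $G(k_{\infty})$.

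Pour la seconde assertion, j'utiliserais le fait standard que tout sous-groupe ouvert d'un groupe topologique est \'egalement ferm\'e (son compl\'ementaire est r\'eunion des classes \`a gauche non triviales, chacune ouverte). Ainsi $G(k_S)^0$ est \`a la fois ouvert et ferm\'e dans $G(k_{\infty})$ et contient l'\'el\'ement neutre. L'intersection $G(k_S)^0\cap G(k_{\infty})^+$ est donc un sous-ensemble ouvert et ferm\'e non vide du connexe $G(k_{\infty})^+$; elle co\"incide donc avec $G(k_{\infty})^+$ tout entier, d'o\`u l'inclusion $G(k_{\infty})^+\sbt G(k_S)^0$.

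Aucune \'etape ne pr\'esente d'obstacle r\'eel: la seule assertion non purement topologique est la finitude du nombre de composantes connexes de $G(\BR)$, qui est un r\'esultat classique pour un groupe alg\'ebrique lin\'eaire (Whitney, ou cas particulier du th\'eor\`eme de Borel-Serre). Le reste n'est que topologie des groupes.
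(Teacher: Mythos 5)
Votre preuve est correcte et suit exactement la voie que l'article sous-entend : le texte se contente de remarquer, juste avant l'énoncé, que tout sous-groupe ouvert $G(k_S)^0$ est fermé, puis déclare le lemme immédiat, ce qui correspond précisément à votre argument de connexité pour la seconde assertion et au fait classique (Whitney) que $G(k_v)$ n'a qu'un nombre fini de composantes, toutes ouvertes, pour la première. Rien à redire.
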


\begin{lem}\label{main2lem}
Soit $G$ un groupe lin\'eaire connexe et simplement connexe.
Soit $v\in \Omega_k$ une place. 
Supposons que $G$ est unipotent ou que $G$ est semi-simple et simple avec $G(k_v)$ non compact.
Alors $G(k_v)$ ne poss\`ede pas de sous-groupe ouvert d'indice fini non-trivial.
\end{lem}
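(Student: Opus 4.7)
The plan is to split on the hypothesis and reduce both cases to a common divisibility phenomenon. Call a (possibly non-abelian) group \emph{divisible} if every element admits an $n$-th root for each integer $n\geq 1$. I would first record that a divisible group has no proper subgroup of finite index: passing from such a subgroup to its normal core, we may assume normality, and then we must see that a finite group in which every element is an $m$-th power for every $m$ is trivial --- take $m$ equal to the group order. This observation will convert each of the two cases of the lemma into a generation-by-divisible-subgroups statement.

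In the unipotent case, since $\Char(k)=0$, the exponential map identifies $G$ with $\Lie(G)$ as a $k$-variety, with product given by the Baker--Campbell--Hausdorff formula; consequently for $g=\exp(X)$ with $X\in \Lie(G)(k_v)$, the element $\exp(X/n)$ is an $n$-th root of $g$, so $G(k_v)$ is divisible. Alternatively, one can proceed by induction on $\dim G$ using a central dévissage $1\to \BG_a\to G\to G'\to 1$: since $H^1(k_v,\BG_a)=0$ the sequence stays exact on $k_v$-points, the additive group $(k_v,+)$ is divisible because $x\mapsto x/n$ is well-defined in $k_v$, and an extension of divisible groups is divisible.

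In the semisimple case, where $G$ is simply connected and (absolutely) almost simple with $G(k_v)$ noncompact, I would first argue that $G$ must be $k_v$-isotropic: for archimedean $v$ this is classical real Lie theory, and for non-archimedean $v$ it is the standard fact that a simply connected absolutely almost simple $k_v$-anisotropic group has compact $k_v$-points (\cite[Thm.~3.1]{PR}). Then the Kneser--Tits theorem over local fields (Platonov; cf.\ \cite[Ch.~7]{PR}) asserts that $G(k_v)$ is generated by the subgroups $U(k_v)$ as $U$ runs over the unipotent radicals of proper $k_v$-parabolic subgroups of $G$. Each such $U$ is unipotent, so by the first case $U(k_v)$ is divisible and thus contained in every finite-index subgroup of $G(k_v)$; since these subgroups generate, no proper finite-index subgroup exists.

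The main obstacle is the appeal to the Kneser--Tits theorem over local fields in the semisimple case, which provides the crucial generation-by-unipotent-subgroups statement; once that is granted, the unipotent case and the passage from finite-index subgroups to divisibility are elementary.
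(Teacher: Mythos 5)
Your proof is correct and follows essentially the same route as the paper's: divisibility of $\BG_a(k_v)$ together with a central filtration handles the unipotent case, and the Kneser--Tits theorem (Platonov) handles the semisimple case. The only deviation is that for archimedean $v$ the paper invokes E.~Cartan's connectedness of $G(k_v)$ for simply connected semisimple $G$ (a connected topological group has no proper open subgroup), whereas you apply the isotropy criterion and Kneser--Tits uniformly at all places; both work, and your explicit reduction of ``no proper finite-index subgroup'' to divisibility is the same device the paper uses implicitly.
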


\begin{proof}
Si $G\cong \BG_a$, ceci vaut car $G(k_v)\cong k_v$ est uniquement divisible.
Dans le cas o\`u $G$ est unipotent, ceci vaut car il existe une filtration de $G$ de facteurs $\BG_a$ (\cite[Cor. 15.5 (ii)]{Bo}).
Ceci vaut aussi pour tout tel $G$ d\'efini sur $k_v$.

Dans le cas o\`u $G$ est semi-simple, simplement connexe et simple avec $G(k_v)$ non compact,
 si $v\in \infty_k$, ceci vaut par E. Cartan (cf. \cite[Prop. 7.6]{PR}).
 Si $v\notin \infty_k$, ceci vaut car $G(k_v)$ est engendr\'e par les $k_v$-points des sous-groupes unipotents de $G$ sur $k_v$ (la conjecture de Kneser-Tits \'etablie par Platonov, cf. \cite[Thm. 7.6]{PR}).
\end{proof}

\begin{prop}\label{main2lem1}
Soit $G$ un groupe lin\'eaire connexe et simplement connexe.
Soit $S\sbt \Omega_k$ un sous-ensemble fini non vide  tel que $G'(k_S)$ soit non compact pour chaque facteur simple $G'$ du groupe $G^{sc}$.
Alors, pour tout sous-groupe ouvert d'indice fini $G(k_S)^0\sbt G(k_S)$ et tout $G$-torseur $P$ sur $k$, l'ensemble $G(k_S)^0\cdot P(k)$ est dense dans $P(\RA_k)$.
\end{prop}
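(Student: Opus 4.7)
The plan is to reduce first to the trivial torsor via the Hasse principle, then to the semisimple case via the Levi decomposition, and finally to treat the semisimple case by combining strong approximation for each $k$-simple factor with Lemma~\ref{main2lem}. If $P(\RA_k)=\emptyset$ the conclusion is vacuous; otherwise, since $G$ is simply connected, the Hasse principle (Kneser--Harder--Chernousov for the simply connected semisimple quotient, combined with the vanishing of $H^1(k,G^u)$ for split unipotent groups in characteristic zero) gives $\Sha^1(G)=1$, hence $P(k)\neq\emptyset$. Fixing $p_0\in P(k)$ identifies $P$ with the trivial $G$-torsor, so it suffices to show that $G(k_S)^0\cdot G(k)$ is dense in $G(\RA_k)$; equivalently, by taking inverses, that $G(k)\cdot G(k_S)^0$ is dense.

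Using a Mostow--Levi decomposition (valid in characteristic zero) one writes $G=G^u\rtimes G^{ss}$ and decomposes $G^{ss}=\prod_i G'_i$ into its simply connected $k$-simple factors. Lemma~\ref{main2lem} applied to $G^u$ forces $G(k_S)^0\cap G^u(k_v)=G^u(k_v)$ for each $v\in S$, so $G(k_S)^0\supset G^u(k_S)$. By hypothesis, for each $i$ there exists $v_i\in S$ with $G'_i(k_{v_i})$ non-compact; Lemma~\ref{main2lem} applied to $G'_i$ then forces $G'_i(k_{v_i})\subset G(k_S)^0$ via the natural embedding. Hence $G(k_S)^0$ contains the subgroup
\[
H:=G^u(k_S)\cdot\prod_i G'_i(k_{v_i}).
\]

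The Kneser--Platonov strong approximation theorem, applied to each simply connected $k$-simple group $G'_i$ with respect to the single-place set $\{v_i\}$, yields density of $G'_i(k)\cdot G'_i(k_{v_i})$ in $G'_i(\RA_k)$; taking the product over $i$ gives density of $G^{ss}(k)\cdot\prod_iG'_i(k_{v_i})$ in $G^{ss}(\RA_k)$. Combining this with strong approximation for the unipotent group $G^u$ off $S$ (immediate via successive $\BG_a$-extensions) and with the semidirect-product identity $(\gamma^u,\gamma^{ss})(h^u,h^{ss})=(\gamma^u\cdot\gamma^{ss}(h^u),\gamma^{ss}h^{ss})$ --- in which the component $h^u\in G^u(k_S)$ can be chosen freely since $G^u(k_S)\subset G(k_S)^0$ --- one concludes that $G(k)\cdot H$ is dense in $G(\RA_k)$, and therefore so is $G(k)\cdot G(k_S)^0$.

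The main obstacle is that an arbitrary open finite-index subgroup $G(k_S)^0\subset G(k_S)$ need not be normal, need not split off the unipotent radical, and need not respect the product decomposition of $G^{ss}$. The crucial role of Lemma~\ref{main2lem} is precisely to force $G(k_S)^0$ to contain the ``Kneser--Tits pieces'' $G^u(k_S)$ and $G'_i(k_{v_i})$, which are exactly the ingredients on which the strong approximation argument operates.
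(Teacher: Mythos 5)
Your argument is correct, and it shares the paper's skeleton: reduce to the trivial torsor via the Hasse principle for simply connected groups, use Lemma \ref{main2lem} to force the open finite-index subgroup $G(k_S)^0$ to contain the relevant local groups at a non-compact place, and conclude by Kneser--Platonov strong approximation. Where you genuinely diverge is in how the general case is assembled from the unipotent and simple building blocks: the paper takes a normal series of $G$ whose successive quotients are either unipotent or semisimple simply connected simple, and propagates the statement along this filtration by the fibration method (\cite[Prop. 3.1]{CTX13}), whereas you use the characteristic-zero Levi splitting $G=G^u\rtimes G^{ss}$ together with the fact that the simply connected group $G^{ss}$ is the direct product of its $k$-simple factors, and then conclude by a direct computation in the semidirect product: first approximate the $G^{ss}$-component by an element of $G^{ss}(k)\cdot\prod_i G'_i(k_{v_i})$, then fix the unipotent component using density of $G^u(k)\cdot G^u(k_S)$ in $G^u(\RA_k)$ --- the only point worth making fully explicit being that, once $\gamma^{ss}\in G^{ss}(k)$ is chosen, the conjugation twist ${}^{\gamma^{ss}}h^u$ runs through all of $G^u(k_S)$, so the required element $h=h^u h^{ss}$ indeed lies in $G^u(k_S)\cdot\prod_i G'_i(k_{v_i})\subset G(k_S)^0$. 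Your route buys a more elementary and self-contained proof (no appeal to the fibration machinery of \cite{CTX13}, and the role of $G(k_S)^0$ is completely explicit via the containments $G^u(k_S),\ G'_i(k_{v_i})\subset G(k_S)^0$), at the price of invoking the Mostow--Levi decomposition and the product decomposition of $G^{ss}$, which the paper's filtration argument does not need; conversely, the paper's proof is shorter on the page but leaves the compatibility of the fibration step with the subgroup $G(k_S)^0$ implicit. Both proofs use Lemma \ref{main2lem} in exactly the same way, so no extra hypothesis is consumed by your variant.
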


\begin{proof}
On peut supposer que $P(\RA_k)\neq\emptyset$.
 Puisque $\Br_a(G)=0$, par le principe de Hasse pour un $G$-torseur (Kneser, Harder et Chernousov, cf. \cite[Thm. 5.1.1 (e)]{sko}), on a $P(k)\neq\emptyset$. 
 Alors on peut supposer que $G\cong P$.

Si $G$ est soit unipotent soit semi-simple, simplement connexe et simple,  par hypoth\`ese
 il existe une place $v\in S$ tel que $G(k_v)$ soit non compact.
D'apr\`es le lemme \ref{main2lem}, $G(k_v)^0:=G(k_S)^0\cap  G(k_v)$ est exactement $G(k_v)$.
Une application de l'approximation forte de $G$ (Kneser, Platonov, cf. \cite[Thm. 7.12]{PR}) donne l'\'enonc\'e.

En g\'en\'eral, le groupe $G$ poss\`ede une filtration de facteurs soit unipotents soit semi-simples simplement connexes et simples.
Une application de la m\'ethode de fibration (\cite[Prop. 3.1]{CTX13}) donne l'\'enonc\'e.
\end{proof}

\begin{prop}\label{main2prop}
Soient $G$ un groupe lin\'eaire connexe, et $Z$ un $G$-espace homog\`ene \`a stabilisateur g\'eom\'etrique connexe.
Soit $S\sbt \Omega_k$ un sous-ensemble fini non vide  tel que $G'(k_S)$ soit non compact pour chaque facteur simple $G'$ du groupe $G^{sc}$.
Supposons que $S=\infty_k$ ou que $\bk[Z]^{\times}=\bk^{\times}$.
Alors, pour tout sous-groupe ouvert d'indice fini $G(k_S)^0\sbt G(k_S)$ et tout ouvert $W\sbt Z(\RA_k)$ satisfaisant $W^{\Br_G(Z)}\neq\emptyset$, 
on a $Z(k)\cap (G(k_S)^0\cdot W)\neq \emptyset $.
\end{prop}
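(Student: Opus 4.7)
L'idée est d'utiliser la descente (Corollaire \ref{Dmaincor2}) pour ramener l'énoncé à une question d'approximation forte sur le groupe $G$ lui-même, puis d'invoquer \cite[Thm.~1.4]{BD} dans le cas $S=\infty_k$, ou l'approximation forte hors de $S$ pour $G^{sc}$ (Proposition \ref{main2lem1}) dans le cas $\bk^{\times}=\bk[Z]^{\times}$.

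\textbf{Étape 1 (descente et réduction).} Puisque $W^{\Br_G(Z)}\neq\emptyset$, une application préliminaire de \cite[Thm.~1.4]{BD} (qui correspond au cas $S=\infty_k$, $G(k_S)^0=G(k_{\infty})^+$) fournit $Z(k)\neq\emptyset$; fixons $z_0\in Z(k)$, dont le stabilisateur $G_0$ est connexe par hypothèse, de sorte que $Z\cong G/G_0$. Le Corollaire \ref{Dmaincor2} permet d'écrire tout $w\in W^{\Br_G(Z)}$ sous la forme $w=\widetilde{g}\cdot z_0$ avec $\widetilde{g}\in G(\RA_k)^{\Br_1(G)}$. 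Quitte à remplacer $G(k_S)^0$ par son c\oe ur normal (qui demeure ouvert d'indice fini dans $G(k_S)$), on le suppose normal dans $G(k_S)$, donc stable par conjugaison par $G(k)$ plongé diagonalement. Il suffit alors d'établir:
\begin{equation*}
(\star)\qquad G(k)\cdot G(k_S)^0 \text{ est dense dans } G(\RA_k)^{\Br_1(G)}.
\end{equation*}
En effet, soit $U$ un voisinage ouvert de $\widetilde{g}$ dans $G(\RA_k)$ avec $U\cdot z_0\sbt W$; par $(\star)$ il existe $g_k\cdot g_S\in U$ avec $g_k\in G(k)$ et $g_S\in G(k_S)^0$. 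Posant $z:=g_k\cdot z_0\in Z(k)$, on a $g_k g_S\cdot z_0=(g_k g_S g_k^{-1})\cdot z\in W$, d'où $z\in (g_k g_S g_k^{-1})^{-1}\cdot W\sbt G(k_S)^0\cdot W$ par normalité de $G(k_S)^0$.

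\textbf{Étape 2 (preuve de $(\star)$, cas $S=\infty_k$).} On applique \cite[Thm.~1.4]{BD} à $G$ vu comme $G$-espace homogène sous translations à gauche (stabilisateur trivial, $\Br_G(G)=\Br_1(G)$ par le Lemme \ref{lembrauerkbar}): ceci donne la densité de $G(k_{\infty})^+\cdot G(k)$ dans $G(\RA_k)^{\Br_1(G)}$. Comme $G(k_{\infty})^+\sbt G(k_S)^0$ (Lemme \ref{lemsousgroupeouvert}), on obtient $(\star)$.

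\textbf{Étape 3 (preuve de $(\star)$, cas $\bk^{\times}=\bk[Z]^{\times}$).} Par la suite exacte de Sansuc, cette hypothèse équivaut à la surjectivité $G_0^{tor}\twoheadrightarrow G^{tor}$, ce qui entraîne que la multiplication $G^{ssu}\times G_0\to G$ est surjective. Via la suite exacte $1\to G^{ssu}\to G\xrightarrow{q} G^{tor}\to 1$ et l'égalité $\Br_1(G)=q^*\Br_1(G^{tor})$ (Sansuc), on décompose adéliquement (modulo les obstructions cohomologiques résiduelles, éliminées grâce à $\widetilde{g}\in G(\RA_k)^{\Br_1(G)}$) l'élément $\widetilde{g}=g^{ssu}\cdot g_0$ avec $g_0\in G_0(\RA_k)$ et $g^{ssu}\in G^{ssu}(\RA_k)$. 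Puisque $g_0\in G_0$ stabilise $z_0$, on a $\widetilde{g}\cdot z_0=g^{ssu}\cdot z_0$, et l'on se ramène à approcher $g^{ssu}$ par des éléments de $G^{ssu}(k)\cdot G^{ssu}(k_S)^0$ dans $G^{ssu}(\RA_k)$. Puisque $\Br_1(G^{ssu})=0$, ceci résulte de l'approximation forte hors de $S$ pour le revêtement simplement connexe $G^{sc}$ (Proposition \ref{main2lem1}, combinée au Lemme \ref{main2lem} qui garantit que $G^{sc}(k_v)$ ne possède aucun sous-groupe ouvert d'indice fini non trivial en une place $v\in S$ où $G^{sc}(k_v)$ est non compact).

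L'obstacle principal est l'Étape 3: le relèvement adélique dans la décomposition $\widetilde{g}=g^{ssu}\cdot g_0$ et la gestion de l'indice fini dans $G(k_S)^0$ requièrent un argument technique utilisant pleinement les conditions de Brauer-Manin satisfaites par $\widetilde{g}$ et la structure du dévissage $G^{sc}\to G^{ssu}\hookrightarrow G\to G^{tor}$; la surjectivité $G_0^{tor}\twoheadrightarrow G^{tor}$ est la clé qui permet d'éliminer l'obstruction torique classique empêchant l'approximation forte hors d'un ensemble fini non archimédien arbitraire.
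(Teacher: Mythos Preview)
Your overall strategy --- descend via Corollary~\ref{Dmaincor2} to a question on $G(\RA_k)^{\Br_1(G)}$, then dévisser $G$ via $G^{ssu}\hookrightarrow G\twoheadrightarrow G^{tor}$ --- is the same as the paper's. But Étape~3 has real gaps.

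First, the claim $\Br_1(G^{ssu})=0$ is false without further hypotheses: one has $\Br_a(G^{ssu})\cong H^1(k,\Pic(G^{ssu}_{\bk}))$ and $\Pic(G^{ssu}_{\bk})\cong\Pic(G^{ss}_{\bk})$, which is nonzero whenever $G^{ss}\neq G^{sc}$. For the same reason, Proposition~\ref{main2lem1} applies to $G^{sc}$, not to $G^{ssu}$; the passage from one to the other that you invoke is unjustified. The paper fixes this at the outset by replacing $G$ with a quasi-trivial cover $H$ (flasque resolution, \cite[Prop.~5.4]{CT07}): then $H^{ss}=H^{sc}$, so $H^{ssu}$ is genuinely simply connected and $\Br_a(H^{ssu})=0$.

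Second, even after this reduction, the global decomposition $\widetilde g=g^{ssu}\cdot g_0$ need not exist. Surjectivity of $G_0\to G^{tor}$ does not force $G_0(k_v)\to G^{tor}(k_v)$ to be surjective place by place, since $\ker(G_0\to G^{tor})$ may be disconnected and contribute nontrivial $H^1$; the vague appeal to the Brauer--Manin condition does not repair this. Relatedly, your reduction to $(\star)$ in Étape~1 is only a \emph{sufficient} condition, and $(\star)$ is genuinely false for, say, $G=\BG_m$ with $S$ a single finite place (while the proposition is trivially true for $Z=\Spec k$); so Étape~3 cannot be a proof of $(\star)$, and indeed your argument there silently reverts to acting on $z_0$ rather than proving $(\star)$.

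The paper's route around the lifting problem is to project $W_1$ to the quasi-trivial torus $G^{tor}$, use Harari's strong approximation to find $t\in G^{tor}(k)$ in $G^{tor}(k_\infty)^+\cdot\phi(W_1)$, and then observe that the surjection $G_0\twoheadrightarrow G^{tor}$ induces a surjection $G_0(k_\infty)^+\twoheadrightarrow G^{tor}(k_\infty)^+$ of connected real Lie groups --- so the lift is needed only at archimedean places, where it always exists. One lands in the fiber $G_t$, a $G^{ssu}$-torsor, and Proposition~\ref{main2lem1} (now legitimately applicable) furnishes $g\in G_t(k)$ with $g\in G^{ssu}(k_S)^0\cdot W_1\cdot G_0(k_\infty)^+$; since $G_0$ stabilises $z$ \emph{on the right}, this gives $g\cdot z\in G(k_S)^0\cdot W$.
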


\begin{proof}
Le cas o\`u $S=\infty_k$ a \'et\'e \'etabli par Borovoi et Demarche (\cite[Thm. 1.4]{BD}). 
Ici, on donne une d\'emonstration unifi\'ee des deux cas consid\'er\'es.

Une application du principe de Hasse pour un espace homog\`ene \`a stabilisateur g\'eom\'etrique connexe (Borovoi \cite{B96}, cf. \cite[Thm. 5.2.1 (a)]{sko})
montrer que $Z(k)\neq \emptyset$. 
Il induit un $G$-morphisme $\pi: G\to Z$ tel que $Z\cong G/G_0$ avec $G_0\sbt G$ un sous-groupe ferm\'e connexe.

Par la r\'esolution flasque \cite[Porp. 5.4]{CT07}, il existe un groupe lin\'eaire connexe $H$ 
et un homomorphisme surjectif $H\xrightarrow{\psi} G$
tels que $\Ker(\psi)$ soit un tore et $H$ soit quasi-trivial, i.e. $H^{tor}$ soit quasi-trivial et $H^{sc}=H^{ss}$.
Alors $Z$ est une $H$-espace homog\`ene \`a stabilisateur g\'eom\'etrique connexe, $H^{sc}\cong G^{sc}$ 
et, d'apr\`es le corollaire \ref{corbraueralgebraic} et la proposition \ref{corbraueralgebraic1}, on a: $\Br_G(Z)=\Br_H(Z).$
Le sous-groupe $H(k_S)^0:=\psi^{-1}(G(k_S)^0)\sbt H(k_S)$ est ouvert d'indice fini.
Alors on peut remplacer $G$ par $H$ et supposer que $G$ est quasi-trivial.

Notons $G\xrightarrow{\phi} G^{tor}$ le quotient torique maximal. 
Alors $\phi$ est lisse \`a fibres g\'eom\'e\-triquement int\`egres et donc $G(\RA_k)\to G^{tor}(\RA_k)$ est ouvert (\cite[Thm. 4.5]{Co}).
D'apr\`es le corollaire \ref{Dmaincor2}, il existe un ouvert $W_1\sbt G(\RA_k)$ et un point $z\in Z(k)$
tels que $\pi (W_1)\cdot z\sbt W$ et $W_1^{\Br_1(G)}\neq\emptyset$.
Puisque $G^{tor}$ satisfait l'approximation forte par rapport \`a $\Br_1(G^{tor})$ hors de $\infty_k$ (Harari \cite[Thm. 2]{Ha08}),
il existe
$$t\in G^{tor}(k)\cap (G^{tor}(k_{\infty})^+\cdot \phi (W_1)) .$$

Notons $G_t$ la fibre de $\phi$ au-dessus de $t$ et $G^{ssu}(k_S)^0:=G(k_S)^0\cap G^{ssu}(k_S)$.
Ainsi  $G_t$ est un $G^{ssu}$-torseur.
D'apr\`es la proposition \ref{main2lem1}, l'ensemble $G^{ssu}(k_S)^0\cdot G_t(k)$ est dense dans $G^{ssu}(\RA_k)$.

Dans le cas o\`u $S=\infty_k$, puisque l'homomorphisme $G(k_{\infty})^+\to G^{tor}(k_{\infty})^+$ est surjectif,
 il existe 
 $$a\in G_t(\RA_k)\cap (G(k_{\infty})^+\cdot W_1)\ \ \ \text{ et donc}\ \ \  g\in G_t(k)\cap (G^{ssu}(k_S)^0\cdot G(k_{\infty})^+\cdot W_1).$$ 
 Par le lemme \ref{lemsousgroupeouvert}, on a $G^{ssu}(k_S)^0\cdot G(k_{\infty})^+\sbt  G(k_S)^0$ et donc $g\cdot z\in Z(k)\cap (G(k_S)^0\cdot W)$.

Dans le cas o\`u $\bk[Z]^{\times}=\bk^{\times}$, par la suite exacte de Sansuc \cite[Prop. 6.10]{S}, 
$G_0^{tor}\to G^{tor}$ est surjectif et donc $G_0\to G \to G^{tor}$ est surjectif.
Ainsi $G_0(k_{\infty})^+\to G^{tor}(k_{\infty})^+$ est surjectif.
Alors il existe  $a\in G_t(\RA_k)\cap (G_0(k_{\infty})^+\cdot W_1)$ et donc 
$$g\in G_t(k)\cap (G^{ssu}(k_S)^0\cdot G_0(k_{\infty})^+\cdot W_1).$$
Ainsi $g\cdot z\in Z(k)\cap (G(k_S)^0\cdot W)$.
\end{proof}

 \begin{thm}\label{main2thm}
Soient $G$ un groupe lin\'eaire connexe, $G_0\sbt G$ un sous-groupe ferm\'e connexe et $Z:=G/G_0$.
 Soient $X$ une $G$-vari\'et\'e lisse g\'eom\'etriquement int\`egre,  $U\sbt X$ un $G$-ouvert  
 et $U\xrightarrow{f}Z$ un $G$-morphisme. 
 Soient $A\sbt \Br(X)$ et $B\sbt \Br_G(U)$ (cf. (\ref{defBr_GX})) deux sous-groupes finis. 
 Soit $S\sbt \Omega_k$ un sous-ensemble fini non vide  tel que $G'(k_S)$ soit non compact pour chaque facteur simple $G'$ du groupe $G^{sc}$.
 Supposons que $S=\infty_k$ ou que $\bk[X]^{\times}/\bk^{\times}=0$.
 Alors, pour tout sous-groupe ouvert d'indice fini $G(k_S)^0\sbt G(k_S)$
 et tout ouvert $W\sbt X(\RA_k)$ satisfaisant $W^{\Br(X)\cap (A+B+ f^*\Br_G(Z))}\neq\emptyset ,$  
il existe   un $z\in Z(k)$ de fibre $U_z$, tel que 
 $$(G(k_S)^0\cdot W)\cap U_z(\RA_k)^{B+ A}\neq\emptyset .$$
 \end{thm}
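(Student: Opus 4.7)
\textit{Stratégie.} La preuve combine trois ingrédients antérieurs : le théorème \ref{main1thminfty} (approximation forte hors des places archimédiennes), le théorème \ref{main1thm} (fibration au-dessus d'un pseudo espace homogène), et la proposition \ref{main2prop} (approximation forte pour les espaces homogènes à stabilisateurs connexes). D'après l'exemple \ref{examtoretorseur}(1), l'espace $Z = G/G_0$ est un pseudo $G$-espace homogène au sens de la définition \ref{defpseudo}, ce qui permet d'invoquer les deux théorèmes de fibration ci-dessus. On distingue alors les deux cas selon l'hypothèse sur $S$.

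\textit{Cas $S=\infty_k$.} L'application de la proposition \ref{main2prop} au couple $(G,Z)$, avec $S=\infty_k$ et $G(k_S)^0 := G(k_\infty)^+$ (ce qui est licite par le lemme \ref{lemsousgroupeouvert}), montre que $G(k_\infty)^+\cdot Z(k)$ est dense dans $Z(\RA_k)^{\Br_G(Z)}$. Le théorème \ref{main1thminfty}(2) s'applique alors à $(G,Z,X,U,f)$ avec les sous-groupes $A$ et $B$ et fournit un $z\in Z(k)$ tel que $(G(k_\infty)^+\cdot W)\cap U_z(\RA_k)^{A+B}\neq\emptyset$. Puisque $G(k_\infty)^+\subset G(k_S)^0$ (lemme \ref{lemsousgroupeouvert}), ceci conclut ce cas.

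\textit{Cas $\bk^\times=\bk[X]^\times$.} On applique le théorème \ref{main1thm}(2) à $(G,Z,X,U,f,A,B)$. Il reste à vérifier son hypothèse : pour tout sous-groupe ouvert d'indice fini $H_0(k_S)^0\subset H_0(k_S)$ et tout $t\in Z^{\tor}(k)$ de fibre $Z_t$, l'adhérence $\overline{H_0(k_S)^0\cdot Z_t(k)}$ contient $Z_t(\RA_k)^{\Br_{H_0}(Z_t)}$, où $H_0$ désigne le stabilisateur de $G$ sur $Z^{\tor}$ au sens de la définition \ref{defitorequotient}. Notant $Z\xrightarrow{\pi}Z^{\tor}$ le quotient torique maximal, le morphisme composé $G\to Z^{\tor}$ est surjectif de noyau $H_0$ connexe contenant $G_0$, donc $Z_t\cong H_0/G_0$ est un $H_0$-espace homogène à stabilisateur géométrique connexe. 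Comme $Z^{\tor}$ est un tore, on a $H_0^{sc}\cong G^{sc}$, si bien que l'hypothèse de non-compacité passe automatiquement de $G$ à $H_0$. Par la proposition \ref{propbrauersuj} appliquée à $\pi$, on a la suite exacte
\[
0\to (Z^{\tor})^*\to \bk[Z]^\times/\bk^\times\to \bk[Z_t]^\times/\bk^\times\to 0,
\]
et comme $(Z^{\tor})^*=\bk[Z]^\times/\bk^\times$ par définition de $Z^{\tor}$, on obtient $\bk[Z_t]^\times=\bk^\times$. La proposition \ref{main2prop} s'applique donc au couple $(H_0,Z_t)$ avec le sous-groupe $H_0(k_S)^0$, ce qui établit l'hypothèse voulue, puis le théorème \ref{main1thm}(2) fournit le $z\in Z(k)$ cherché.

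\textit{Obstacle principal.} Toute la substance arithmétique est concentrée dans les résultats préparatoires. Le seul point délicat est de s'assurer que les fibres $Z_t$ (et non $Z$ lui-même) vérifient les hypothèses de la proposition \ref{main2prop} relativement au groupe $H_0$ ; ceci se réduit aux deux observations : $H_0^{sc}=G^{sc}$ car $G/H_0=Z^{\tor}$ est un tore, et $\bk[Z_t]^\times=\bk^\times$ (proposition \ref{propbrauersuj}).
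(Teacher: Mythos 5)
Votre démonstration est correcte et suit essentiellement la même voie que celle de l'article : cas $S=\infty_k$ par le théorème \ref{main1thminfty}(2) combiné à la proposition \ref{main2prop}, et cas $\bk[X]^{\times}=\bk^{\times}$ par le théorème \ref{main1thm}(2) dont l'hypothèse sur les fibres $Z_t$ est vérifiée via la proposition \ref{main2prop} appliquée au stabilisateur de $G$ sur $Z^{\tor}$, en utilisant $\bk[Z_t]^{\times}/\bk^{\times}=0$ (proposition \ref{propbrauersuj}). Vous explicitez en outre deux points laissés implicites dans l'article ($G_1^{sc}\cong G^{sc}$ et le transfert de l'hypothèse de non-compacité), ce qui est bienvenu.
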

 
 \begin{proof}
Le cas o\`u $S=\infty_k$ d\'ecoule du th\'eor\`eme \ref{main1thminfty} (2) et de la proposition \ref{main2prop}.
 
 Soit $\pi: Z\xrightarrow{\pi} T$ le quotient torique maximal de $Z$ et $G_1\sbt G$ le stabilisateur de $G$ sur $T$ (cf. D\'efinition \ref{defitorequotient}).
 Pour tout $t\in T(k)$, notons $Z_t$ la fibre de $\pi$ au-dessus de $t$.
 Alors $Z_t$ est un $G_1$-espace homog\`ene \`a stabilisateur g\'eom\'etrique connexe.
 Par la proposition \ref{propbrauersuj}, on a $\bk[Z_t]^{\times}/\bk^{\times}=0$.
 
 Le cas o\`u $\bk[X]^{\times}/\bk^{\times}=0$ d\'ecoule du  th\'eor\`eme \ref{main1thm} (2) et de la proposition \ref{main2prop}.
 \end{proof}

\begin{thm}\label{main1cor2}
Soient $G$ un groupe lin\'eaire connexe, $G_0\sbt G$ un sous-groupe ferm\'e connexe et $Z:=G/G_0$.
 Soient $X$ une $G$-vari\'et\'e lisse g\'eom\'etriquement int\`egre, 
  $U\sbt X$ un $G$-ouvert  et $U\xrightarrow{f}Z$ un $G$-morphisme. 
 Soient $A\sbt \Br(X)$ et $B\sbt \Br_G(U)$ (cf. (\ref{defBr_GX})) deux sous-groupes finis.
 Soit $S\sbt \Omega_k$ un sous-ensemble fini non vide tel que $G'(k_S)$ soit non compact pour chaque facteur simple $G'$ du groupe $G^{sc}$.

 (1) Si $S= \infty_k$ et, pour tout $z\in Z(k)$ de fibre $U_z$, l'ensemble $U_z(k)$ est dense dans $U_z(\RA_k)_{\bullet}^{A+B}$,
 alors $X(k)$ est dense dans $X(\RA_k)_{\bullet}^{\Br(X)\cap (A+B+f^*\Br_G(Z))}$.
 
 (2) Si $\bk[X]^{\times}=\bk^{\times}$ et, pour tout $z\in Z(k)$, la fibre $U_z$ satisfait l'approximation forte de Brauer-Manin   par rapport \`a $A+B$ hors de $S$,
alors $X$ satisfait l'approximation forte de Brauer-Manin par rapport \`a $\Br(X)\cap (A+B+f^*\Br_G(Z))$ hors de $S$.
\end{thm}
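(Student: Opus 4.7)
Le plan est de déduire les deux énoncés directement du théorème \ref{main2thm}, combiné à l'hypothèse d'approximation imposée sur chacune des fibres $U_z$. Toute la substance arithmétique et géométrique (descente, méthode de fibration, approximation forte pour les espaces homogènes) est déjà concentrée dans le théorème \ref{main2thm}; il restera à vérifier la compatibilité topologique entre l'action d'un sous-groupe $G(k_S)^0$ bien choisi et la notion de proximité pertinente dans chacun des deux cas.

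Pour la partie (1), je partirais d'un ouvert $W_\bullet \subset X(\RA_k)_\bullet$ vérifiant $W_\bullet^{\Br(X)\cap (A+B+f^*\Br_G(Z))} \neq \emptyset$ et considérerais sa préimage $W \subset X(\RA_k)$ par la projection canonique $X(\RA_k) \to X(\RA_k)_\bullet$, qui satisfait la même condition de Brauer-Manin. J'appliquerais alors le théorème \ref{main2thm} avec $G(k_S)^0 = G(k_\infty)^+$ (sous-groupe ouvert d'indice fini par le lemme \ref{lemsousgroupeouvert}): ceci fournirait $z \in Z(k)$ de fibre $U_z$ et un point $u = g \cdot w \in U_z(\RA_k)^{A+B}$ avec $g \in G(k_\infty)^+$ et $w \in W$. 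L'observation cruciale serait que, pour chaque place archimédienne $v$, l'élément $g_v$ appartient à la composante connexe de l'identité de $G(k_v)$, donc sa translation préserve chaque composante connexe de $X(k_v)$ par continuité; les images de $u$ et $w$ dans $X(\RA_k)_\bullet$ coïncident ainsi. L'hypothèse de densité de $U_z(k)$ dans $U_z(\RA_k)_\bullet^{A+B}$ permet alors d'approcher $u$ (donc $w$) dans $X(\RA_k)_\bullet$ par un point de $U_z(k) \subset X(k)$.

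Pour la partie (2), je procéderais symétriquement: étant donné un ouvert $W \subset X(\RA_k)$ vérifiant $W^{\Br(X)\cap (A+B+f^*\Br_G(Z))} \neq \emptyset$, je choisirais un sous-groupe ouvert d'indice fini $G(k_S)^0 \subset G(k_S)$ arbitraire. Le théorème \ref{main2thm} fournirait $z \in Z(k)$ et $u = g \cdot w$ avec $g \in G(k_S)^0$ et $w \in W$; comme $g$ n'affecte que les coordonnées aux places de $S$, on aurait $\mathrm{Pr}^S(u) = \mathrm{Pr}^S(w) \in \mathrm{Pr}^S(W)$. L'hypothèse d'approximation forte de Brauer-Manin pour $U_z$ par rapport à $A+B$ hors de $S$ fournirait alors $y \in U_z(k) \subset X(k)$ tel que $\mathrm{Pr}^S(y)$ soit arbitrairement proche de $\mathrm{Pr}^S(u)$, donc dans $\mathrm{Pr}^S(W)$.

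La seule chose à vérifier avec soin sera donc cette compatibilité topologique — préservation des composantes connexes archimédiennes dans le cas (1), stabilité des coordonnées hors $S$ dans le cas (2) — qui résultera immédiatement des définitions et du choix de $G(k_S)^0$. Il n'y a pas d'obstacle véritable au-delà du théorème \ref{main2thm}: ce corollaire en est une mise en forme essentiellement formelle.
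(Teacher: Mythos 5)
Votre démonstration est correcte et suit exactement la même voie que celle de l'article, lequel se contente d'écrire que l'énoncé « suit immédiatement du théorème \ref{main2thm} » ; vous explicitez simplement le transfert topologique (choix de $G(k_S)^0=G(k_\infty)^+$ et préservation des composantes connexes archimédiennes dans le cas (1), invariance des coordonnées hors de $S$ dans le cas (2)), qui est bien la seule vérification restante.
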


\begin{proof}
Ceci suit imm\'ediatement du th\'eor\`eme \ref{main2thm}. 
\end{proof}

\bigskip

\noindent{\bf Remerciements.}
Je remercie tr\`es chaleureusement Jean-Louis Colliot-Th\'el\`ene et Fei Xu pour  plusieurs discussions.
Je remercie \'e\-galement Cyril Demarche, Qifeng Li et Giancarlo Lucchini Arteche pour leurs commentaires.
Projet soutenu par l'attribution d'une allocation de recherche R\'egion Ile-de-France.

\bibliographystyle{alpha}
\end{document}